\newcommand{\QQ}{\mathcal{Q}}
\newcommand{\T}{{\mathbf T}^m}
\newcommand{\szego}{Szeg\"o }
\newcommand{\kahler}{K\"ahler }
\newcommand{\PP}{{\mathbb P}}
\newcommand{\R}{{\mathbb R}}
\newcommand{\C}{{\mathbb C}}
\newcommand{\Z}{{\mathbb Z}}
\newcommand{\N}{{\mathbb N}}
\newcommand{\CP}{\C\PP}
\newcommand{\dbar}{\bar\partial}
\newcommand{\ddbar}{\partial\dbar}
\newcommand{\half}{{\frac{1}{2}}}
\renewcommand{\phi}{\varphi}
\newcommand{\bcal}{\mathcal{B}}
\newcommand{\ccal}{\mathcal{C}}
\newcommand{\dcal}{\mathcal{D}}
\newcommand{\fcal}{\mathcal{F}}
\newcommand{\gcal}{\mathcal{G}}
\newcommand{\hcal}{\mathcal{H}}
\newcommand{\ical}{\mathcal{I}}
\newcommand{\lcal}{\mathcal{L}}
\newcommand{\pcal}{\mathcal{P}}
\newcommand{\rcal}{\mathcal{R}}
\newcommand{\ocal}{\mathcal{O}}
\newcommand{\tcal}{\mathcal{T}}
\def\us{{\underline s}}
\def    \half   {{\frac{1}{2}}}
\def    \Z  {{\mathbb Z}}
\def    \R  {{\mathbb R}}
\def    \C  {{\mathbb C}}
 \def   \half   {{\frac{1}{2}}}
\newtheorem{theo}{{\sc Theorem}}[section]
\newtheorem{cor}[theo]{{\sc Corollary}}
\newtheorem{lem}[theo]{{\sc Lemma}}
\newtheorem{prop}[theo]{{\sc Proposition}}
\newenvironment{rem}{\medskip\noindent{\it Remark:\/} }{\medskip}
\newenvironment{defin}{\medskip\noindent{\it Definition:\/} }{\medskip}
\title[Bergman metrics and geodesics ]
{Bergman metrics and geodesics in the space of K\"ahler metrics on
toric varieties}
\author{Jian Song}
\author{Steve Zelditch }
\address{Department of Mathematics, Rutgers University, New Brunswick, NJ 08854, USA} \email{jiansong@math.rutgers.edu}
\address{Department of Mathematics, Johns Hopkins University, Baltimore,
MD 21218, USA} \email{zelditch@math.jhu.edu}
\thanks{Research partially supported by
 National Science Foundation grants DMS-06-04805 and
 DMS-06-03850.}
\date{\today}
\begin{document}

\maketitle

\begin{abstract} A guiding principle in \kahler geometry is that the
infinite dimensional symmetric space  $\hcal$ of K\"ahler metrics
in a fixed K\"ahler class on a polarized projective K\"ahler
manifold $M$ should be well approximated by finite dimensional
submanifolds $\bcal_k \subset \hcal$ of Bergman metrics of height
$k$ (Yau, Tian, Donaldson). The Bergman metric spaces are
symmetric spaces of type $G_{\C}/G$ where $G = U(d_k + 1)$ for
certain $d_k$. This article establishes the basic estimates for
Bergman approximations for geometric families of toric \kahler
manifolds.

The approximation results are applied to the endpoint problem for
geodesics of $\hcal$, which are solutions of a homogeneous complex
Monge-Amp\`ere equation in $A \times X$, where $A \subset \C$ is
an annulus. Donaldson, Arezzo-Tian and Phong-Sturm raised the
question whether $\hcal$- geodesics with fixed endpoints  can be
approximated by geodesics of  $\bcal_k$. Phong-Sturm proved weak
$C^0$-convergence of Bergman to Monge-Amp\`ere geodesics on a
general \kahler manifold. Our approximation results show that one
has $C^2(A \times X)$ convergence  in the case of toric K\"ahler
metrics, extending our earlier result on $\CP^1$.

In subsequent papers, the techniques of this article are applied
to approximations for  harmonic maps into $\hcal$,  to test
configuration geodesic rays and to the smooth initial value
problem.

\end{abstract}

\tableofcontents

\section{Introduction}

This is the first in a series of articles  on the Riemannian
geometry of the space \begin{equation} \label{HCALDEF} \hcal\ = \
\{\phi\in C^{\infty} (M) : \omega_\phi\ = \ \omega_0+ dd^c \phi>0\
\}
\end{equation}of  \kahler metrics in the class $[\omega_0]$  of a polarized
projective \kahler manifold $(M, \omega_0, L)$,   equipped with
the Riemannian metric $g_{\hcal}$  of Mabuchi-Semmes-Donaldson
\cite{M,S2,D2},
\begin{equation} \label{metric} ||\psi||^2_{g_{\hcal}, \phi}\ = \ \int_M |\psi|^2\
\frac{\omega_{\phi}^m}{m!},\ \, \;\; {\rm ~ where~} \phi \in \hcal
{\rm~ and ~} \psi \in T_{\phi} \hcal \simeq C^{\infty}(M).
\end{equation}
Here, $L \to M$ is an ample line bundle with $c_1(L) = [
\omega_0]$. Formally, $(\hcal, g_{\hcal})$ is an infinite
dimensional non-positively curved  symmetric space
 of the type
$G_{\C}/G$, where $G = SDiff_{\omega_0}(M)$ is the group of
Hamiltonian symplectic diffeomorphisms of $(M, \omega_0)$. This
statement is only formal since  $G$ does not possess a
complexification and $\hcal$ is an  incomplete, infinite
dimensional space. An attractive approach to the infinite
dimensional geometry is to approximate it by  a sequence of finite
dimensional submanifolds $\bcal_k \subset \hcal$ of so-called
Bergman (or Fubini-Study) metrics.  The space $\bcal_k$ of Bergman
metrics may be identified with the finite dimensional symmetric
space  $GL(d_k + 1, \C)/ U(d_k + 1)$ where $d_k$ is a certain
dimension. Thus, $\bcal_k$ is equipped with a finite dimensional
symmetric space metric $g_{\bcal_k}$, which is not the same as the
submanifold Riemannian metric induced on it by $g_{\hcal}$. The
purpose of the series is to show that much of the symmetric space
geometry of $(\bcal_k, g_{\bcal_k})$ tends to the infinite
dimensional symmetric space geometry of $(\hcal, g_{\hcal})$ as $k
\to \infty$.

To put the problem and results in perspective, we recall that  at
the level of individual metrics $\omega \in \hcal$, there exists a
well-developed approximation theory: Given $\omega$, one can
define  a canonical sequence of Bergman metrics $\omega_k \in
\bcal_k$ which approximates $\omega$ in the $C^{\infty}$ topology
(see (\ref{TK})), in  much the same way that smooth functions can
be approximated by Bernstein polynomials (Yau \cite{Y} and Tian
\cite{T}; see also \cite{C,Z,Z2}).  The approximation theory is
based on microlocal analysis in the complex domain, specifically
Bergman kernel asymptotics on and off the diagonal
\cite{BSj,C,Z,D,PS3}.
 As will be shown in \cite{RZ3}, one  may use the same
methods to prove that the geometry of $(\bcal_k, g_{\bcal_k})$
tends to the geometry of $(\hcal, g_{\hcal})$ at the infinitesimal
level: e.g. that   the Riemann metric, connection and curvature
tensor of $\bcal_k$ tend to the Riemann metric, connection and
curvature of $\hcal$. But our principal aim in this series
 is to extend the approximation from
pointwise or infinitesimal objects  to more global aspects of the
geometry, such as   such as $\bcal_k$ -geodesics  or harmonic maps
to $(\bcal_k, g_{\bcal_k})$. These more global approximation
problems are much more difficult than the infinitesimal ones. The
obstacles are analogous to those involved in complexifying
$SDiff_{\omega_0}(M)$. We will explain this comparison in more
detail in \S \ref{BAC} at the end of the introduction.

This article is concerned with the   approximation of
$g_{\hcal}$-geodesic segments $\omega_t$ in $\hcal$  with fixed
endpoints by $g_{\bcal_k}$-geodesic segments  in $\bcal_k$.
 As recalled in \S \ref{BACKGROUND}, the geodesic equation for the
  \kahler potentials $\phi_t$ of $\omega_t$  is a  complex homogeneous Monge-Amp\`ere equation. Little
  is known about the solutions of the Dirichlet problem  at present beyond the regularity result that $\phi_t \in C^{1,\alpha}([0, T] \times
  M)$ for all $\alpha < 1$
  if the endpoint metrics are smooth (see X. Chen \cite{Ch} and Chen-Tian \cite{CT} for results and background).
  It is therefore
natural to study the  approximation of Monge-Amp\`ere
$g_{\hcal}$-geodesics $\phi_t$ by the much simpler
$g_{\bcal_k}$-geodesics
  $\phi_k(t,z)$, which are defined by one parameter
subgroups of $GL(d_k + 1, \C)$ (see (\ref{toricphik})). The
 problem of approximating $\hcal$-geodesic segments
between two smooth endpoints by $\bcal_k$-geodesic segments    was
raised by Donaldson \cite{D}, Arezzo-Tian \cite{AT} and
Phong-Sturm \cite{PS} and was studied in depth by Phong-Sturm in
\cite{PS,PS1}. They proved in \cite{PS} that $\phi_k (t, z) \to
\phi_t$ in a weak $C^0$ sense on $[0, 1] \times M$ (see
(\ref{PSRESULT})); a $C^0$ result with a remainder estimate was
later proved by Berndtsson \cite{B} for a somewhat different
approximation.

 In this article, we study the $g_{\bcal_k}$-approximation of
 $g_{\hcal}$-geodesics in the case of a polarized projective toric
\kahler manifold. Our main  result is that a $g_{\hcal}$ geodesic
segment  of toric \kahler metrics  with fixed endpoints is
approximated in $C^2$ by a sequence $\phi_k(t, z)$ of toric
$g_{\bcal_k}$- geodesic segments. More precisely, for any $T \in
\R_+$,  $\phi_k(t, z) \to \phi_t(z)$ in $C^2([0, T] \times M)$,
generalizing the results of \cite{SoZ} in the case of $\CP^1$. It
is natural to study convergence of two (space-time) derivatives
since the \kahler metric $\omega_{\phi} = \omega_0 + dd^c \phi$
involves two derivatives. In the course of the  proof, we
introduce methods which have  many other applications to  global
approximation problems on toric \kahler manifolds, and which
should also  have applications to non-toric \kahler manifolds.

Here,   as  in \cite{SoZ2,RZ,RZ2}, we restrict to the toric
setting  because, at this stage, it is possible to obtain much
stronger results than for general \kahler manifolds and because it
is  one of the few settings where we can see clearly what is
involved in the classical limit as $k \to \infty$. The simplifying
feature of toric \kahler manifolds is that they are completely
integrable on both the classical and quantum level. In Riemannian
terms,  the submanifolds of toric metrics of $\hcal$ and $\bcal_k$
form totally geodesic flats. Hence in the toric case, the geodesic
equation along the flat  is linearized by the Legendre transform,
with the consequence that there  exists an explicit formula for
the Monge-Amp\`ere geodesic $\phi_t$ between two smooth endpoint
metrics. In particular, the explicit formula shows that geodesics
between smooth endpoints are smooth. We use this explicit solution
throughout the article, starting from (\ref{REWRITE}). Thus, in
the toric case we only need to prove $C^2$-convergence of the
Bergman approximation. An analogous result   on a general \kahler
manifold would require an improvement on  the known regularity
results on Monge-Amp\`ere geodesics in addition to a convergence
result. We refer to \cite{CT} for the state of the art on the
regularity theory.

\subsection{\label{BACKGROUND} Background}

To state our results, we need some notation and background.   Let
$L \to M^m$ be an ample holomorphic line bundle over a compact
complex manifold of dimension $m$.  Let $\omega_0 \in H^{(1,1)}(M,
\Z)$ denote an integral \kahler form. Fixing a reference hermitian
metric $h_0$ on $L$, we may write other hermitian metrics on $L$
as $h_{\phi} = e^{-\phi} h_0$, and then the  space of hermitian
metrics $h$ on $L$ with curvature $(1,1)$-forms $\omega_h$ in the
class of $\omega_0$ may (by the $\ddbar$ lemma) be identified with
the space $\hcal$  of relative \kahler potentials (\ref{HCALDEF}).
We may then identify the  tangent space $T_\phi\hcal$ at
$\phi\in\hcal$  with $C^{\infty}(M)$. Following \cite{M, S2, D},
we  define the Riemannian metric (\ref{metric})  on $\hcal$. With
this Riemannian metric, $\hcal$ is  formally
 an infinite
dimensional non-positively  curved symmetric space.

The space $\bcal_k$ of Bergman (or Fubini-Study) metrics of height
$k$ is defined as follows: Let
  $H^0(M, L^k)$  denote the space of  holomorphic
sections of the $k$th power $L^k \to M$ of $L$ and let $d_k + 1 =
\dim H^0(M, L^k)$. We let $\bcal H^0(M, L^k)$ denote the manifold
of all bases $\underline{s} = \{s_0, \dots, s_{d_k}\}$ of $H^0(M,
L^k)$. Given a basis, we define the  Kodaira embedding
\begin{equation} \label{KODEMB} \iota_{\underline{s}}: M \to \CP^{d_k},\;\;z \to [s_0(z),
\dots, s_{d_k}(z)]. \end{equation}
 We then define a Bergman metric (or equivalently, Fubini-Study)
 metric of height $k$ to be a metric of the form
 \begin{equation} \label{FSDEFa}  h_{\underline{s}} := (\iota_{\underline{s}}^*
h_{FS})^{1/k} = \frac{h_0}{\left( \sum_{j = 0}^{d_k}
|s_j(z)|^2_{h_0^k} \right)^{1/k}}, \end{equation} where $h_{FS}$
is the Fubini-Study Hermitian metric on $\ocal(1) \to \CP^{d_k}$.
We then define
\begin{equation}
 \label{BERGMETDEF} \; {\mathcal
B}_k = \{h_{\underline{s}}, \;\; \underline{s} \in \bcal H^0(M,
L^k) \}.
\end{equation}
We use the same notation for the associated space of  potentials
$\phi$ such that $h_{\underline{s}} = e^{- \phi} h_0$ and for the
associated \kahler metrics $\omega_{\phi}$.
 We observe that with a choice of basis of $H^0(M, L^k)$ we may
identify  ${\mathcal B}_k$ with the symmetric space $ GL(d_k + 1,
\C) / U(d_k + 1)$ since $GL(d_k + 1, \C)$ acts transitively on the
set of bases, while $\iota_{\underline{s}}^* h_{FS}$ is unchanged
if we replace the basis $\underline{s}$ by a unitary change of
basis.

Several further identifications are important. The first is that
${\mathcal B}_k$ may be identified with the space $\ical_k$ of
Hermitian inner products  on $H^0(M, L^k)$, the correspondence
being that a basis is identified with an inner product for which
the basis is Hermitian orthonormal. As in \cite{D,D4}, we define
maps
$$Hilb_k: \hcal \to \ical_k, $$
by the rule that a Hermitian metric  $h \in \hcal$ induces the
inner products on $H^0(M, L^k)$,
\begin{equation} \label{HILBDEF} ||s||^2_{Hilb_k(h)} = R \int_M |s(z)|_{h^k}^2 dV_h,
\end{equation}
where $dV_h = \frac{\omega_h^m}{m!}$, and where $R = \frac{d_k +
1}{Vol(M, dV_h)}.$ Also, $h^k$ denotes the induced metric on
$L^k$. Further, we define the identifications
$$FS_k: \ical_k \simeq {\mathcal B}_k $$
as follows: an inner product $G = \langle ~,~ \rangle$ on $H^0(M,
L^k)$ determines a $G$-orthonormal  basis  $\underline{s} =
\underline{s}_G$ of $H^0(M, L^k)$ and an associated Kodaira
embedding (\ref{KODEMB}) and Bergman metric (\ref{FSDEFa}). Thus,
\begin{equation} \label{FSDEF} FS_k(G) = h_{\underline{s}_G}. \end{equation} The right side
is independent of the choice of $h_0$ and the choice of
orthonormal basis.  As observed in \cite{D,PS}, $FS_k(G)$ is
characterized by the fact that for any $G$-orthonormal basis
$\{s_j\}$ of $H^0(M, L^k)$, we have
\begin{equation} \label{CONSTANT} \sum_{j = 0}^{d_k}
|s_j(z)|_{FS_k(G)}^2 \equiv 1, \;\; (\forall z \in M).
\end{equation}

Metrics in  $\bcal_k$ are defined by an algebro-geometric
construction. By analogy with the approximation of real numbers by
rational numbers, we say that $h \in \hcal$ (or its curvature form
$\omega_h$) has {\it height} $k$ if $h \in \bcal_k$.  A basic fact
is that the union
$${\mathcal B} = \bigcup_{k=1}^{\infty} \bcal_k$$
of Bergman metrics  is dense in the $C^{\infty}$-topology  in the
space $\hcal$  (see \cite{T, Z}).  Indeed, \begin{equation}
\label{TK} \frac{FS_k \circ Hilb_k(h)}{h} = 1 + O(k^{-2}),
\end{equation} where the remainder is estimated in $C^r(M)$ for any $r > 0$;
left side moreover has  a complete asymptotic expansion  (see
\cite{D3, PS} for precise statements).

Now that we have defined the spaces $\hcal$ and $\bcal_k$, we can
compare Monge-Amp\`ere geodesics and Bergman geodesics. Geodesics
of $\hcal$ satisfy the Euler-Lagrange equations for the energy
functional determined by (\ref{metric}); see (\ref{EF}). By
\cite{M,S2,D2}, the geodesics of $\hcal$ in this metric are the
paths $h_t = e^{-\phi_t}h_0$ which satisfy the equation
\begin{equation} \ddot\phi- \frac{1}{2} |\nabla \dot{\phi}|_{\omega_{\phi}}^2=0,
\end{equation}
which  may be interpreted as a homogeneous complex Monge-Amp\`ere
equation on $A \times M$ where $A$ is an annulus \cite{S2,D2}.

Geodesics in $\bcal_k$ with respect to the symmetric space metric
are given by orbits of certain  one-parameter subgroups
$\sigma_k^t = e^{t A_k}$ of $GL(d_k + 1, \C)$. In the
identification of $\bcal_k$ with the symmetric space $\ical_k
\simeq GL(d_k + 1, \C)/U(d_k+ 1)$ of inner products, the 1 PS (one
parameter subgroup) $e^{t A_k} \in GL(d_k+1)$  changes an
orthonormal basis $\hat\us^{(0)}$  for the initial inner product
$G_0$  to an orthonormal basis $e^{t A_k} \cdot\hat\us^{(0)}$ for
$G_t$ where $G_t$ is a geodesic of $\ical_k$. Geometrically, a
Bergman geodesic may be visualized as the path of metrics on $M$
obtained by holomorphically embedding $M$ using a basis of $H^0(M,
L^k)$ and then moving the embedding under the 1 PS subgroup $e^{t
A_k}$ of motions of $\CP^{d_k}$. The difficulty is to interpret
this simple extrinsic motion in intrinsic terms on $M$.

In this article, we only study the  endpoint problem for the
geodesic equation. We assume given $h_0, h_1 \in \hcal$ and let
$h(t) $ denote the Monge-Amp\`ere geodesic between them. We then
consider the geodesic $G_k(t)$ of $\ical_k$ between $G_k(0) =
Hilb_k(h_0)$ and $G_k(1) = Hilb_k(h_1)$ or equivalently between
$FS_k \circ Hilb_k(h_0)$ and $FS_k \circ  Hilb_k(h_1)$. Without
loss of generality, we may assume that the change of orthonormal
basis (or change of inner product) matrix $\sigma_k = e^{A_k}$
between $Hilb_k(h_0), Hilb_k(h_1)$ is diagonal with entries
$e^{\lambda_0},...,e^{\lambda_{d_k}}$ for some $\lambda_j\in\R$.
Let $\hat\us^{(t)}= e^{t A_k} \; \cdot\hat\us^{(0)}$ where $e^{t
A_k}$ is diagonal with entries $e^{\lambda_jt}$. Define
\begin{equation}\label{HKT}   h_k(t) : = FS_k \circ G_k(t)  = h_{\hat\us^{(t)}}=: h_0e^{-\phi_k(t)}. \end{equation}
 It
follows immediately from (\ref{CONSTANT}) that
\begin{equation} \label{intphi}
   \phi_k(t;z)\ = \ {1\over k}
\log\left(\sum_{j=0}^N e^{2\lambda_jt}|\hat s_j^{(0)}|^2_{h_0^k}
\right).
\end{equation}
We emphasize that $\phi_k(t;z) $ is the intrinsic $\bcal_k$
geodesic between the endpoints $FS_k \circ Hilb_k(h_0)$ and $FS_k
\circ Hilb_k(h_1)$. It is of course quite distinct from the
$Hilb_k$-image of the Monge-Amp\`ere geodesic;  the latter is not
intrinsic to $\bcal_k$ and one cannot gain any information on the
$\hcal$-geodesic by studying it.

Let us summarize the notation for hermitian metrics and geodesics
of metrics:

\begin{itemize}

\item For any metric $h$ on $L$, $h^k$ denotes the induced metric
on $L^k$, and for any metric $H$ on $L^k$, $H^{\frac{1}{k}}$ is
the induced metric on $L$;

\item Given $h_0 \in \hcal$, $h_t = e^{- \phi_t} h_0$ is the
Monge-Amp\`ere geodesic;

\item $h_k = FS \circ Hilb_k(h) \in \bcal_k$ is the natural
approximating Bergman metric to $h$, and $h_k(t) = e^{- \phi_k(t)}
h_0$ is the Bergman geodesic (\ref{HKT}).

\end{itemize}

The main result of Phong-Sturm \cite{PS} is that the
Monge-Amp\`ere geodesic $\phi_t$ is approximated by the 1PS
Bergman geodesic $\phi_k(t,z)$ in the following weak $C^0$ sense:
\begin{equation} \label{PSRESULT} \phi_t(z) = \lim_{\ell \to
\infty}\left[ \sup_{k \geq \ell} \phi_k(t, z)\right]^*, \;\;
\mbox{uniformly as} \; \ell \to \infty,\end{equation} where $u^*$
is the upper envelope of $u$, i.e., $u^*(\zeta_0) = \lim_{\epsilon
\to 0} \sup_{|\zeta - \zeta_0| < \epsilon} u(\zeta). $ In
particular, without taking the upper envelope, $ \sup_{k \geq
\ell} \phi_k(t, z) \to \phi(t,z)$ almost everywhere as $\ell \to
\infty$. See also \cite{B} for the subsequent proof of an
analogous result for the adjoint bundle $L^k \otimes K$ (where $K$
is the canonical bundle) with an error estimate $||\phi_k(t) -
\phi(t)||_{C^0} = O(\frac{\log k}{k})$.

\subsection{Statement of results}

Our purpose is to show that  the degree of convergence of $h_k(t)
\to h_t$ or equivalently of $\phi_k(t, z) \to \phi_t(z)$ is much
stronger for toric hermitian metrics on the invariant line bundle
$L \to M$ over a smooth toric \kahler manifold. We recall that a
toric variety $M$ of dimension $m$ carries the holomorphic action
of a complex torus $(\C^*)^m$ with an open dense orbit. The
associated real torus $\T = (S^1)^m$ acts on $M$ in a Hamiltonian
fashion with respect to any invariant \kahler metric $\omega$,
i.e., it possesses a moment map $\mu: M \to P$ with image a convex
lattice polytope. Here, and henceforth, $P$ denotes the closed
polytope; its interior is denoted $P^o$  (see \S \ref{TV} for
background). Objects associated to $M$ are called toric if they
are invariant or equivariant with respect to the torus action
(real or complex, depending on the context).  We define the space
of toric Hermitian metrics by
 \begin{equation} \hcal_{\T} = \{\phi \in \hcal: (e^{i \theta})^* \phi = \phi, \;\; {\rm ~for~all~} e^{i \theta}
  \in \T\}. \end{equation}
 Here, we assume the reference metric $h_0$ is
 $\T$-invariant. We note that since $\T$ has a moment map, it automatically
 lifts to $L$ and hence it makes sense to say that $h_0: L \to \C$ is invariant under it.
  With a slight abuse of notation carried over from  \cite{D},  we also let
 $\phi$ denote the full \kahler potential on the open orbit, i.e.,
 $\omega_{\phi} = dd^c \phi$ on the open orbit. It is clearly
 $\T$-invariant.

Our main result is

\begin{theo} \label{SUM} Let $L \to M$ be a very  ample  toric line bundle over
a smooth compact toric variety $M$. Let $\hcal_T$ denote the space
of toric Hermitian metrics on $L$. Let $h_0, h_1 \in \hcal_T$ and
let $h_t$ be the Monge-Amp\`ere geodesic between them. Let
$h_k(t)$ be the Bergman geodesic between $Hilb_k(h_0)$ and
$Hilb_k(h_1)$ in $\bcal_k$. Let $h_k(t) = e^{- \phi_k(t, z)} h_0$
and let $h_t = e^{- \phi_t(z)} h_0$. Then
$$\lim_{k \to \infty} \phi_k(t, z) = \phi_t(z) $$
in  $C^{2}([0, 1] \times M) $. In fact, there exists $C$ independent of $k$ such that
$$||\phi_k - \phi||_{C^2([0, 1] \times M)} \; \leq C \; k^{-1/3 + \epsilon}, \;\; \forall \epsilon > 0. $$
\end{theo}

Our methods show moreover that away from the divisor at infinity
$\dcal$ (cf. \S \ref{TV}), the function $\phi_k(t,z)$ has an
asymptotic expansion in powers of  $k^{-1}$, and  converges in
$C^{\infty}$ to $\phi_t$. But the asymptotics become complicated
near $\dcal$, and  require  a `multi-scale' analysis involving
distance to boundary facets. It is therefore not clear  whether
$\phi_k$ has an asymptotic expansion in $k^{-1}$ globally on $M$.
At  least, no such asymptotics follow from the known Bergman
kernel asymptotics, on or off the diagonal.
 The analysis of
these regimes for general toric varieties  seems to be fundamental
in `quantum mechanical approximations' on toric varieties.

As mentioned above, the Monge-Amp\`ere equation can be linearized
in the toric case and solved explicitly (\ref{TORPHIT}); we give a
simple new proof in \S \ref{TV}. The  geodesic arcs are easily
seen to be $C^{\infty}$ when the endpoints are $C^{\infty}$. Hence
the $C^2$-convergence result does not improve the known regularity
results on Monge-Amp\`ere geodesics of toric metrics, but pertain
only to the degree of convergence of Bergman to Monge-Amp\`ere
geodesics in a setting where the latter are known to be smooth; it
is possible that the methods can be developed to give regularity
results, but this is a distant prospect (see the remarks at the
end of this introduction).

\subsection{\label{OUTLINE} Outline of the proof}

Let us now  outline the proof of Theorem \ref{SUM}.  We start with
the fact  that the Legendre transform of the \kahler potential
linearizes the Monge-Amp\`ere equation (cf. \S \ref{LIN} and
\cite{A, G, D3}). The Legendre transform $\lcal \phi$  of the
open-orbit \kahler potential $\phi$, a convex function on $\R^m$
in logarithmic coordinates, is the so-called dual symplectic
potential
 \begin{equation} \label{SYMPOT} u_{\phi}(x) = \lcal \phi(x),\end{equation}   a convex function on
the convex polytope $P$.
  Under this Legendre transform,
  the complex Monge-Amp\`ere
equation on $\hcal_{\T}$ linearizes to the equation $\ddot{u} = 0$
and is thus solved by  \begin{equation} \label{UT} u_t =
u_{\phi_0} + t(u_{\phi_1} - u_{\phi_0}). \end{equation} Hence the
solution $\phi_t$ of the geodesic equation on $\hcal$ is solved in
the toric setting by \begin{equation} \label{TORPHIT} \phi_t = \lcal^{-1} u_t. \end{equation} Our goal is to
show that $\phi_k(t; z) \to \lcal^{-1} u_t$ as in (\ref{UT}) in a
strong sense.

The second simplifying feature of the toric setting occurs on the
quantum level. The Bergman geodesic is obtained by applying the
$FS_k$ map to the one-parameter subgroup $e^{t A_k}$. In general,
it is difficult to understand what kind of asymptotic behavior is
possessed by the operators $e^{t A_k}$.
 But on a toric variety, there  exists  a natural basis of the
space of holomorphic sections $H^0(M, L^k)$ furnished by monomial
sections $z^{\alpha}$ which are orthogonal with respect to all
torus-invariant inner products, and with respect to which all
change of basis operators $e^{t A_k}$ are diagonal;  we refer to
\S \ref{TV} or to \cite{STZ} for background.  Hence, we only need
to analyze the eigenvalues of $e^{A_k}$.  The exponents $\alpha$
of the monomials
  are lattice points $\alpha \in k P$ in the $k$th dilate
of the polytope $P$ corresponding to $M$. The eigenvalues in the
toric case are given by

\begin{equation} \label{lambdaalpha} \lambda_{\alpha} : = \frac{1}{2} \log
\left(\frac{\QQ_{h_0^k}(\alpha)}{\QQ_{h_1^k}(\alpha)}\right),
\end{equation} where $\QQ_{h_0^k}(\alpha)$ is a `norming constant' for a
toric inner product.  By a norming constant for a toric Hermitian
inner product $G$ on $H^0(M, L^k)$ we mean the associated $L^2$
norm-squares of the monomials
\begin{equation} \label{QIP} \QQ_{G}(\alpha) = ||s_{\alpha}||_{G}^2. \end{equation}
In  particular, if $h \in \hcal_{\T}$, the norming constants for
$Hilb_k(h)$ are given by
\begin{equation} \label{QHK} \QQ_{h^k}(\alpha) = ||s_{\alpha}||_{h^k}^2:= \int_{M_P}
|s_{\alpha}(z)|^2_{h^k} dV_h. \end{equation} Thus, an orthonormal
basis of $H^0(M, L^k)$ with respect to $Hilb_k(h)$ for $h \in
\hcal_T$ is given by
$\{\frac{s_{\alpha}}{\sqrt{\QQ_{h^k}(\alpha)}}, \;\; \alpha \in k
P \cap \Z^m\}$. An equivalent, and in a sense dual (cf. \S
\ref{PQ}), formulation is in terms of  the functions
\begin{equation} \label{PHK} \pcal_{h^k}(\alpha, z): =
\frac{|s_{\alpha}(z)|^2_{h^k}}{\QQ_{h^k}(\alpha)},
\end{equation} and their special values
\begin{equation} \label{PHKALPHA} \pcal_{h^k}( \alpha): = \pcal_{h^k}(\alpha,
\mu_h^{-1}(\frac{\alpha}{k}))=
\frac{|s_{\alpha}(\mu_h^{-1}(\frac{\alpha}{k}))|^2_{h^k}}{\QQ_{h^k}(\alpha)}.
\end{equation}

Given two toric hermitian metrics $h_0, h_1 \in \hcal_{\T}$, the
change of basis matrix $e^{A_k} = \sigma_{h_0, h_1, k}$ from the
monomial orthonormal basis for $Hilb_k(h_0)$ to that for
$Hilb_k(h_1)$ is diagonal, and the  eigenvalues are given by
\begin{equation} \label{EIGA} Sp(e^{A_k} e^{A_k^*}) : = \{e^{2 \lambda_{\alpha}(k) }=
\frac{\QQ_{h_0^k}(\alpha)}{\QQ_{h_1^k}(\alpha)},\;\;\; \alpha \in
k P \}.
\end{equation} Hence,
 for a
$\bcal_k$-geodesic, (\ref{intphi}) becomes
\begin{equation} \label{toricphik} \phi_k(t, z) = \frac{1}{k} \log
Z_k(t, z)
\end{equation}
where
\begin{equation} \label{toricFk} Z_k(t, z) =
\sum_{\alpha \in k P \cap \Z^m}
\left(\frac{\QQ_{h_0^k}(\alpha)}{\QQ_{h_1^k}(\alpha)}\right)^t
\frac{|s_{\alpha}(z)|^2_{h_0^k}}{\QQ_{h_0^k}(\alpha)}.
\end{equation}
It is interesting to observe that the relative \kahler potential
(\ref{toricphik}) is the logarithm of  an exponential sum, hence
 has the form of a free energy of a statistical mechanical
problem with states parameterized by $\alpha \in k P$ and with
Boltzmann weights
$\left(\frac{\QQ_{h_0^k}(\alpha)}{\QQ_{h_1^k}(\alpha)}\right)^t$.

Thus, our goal is to prove that
\begin{equation} \label{toricFka} \frac{1}{k} \log
\sum_{\alpha \in k P \cap \Z^m}
\left(\frac{\QQ_{h_0^k}(\alpha)}{\QQ_{h_1^k}(\alpha)}\right)^t
\frac{|s_{\alpha}(z)|^2_{h_0^k}}{\QQ_{h_0^k}(\alpha)} \to
\phi_t(z)\;\; \mbox{in}\;\; C^2(A \times M).
\end{equation}

\subsection{Heuristic proof}

Let us next  sketch a heuristic proof which makes the pointwise
convergence obvious. The first step is to obtain good asymptotics
of the norming constants (\ref{QHK}). As in \cite{SoZ}, they may
be expressed in terms of the symplectic potential by
\begin{equation} Q_{h^k}(\alpha) =
 \int_P  e^{ - k \left( u_{\phi}(x) + \langle \frac{\alpha}{k} - x, \nabla u_{\phi}(x) \rangle \right)}
dx
\end{equation}
As $k \to \infty$ the integral is dominated by the unique point $
x = \frac{\alpha}{k}$ where the `phase function' is maximized. The
Hessian is always non-degenerate and by complex stationary phase
we obtain the asymptotics
$$Q_{h^k}(\alpha_k) \sim k^{-m/2} e^{2 k u_{\phi} (\alpha)}. $$
The complex stationary phase (or steepest descent) method does not
apply near the boundary $\partial P$, causing serious
complications, but in this heuristic sketch we ignore this aspect.

 If we  then replace each term in
$Z_k$ by its asymptotics, we obtain
\begin{equation} \label{PHIKTASY} \phi_k(t, e^{\rho/2}) \sim
\frac{1}{k} \log \sum_{\alpha \in P\cap \frac{1}{k} \Z^m} e^{2
k\left(u_0(\alpha) + t
 (u_1(\alpha) - u_0(\alpha)) + \langle \rho, \alpha \rangle\right)}.
\end{equation}
The exponent $\left(u_0(\alpha) + t
 (u_1(\alpha) - u_0(\alpha)) + \langle \rho, \alpha \rangle\right)$
  is convex and therefore has a unique minimum point. This
  suggests applying a discrete analogue of complex stationary
  phase to the sum (\ref{PHIKTASY}),   a Dedekind-Riemann sum which is
asymptotic to the integral
$$ \int_{P} e^{2  k\left(u_0(\alpha) + t
 (u_1(\alpha) - u_0(\alpha)) + \langle \rho, \alpha \rangle \right)} d\alpha.
$$
Taking $\frac{1}{k} \log$ of the integral and applying complex
stationary phase  gives the asymptote
$$\max_{\alpha \in P}\{u_0(\alpha) + t
 (u_1(\alpha) - u_0(\alpha)) + \langle \rho, \alpha \rangle \}.$$ But this is
the
 Legendre transform of the ray of symplectic potentials
$$u_{\phi_0} (\alpha) + t
 (u_{ \phi_1}(\alpha) - u_{
\phi_0}(\alpha)),$$ and thus is  the Monge-Amp\`ere geodesic.

This is the core idea of the proof. We now give the rigorous
version.

\subsection{Outline of the rigorous proof}

The main difficulty in the proof of Theorem \ref{SUM}  is that the
norms have very different asymptotic regimes according to the
position of the normalized lattice point $\frac{\alpha}{k}$
relative to the boundary $\partial P$ of the polytope. Even in the
simplest case of $\CP^m$, the different positions correspond to
the regimes of the central limit theorem, large deviations
theorems and Poisson law of rare events for multi-nomial
coefficients. In determining the asymptotics of (\ref{toricphik}),
we face the difficulty that these Boltzmann weights might be
exponentially growing or decaying in $k$ as $k \to \infty$.

 To simplify the comparison  between the Bergman and
Monge-Amp\`ere geodesics, we  take advantage of the explicit solution (\ref{TORPHIT}) of
geodesic equation to re-write   $Z_k(t,z)$ in the form
\begin{equation} \label{REWRITE}
e^{- k \phi_t(z)} Z_k(t,z) = \sum_{\alpha \in k P \cap \Z^m}
\rcal_k(t, \alpha)
\frac{|s_{\alpha}(z)|^2_{h_t^k}}{\QQ_{h_t^k}(\alpha)} =
\sum_{\alpha \in k P \cap \Z^m} \rcal_k(t, \alpha)
 \pcal_{h_t^k}(\alpha, z),
\end{equation}
where as usual $h_t = e^{- \phi_t} h_0$ (with $\phi_t$ as in (\ref{TORPHIT})),  and where
\begin{equation} \label{QRATIO} \rcal_k(t, \alpha) : = \frac{\QQ_{h_t^k}(\alpha)}{
(\QQ_{h_0^k}(\alpha))^{1-t}(\QQ_{h_1^k}(\alpha))^{t}}
\end{equation} One of the key ideas is that $\rcal_k(t, \alpha)$
is to at least one order  a semi-classical symbol in $k$, i.e., has
at least to some extent an asymptotic expansion in powers of $k$.
Once this is established, it is possible to prove that
\begin{equation} \label{PART2} \frac{1}{k} \log  \sum_{\alpha \in k P \cap \Z^m} \rcal_k(t, \alpha)  \pcal_{h_t^k}
(\alpha, z) \to 0
\end{equation}
in the $C^2$-topology on $[0, 1] \times M$.

The proof of Theorem \ref{SUM} consists of four main ingredients:
\begin{itemize}

\item The Localization Lemma \ref{LOCALIZATION}, which  states
that the sum over $\alpha$ localizes to a ball of radius $O(
k^{-\frac{1}{2} + \delta})$ around the point $\mu_t(z)$.   Here
and hereafter, $\delta$ can be taken to be any sufficiently small
positive constant.

\item Bergman/\szego asymptotics (see \S \ref{BS}), which allow
one to make comparisons between the sum in $Z_k$ and sums with
known asymptotics.

\item The Regularity Lemma \ref{MAINLEM}, which states that the
summands $\rcal_k(t, \alpha)$  one is averaging have sufficiently
smooth asymptotics as $k \to \infty$, allowing one to Taylor
expand to order at least one around the point $\mu_t(z)$.

\item Joint asymptotics of the Fourier coefficients (\ref{PHK})
and particularly their special values $ \pcal_{h^k}( \alpha)$ in
the parameters $k$ and distance to $\partial P$ (see Proposition
\ref{MAINPCAL}). We use a complex stationary phase method in the
`interior region' far from $\partial P$ and local Bargmann-Fock
models near $\partial P$.

\end{itemize}

The  Localization  Lemma is needed not just for  $\rcal_k(t,
\alpha)$ but also for  summands which arise from differentiation
with respect to $(t, z)$:

\begin{lem} (Localization of Sums)  \label{LOCALIZATION}  Let
$B_k(t, \alpha): \Z^m \cap k P \to \C$ be a family of lattice
point functions satisfying $|B_k(t, \alpha) | \leq C_0 k^M$ for
some $C_0, M \geq 0$. Then,  there exists $C
> 0$ so that for any  $\delta > 0$,
$$ \sum_{\alpha \in k P \cap \Z^m
} B_k(t, \alpha)
\frac{|s_{\alpha}(z)|^2_{h_t^k}}{\QQ_{h_t^k}(\alpha)} =
\sum_{\alpha: |\frac{\alpha}{k} - \mu_t(z)| \leq  k^{-\frac{1}{2}
+ \delta} } B_k(t, \alpha)
\frac{|s_{\alpha}(z)|^2_{h_t^k}}{\QQ_{h_t^k}(\alpha)} \; + \;
O_{\delta} (k^{- C}). $$
\end{lem}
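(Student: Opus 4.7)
The strategy is to show that $\pcal_{h_t^k}(\alpha,z)$ itself decays super-polynomially in $k$ whenever $|\alpha/k - \mu_t(z)| \geq k^{-1/2+\delta}$. Combined with $|B_k(t,\alpha)| \leq C_0 k^M$ and $|kP \cap \Z^m| = O(k^m)$, summing this pointwise estimate over the ``far'' lattice points will give a total error of size $k^{N} e^{-c k^{2\delta}} = O_\delta(k^{-C})$ for any $C > 0$, which is the claim.

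To derive the pointwise bound, work in logarithmic coordinates $z = e^{\rho/2 + i\theta}$ on the open orbit. The monomial section satisfies $|s_\alpha(z)|^2_{h_t^k} = e^{\langle \alpha,\rho\rangle - k\phi_t(\rho)}$, while the integral formula for the norming constants from \S\ref{TV}, combined with Laplace's method at the unique critical point $\nabla\phi_t(\rho_\ast) = \alpha/k$, yields
\begin{equation*}
\QQ_{h_t^k}(\alpha) \;=\; c_k(\alpha)\, e^{k u_t(\alpha/k)},
\end{equation*}
where $u_t = \lcal \phi_t$ is the symplectic potential and $c_k(\alpha)$ is of order $k^{-m/2}$ on compact subsets of $P^\circ$. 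Dividing gives
\begin{equation*}
\pcal_{h_t^k}(\alpha, z) \;\leq\; C_1 k^{N_1}\, \exp\!\bigl( k\bigl[\langle \alpha/k,\rho\rangle - \phi_t(\rho) - u_t(\alpha/k)\bigr]\bigr),
\end{equation*}
and Young's inequality for Legendre-dual pairs shows that the bracketed quantity is $\leq 0$, with equality iff $\alpha/k = \nabla\phi_t(\rho) = \mu_t(z)$. Taylor-expanding $y\mapsto \langle y,\rho\rangle - u_t(y)$ around $y=\mu_t(z)$ and using uniform positive-definiteness of $\Hess(u_t)$ on compact subsets of $P^\circ$ converts this to the Gaussian bound $\pcal_{h_t^k}(\alpha,z) \leq C_1 k^{N_1} e^{-c_0 k |\alpha/k - \mu_t(z)|^2}$, which delivers the required super-polynomial smallness whenever both $\alpha/k$ and $\mu_t(z)$ stay in a fixed compact subset of $P^\circ$.

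The main obstacle is the boundary of $P$, where $u_t$ has the Guillemin/Delzant logarithmic singularities, the Laplace expansion of $\QQ_{h_t^k}(\alpha)$ degenerates, and lattice points on or near $\partial P$ contribute with modified amplitudes. The plan is to handle three regimes of $(\alpha/k,\mu_t(z))$ separately. The interior-interior regime is handled by the previous paragraph. When $\mu_t(z) \in P^\circ$ but $\alpha/k$ lies near $\partial P$, one exploits that $u_t(\alpha/k) + \phi_t(\rho) - \langle \alpha/k,\rho\rangle \to +\infty$ at the Guillemin rate $\ell(\alpha/k)\log(1/\ell(\alpha/k))$ as $\ell(\alpha/k) \to 0$, which dominates any polynomial correction to $c_k(\alpha)$ and again produces super-polynomial decay on $\{|\alpha/k - \mu_t(z)| \geq k^{-1/2+\delta}\}$. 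When $z$ lies on the toric divisor $\dcal$ so $\mu_t(z) \in \partial P$, one invokes the Bargmann-Fock local models of the \szego kernel near boundary strata developed in \S\ref{BS}: near a face of codimension $r$ these show that $\pcal_{h_t^k}(\alpha,z)$ factorizes to leading order as a Gaussian of width $k^{-1/2}$ in the $m-r$ tangent directions times an incomplete-Gamma/Poisson profile in the $r$ transverse directions, both sharply concentrated at $\mu_t(z)$ at scale $k^{-1/2}$.

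The three regimes together yield $\pcal_{h_t^k}(\alpha,z) \leq C k^N e^{-c k^{2\delta}}$ uniformly in $t \in [0,1]$, $z\in M$, and $\alpha\in kP\cap\Z^m$ with $|\alpha/k - \mu_t(z)| \geq k^{-1/2+\delta}$; uniformity in $(t,z)$ follows from compactness of $[0,1]\times M$ and smoothness of $t\mapsto \mu_t$. Multiplying by $|B_k|\leq C_0 k^M$ and summing over the at most $Ck^m$ far lattice points gives the stated $O_{\delta,C}(k^{-C})$ error, completing the proof.
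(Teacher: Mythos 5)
Your proof is correct in substance but follows a genuinely different route than the paper's. The paper proves Proposition~\ref{LOCALSUM} by writing $\pcal_{h_t^k}(\alpha,z)$ as an oscillatory integral over $\T$ via the Boutet de Monvel--Sj\"ostrand parametrix for the \szego kernel (Propositions~\ref{INTEGRAL} and~\ref{SZKTV}) and then integrating by parts in $\theta$, gaining a factor of $k^{-2\delta'}$ per integration once the lower bound $|\nabla_\theta\Phi|\gtrsim k^{-1/2+\delta}$ is established via the complexified moment map (Lemma~\ref{USESMUCEQ}). You instead use the identity of Proposition~\ref{QPINV} to write
$\pcal_{h_t^k}(\alpha,z)=\pcal_{h_t^k}(\alpha)\cdot e^{k[\langle\alpha/k,\rho\rangle-\phi_t(\rho)-u_t(\alpha/k)]}$,
and observe that the bracketed quantity is $-D_{u_t}(\alpha/k,\mu_t(z))$, the Bregman divergence of the symplectic potential, which is $\le 0$ by Fenchel--Young and $\lesssim -|\alpha/k-\mu_t(z)|^2$ by strong convexity. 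Combined with the trivial bound $\pcal_{h_t^k}(\alpha)\le\Pi_{h_t^k}(\mu_t^{-1}(\alpha/k),\mu_t^{-1}(\alpha/k))=O(k^m)$, this gives a pointwise Gaussian decay. Your argument is more elementary and avoids tracking the symbol classes $S^n_\delta$ under iterated integration by parts; the paper's argument is closer to the machinery it needs anyway for the finer asymptotics of \S\ref{BS}--\S\ref{BZ}.

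Two places where your writeup should be tightened. First, you claim uniform positive-definiteness of $\Hess(u_t)$ only ``on compact subsets of $P^\circ$,'' but the lemma requires a constant uniform over $z\in M$, hence over $\mu_t(z)\in\bar P$. The needed uniform bound does hold: $G_t=\Hess(u_t)=H_{\phi_t}^{-1}$ by \rfe{GINV}, and $H_{\phi_t}=\nabla^2_\rho\phi_t$ is the (degenerating but everywhere bounded) Hessian of a smooth metric, so $\|H_{\phi_t}\|_{op}\le C$ uniformly, giving $G_t\ge C^{-1}I$ on all of $P$. Once you have this, your three-regime boundary analysis is unnecessary --- the same one-line Fenchel--Young estimate works uniformly, including for $z\in\dcal$ by continuity of $\pcal_{h_t^k}(\alpha,\cdot)$ and of $\mu_t$. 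Second, you invoke Laplace's method for the full asymptotics $\QQ_{h_t^k}(\alpha)\sim c_k(\alpha)e^{ku_t(\alpha/k)}$, but this is more than you need: the exact identity of Proposition~\ref{QPINV} together with the diagonal Bergman kernel bound $\pcal_{h_t^k}(\alpha)=O(k^m)$ already gives the polynomial prefactor, with no stationary-phase analysis required.
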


The proof is an integration by parts argument. One could localize
to the smaller scale $|\frac{\alpha}{k} - \mu_t(z)| \leq C
\frac{\log k}{\sqrt{k}}$ but then the argument only brings errors
of the order $(\log k)^{- M}$ for all $M$ and that complicates
later applications.

The regularity Lemma concerns  the behavior of the `Fourier
multiplier' $ R_k(t, \alpha)$ (\ref{QRATIO}). The sum
(\ref{toricFk}) formally resembles the Berezin covariant symbol of
a Toeplitz Fourier multiplier, i.e., the restriction to the
diagonal of the Schwartz kernel of the operator;  we refer to
(\cite{STZ2,Z2}) for discussion of such Toeplitz Fourier
multipliers operators on toric varieties and their Berezin
symbols. However, the resemblance is a priori just formal -- it is
not obvious that  $ R_k(t, \alpha)$ has   asymptotics  in $k$.  As
mentioned above, the nature of the asymptotics is most difficult
near $\partial P$; it is not obvious that smooth convergence holds
along $\dcal$, the divisor at infinity.

The purpose of introducing $ R_k(t, \alpha)$ is explained by the
following result. First, we make the

\begin{defin} \label{RINFTY}  We define the metric volume ratio to
be the function on $[0, 1] \times P$ defined by
$$\rcal_{\infty}(t, x):= \left(
\frac{\det \nabla^2 u _t(x)}{ (\det \nabla^2 u_0(x))^{1-t}(\det
\nabla^2 u_1(x))^{t}} \right)^{1/2}. $$
\end{defin}

\begin{lem} \label{MAINLEM} (Regularity) The volume ratio $\rcal_{\infty}(t, x) \in C^{\infty}([0, 1] \times
P)$. Further, for $0 \leq j \leq 2$,
$$ (\frac{\partial}{\partial t})^j  \rcal_k(t, \alpha) = (\frac{\partial}{\partial t})^j
\rcal_{\infty}(t, \frac{\alpha}{k})  + O(k^{-\frac{1}{3} }),
 $$ where the $O$ symbol is uniform in
$(t, \alpha)$.
\end{lem}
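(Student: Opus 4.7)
The plan is to derive a uniform asymptotic expansion for the norming constants of the form
$$\log \QQ_{h_\phi^k}(\alpha) \;=\; 2k\, u_\phi(\alpha/k) \;-\; \tfrac{m}{2}\log k \;+\; b(\alpha/k,\phi) \;+\; O(k^{-1}),$$
in which the leading $O(k)$ term depends on $\phi$ only through $u_\phi$ and the subprincipal term $b$ encodes $-\tfrac{1}{2}\log\det\nabla^2 u_\phi(\alpha/k)$. Substituting into $\rcal_k(t,\alpha) = \QQ_{h_t^k}/(\QQ_{h_0^k}^{1-t}\QQ_{h_1^k}^{t})$ and exploiting the linearity $u_t=(1-t)u_0+tu_1$, the $O(k)$ terms cancel identically, along with the $\log k$ term and universal constants, so that the surviving subprincipal contributions assemble into $\log\rcal_\infty(t,\alpha/k)$ plus a controlled remainder.

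\textbf{Smoothness of $\rcal_\infty$ and amplitude analysis.} By the Guillemin--Abreu description of symplectic potentials on smooth toric varieties, each $u_i$ decomposes as $u_i = u_{\rm can} + v_i$ with $v_i\in C^\infty(P)$ and $u_{\rm can}=\tfrac{1}{2}\sum_f \ell_f \log \ell_f$, where the $\ell_f$ are the defining affine functions of the facets. Hence $u_t = u_{\rm can} + (1-t)v_0 + tv_1$ shares the same singular part, and $\det\nabla^2 u_i \sim (\prod_f \ell_f)^{-1}\cdot (\text{smooth positive})$ near $\partial P$ for each $i\in\{0,1,t\}$. Because $(1-t)+t=1$, these singular factors cancel in the definition of $\rcal_\infty$, producing a smooth positive function on $[0,1]\times P$. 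The same decomposition yields $\partial_t u_t = u_1-u_0 = v_1-v_0 \in C^\infty(P)$, which is the key regularity fact that will allow one to commute $t$-derivatives past the norming-constant asymptotics.

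\textbf{Asymptotics of $\QQ_{h_\phi^k}(\alpha)$ and the boundary transition.} Starting from the integral representation $\QQ_{h_\phi^k}(\alpha) = \int_P e^{2k\Psi_\phi(x,\alpha/k)}\,dx$ with phase $\Psi_\phi(x,y)=u_\phi(x)+\langle y-x,\nabla u_\phi(x)\rangle$, one checks that $\Psi_\phi(\cdot,y)$ is strictly concave in $x$ with unique maximum at $x=y$, critical value $u_\phi(y)$, and Hessian $-\nabla^2 u_\phi(y)$. For $\alpha/k$ at distance at least $k^{-1/3}$ from $\partial P$, the classical Laplace method gives the expansion above with amplitude proportional to $(\det\nabla^2 u_\phi(\alpha/k))^{-1/2}$ and remainder $O(k^{-1})$, smoothly in $\phi$. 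When $\alpha/k$ lies within $O(k^{-1/3})$ of a facet, the interior critical point approaches $\partial P$, and one must instead employ the local Bargmann--Fock (half-space Gaussian) model developed for Proposition \ref{MAINPCAL}: after rescaling normal to the facet by $\sqrt{k}$, the local integral becomes a perturbed Gaussian over a half-space whose leading order can be computed explicitly and matches the interior amplitude at the transition scale up to an error of $O(k^{-1/3})$. Inserting either expansion into $\rcal_k(t,\alpha)$, the exponential factor $e^{2ku_t}/((e^{2ku_0})^{1-t}(e^{2ku_1})^t)=1$ by linearity, the $k^{-m/2}$ factors and numerical constants cancel, and the remaining Hessian factors assemble into $\rcal_\infty(t,\alpha/k)+O(k^{-1/3})$, uniformly in $(t,\alpha)$. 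Because $\partial_t u_t\in C^\infty(P)$, $t$-differentiation can be carried through the stationary phase and boundary model expansions up to order two, and the same cancellation yields the bound for $j=1,2$. The principal obstacle of the argument is exactly this boundary matching: the interior Laplace expansion and the Bargmann--Fock half-space model must be glued not only in leading value but also in their first two $t$-derivatives at the transition scale, and the best exponent compatible with two-derivative control of the matching turns out to be $k^{-1/3}$, which is the source of the suboptimal error in the statement.
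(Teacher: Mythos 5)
Your proposal takes the ``dual'' route: you work directly with the norming constants $\QQ_{h^k}(\alpha)$ via a Laplace integral over the polytope $P$, whereas the paper works with the reciprocal objects $\pcal_{h^k}(\alpha)$, represented (Propositions \ref{INTEGRAL}, \ref{SZKTV}) as compact complex oscillatory integrals over the torus $\T$ with a smooth phase given by the almost-analytic extension of the K\"ahler potential. The paper explicitly names your route as the alternative (``we could use either the expression \rfe{QRATIO} in terms of norming constants $\QQ$ \ldots or the dual expression in terms of $\pcal_{h^k}(\alpha)$'') and says it was taken in \cite{SoZ} for $\CP^1$, but deliberately rejects it here: ``the advantage of using $\pcal_{h^k}(\alpha)$ is that it may be represented by a smooth complex oscillatory integral up to the boundary, while $\QQ_{h^k}(\alpha)$ are singular oscillatory integrals over $P$.'' So the choice is genuine, not cosmetic. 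Your top-level cancellation structure (the $O(k)$ terms cancel by linearity of $u_t$, the $\log k$ and universal constants drop out, and the subprincipal $-\tfrac12\log\det\nabla^2 u_\phi$ terms assemble into $\log\rcal_\infty$) is correct and matches Corollary \ref{RP} and Lemma \ref{RATIOCANCEL}, and your identification of the optimal transition scale $\delta_k\sim k^{-2/3}$ yielding a $k^{-1/3}$ error is the right mechanism.

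However, your write-up has a substantive gap precisely where the Lemma is hard: the boundary regime. You assert that ``after rescaling normal to the facet by $\sqrt k$, the local integral becomes a perturbed Gaussian over a half-space whose leading order \ldots matches the interior amplitude \ldots up to $O(k^{-1/3})$,'' but this is what needs to be proved, and it is not a straightforward transplant of the paper's boundary argument. In the $\QQ$-integral over $P$, both the phase $u_\phi$ and its gradient $\nabla u_\phi$ blow up logarithmically as $x\to\partial P$ (the $\ell\log\ell$ singularity), so the integrand of your Laplace integral is singular, the higher Taylor coefficients of the phase at the interior maximum $x=\alpha/k$ diverge as $\alpha/k$ approaches a facet, and the Gaussian width in the normal direction scales like $\sqrt{\ell(\alpha/k)/k}$, so the correct normal rescaling is by $\sqrt{k/\delta_k}$, not $\sqrt k$. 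None of this is present in the paper's proof, which instead controls a smooth oscillatory integral over $\T$ whose Hessian merely degenerates, by splitting $\theta=(\theta',\theta'')$, doing exact stationary phase in the far directions (Lemma \ref{FIRSTCASEa}), and Taylor--Bargmann--Fock matching in the near directions (Lemmas \ref{CORNER} and \ref{SECONDCASE}); one cannot simply ``employ the model developed for Proposition \ref{MAINPCAL}'' because that model lives on $\T$, not on $P$. Finally, for $j=1,2$ the paper does not merely ``commute $t$-derivatives through'' the expansion; it re-derives the asymptotics for $\partial_t^j\pcal_{h_t^k}(\alpha)$ (Proposition \ref{MAINPCALD}), using the crucial fact that $\partial_t^j(F_t$-phase$)$ vanishes to second order at the critical point $\theta=0$ so the extra factors of $k$ are absorbed; you would need a parallel argument for the $\QQ$-integral and have not sketched it. (Minor: you write $\int_P e^{2k\Psi_\phi}$ and $u_{\rm can}=\tfrac12\sum\ell\log\ell$ — the correct normalizations are $e^{k\Psi_\phi}$ and $u_0=\sum\ell\log\ell$, cf.\ \rfe{CANSYMPOT} and Proposition \ref{QPINV} — though this does not affect the cancellation.)
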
 \noindent This Lemma is  the subtlest part of the
analysis.   If the $\rcal_k$ function were replaced by a fixed
function $f(x)$ evaluated at $\frac{\alpha}{k}$  then the
convergence problem reduces to generalizations of convergence of
Bernstein polynomial approximations to smooth functions \cite{Z2},
and only requires now standard Bergman kernel asymptotics.
However, the actual $ R_k(t, \alpha)$ do not apriori have this
form, and much more is required for their analysis than
asymptotics (on and off diagonal) of Bergman kernels.   The
analysis  uses  a mixture of complex stationary phase arguments in
directions where $\frac{\alpha}{k}$ is `not too close' to
$\partial P$, while directions `close to' $\partial P$ we use an
approximation by the `linear' Bargmann-Fock model (see \S \ref{BF}
and \S \ref{BZ}).

The somewhat unexpected $k^{-1/3}$ remainder estimate has its
origin in this mixture of complex stationary phase and
Bargmann-Fock asymptotics. Both methods are  valid for $k$
satisfying $ \frac{C \log k}{k} \leq \delta_k \leq C'
\frac{1}{\sqrt{k} \log k}$. In this region, the stationary phase
remainder is of order $(k \delta_k)^{-1}$ while the Bargmann-Fock
remainder is of order $k \delta_k^2$; the two remainders agree
when $\delta_k = k^{-\frac{2}{3}}$, and then the remainder is
$O(k^{-1/3})$. For smaller  $\delta_k$ the Bargmann-Fock
approximation is more accurate and for larger $\delta_k$ the
stationary phase approximation is more accurate. This matter is
discussed in detail in \S \ref{BZ}.

The rest of the proof of the $C^2$-convergence may be roughly
outlined as follows: We calculate two logarithmic derivatives of
$e^{- k \phi_t(z)} Z_k(t,z) $ of (\ref{REWRITE}) with respect to
$(t, \rho)$. Using the Localization Lemma \ref{LOCALIZATION} we
can drop the terms in the resulting sums  corresponding to
$\alpha$ for which $|\frac{\alpha}{k} - \mu_t(z)|
> k^{-\frac{1}{2} + \delta}$. In the remaining terms we use the
Regularity Lemma \ref{MAINLEM} to approximate the summands by
their Taylor expansions to order one  around $\mu_t(z)$. This
reduces the expressions to derivatives of the diagonal \szego
kernel
\begin{equation} \label{SZEGOT}
\Pi_{h^k_t}(z, z) = \sum_{\alpha \in k P \cap \Z^m}
\frac{|s_{\alpha}(z)|^2_{h_t^k}}{\QQ_{h_t^k}(\alpha)}
\end{equation}
for the metric $h_t^k$ on $H^0(M, L^k)$ induced by  Monge-Amp\`ere
geodesic $h_t$. Here, we use the smoothness of $h_t$. The known
asymptotic expansion of this kernel (\S \ref{BS}) implies the
$C^2$-convergence of $e^{ k \phi_t(z)} Z_k(t,z) $.

As indicated in this sketch, the key problem is to analyze the
joint asymptotics of norming constants $\QQ_h^k(\alpha)$ and the
dual constants $\pcal_{h^k}( \alpha)$  (\ref{PHKALPHA}) in $(k,
\alpha)$. Norming constants are a complete set of invariants of
toric \kahler metrics.  Initial results (but not joint asymptotics
in the boundary regime) were obtained in \cite{STZ}; norms are
also an important component of Donaldson's numerical analysis of
canonical metrics \cite{D4} on toric varieties. In \cite{SoZ} the
joint asymptotics of $\QQ_h^k(\alpha)$ were studied up to the
boundary of the polytope $[0, 1]$ associated to $\CP^1$. In this
article, we emphasize the dual constants (\ref{PHKALPHA}).

\subsection{\label{BAC} Bergman approximation and complexification}

Having described our methods and results, we return to the
discussion of their relation to \kahler quantization and to the
obstacles in complexifying  $Diff_{\omega_0}(M)$.  Further
discussion  is given in \cite{RZ2}.

We may distinguish two intuitive ideas as to the nature of
Monge-Amp\`ere geodesics. The first heuristic idea, due to Semmes
\cite{S2} and  Donaldson \cite{D},  is to view HCMA geodesics as
one parameter subgroups of $G_{\C}$ where $G =
SDiff_{\omega_0}(M)$. One parameter subgroups of
$SDiff_{\omega_0}(M)$ are defined by Hamiltonian flows of initial
Hamiltonians $\dot{\phi}_0$ with respect to $\omega_0$. A
complexified one parameter subgroup is the analytic continuation
in time of such a Hamiltonian flow \cite{S2,D}. This idea is
heuristic inasmuch as  Hamiltonian flows need not possess analytic
continuations in time; moreover, no genuine complexification of
$SDiff_{\omega_0}(M)$ exists.

The second intuitive idea (backed up by the results of \cite{PS}
and this article)  is to view HCMA geodesics as classical limits
of $\bcal_k$ geodesics. The latter have a very simple extrinsic
interpretation as one parameter motions $e^{t A_k}
\iota_{\underline{s}}(M)$ of a holomorphic embedding
$\iota_{\underline{s}}: M \to \CP^{d_k}$. But the passage to the
classical limit is quite non-standard from the point of view of
\kahler quantization. The problem is that the approximating one
parameter subgroups $e^{t A_k}$ of operators on $H^0(M, L^k)$,
which change an orthonormal basis for an initial inner product to
a path of orthonormal bases for the geodesic of inner products,
are not apriori complex Fourier integral operators or any known
kind of quantization of classical dynamics.

The heuristic view taken in this article and series is that $e^{ t
A_k}$ should be approximately the analytic continuation of the
\kahler  quantization of a classical Hamiltonian flow.   To
explain this, let us recall the basic ideas of \kahler
quantization.

Traditionally, \kahler quantization refers to  the quantization of
a polarized \kahler manifold $(M, \omega, L)$ by Hilbert spaces
$H^0(M, L^k)$ of holomorphic sections of high powers of a
holomorphic line bundle $L \to M$ with Chern class $c_1(L) =
 [\omega]$. The \kahler form determines a Hermitian
metric $h$ such that $Ric(h) = \omega$. The Hermitian metric
induces  inner products $Hilb_k(h)$ on $H^0(M, L^k)$.  In this
quantization theory, functions $H$ on $M$ are quantized as
Hermitian (Toeplitz) operators $\hat{H} := \Pi_{h^k} H \Pi_{h^k}$
on $H^0(M, L^k)$, and canonical transformations of $(M, \omega)$
are quantized as unitary operators on $H^0(M, L^k)$. Quantum
dynamics is given by unitary groups $e^{i t k \hat{H}}$  (see
\cite{BBS,BSj,Z} for references).

In the case of Bergman geodesics with fixed endpoints, $H$ should
be $\dot{\phi}_0$, the initial tangent vector to the HCMA geodesic
with the fixed endpoints. The quantization of the Hamiltonian flow
of $\dot{\phi}$ should then be  $e^{i t k \hat{H}}$  and its
analytic continuation should be  $e^{ t k \hat{H}}$. The change of
basis operator $e^{t A_k}$ should then be approximately the same
as $e^{ t k \hat{H}}$. But proving this and taking the classical
limit is  necessarily non-standard when the classical analytic
continuation of the Hamiltonian flow of $\dot{\phi}$ does not
exist. Moreover, we only know that $\dot{\phi} \in C^{1, 0} $.

This picture of the Bergman approximation to HCMA geodesics is
 validated in this article in the case of the Dirichlet
problem on projective toric \kahler manifolds.  It is verified for
the initial value problem on toric \kahler manifolds in
\cite{RZ2}. In work in progress, we are investigating the same
principle for general \kahler metrics on Riemann surfaces
\cite{RZ4}.

\subsection{\label{Overview} Final remarks and further results and problems}

An obvious question within the toric setting is whether $\phi_k(t)
\to \phi_t$ in a stronger topology than $C^2$ on a toric variety.
It seems possible that the methods of this paper  could be
extended to $C^k$-convergence. The methods of this paper easily
imply $C^k$ convergence for all $k$ away from $\partial P$ or
equivalently the divisor at infinity, but the degree of
convergence along this set has yet to be investigated. As
mentioned above, we do not see why $\phi_k$ should have an
asymptotic expansion in $k$, but this aspect may deserve further
exploration. We also mention that our methods can be extended to
prove $C^2$-convergence of Berndtsson's approximations in
\cite{B}.

In subsequent articles on the toric case,  we build on the methods
introduced here to prove convergence theorems for other  geodesics
and for general harmonic maps \cite{RZ} (including the
Wess-Zumino-Witten equation).   In \cite{SoZ2}, we develop the
methods of this article to prove that the geodesic rays
constructed in \cite{PS1} from test configurations are $C^{1,1}$
and no better on a toric variety. Test configuration  geodesic
rays are solutions of a kind of initial value problem; we refer to
the articles \cite{PS1,SoZ2} for the definitions and results. For
test configuration geodesics, the analogue of $\rcal_k$ is not
even smooth in $t$. The smooth initial value problem is studied in
\cite{RZ2}.
 In a
different direction, one of the authors and Y. Rubinstein  prove a
$C^2$ convergence result for completely general harmonic maps of
Riemannian manifolds with boundary into toric varieties (see
\cite{R,RZ}). This includes the Wess-Zumino-Witten model where the
manifold is a Riemann surface with boundary.

 We believe that the techniques of this paper extend  to other
 settings with a high degree of symmetry, such as Abelian
 varieties and other settings discussed in \cite{D5}. The general
 \kahler case involves significant further obstacles.
 A basic problem in generalizing the
results is to construct a useful localized basis of sections on a
general $(M, \omega)$. In the toric case, we use the basis of
$\T$-invariant states $\hat{s}_{\alpha} = z^{\alpha}$, which
`localize' on the so-called `Bohr-Sommerfeld tori', i.e. the
inverse images $\mu^{-1}(\frac{\alpha}{k})$  of lattice points
under the moment map $\mu$. Such Bohr-Sommerfeld states also exist
on any Riemann surface; in \cite{RZ4}, we relate them to the
convergence problem for HCMA geodesics on Riemann surfaces.

 We briefly speculate on the higher dimensional general \kahler case. There are a number of plausible
substitutes for the Bohr-Sommerfeld basis on a general \kahler
manifold. A rather traditional one  is to study the asymptotics of
$e^{A_k}$ on
 a  basis of {\it coherent states}
 $\Phi_{h^k}^w$.  Here, $\Phi_{h^k}^w(z) =
 \frac{\Pi_{h^k}(z,w)}{\sqrt{\Pi_{h^k}(w,w)}}$ are $L^2$
 normalized \szego kernels pinned down in the second argument.
 Intuitively, $\Phi_{h^k}^w$
 is  like a Gaussian bump centered at $w$ with shape determined by the metric $h$. It is thus
 more localized than the monomials $z^{\alpha}$, which are only Gaussian transverse to the tori.  Under the
change of basis operators $e^{t A_k}$,  both the center and shape
should change. Like the monomials $z^{\alpha}$,  coherent states
 have some degree of orthogonality. There are
in addition other well-localized bases depending on the \kahler
metric which may be used in the analysis.

 Our  main result (Theorem \ref{SUM}) may be viewed heuristically
as showing that as $k \to \infty$  the change of basis operators
$e^{t A_k}$ tend to a path $f_t$ of diffeomorphisms changing the
initial \kahler metric $\omega_0$ into the metric $\omega_t$ along
the Monge-Amp\`ere geodesic. We conjecture that  $e^{t A_k}
\Phi_{h^k}^w \sim
 \Phi_{h_t^k}^{f_t(w)}$,
 where $h_t$ is the Monge-Amp\`ere geodesic and $f_t$ is the Moser
 path of
diffeomorphisms such that $f_t^* \omega_0 = \omega_t$. We leave
the exact degree of asymptotic similarity vague at this time since
even the regularity of the Moser path is currently an open
problem.

\bigskip

\noindent{\bf Acknolwedgements}  The  authors would like to thank
D. H. Phong and J. Sturm for their  support of this project, and
to thank  them and Y. A. Rubinstein for many detailed corrections.
The second author's collaboration with Y. A. Rubinstein subsequent
to the initial version of this article has led to a deepened
understanding of the global approximation problem, which is
reflected in the revised version of the introduction.

\section{Background on toric varieties \label{TV} }

In this section, we review the necessary background on toric
\kahler manifolds. In addition to standard material on \kahler and
symplectic potentials, moment maps and polytopes, we also present
some rather non-standard material on almost analytic extensions of
\kahler potentials and moment maps that are needed later on. We
also give a simple proof that the Legendre transform from \kahler
potentials to symplectic potentials  linearizes the Monge-Amp\'ere
equation.

Let $M$ be a complex manifold.  We use the following standard
notation: $\frac{\partial}{\partial z} = \frac{1}{2}
(\frac{\partial}{\partial x} - i \frac{\partial}{\partial y} ),
\frac{\partial}{\partial \bar{z}} = \frac{1}{2}
(\frac{\partial}{\partial x} + i \frac{\partial}{\partial y} ). $
We often find it  convenient to use the real operators  $d =
\partial + \dbar, d^c := \frac{i}{4 \pi} (\dbar -
 \partial)$ and   $dd^c = \frac{i}{2\pi}
 \ddbar$.

Let $L \to M$ be a holomorphic line bundle.  The  Chern form of a
Hermitian metric $h$ on $L$ is defined by
\begin{equation}\label{curvature} c_1(h)= \omega_h : = -\frac{\sqrt{-1}}{2 \pi}\ddbar \log \|e_L\|_h^2\;,\end{equation} where $e_L$ denotes a local
holomorphic frame (= nonvanishing section) of $L$ over an open set
$U\subset M$, and $\|e_L\|_h=h(e_L,e_L)^{1/2}$ denotes the
$h$-norm of $e_L$. We say that $(L,h)$ is positive if the (real)
2-form $\omega_h $  is a positive $(1,1)$ form,  i.e.,  defines a
\kahler metric. We write $\|e_L(z)\|_h^2 = e^{-\phi} $ or locally
$h = e^{- \phi}$,  and then refer to $\phi$ as the \kahler
potential of $\omega_h$ in $U$. In this notation,
\begin{equation} \label{DDCPHI} \omega_h = \frac{\sqrt{-1}}{2 \pi} \ddbar \phi  = dd^c
\phi. \end{equation} If we fix a Hermitian metric $h_0$ and let $h
= e^{- \phi} h_0$, and put $\omega_0 = \omega_{h_0}$, then
\begin{equation} \label{DDCPHIrel} \omega_h = \omega_0 + dd^c
\phi. \end{equation}
 The metric $h$ induces Hermitian metrics
$h^k$ on $L^k=L\otimes\cdots\otimes L$ given by $\|s^{\otimes
k}\|_{h_N}=\|s\|_h^k$.

We now specialize to toric \kahler manifolds;  for background,  we
refer to \cite{A, D3, G, STZ}. A toric \kahler manifold is a
\kahler  manifold $(M, J, \omega)$ on which the complex torus
$(\C^*)^m$ acts holomorphically with an open orbit $M^o$. Choosing
a basepoint $m_0$ on the  open orbit identifies $M^o \equiv
(\C^{*})^{m}$ and give the point $z = e^{\rho/2 +i\varphi} m_0$
the holomorphic coordinates
\begin{equation} \label{OPENORBCOORDS}
z=e^{\rho/2 +i\varphi} \in (\C^{*})^{m},\quad \rho, \varphi \in
\R^{m}. \end{equation} The real torus $\T \subset (\C^*)^m$ acts
in a Hamiltonian fashion with respect to $\omega$. Its moment map
$\mu = \mu_{\omega}: M \to P \subset {\bf t}^* \simeq \R^m$ (where
${\bf t}$ is the Lie algebra of $\T$) with respect to $\omega$
defines a singular torus fibration over a convex lattice polytope
$P$; as in the introduction, $P$ is understood to be the closed
polytope.  We recall that the moment map of a Hamiltonian torus
action with respect to a symplectic form $\omega$ is the map
$\mu_{\omega}: M \to {\bf t}^*$ defined by $d \langle
\mu_{\omega}(z), \xi \rangle = \iota_{\xi^{\#}} \omega$ where
$\xi^{\#}$ is the vector field on $M$ induced by the vector $\xi
\in {\bf t}$.  Over the open orbit one thus has a symplectic
identification
$$\mu: M^o \simeq P^o \times \T.$$
We let $x$ denote the Euclidean coordinates on $P$. The components
$(I_1, \dots, I_m)$  of the moment map are called action variables
for the torus action. The symplectically dual variables on $\T$
are called the angle variables. Given a basis of ${\bf t}$ or
equivalently of the action variables,  we denote by
$\{\frac{\partial}{\partial \theta_j}\}$ the corresponding
generators (Hamiltonian vector fields) of the $\T$ action. Under
the complex structure $J$, we also obtain generators
$\frac{\partial}{\partial \rho_j}$ of the $\R_+^m$ action.

The action variables are globally defined smooth functions but
fail to be coordinates at points where  the generators of the $\T$
action vanish. We denote the set of such points by $\dcal$ and
refer to it as the divisor at infinity. If $p \in \dcal$ and
$\T_p$ denotes the isotropy group of $p$, then the generating
vector fields of $\T_p$ become linearly dependent at $P$. Since we
are proving $C^2$ estimates, we need to replace them near points
of $\dcal$ by vector fields with norms bounded below.
 We discuss good choices of
coordinates near points of $\dcal$ below.

We assume $M$ is smooth and that $P$ is a Delzant polytope. It is
 defined by a set of linear inequalities
$$\ell_r(x): =\langle x, v_r\rangle-\lambda_r \geq 0, ~~~r=1, ..., d, $$
where $v_r$ is a primitive element of the lattice and
inward-pointing normal to the $r$-th $(m-1)$-dimensional facet
$F_r = \{\ell_r = 0\}$  of $P$. We recall that a facet is a
highest dimensional face of a polytope. The inverse image
$\mu^{-1}(\partial P)$ of the boundary of $P$ is the divisor at
infinity $\dcal \subset M$. For $x \in
\partial P$ we denote by
$$\fcal(x) = \{r: \ell_r(x) = 0\}$$
the set of facets containing $x$. To measure when $x \in P$ is
near the boundary we further define
\begin{equation} \label{FCALE} \fcal_{\epsilon} (x) = \{r: |\ell_r(x)| <
\epsilon\}. \end{equation}

The simplest  toric varieties are linear \kahler manifolds $(V,
\omega)$ carrying a linear holomorphic  torus action. They provide
local models near a corner of $P$ or equivalently near a fixed
point of the $\T$ action.  As discussed in \cite{GS, LT}, a linear
symplectic torus action is determined by a choice of $m$ elements
$\beta_j$  of the weight lattice of the Lie algebra of the torus.
The vector space then decomposes $(V, \omega) = \bigoplus (V_i,
\omega_i)$ of
 orthogonal symplectic subspaces so that the moment map has the
 form
 \begin{equation} \label{BFMM} \mu_{BF}(v_1, \dots, v_m) = \sum |v_j|^2 \beta_j.
 \end{equation}
 The  image of the moment map is the orthant
$\R_+^m$. This provides a useful local model at corners. We refer
to these as Bargmann-Fock models; they play a fundamental role in
this article (cf. \S \ref{BF}).

\subsection{\label{SOC}Slice-orbit coordinates}

We will also need local models at points  near codimension $r$
faces, and therefore supplement the coordinates
(\ref{OPENORBCOORDS}) on the open orbit with holomorphic
coordinates valid in neighborhoods of points of $\dcal$. An atlas
of coordinate charts for $M$ generalizing the usual affine charts
of $\CP^m$  is given
  in \cite{STZ}, \S 3.2 and we briefly recall the definitions.  For each vertex $v_0 \in P$,
we define  the chart $U_{v_0}$
 by
\begin{equation}
U_{v_{0}}:=\{z \in M_{P}\,;\,\chi_{v_{0}}(z) \neq 0\},
\end{equation} where
$$\chi_{\alpha}(z) =  z^{\alpha} = z_1^{\alpha_1} \cdots
z_n^{\alpha_n}.$$ Throughout the article we use standard
multi-index notation, and put $|\alpha| = \alpha_1 + \cdots +
\alpha_m$.  Since $P$ is Delzant, we can choose lattice points
$\alpha^{1},\ldots,\alpha^{m}$ in $P$ such that each $\alpha^{j}$
is in an edge incident to the vertex $v_{0}$, and the vectors
$v^{j}:=\alpha^{j}-v_{0}$ form a basis of $\Z^{m}$. We define
\begin{equation}
\label{CChange} \eta:(\C^{*})^{m} \to (\C^{*})^{m}, \quad
\eta(z)=\eta_{j}(z):=(z^{v^{1}},\ldots,z^{v^{m}}).
\end{equation}
The map $\eta$ is a $\T$-equivariant biholomorphism with  inverse
\begin{equation}  z:(\C^{*})^{m} \to (\C^{*})^{m},\quad z(\eta)=(\eta^{\Gamma
e^{1}},\ldots,\eta^{\Gamma e^{m}}),
\end{equation}
where $e^{j}$ is the standard basis for $\C^{m}$, and $\Gamma$ is
an $m \times m$-matrix with $\det \Gamma =\pm 1$ and integer
coefficients defined by
\begin{equation}\label{GAMMADEF}
\Gamma v^{j}=e^{j},\quad v^{j}=\alpha^{j}-v_{0}.
\end{equation}
 The
corner of $P$ at $v_0$ is transformed to the standard corner of
the orthant $\R_+^m$ by
 the affine linear transformation
\begin{equation}\label{GAMMATWDEF}
\tilde{\Gamma}:\R^{m} \ni u \to \Gamma u -\Gamma v_{0} \in \R^{m},
\end{equation}
which preserves $\Z^{m}$, carries $P$ to a polytope $Q_{v_0}
\subset \{x \in \R^{m}\,;\,x_{j} \geq 0\}$ and carries the facets
$F_j$ incident at $v_0$ to the coordinate hyperplanes $=\{x \in
Q_{v_0}\,;\,x_{j}=0\}$. The  map $\eta$ extends a homeomorphism:
\begin{equation}
\eta:U_{v_{0}} \to \C^{m},\quad \eta(z_{0})=0,\quad z_{0}=\mbox{
the fixed point corresponding to } v_{0}.
\end{equation}
By this homeomorphism, the set $\mu_{P}^{-1}(\bar{F}_{j})$
corresponds to the set $\{\eta \in \C^{m}\,;\,\eta_{j}=0\}$. If
$\bar{F}$ be a closed face with $\dim F=m-r$ which contains
$v_{0}$, then there are facets  $F_{i_{1}},\ldots,F_{i_{r}}$
incident at $v_0$ such that $\bar{F}=\bar{F}_{i_{1}} \cap \cdots
\cap \bar{F}_{i_{r}}$. The subvariety $\mu_{P}^{-1}(\bar{F})$
corresponding $\bar{F}$ is expressed by
\begin{equation}\label{ETAIJ}
\mu_{P}^{-1}(\bar{F}) \cap U_{v_{0}}= \{\eta \in
\C^{m}\,;\,\eta_{i_{j}}=0,\quad j=1,\ldots,r\}.
\end{equation}
When working near a point of $\mu_{P}^{-1}(\bar{F})$, we simplify
notation by writing
  \begin{equation} \label{PRIMECOORD} \eta=(\eta',\eta'') \in
\C^{m}=\C^{r} \times \C^{m-r} \end{equation}  where $\eta' =
(\eta_{i_j})$ as in (\ref{ETAIJ}) and where $\eta''$ are the
remaining $\eta_j$'s, so that  $(0,\eta'')$ is a local coordinate
of the submanifold $\mu_{P}^{-1}(\bar{F})$. When the point $(0,
\eta'') $ lies in the open orbit of $\mu_{P}^{-1}(\bar{F})$, we
often write $\eta'' = e^{i \theta'' + \rho''/2}.$ In practice, we
simplify notation by tacitly treating the corner at $v_0$ as if it
were the standard corner of $\R_+^m$,  omit mention of $\Gamma$
and always use $(z', z'')$ instead of $\eta$. It is
straightforward to rewrite all the expressions we use in terms of
the more careful coordinate charts just mentioned.

These coordinates may be described more geometrically as {\it
slice-orbit} coordinates. Let $P_0 \in \mu_{P}^{-1}(\bar{F})$ and
let $(\C^*)^m_{P_0}$ denote its stabilizer (isotropy) subgroup.
Then there always exists a local slice at $P_0$, i.e., a local
analytic subspace $S \subset M$ such that  $P_0 \in S$, $S$ is
invariant under $(\C^*)^m_{P_0}$,  and such that  the natural
$(\C^*)^m$ equivariant map of the normal bundle of the orbit
$(\C^*)^m \cdot P_0$,
\begin{equation} \label{SLICE} [\zeta, P] \in (\C^*)^m
\times_{(\C^*)^m_z} S \to \zeta \cdot P \in M
\end{equation}  is biholomorphism onto $(\C^*)^m \cdot S$. The terminology
is taken from \cite{Sj} (see Theorem 1.23).  The  slice $S$ can be
taken to be the image of a ball in  the hermitian normal space
$T_{P_0} ((\C^*)^m P_0)^{\perp}$ to the orbit under any local
holomorphic embedding  $w: T_{P_0} ((\C^*)^m P_0)^{\perp} \to M$
with $w(P_0) = P_0, dw_{P_0} = Id.$  The affine coordinates
$\eta''$ above define the slice $S = \eta^{-1} \{(z', z''(P_0)):
z' \in (\C^*)^{r}\}$.  The local `orbit-slice' coordinates are
then defined by
\begin{equation} \label{OS} P = (z', e^{i \theta'' + \rho''/2})
\iff \eta(P) = e^{i \theta'' + \rho''/2} (z', 0)
\end{equation} where $(z', 0) \in S$ is  the point on the slice  with affine holomorphic
coordinates $z' = (\eta')$.

As will be seen below, toric functions are smooth functions of the
variables $e^{\rho_j}$ away from $\dcal$, and  of the variables
$|z_j|^2$ at points near $\dcal$. We introduce the following
`polar coordinates' centered at a point $P \in \dcal$:
\begin{equation} \label{R} r_j :=  |z_j|  = e^{\rho_j/2}.
\end{equation}
They are polar coordinates along the slice.  The gradient vector
field of $r_j$ is denoted $\frac{\partial}{\partial r_j}$. As with
polar vector fields, it is not well-defined at $r_j = 0$. But to
prove $C^{\ell}$ estimates of functions which are smooth functions
of $r_j^2$ it is sufficient to prove $C^{\ell}$ estimates with
respect to the vector fields $\frac{\partial}{\partial r_j}$ or $\frac{\partial}{\partial (r_j^2)}$.

\subsection{\kahler potential in the open orbit  and symplectic potential\label{KPSP}}

Now consider the \kahler metrics $\omega$ in $\hcal$ (cf.
(\ref{HCALDEF})). We recall that on any simply connected open set,
a \kahler metric may be locally expressed as $\omega =  2  i
\ddbar \phi$ where $\phi$ is a locally defined function which is
unique up to the addition $\phi \to \phi + f(z) +
\overline{f(z)}$ of the real part of  a holomorphic or
antiholomorphic function $f$. Here, $a \in \R$ is a real constant
which depends on the choice of coordinates.  Thus, a \kahler
metric $\omega \in \hcal$ has a \kahler potential $\phi$ over  the
open orbit $M^o \subset M$. In fact, there is a canonical choice
of the open-orbit \kahler potential once one fixes the image $P$
of the moment map:
\begin{equation} \label{CANKP} \phi(z) = \log  \sum_{\alpha \in
P}  |z^{\alpha}|^2 = \log  \sum_{\alpha \in P} e^{\langle \alpha,
\rho \rangle}.
\end{equation}
Invariance under the real torus action implies that $\phi$ only
depends on the $\rho$-variables, so that we may write it in the
form
\begin{equation} \label{PHIVSF} \phi(z) = \phi(\rho) =  F(e^{\rho}). \end{equation}
The notation  $\phi(z) = \phi(\rho)$ is an abuse of notation,  but
is rather standard since \cite{D3}.  For instance, the
Fubini-Study \kahler potential is $\phi(z) =\log (1 + |z|^2) =
\log (1 + e^{\rho}) = F(e^{\rho})$. Note that the \kahler
potential $\log (1 + |z|^2)$ extends  to $\C^m$ from the open
orbit $(\C^*)^m$, although the coordinates $(\rho, \theta)$  are
only valid on the open orbit. This is a typical situation.

On the open orbit, we then have
\begin{equation} \label{OMHESSPHI} \omega_{\phi} = \frac{i}{2}  \sum_{j,
k} \frac{\partial^2 \phi(\rho)}{\partial \rho_k  \partial \rho_j}
\frac{dz_j}{z_j} \wedge \frac{d\bar{z}_k}{\bar{z}_k}
\end{equation} Positivity of $\omega_{\phi}$ implies that
$\phi(\rho) = F(e^{\rho})$ is a strictly convex function of $\rho
\in \R^n$. The moment map with respect to $\omega_{\phi}$ is given
on the open orbit by
\begin{equation} \label{MMDEF} \mu_{\omega_{\phi}} (z_1, \dots, z_m) =
\nabla_{\rho} \phi(\rho) =  \nabla_{\rho} F (e^{\rho_1}, \dots,
e^{\rho_m}), \;\;\; (z = e^{\rho/2 + i \theta}).
\end{equation}
 Here, and henceforth, we subscript moments
maps either by the Hermitian metric $h$ or by a local \kahler
potential $\phi$. The formula (\ref{MMDEF}) follows from the fact
that the generators $\frac{\partial}{\partial \theta_j}$ of the
$\T$ actions are Hamiltonian vector fields with respect to
$\omega_{\phi}$ with Hamiltonians $\frac{\partial \phi(\rho)
}{\partial \rho_j}$, since
\begin{equation} \label{HAMS}  \iota_{\frac{\partial}{\partial
\theta_j} } \omega_{\phi}  = d \frac{\partial \phi }{\partial
\rho_j}.
\end{equation}
 The moment map is a homeomorphism from
$\rho \in \R^m$ to the interior $P^o$ of $P$ and  extends as a
smooth map from $M \to \bar{P}$ with critical  points on the
divisor at infinity $\dcal$. Hence, the Hamiltonians (\ref{HAMS})
extend to $\dcal$.

Note that the  local \kahler potential on the open orbit  is not
the same as the global smooth {\it relative \kahler potential } in
(\ref{HCALDEF}) with respect to a background \kahler metric
$\omega_0$. That is,  given a reference metric $\omega_0$ with
\kahler potential $\phi_0$, it follows by the $\ddbar$ lemma that
$\omega = \omega_0 + dd^c \phi$ with $\phi
 \in C^{\infty}(M)$.   As discussed in \cite{D3}
(see Proposition 3.1.7),  the \kahler potential $\phi$ on the open
orbit defines a singular potential on $M$ which satisfies $dd^c
\phi = \omega + H$ where $H$ is a fixed current supported on
$\dcal$. We generally denote \kahler potentials by $\phi$ and in
each context explain which type we mean.

 By (\ref{OMHESSPHI}), a  $\T$-invariant \kahler potential  defines a real convex function
on $\rho \in \R^m$. Its Legendre dual is the  {\it symplectic
potential} $u_{\phi}$: for $x \in P$ there is a unique $\rho$ such
that $\mu_{\phi}(e^{\rho/2}) = \nabla_{\rho} \phi = x$. Then the
Legendre transform is defined to be the convex function
\begin{equation} \label{SYMPOTDEF} u_{\phi}(x) = \langle x, \rho_x \rangle -
\phi(\rho_x), \;\;\; e^{\rho_x/2 } = \mu_{\phi}^{-1}(x) \iff
\rho_x = 2 \log   \mu_{\phi}^{-1}(x)
\end{equation}  on $P$.  The
gradient $\nabla_x u_{\phi}$ is an inverse to
$\mu_{\omega_{\phi}}$ on $M_{\R}$ on the open orbit, or
equivalently on  $P$, in the sense that $\nabla u_{\phi}
(\mu_{\omega_{\phi}}(z)) = z$ as long as $\mu_{\omega_{\phi}}(z)
\notin \partial P$.

 The symplectic potential has canonical logarithmic
singularities on $\partial P$. According to
 \cite{A} (Proposition 2.8) or \cite{D3} ( Proposition 3.1.7),  there is a one-to-one
correspondence between $\T_{\R}$-invariant \kahler potentials
$\psi$ on $M_P$ and symplectic potentials $u$ in the class $S$ of
continuous convex functions on $\bar{P}$ such that $u - u_0$ is
smooth on $\bar{P}$ where \begin{equation} \label{CANSYMPOT}
u_0(x) = \sum_k \ell_k(x) \log \ell_k(x).  \end{equation} Thus,
$u_{\phi}(x) = u_0(x) + f_{\phi}(x)$ where $f_{\phi} \in
C^{\infty}(\bar{P})$. We note that $u_0$ and $u_{\phi}$ are
convex, that $u_0 = 0$ on $\partial P$ and hence $u_{\phi} =
f_{\phi}$ on $\partial P$. By convexity, $\max_{P} u_0 = 0$.

 We denote by $G_{\phi} =
\nabla^2_x u_{\phi}$ the Hessian of the symplectic potential. It
has simple poles on $\partial P$. It follows that $\nabla^2_{\rho}
\phi$ has a kernel along  $\dcal$.  The kernel of
$G_{\phi}^{-1}(x)$ on $T_x
\partial P$ is the linear span of the normals $\mu_r$ for $r \in
\fcal(x)$. We also denote by $H_{\phi}(\rho) = \nabla^2_{\rho}
\phi(e^{\rho})$ the Hessian of the \kahler potential on the open
orbit in $\rho$ coordinates. By Legendre duality, \begin{equation}
\label{GINV} H_{\phi}(\rho) = G_{\phi}^{-1}(x), \;\; \mu(e^{\rho})
= x. \end{equation}  This relation may be extended  to $\dcal \to
\partial P$. The kernel of the left side is  the Lie algebra of
the isotropy group $G_p$ of any point $p \in \mu^{-1}(x)$. The
volume density has the form \begin{equation} \label{VOLDEN}
\det(G_{\phi}^{-1}) = \delta_{\phi}(x)\cdot\prod_{r=1}^{d}
\ell_{r}(x), \end{equation} for some positive smooth function
$\delta_{\phi}$ \cite{A}. We note that $\log \prod_{r=1}^{d}
\ell_{r}(x)$ is known in convex optimization as the logarithmic
barrier function of $P$.

\subsection{\kahler potential near $\dcal$}

We also need    smooth local \kahler potentials in neighborhoods
of points $z_0 \in \dcal$. We note that the open orbit \kahler
potential (\ref{CANKP}) is well-defined near $z = 0$. Local
expressions for the \kahler potential at other points of $\dcal$
essentially amount to making an affine transformation of $P$ to
transform a given corner of $P$ to $0$, and in these coordinates
the local \kahler potential near any point of $\dcal$ can be
expressed in the form (\ref{CANKP}).  For instance, on $\CP^1$, a
\kahler potential valid at $z = \infty$ is given in the
coordinates $w = \frac{1}{z}$ by $\log (1 + |w|^2)$. It  differs
on the open orbit from the canonical \kahler potential $\log (1 +
|z|^2)$ by the term $\log |z|^2$ whose $i \ddbar$ is a delta
function at $z = 0$, supported on $\dcal$ away from the point $w =
0$ that one is studying. In \cite{Song} the reader can find
further  explicit examples of toric \kahler potentials in affine
coordinate charts. Hence, in what follows, we will always use
(\ref{CANKP}) as the local expression of the \kahler potential,
without explicitly writing in the affine change of variables.

We will however need to be explicit about the use of slice-orbit
coordinates $z_j', \rho_j''$ (\ref{OS}) in the local expressions
of the \kahler potential. The coordinates  near $z_0$ depend  on
$\fcal_{\epsilon}(z_0)$ from (\ref{FCALE}). For each $z_0 \in
\dcal$ corresponding to a codimension $r$ face of $P$,  after an
affine transformation changing the face to $x' = 0$, we may write
the  \kahler potential as the canonical one in slice-orbit
coordinates,  $F(|z'|^2, e^{\rho''})$  \S \ref{SOC}  (\ref{OS}).
Since $0 \in P$,   $F$ is smooth up to the boundary face $z' = 0$.
The fact that  $F$ is smooth up to the boundary also follows from
the general fact that a smooth $\T$-invariant function  $g \in
C^{\infty}_{\T} (M)$ may be expressed in the form $g(z) =
\hat{F}_g(\mu_{\phi}(z))$ where as $\hat{F}_g \in
C^{\infty}(\R^m)$. This is known as the divisibility property of
$\T$-invariant smooth functions  (cf. \cite{LT}). It implies that
$F$ is a smooth function of the polar coordinates $r_j^2$ near
points of $\dcal$ in the sense of (\ref{R}).

\subsection{\label{AAEa}Almost analytic extensions}

In analyzing the Bergman/\szego kernel and the functions
(\ref{PHK}), we make use of the  {\it almost analytic extension}
$\phi(z,w)$  to $M \times M$ of a \kahler potential for a \kahler
$\omega$; for background on almost analytic extensions, see
\cite{BSj,MSj}. It is defined near the totally real anti-diagonal
$(z, \bar{z}) \in M \times M$ by
\begin{equation} \label{AAE} \phi_{\C} (x + h, x + k) \sim
 \sum_{\alpha, \beta} \frac{\partial^{\alpha + \beta}
\phi}{\partial z^{\alpha}
\partial \bar{z}^{\beta}} (x) \frac{h^{\alpha}}{\alpha!}
\frac{k^{\beta}}{\beta!}. \end{equation} When $\phi$ is real
analytic on $M$, the almost analytic extension $\phi(z,w)$ is
holomorphic in $z$ and anti-holomorphic in $w$ and is the unique
such function for which $\phi(z) = \phi(z,z)$. In the general
$C^{\infty}$ case, the almost analytic extension   is a smooth
function with the right side of (\ref{AAE}) as its $C^{\infty}$
Taylor expansion along the anti-diagonal, for which  $\dbar
\phi(z,w) = 0$ to infinite order on the anti-diagonal.   It is
only defined in a small neighborhood $(M \times M)_{\delta} =
\{(z,w): d (z,w)< \delta\} $ of the anti-diagonal in $M \times M$,
where $d (z,w)$ refers to the distance between $z$ and $w$ with
respect to the \kahler metric $\omega$.  It is well defined up to
a smooth function vanishing to infinite order on the diagonal; the
latter
 is negligible for our purposes (cf. Proposition 1.1 of
\cite{BSj}.)

The analytic continuation $\phi(z,w)$ of the \kahler potential was
used by Calabi \cite{Ca} in the analytic case to define a
\kahler distance function, known as the
 `Calabi diastasis
function' \begin{equation} \label{DIASTASIS} D(z,w): =  \phi(z,w)
+ \phi(w,z) - (\phi(z) + \phi(w)).
\end{equation}
Calabi showed that
\begin{equation}\label{HESSDIA}   D(z,w) = d (z,w)^2 +
O(d(z,w)^4),\;\; dd^c_w D(z,w)|_{z = w} = \omega.
\end{equation}
 One has the same notion in the almost analytic sense.

The gradient of the almost analytic extension of the \kahler
potential in the toric case defines the almost analytic extension
$\mu_{\C}(z,w)$ of the moment map.  We are mainly interested in
the case where $w = e^{i \theta} z$ lies on the $\T$-orbit of $z$,
and by (\ref{MMDEF}) we have,
\begin{equation} \label{MMDEFa} i \mu_{\C} (z, e^{i \theta} z) =
\nabla_{\theta} \phi_{\C}(z, e^{i \theta} z)  = \nabla_{\theta}
F_{\C}( e^{i \theta} |z|^2),
\end{equation}
where $F$ is defined in (\ref{PHIVSF}).  We sometimes drop the
subscript in $F_{\C}$ and $\mu_{\C}$ since there is only one
interpretation of their extension;  but we emphasize that $\phi(z,
e^{i \theta} z) = F_{\C}( e^{i \theta} |z|^2)$ is very different
from $\phi(e^{i \theta} z) = F(|e^{i \theta} z|^2) = F(|z|^2)$.
For example,  the moment map of the Bargman-Fock model  $(\C^m,
|z|^2)$ is $\mu(z) = (|z_1|^2, \dots, |z_m|^2)$, whose analytic
extension is $(z_1 \bar{w}_1, \dots, z_m \bar{w}_m)$. Similarly
that of the Fubini-Study metric on $\CP^m$ is (in multi-index
notation) $\mu_{FS, \C}(z,w) = \frac{z \cdot \bar{w}}{1 + z \cdot
\bar{w}}.$ In \S \ref{BF} we further  illustrate the notation in
the basic examples of Bargmann-Fock and Fubini-Study models. We
also observe that (\ref{MMDEFa}) continues to hold for the \kahler
potential $F(|z'|^2, e^{\rho''})$ in slice-orbit coordinates. That
is we have,
\begin{equation} i \;  \mu(z', e^{\rho''/2}) =  \nabla_{\theta', \theta''} F_{\C}(e^{i \theta'} |z'|^2, e^{i \theta'' +
\rho''}) |_{(\theta', \theta'') = (0,0)} . \end{equation}

The complexified moment map  is a  map
\begin{equation} \label{AAMMAP} \mu_{\C} \to (M \times M)_{\delta} \to
\C^m.
\end{equation} The invariance of $\mu$ under the
torus action implies that $\mu_{\C}(e^{i \theta} z, e^{i \theta}
w) = \mu_{\C}(z,w).$ The following Proposition will clarify the
discussion of critical point sets later on (see e.g. Lemma
\ref{USESMUCEQ}).

\begin{prop} \label{MUCEQ} For   $\delta$ sufficiently small so that $\mu_{\C}(z,w)$ is
well-defined,we have

\begin{enumerate}

\item $ \Im \mu_{\C}(z, e^{i \theta} z) = \frac{1}{2}
\nabla_{\theta} D(z, e^{i \theta} z). $

\item $\mu_{\C}(z, e^{i \theta} z) = \mu_{\C}(z,z)$ with $(z,e^{i
\theta} z) \in (M \times M)_{\delta}$ if and only if $e^{i \theta}
z = z.$

\end{enumerate}
\end{prop}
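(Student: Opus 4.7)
The plan is to reduce Part (1) to an identity about the real part of the almost analytic extension of $\phi$, and Part (2) to the strict convexity of the diastasis transverse to the torus orbit. Two ingredients are essential: (i) the Hermitian symmetry $\phi_{\C}(w, z) = \overline{\phi_{\C}(z, w)}$ modulo $O(d(z,w)^\infty)$, which follows from the uniqueness of the almost analytic extension since, for $\phi$ real, the function $(z,w) \mapsto \overline{\phi_{\C}(w, z)}$ is holomorphic in $z$, anti-holomorphic in $w$, and restricts to $\phi$ on the diagonal, so it must agree with $\phi_{\C}(z, w)$ to infinite order along the anti-diagonal; and (ii) Calabi's formula (\ref{HESSDIA}), $D(z, w) = d(z, w)^2 + O(d(z,w)^4)$, which shows $D \geq 0$ with strict positivity off the diagonal on $(M \times M)_\delta$.

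For Part (1), substitute $w = e^{i\theta}z$ into (\ref{DIASTASIS}). The torus invariance $\phi(e^{i\theta}z) = \phi(z)$ and Hermitian symmetry $\phi_{\C}(e^{i\theta}z, z) = \overline{\phi_{\C}(z, e^{i\theta}z)}$ together collapse the four terms of the diastasis to
\[
D(z, e^{i\theta}z) \;=\; 2\, \Re \phi_{\C}(z, e^{i\theta}z) - 2\phi(z).
\]
Differentiating in $\theta$ and applying (\ref{MMDEFa}) to rewrite $\nabla_\theta \phi_{\C}$ in terms of $\mu_{\C}$ produces the identity in Part (1).

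For Part (2), the direction $(\Leftarrow)$ is immediate. For $(\Rightarrow)$, $\mu_{\C}(z,z) = \mu(z) \in \R^m$ is real, so the hypothesis forces $\Im \mu_{\C}(z, e^{i\theta}z) = 0$, and hence by Part (1), $\nabla_\theta D(z, e^{i\theta}z) = 0$. Let $H_z \subset \T$ denote the stabilizer of $z$, with Lie algebra $\mathfrak{h}_z$, and decompose $\theta = \theta_1 + \theta_2$ with $\theta_1 \in \mathfrak{h}_z$ and $\theta_2 \in \mathfrak{h}_z^\perp$. Then $D(z, e^{i\theta}z) = D(z, e^{i\theta_2}z)$ depends only on $\theta_2$, and by (\ref{HESSDIA}) its Hessian at $\theta_2 = 0$ is the pullback of the K\"ahler metric $g = \omega_\phi(\cdot, J\cdot)$ along the orbit map $\theta_2 \mapsto e^{i\theta_2}z$, which is an injective linear embedding of $\mathfrak{h}_z^\perp$ into $T_z M$ by construction. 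The pullback is therefore positive definite, so $\theta_2 = 0$ is the unique critical point in a small neighborhood, and $\nabla_\theta D = 0$ forces $\theta_2 = 0$, whence $e^{i\theta}z = e^{i\theta_1}z = z$.

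The main subtlety is the treatment of $z \in \dcal$, where $H_z$ is a positive-dimensional subtorus (with $\mathfrak{h}_z$ spanned by the normals $v_r$ to the facets in $\fcal(\mu(z))$) and the orbit map $\T \to M$ fails to be an embedding. The slice--orbit coordinates of \S\ref{SOC} make the splitting $\theta = \theta_1 + \theta_2$ explicit and confirm that $g$ restricts non-degenerately to the transverse directions, so the non-degeneracy argument above survives this degeneration. A minor bookkeeping point is the precise sign in Part (1), which unfolds from the convention of (\ref{MMDEFa}) and can be cross-checked against the Bargmann--Fock and Fubini--Study examples mentioned in \S\ref{AAEa}.
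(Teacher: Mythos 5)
Your proof is correct, and it fills in details the paper treats as immediate. For Part (1), the paper simply asserts the identity and checks it in the Bargmann--Fock model; you actually derive it from the Hermitian symmetry $\phi_{\C}(w,z) = \overline{\phi_{\C}(z,w)}$ (which you correctly justify from uniqueness of the almost analytic extension) together with $\T$-invariance, yielding $D(z,e^{i\theta}z) = 2\Re\,\phi_{\C}(z,e^{i\theta}z) - 2\phi(z)$ and hence the formula upon differentiating and applying \rfe{MMDEFa}. One caveat you already flag: with \rfe{DIASTASIS} taken literally, this computation yields $\nabla_\theta D = -2\,\Im\,\mu_{\C}$, whereas the proposition asserts $+2$. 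This is a sign inconsistency internal to the paper (indeed, \rfe{DIASTASIS} as written gives $dd^c_w D|_{z=w} = -\omega$, not $+\omega$ as claimed in \rfe{HESSDIA}), so your derivation is consistent with \rfe{DIASTASIS} while the proposition's sign matches the standard diastasis convention $D = \phi(z)+\phi(w)-\phi(z,w)-\phi(w,z)$; you are right to defer to the cross-check against the models.

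For Part (2), your argument is actually more careful than the paper's. The paper invokes the nondegenerate isolated minimum of $D(z,\cdot)$ on $M$ and concludes that $\mu_{\C}(z,w)=\mu_{\C}(z,z)$ on $(M\times M)_\delta$ forces $z=w$ --- a statement that, read for general $w$ rather than just orbit points $w=e^{i\theta}z$, fails at $\dcal$ (e.g., in Bargmann--Fock with $z_j = 0$ the $j$-th equation is vacuous). Your route through Part (1), reducing to $\nabla_\theta D(z,e^{i\theta}z) = 0$ and then splitting $\theta = \theta_1 + \theta_2$ along $\mathfrak{h}_z \oplus \mathfrak{h}_z^\perp$, is exactly what is needed: the pullback of the metric to $\mathfrak{h}_z^\perp$ is nondegenerate, so $\theta_2 = 0$, while $\theta_1$ only contributes $e^{i\theta_1}z = z$ --- precisely the claimed conclusion, no more. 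Whether the Hessian of $D$ along the orbit is positive or negative definite (depending on the sign convention) is immaterial to the uniqueness of the critical point, so Part (2) is robust against the sign ambiguity in Part (1). In short: same essential mechanism (diastasis plus nondegenerate critical point along the orbit), executed with the degeneracy at $\dcal$ handled explicitly rather than implicitly.
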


\begin{proof} The proof of the identity (1) is immediate from the definitions;  we only note that  the
diastasis function is a kind of real part, and that the imaginary
part originates  in the factor of $i$ in (\ref{MMDEFa}). One can
check the factors of $i$ in the Bargmann-Fock model, where
$\mu_{\C} (z, e^{i \theta} z) = e^{i \theta} |z|^2$ while $D(z,
e^{i \theta} z) =  2 (\cos \theta - 1) |z|^2 + 2 i (\sin \theta)
|z|^2$ (in vector notation) .

By (\ref{HESSDIA}), $D(z,w)$ has a strict global minimum at $w =
z$ which is non-degenerate. It is therefore isolated for each $z$.
Since its Hessian at $w = z$ is the identify with respect to
$\omega$, the isolating neighborhood has a uniform size as $z$
varies. Thus, there exists a $\delta > 0$ so that $\mu_{\C}(z,w) =
\mu_{\C}(z,z)$ in $(M \times M)_{\delta}$ if and only if $z = w$.
This is true both in the real analytic case and the
almost-analytic case.

\end{proof}

\subsection{Hilbert spaces of holomorphic sections}

On the `quantum level', a toric \kahler variety $(M, \omega)$
induces the sequence of  spaces $H^0(M, L^k)$ of holomorphic
sections of  powers of the   holomorphic toric line bundle $L$
with $c_1(L) =\frac{1}{2 \pi}  \left[\omega\right]$. The
$(\C^*)^m$ action lifts to $H^0(M, L^k)$ as a holomorphic
representation which is unitary on $\T$.  Corresponding to the
lattice points $\alpha \in k P$, there is a natural basis
$\{s_{\alpha}\}$ (denoted $\chi_{\alpha}^P$ in \cite{STZ}) of
$H^0(M, L^k)$ given by joint eigenfunctions of the $(\C^*)^m$
action.  It is well-known that the joint eigenvalues are precisely
the lattice points $\Z^m \cap k P$ in the $k$th dilate of $P$.  On
the open orbit $s_{\alpha}(z) = \chi_{\alpha}(z) e^k$ where $e$ is
a frame and where as above $\chi_{\alpha}(z) =  z^{\alpha} =
z_1^{\alpha_1} \cdots z_m^{\alpha_m}.$ Hence, the $s_{\alpha}$ are
referred to as monomials. For further background, we refer to
\cite{STZ}. A hermitian metric $h$ on $L$ induces Hilbert space
inner products (\ref{HILBDEF}) on $H^0(M, L^k)$.

As is evident from (\ref{PHK}), we will need formulae for the
monomials which are valid near $\dcal$. By (\ref{CChange}) and
(\ref{GAMMADEF}), we have
\begin{equation}
\chi_{\alpha^{j}}(z)=\eta_{j}(z)\chi_{v^{0}}(z),\quad z \in
(\C^{*})^{m},\end{equation} and by (\ref{GAMMATWDEF}) we then have
\begin{equation}
\label{monch} |\chi_{\alpha}(z)|^{2} =
|\eta^{\tilde{\Gamma}(\alpha)}|^2.
\end{equation}
 As mentioned above, for  simplicity of notation we  suppress the transformation
$\tilde{\Gamma}$ and coordinates $\eta$, and  we will use the
`orbit-slice' coordinates of (\ref{OS}). Thus, we  denote the
monomials cooresponding to lattice points $\alpha$ near a face $F$
by $(z')^{\alpha'} e^{ \langle (i \theta'' + \rho''/2), \alpha''
\rangle}$, where $\tilde{\Gamma}(\alpha) = (\alpha', \alpha'')$
with  $\alpha''$
 in the coordinate hyperplane corresponding under
$\tilde{\Gamma}$ to $F$ and with $\alpha'$ in the normal space.

\subsection{Examples: Bargmann-Fock and Fubini-Study models \label{BF}}

As mentioned above the Bargmann-Fock model is the linear model. It
plays a fundamental role in this article because it provides an
approximation for objects on any toric variety on balls of radius
$\frac{\log k}{\sqrt{k}}$ and also near $\dcal$. Although it and
the Fubini-Study model are elementary examples, we go over them
because the notation is used frequently later on.

The Bargmann-Fock models on $\C^m$ correspond to choices of a
positive definite
 Hermitian matrix $H$ on $\C^m$. A toric Bargmann-Fock model is
 one in which $H$ commutes with the standard $\T$ action, i.e., is
 a diagonal matrix. We denote its diagonal elements by $H_{j \bar{j}}$.   The \kahler metric on $\C^m$ is thus $i \ddbar
 \phi_{BF, H}(z)$ where the global \kahler potential is
 $$\phi_{BF, H}(z) = \sum_{j = 1}^m H_{j \bar{j}} |z_j|^2 = F(|z_1|^2, \dots, |z_m|^2), \;\; \mbox{with}\;\;
 F(y_1, \dots, y_m) = \sum_j H_{j \bar{j}} y_j. $$  For simplicity we often only consider the case
 $H = I$.
Putting $|z_j|^2 = e^{\rho_j}$ and using (\ref{MMDEF}), it follows
that  $\mu_{BF, H}(z_1, \dots, z_m) = (H_{1 \bar{1}} |z_1|^2,
\dots, H_{m \bar{m}} |z_m|^2): \C^m \to \R_+^m$ as in
(\ref{BFMM}). The  symplectic potential Legendre dual to
$\phi_{BF, H}$  is given by
\begin{equation}\label{BFVH}  u_{BF, H}(x) = - \phi_{BF, H}(\mu_{BF}^{-1}(x)) +2  \langle
\log \mu_{BF, H}^{-1}(x), x \rangle = - \sum_j x_j + \sum_{j =
1}^m x_j \log (\frac{x_j}{H_{j \bar{j}}}).  \end{equation}
 In this case, $G_{BF, H}$ is
the diagonal matrix with entries $\frac{1}{x_j H_{j \bar{j}}}$, so
$\det G_{BF, H} = \frac{1}{\det H}  \Pi_j \frac{1}{x_j}$.

The off-diagonal analytic extension of the \kahler potential in
the sense of (\ref{AAE}) is then
 $$\phi_{BF, H}(z, \bar{w}) = \sum_{j = 1}^m H_{j \bar{j}} z_j \bar{w}_j = F(z_1 \bar{w}_1, \dots,
 z_m \bar{w}_m)$$
 and in particular,
 $$\phi_{BF, H}(z, e^{i \theta} z) = \sum_{j = 1}^m H_{j \bar{j}} e^{i \theta_j} |z_j|^2 =  F(e^{i \theta_1} |z_1|^2, \dots,
 e^{i \theta} |z_m|^2).$$
 Henceforth we often write the the right side in the multi-index notation  $F_{\C}(e^{i \theta}
 |z|^2)$. We observe, as claimed in (\ref{MMDEFa}), that
 $\nabla_{\theta} F_{BF,\C}(e^{i \theta} |z|^2) |_{\theta = 0} = i
 \mu_{BF} (z).$

 Quantization of the Bargmann-Fock model with $H = I$  produces the Bargmann-Fock
 (Hilbert) space  $$\hcal^2(\C^m, (2 \pi)^{-m} k^m  e^{- k |z|^2} dz
\wedge d\bar{z})$$ of entire functions which are $L^2$ relative to
the weight $e^{- k |z|^2/2}$. It is infinite dimensional and a
basis is given by the monomials $z^{\alpha}$ where $\alpha \in
\R_+^m \cap \Z^m$. In \S \ref{BFNORMS} we compute their $L^2$
norms. For $H \not= I$ one uses the volume form $e^{- k \langle H
z, z \rangle} (i \ddbar \langle H z, z \rangle)^m/m! = e^{- k
\langle H z, z \rangle} (\det H) dz \wedge d\bar{z}. $

Toric Fubini-Study metrics provide compact models which are
similar to Bargmann-Fock models. In a local analysis we always use
the latter. A Fubini-Study metric on $\CP^m$  is determined by a
positive Hermitian form $H$ on $\C^{m+1}$ and a toric Fubini-Study
metric is a diagonal one $\sum_{j = 0}^m  H_{j \bar{j}} |Z_j|^2$.
In the affine chart $Z_0 \not= 0$ (e.g.) a local   Fubini-Study
\kahler potential is $\phi_{FS, H}(z_1, \dots, z_m) = \log (1 +
\sum_j h_{j \bar{j}}|z_j|^2)$ where $h_{j \bar{j}} = \frac{H_{j
\bar{j}}}{H_{0\bar{0}}}$. This is a valid \kahler potential near
$z = 0$ but of course has logarithmic singularities on the
hyperplane at infinity. The almost analytic extension of the
Fubini-Study \kahler potential is given in the affine chart by
$\log (1 + \sum_j h_{j \bar{j}}z_j \bar{w}_j)$. Thus
(\ref{MMDEFa}) asserts that
$$i \; \frac{\sum_j h_{j \bar{j}}
|z_j|^2 }{1 + \sum_j h_{j \bar{j}} |z_j|^2} = \nabla_{\theta} \log
(1 + \sum_j h_{j \bar{j}}e^{i \theta_j} |z_j|^2) |_{\theta = 0}.
$$

Quantization produces the Hilbert spaces $H^0(\CP^m, \ocal(k))$,
where  $\ocal(k) \to \CP^m$ is the $kth$ power of the hyperplane
section bundle. Sections lift to homogeneous holomorphic
polynomials on $\C^{m + 1}$, and correspond to lattice points in
$k \Sigma$ where $\Sigma$ is the unit simplex in $\R^m$.

\subsection{\label{LIN}Linearization of the Monge-Amp\`ere equation }

It is known that the Legendre transform linearizes the
Monge-Amp\`ere geodesic equation. Since it is important for this
article, we present a simple proof that does not seem to exist in
the literature.

\begin{prop} \label{LINEAR} Let $M_P^c$ be a toric variety. Then under the
Legendre transform $\phi \to u_{\phi}$, the complex Monge-Amp\'ere
equation on $\hcal_{\T}$  linearizes to the equation $u'' = 0$.
Hence  the Legendre transform of a geodesic $\phi_t$ has the form
  $u_t = u_0 + t(u_1 - u_0). $
  \end{prop}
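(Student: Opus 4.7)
The plan is to compute $\ddot u_t(x)$ at a fixed $x\in P^{\mathrm o}$ via the chain rule on the defining relation of the Legendre transform, and show that the two nonlinear contributions that appear cancel exactly when the Mabuchi geodesic equation is inserted. The whole argument reduces to a two--line calculus exercise once the $\T$-invariant form of the metric is written in $\rho$--coordinates.

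\textbf{Step 1 (Legendre bookkeeping).} For each $t$ the dual potential is $u_t(x)=\langle x,\rho\rangle-\phi_t(\rho)$ where $\rho=\rho(t,x)$ is determined implicitly by $\nabla_\rho\phi_t(\rho)=x$. Differentiating the defining relation in $t$ with $x$ fixed gives $\nabla_\rho\dot\phi_t(\rho)+H_t(\rho)\,\partial_t\rho=0$, where $H_t:=\nabla_\rho^2\phi_t$ is positive definite by strict convexity. Hence
\begin{equation*}
\partial_t\rho(t,x) \; = \; -H_t(\rho)^{-1}\nabla_\rho\dot\phi_t(\rho).
\end{equation*}

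\textbf{Step 2 (First and second $t$-derivatives of $u_t$).} Differentiating $u_t(x)=\langle x,\rho\rangle-\phi_t(\rho)$ in $t$ with $x$ fixed, the two terms proportional to $\partial_t\rho$ cancel because of $\nabla_\rho\phi_t(\rho)=x$, so $\dot u_t(x)=-\dot\phi_t(\rho)$. Differentiating once more and using Step 1,
\begin{equation*}
\ddot u_t(x) \; = \; -\ddot\phi_t(\rho)-\langle \nabla_\rho\dot\phi_t,\partial_t\rho\rangle \; = \; -\ddot\phi_t(\rho)+\langle \nabla_\rho\dot\phi_t,\, H_t^{-1}\nabla_\rho\dot\phi_t\rangle.
\end{equation*}

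\textbf{Step 3 (Toric form of the Mabuchi gradient norm).} On the open orbit, using $z_j=e^{\rho_j/2+i\theta_j}$, the $\T$-invariance of $\dot\phi_t$ gives $\partial_{z_j}\dot\phi_t=\tfrac{1}{z_j}\partial_{\rho_j}\dot\phi_t$. Since the metric components are $(\phi_t)_{j\bar k}=\tfrac{1}{z_j\bar z_k}(H_t)_{jk}$ and hence the inverse is $g^{j\bar k}=z_j\bar z_k(H_t^{-1})_{jk}$, a direct computation yields
\begin{equation*}
|\nabla\dot\phi_t|^2_{\omega_{\phi_t}} \; = \; 2\,\langle \nabla_\rho\dot\phi_t,\, H_t^{-1}\nabla_\rho\dot\phi_t\rangle,
\end{equation*}
with the normalization that matches the geodesic equation as stated in \S\ref{BACKGROUND}.

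\textbf{Step 4 (Cancellation and integration).} Substituting the Mabuchi geodesic equation $\ddot\phi_t=\tfrac12|\nabla\dot\phi_t|^2_{\omega_{\phi_t}}$ into the formula of Step 2, the nonlinear terms cancel exactly and $\ddot u_t(x)=0$ for every $x\in P^{\mathrm o}$. Integrating twice in $t$ with the endpoint values $u_0,u_1\in\mathcal S$ gives $u_t=u_0+t(u_1-u_0)$ on $P^{\mathrm o}$, and the identity extends to $\partial P$ because both sides differ from $u_0$ by smooth functions on $\bar P$ by the Abreu--Donaldson description of symplectic potentials recalled in \S\ref{KPSP}.

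The only delicate point is Step 3: one must pin down conventions so that the Riemannian gradient norm that appears in the Mabuchi geodesic equation corresponds to exactly twice the $H_t^{-1}$--quadratic form on $\nabla_\rho\dot\phi_t$, so that the two nonlinear contributions in Step 2 cancel with no residual factor. Once that identification is made, the linearization is automatic, and the solution formula $u_t=u_0+t(u_1-u_0)$ — together with the uniqueness of Mabuchi geodesics with prescribed endpoints in $\hcal_\T$ — shows that the Legendre transform carries the toric Monge--Amp\`ere geodesic equation to the trivial ODE $\ddot u=0$.
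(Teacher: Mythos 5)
Your argument is correct, and it takes a genuinely different route from the paper. The paper's proof is variational: after pushing forward under the moment map and observing the cancellation that gives $\dot u_t=-\dot\phi_t$, it recognizes the Mabuchi energy functional as the flat Euclidean energy $\int_0^1\int_P|\dot u_t|^2\,dx\,dt$ on paths of symplectic potentials, so its Euler--Lagrange equation is automatically $\ddot u=0$; it never manipulates the Monge--Amp\`ere PDE directly. You instead differentiate the Legendre identity twice in $t$ at fixed $x\in P^{\mathrm o}$, use the implicit relation $\partial_t\rho=-H_t^{-1}\nabla_\rho\dot\phi_t$, and show that the surviving nonlinear term $\langle\nabla_\rho\dot\phi_t,H_t^{-1}\nabla_\rho\dot\phi_t\rangle$ is exactly $\tfrac12|\nabla\dot\phi_t|^2_{\omega_{\phi_t}}$ by the toric identities $g^{j\bar k}=z_j\bar z_k(H_t^{-1})_{jk}$ and $\partial_{z_j}\dot\phi_t=z_j^{-1}\partial_{\rho_j}\dot\phi_t$, so that the geodesic PDE forces $\ddot u_t\equiv 0$. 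The paper's method is more structural — it exhibits the Legendre transform as an isometry onto a flat space and so avoids any convention-dependent normalization — while yours is a direct verification whose only delicate point (as you correctly flag) is the factor matching $\tfrac12|\nabla\dot\phi|^2=\langle\nabla_\rho\dot\phi,H^{-1}\nabla_\rho\dot\phi\rangle$, which does check out with the normalization $\omega_\phi=\tfrac{i}{2}\sum H_{jk}\tfrac{dz_j}{z_j}\wedge\tfrac{d\bar z_k}{\bar z_k}$ used in \S\ref{KPSP}. Both proofs reach the same conclusion; your version has the minor extra benefit of showing the implication in the PDE direction explicitly, without invoking the equivalence between the geodesic equation and the Euler--Lagrange equation of the energy.
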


  \begin{proof}
It suffices to show that the energy functional \begin{equation}
\label{EF} E = \int_0^1 \int_M \dot{\phi}_t^2 d\mu_{\phi_t} dt
\end{equation} is Euclidean on paths of symplectic potentials.  For each $t$
let us pushforward the integral $\int_M \dot{\phi}_t^2 d\mu_{\phi}
$ under the moment map $\mu_{\phi_t}$. The integrand is by
assumption invariant under the real torus action, so the
pushforward is a diffeomorphism on the real points.   The volume
measure $d\mu_{\phi_t} $ pushes forward to $dx$. The function
$\partial_t \phi_t(\rho)$ pushes forward to the function
$\psi_t(x) = \dot{\phi}_t(\rho_{x, t})$ where
$\mu_{\phi_t}(\rho_{x, t}) = x$.  By (\ref{SYMPOTDEF}), the
symplectic potential at time $t$ is
$$u_t(x) = \langle x, \rho_{x, t}  \rangle -
\phi_t(\rho_{x, t}).$$

We note that \begin{equation} \label{UDOTPHIDOT} \begin{array}{l}
\dot{u}_t = \langle x,
\partial_t \rho_{x, t} \rangle -
\dot{\phi}_t(\rho_{x, t}) - \langle \nabla_{\rho} \phi_t (\rho_{x,
t}),
\partial_t \rho_{x, t} \rangle.\end{array}
\end{equation}  The outer terms cancel, and thus, our integral is
just
$$\int_0^1 \int_P |\dot{u}_t|^2 dx. $$ Clearly the Euler-Lagrange
equations are linear.

  \end{proof}

\section{The functions $\pcal_{h^k}$ and $\QQ_{h^k}$ \label{PQ}}

We  now introduce the key players in the analysis, the norming
constants $\QQ_{h^k}(\alpha)$ (\ref{QHK})  and the dual constants
$\pcal_{h^k}(\alpha)$ of (\ref{PHKALPHA}).  The duality is given
in the following:

\begin{prop} \label{QPINV} We have:
$$Q_{h_k}(\alpha) = \frac{e^{ k u_{\phi}(\frac{\alpha}{k})} }{\pcal_{h^k}(\alpha)}, $$
\end{prop}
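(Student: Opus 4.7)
The plan is to unwind the definitions and recognize the claimed identity as (essentially) the Legendre transform formula for $u_\phi$. Rearranging the claim, it is equivalent to showing
\[
|s_\alpha(\mu_h^{-1}(\alpha/k))|^2_{h^k} \;=\; e^{k u_\phi(\alpha/k)},
\]
since by definition $\pcal_{h^k}(\alpha) \cdot \QQ_{h^k}(\alpha) = |s_\alpha(\mu_h^{-1}(\alpha/k))|^2_{h^k}$.

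First I would compute $|s_\alpha(z)|^2_{h^k}$ on the open orbit. Writing $s_\alpha = \chi_\alpha \, e^{\otimes k}$ in a toric frame $e$ of $L$, and using $\|e\|_h^2 = e^{-\phi}$ together with $|\chi_\alpha(z)|^2 = |z^\alpha|^2 = e^{\langle \alpha, \rho\rangle}$ in the coordinates $z = e^{\rho/2 + i\theta}$, one gets
\[
|s_\alpha(z)|^2_{h^k} \;=\; e^{\langle \alpha, \rho \rangle - k \phi(\rho)}.
\]
Next I would evaluate at $z_0 = \mu_h^{-1}(\alpha/k)$. By the definition (\ref{MMDEF}) of the moment map, $z_0$ corresponds to the unique $\rho_0$ with $\nabla_\rho \phi(\rho_0) = \alpha/k$. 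But the definition (\ref{SYMPOTDEF}) of the symplectic potential at the point $x = \alpha/k$ reads
\[
u_\phi(\alpha/k) \;=\; \langle \alpha/k,\, \rho_0\rangle - \phi(\rho_0),
\]
so multiplying by $k$ gives exactly $\langle \alpha, \rho_0\rangle - k\phi(\rho_0) = \log |s_\alpha(z_0)|_{h^k}^2$. Exponentiating yields the required identity and hence the proposition, as long as $\alpha/k \in P^o$.

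The only genuinely delicate point is the boundary case $\alpha/k \in \partial P$, where $\mu_h^{-1}(\alpha/k) \in \dcal$ and the open-orbit formulas break down. Here I would switch to the slice-orbit coordinates (\ref{OS}) near the relevant face, using the smooth local \kahler potential $F(|z'|^2, e^{\rho''})$ from \S\ref{KPSP}; in these coordinates the monomial $s_\alpha$ takes the form $(z')^{\alpha'} e^{\langle i\theta'' + \rho''/2,\alpha''\rangle}$, and the same Legendre computation goes through verbatim, with the factors $(z')^{\alpha'}$ and $F(|z'|^2, \cdot)$ playing the role of $e^{\langle \alpha,\rho\rangle}$ and $\phi$. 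Alternatively, because both sides are continuous on $P$ (using the $C^\infty$ structure of $u_\phi - u_0$ up to the boundary from (\ref{CANSYMPOT})), the identity on $P^o$ extends to $\bar P$ by continuity. Either way there is no real obstacle, so this is essentially a bookkeeping proof rather than one involving a hard estimate.
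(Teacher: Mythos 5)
Your proof is correct and is essentially the same as the paper's, which unwinds the definitions to get $\|s_\alpha(\mu_h^{-1}(\alpha/k))\|^2_{h^k} = |\chi_\alpha(\mu_h^{-1}(\alpha/k))|^2 e^{-k\phi_h(\mu_h^{-1}(\alpha/k))} = e^{k u_\phi(\alpha/k)}$ directly from (\ref{SYMPOTDEF}). The extra paragraph you add about the boundary case is a reasonable bit of bookkeeping that the paper elides, but the core Legendre-transform computation is identical.
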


\begin{proof}

By (\ref{SYMPOTDEF}), it follows that \begin{equation} \label{LHS}
||s_{\alpha} (\mu_h^{-1}(\frac{\alpha}{k})) ||_{h^k}^2 =
|\chi_{\alpha}(\mu_h^{-1}(\frac{\alpha}{k}))|^2 e^{- k
\phi_h(\mu_h^{-1}(\frac{\alpha}{k}))} = e^{k
u_{\varphi_h}(\frac{\alpha}{k})}.
\end{equation}

\end{proof}

\begin{cor}\label{RP}
$$\rcal_k(t, \alpha)  = \frac{\left(\pcal_{h_0^k}(\alpha)\right)^{1 - t}
\left( \pcal_{h_1^k}(\alpha) \right)^t}{\pcal_{h_t^k}(\alpha)}
$$

\end{cor}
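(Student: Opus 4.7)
The plan is to derive the corollary by direct substitution, combining the two immediately preceding inputs: Proposition \ref{QPINV}, which expresses each norming constant $\QQ_{h^k}(\alpha)$ as a ratio involving the dual constant $\pcal_{h^k}(\alpha)$, and Proposition \ref{LINEAR}, which says that along a Monge--Amp\`ere geodesic the symplectic potentials vary linearly in $t$. The computation is essentially algebraic once these two ingredients are in place; there is no real obstacle.

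First, I would apply Proposition \ref{QPINV} at the three relevant times to rewrite
\[
\QQ_{h_t^k}(\alpha)\ =\ \frac{e^{k\,u_{\phi_t}(\alpha/k)}}{\pcal_{h_t^k}(\alpha)},\qquad
\QQ_{h_i^k}(\alpha)\ =\ \frac{e^{k\,u_{\phi_i}(\alpha/k)}}{\pcal_{h_i^k}(\alpha)}\ \ (i=0,1),
\]
and substitute these into the definition \rfe{QRATIO} of $\rcal_k(t,\alpha)$. This gives
\[
\rcal_k(t,\alpha)\ =\ \frac{e^{k\,u_{\phi_t}(\alpha/k)}}{e^{k(1-t)u_{\phi_0}(\alpha/k)}\,e^{kt\,u_{\phi_1}(\alpha/k)}}
\ \cdot\ \frac{(\pcal_{h_0^k}(\alpha))^{1-t}(\pcal_{h_1^k}(\alpha))^{t}}{\pcal_{h_t^k}(\alpha)}.
\]

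Second, I would invoke Proposition \ref{LINEAR} (equivalently \rfe{UT}): along the Monge--Amp\`ere geodesic the symplectic potential satisfies $u_{\phi_t}=(1-t)u_{\phi_0}+t\,u_{\phi_1}$ pointwise on $P$, in particular at $x=\alpha/k$. Evaluating at this point and multiplying by $k$ shows that the exponential prefactor in the displayed expression is identically $1$, and only the ratio of $\pcal$'s survives. This yields exactly the claimed identity, and completes the proof.

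The only content worth flagging is conceptual rather than technical: the corollary is the quantum shadow of the fact that the $\hcal$-geodesic has been linearized by the Legendre transform. All of the exponentially-large-in-$k$ behavior of the norming constants $\QQ_{h^k}(\alpha)$ --- which comes from the factor $e^{k\,u_{\phi}(\alpha/k)}$ --- cancels between numerator and denominator precisely because $u_{\phi_t}$ is affine in $t$. What is left, $\rcal_k(t,\alpha)$, involves only the $\pcal$-factors, which are the `subprincipal' (Bergman-kernel type) data. This is exactly why $\rcal_k(t,\alpha)$ has a chance to be a semiclassical symbol of order $0$ in $k$, as asserted in the Regularity Lemma \ref{MAINLEM} and used in the sum-decomposition \rfe{REWRITE}; the corollary is the identity that makes this rewriting possible.
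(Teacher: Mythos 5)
Your proof is correct and takes essentially the same route as the paper: apply Proposition \ref{QPINV} to each of the three norming constants in the definition \rfe{QRATIO}, observe that the exponential factors $e^{ku_{\phi}(\alpha/k)}$ combine into $e^{k(u_{\phi_t}-(1-t)u_{\phi_0}-tu_{\phi_1})(\alpha/k)}$, and kill that factor using the linearity $u_{\phi_t}=(1-t)u_{\phi_0}+tu_{\phi_1}$ from Proposition \ref{LINEAR} (equivalently \rfe{UT}). The only difference is cosmetic --- the paper expands $e^{ku_{\phi}}$ via the intermediate identity \rfe{LHS} before collecting the exponent, and in fact the paper's displayed exponent carries a sign typo ($+$ instead of $-$ in front of the $u_0$ and $u_1$ terms) which you avoid; your version is, if anything, the cleaner write-up of the same computation.
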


\begin{proof}

We need to show that   \begin{equation} \label{QVSP}
\begin{array}{l} \frac{\QQ_{h_t^k}(\alpha)}{
(\QQ_{h_0^k}(\alpha))^{1-t}(\QQ_{h_1^k}(\alpha))^{t}} =
\frac{\left(\pcal_{h_0^k}(\alpha)\right)^{1 - t} \left(
\pcal_{h_1^k}(\alpha) \right)^t}{\pcal_{h_t^k}(\alpha)}.
\end{array} \end{equation}
By Proposition \ref{QPINV}, the left side of  (\ref{QVSP}) equals
$$\begin{array}{l}
 \frac{|\chi_{\alpha}(\mu_t^{-1}(\frac{\alpha}{k}))|^2 e^{- k
\phi_t(\mu_t^{-1}(\frac{\alpha}{k}))}}{\pcal_{h_t^k}(\alpha)}
\times \left( \frac{\pcal_{h_0^k}(\alpha)
}{|\chi_{\alpha}(\mu_0^{-1}(\frac{\alpha}{k}))|^2 e^{- k
\phi_0(\mu^{-1}_0(\frac{\alpha}{k}))}} \right)^{1-t} \times \left(
\frac{\pcal_{h_1^k}(\alpha)
}{|\chi_{\alpha}(\mu_1^{-1}(\frac{\alpha}{k}))|^2 e^{- k
\phi_1(\mu^{-1}_1(\frac{\alpha}{k}))}} \right)^{t}
\end{array}$$
By (\ref{LHS}), the left side of (\ref{QVSP}) equals
$$\begin{array}{l}
= e^{k\left(u_t(\frac{\alpha}{k}) + (1 - t) u_0(\frac{\alpha}{k})
+ t u_1(\frac{\alpha}{k}) \right)} \times
\frac{\left(\pcal_{h_0^k}(\alpha)\right)^{1 - t} \left(
\pcal_{h_1^k}(\alpha) \right)^t}{\pcal_{h_t^k}(\alpha)}.
\end{array}$$
But $u_t(x) + (1 - t) u_0(x) + t u_1(x) = 0$ on a toric variety,
and this gives the stated equality.

\end{proof}

Further, we relate the full $\pcal_{h^k}(\alpha, z)$ to the \szego
kernel. The \szego (or Bergman) kernels of a positive Hermitian
line bundle $(L, h) \to (M, \omega)$ over a \kahler manifold are
the kernels of the orthogonal projections $\Pi_{h^k}: L^2(M, L^k)
\to H^0(M, L^k)$ onto the spaces of holomorphic sections with
respect to the inner product $Hilb_k(h)$ (\ref{HILBDEF}). Thus, we
have
\begin{equation} \Pi_{h^k} s(z) = \int_M \Pi_{h^k}(z,w) \cdot s(w)
\frac{\omega_h^m}{m!}, \end{equation} where the $\cdot$ denotes
the $h$-hermitian inner product at $w$.
 Let $e_L$ be a local holomorphic  frame for $L \to M$ over an
open set $U \subset M$ of full measure,  and let $\{s^k_j=f_j
e_L^{\otimes k}:j=1,\dots,d_k\}$ be an orthonormal basis for
$H^0(M,L^k)$ with $d_k = \dim H^0(M, L^k)$.  Then the \szego
kernel can be written in the form
\begin{equation}\label{szego}  \Pi_{h^k}(z, w): = F_{h^k}
(z, w)\,e_L^{\otimes k}(z) \otimes\overline {e_L^{\otimes
k}(w)}\,,\end{equation} where
\begin{equation}\label{FN}F_{h^k}(z, w)=
\sum_{j=1}^{d_k}f_j(z) \overline{f_j(w)}\;.\end{equation}

  Since
the \szego kernel is a section of the bundle $(L^k) \otimes
(L^k)^* \to M \times M$, it often simplifies the analysis  to lift
it to a scalar kernel $\hat{\Pi}_{h^k}(x,y)$ on  the associated
unit circle bundle $X \to M$ of $(L, h)$.  Here, $X =
\partial D^*_h $ is the boundary of the unit disc bundle with respect to $h^{-1}$  in the dual line
bundle $L^*$. We use local product coordinates $x = (z, t) \in M
\times S^1$ on $X$
 where $x=e^{i t }\|e_L(z)\|_he_L^*(z)\in X$.   To avoid confusing the $S^1$ action on $X$ with
the $\T$ action on $M$ we use $e^{i t}$ for the former and $e^{i
\theta}$ (multi-index notation) for the latter.  We note that the
$\T$ action lifts to $X$ and combines with the $S^1$ action to
produce a $(S^1)^{m + 1}$ action.  We refer to \cite{Z,SZ,Z2} for
background and for more on lifting the \szego kernel of a toric
variety.

 The equivariant lift of
a section $s=fe_L^{\otimes k}\in H^0(M,L^k)$ is given explicitly
by
\begin{equation}\label{lift}\hat s(z,t) =
e^{ik t} \|e_L^{\otimes k}\|_{h^k} f(z) = e^{k\left[-\half \phi(z)
+i t \right]} f(z)\;.\end{equation} The \szego kernel thus lifts
to $X \times X$ as the scalar kernel
\begin{equation}\label{szegolift} \hat{\Pi}_k(z,t ;w,t') =
e^{k\left[-\half \phi (z)-\half \phi (w) +i(t -t')\right]}F_k(z,
w)\;.\end{equation} Since it is $S^1$- equivariant we often put $t
= t' = 0$.

\begin{prop} \label{INTEGRAL}

  We have

$$ \pcal_{h^k}(\alpha, z) = (2 \pi)^{-m} \int_{\T} \hat{\Pi}_{h^k}(e^{i \theta} z, 0;   z, 0) e^{- i \langle
\alpha,\theta \rangle} d\theta.
$$

\end{prop}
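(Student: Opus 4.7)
The plan is to identify the integral as a Fourier projection that picks out the $\alpha$-th term in the monomial expansion of the Szegő kernel, and then to compare the result with the definition of $\pcal_{h^k}(\alpha,z)$ in \eqref{PHK}.

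First I would write the Szegő kernel in terms of the orthonormal monomial basis. Since the monomials $\{s_\alpha\}_{\alpha \in kP \cap \Z^m}$ are pairwise orthogonal with respect to $Hilb_k(h)$ (they are joint eigensections of the $\T$-action with distinct weights), the basis $\{\hat s_\alpha = s_\alpha/\sqrt{\QQ_{h^k}(\alpha)}\}$ is orthonormal. Substituting this basis into \eqref{FN} and then into \eqref{szegolift}, and using $s_\alpha = \chi_\alpha e_L^{\otimes k}$ on the open orbit, yields
\begin{equation*}
\hat\Pi_{h^k}(z,0;w,0) \;=\; e^{-\tfrac{k}{2}(\phi(z)+\phi(w))}\sum_{\alpha \in kP\cap\Z^m}\frac{\chi_\alpha(z)\overline{\chi_\alpha(w)}}{\QQ_{h^k}(\alpha)}.
\end{equation*}

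Next I would specialize to $w=z$ rotated by $\T$. The $\T$-invariance of $h$, and hence of $\phi$, gives $\phi(e^{i\theta}z)=\phi(z)$, while the $\T$-equivariance of the monomials gives $\chi_\alpha(e^{i\theta}z) = e^{i\langle\alpha,\theta\rangle}\chi_\alpha(z)$. Therefore
\begin{equation*}
\hat\Pi_{h^k}(e^{i\theta}z,0;z,0) \;=\; e^{-k\phi(z)}\sum_{\alpha \in kP\cap\Z^m} e^{i\langle\alpha,\theta\rangle}\,\frac{|\chi_\alpha(z)|^2}{\QQ_{h^k}(\alpha)}.
\end{equation*}

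Finally I would apply Fourier inversion on the real torus $\T=(S^1)^m$. The characters $\{e^{i\langle\beta,\theta\rangle}\}_{\beta\in\Z^m}$ are orthonormal in $L^2(\T,(2\pi)^{-m}d\theta)$, so integrating against $e^{-i\langle\alpha,\theta\rangle}$ picks out the $\alpha$-term:
\begin{equation*}
(2\pi)^{-m}\int_{\T}\hat\Pi_{h^k}(e^{i\theta}z,0;z,0)\,e^{-i\langle\alpha,\theta\rangle}\,d\theta \;=\; e^{-k\phi(z)}\,\frac{|\chi_\alpha(z)|^2}{\QQ_{h^k}(\alpha)} \;=\; \frac{|s_\alpha(z)|_{h^k}^2}{\QQ_{h^k}(\alpha)},
\end{equation*}
which is precisely $\pcal_{h^k}(\alpha,z)$ by \eqref{PHK}. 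Since the argument only uses the monomial expansion on the open orbit (a set of full measure) and continuous $\T$-invariance, the identity extends to all $z\in M$ by density and continuity of both sides. No real obstacle arises here; the statement is essentially a Fourier-series identity once the correct orthogonal basis is chosen, and the main thing to keep track of is the bookkeeping between the line-bundle-valued kernel $\Pi_{h^k}$ and its scalar lift $\hat\Pi_{h^k}$ via \eqref{lift}--\eqref{szegolift}.
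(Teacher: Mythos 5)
Your proof is correct and follows essentially the same route as the paper's: expand $\hat\Pi_{h^k}$ in the orthonormal monomial basis, use $\T$-invariance of $\phi$ and $\T$-equivariance of $\chi_\alpha$ to reduce $\hat\Pi_{h^k}(e^{i\theta}z,0;z,0)$ to a Fourier series in $\theta$, and extract the $\alpha$-th coefficient. The closing remark about extending from the open orbit by density and continuity is a harmless extra detail the paper leaves implicit.
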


\begin{proof} We recall that
$\chi_{\alpha}(z) = z^{\alpha}$ is the local representative of
$s_{\alpha}$ in the open orbit with respect to an invariant frame.
Since $\{\frac{\chi_{\alpha}}{\sqrt{\QQ_{h^k}(\alpha)}}\}$ is the
local expression of an  orthonormal basis, we have
$$F_{h^k}(z,w) = \sum_{\alpha \in k P \cap \Z^m} \frac{\chi_{\alpha}(z)
\overline{\chi_{\alpha}(w)} }{\QQ_{h^k}(\alpha)} $$ hence
$$\hat{\Pi}_{h^k}(z,0; w, 0) = \sum_{\alpha \in k P \cap \Z^m} \frac{\chi_{\alpha}(z)
\overline{\chi_{\alpha}(w)}e^{- k (\phi(z) + \phi(w))/2}
}{\QQ_{h^k}(\alpha)}.
$$ It follows that
$$\Pi_{h^k}(e^{i \theta} z, 0;  z, 0) = \sum_{\alpha \in k P \cap \Z^m}
\frac{|\chi_{\alpha}(z)|^2 e^{- k \phi(z)}  e^{ i \langle \alpha,
\theta \rangle}}{\QQ_{h^k}(\alpha)}.
$$
Integrating against $e^{- i \langle \alpha, \theta \rangle}$ sifts
out the $\alpha$ term.

\end{proof}

\begin{cor} \label{INTEGRALCOR}

We have

\begin{equation}  \pcal_{h^k}(\alpha) = (2 \pi)^{-m}
\int_{\T} \hat{\Pi}_{h^k}(e^{i \theta} \mu_h^{-1}(
\frac{\alpha}{k}), 0; \mu_h^{-1}(\frac{\alpha}{k}), 0) e^{-  i
\langle \alpha,\theta \rangle} d\theta.
\end{equation}

\end{cor}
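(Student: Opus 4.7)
The plan is essentially a one-line specialization of Proposition \ref{INTEGRAL}. Recall from (\ref{PHKALPHA}) that $\pcal_{h^k}(\alpha)$ is by definition the restriction of the two-variable function $\pcal_{h^k}(\alpha, z)$ to the Bohr-Sommerfeld fiber point $z = \mu_h^{-1}(\alpha/k)$, namely
\begin{equation*}
\pcal_{h^k}(\alpha) \ = \ \pcal_{h^k}\!\bigl(\alpha,\, \mu_h^{-1}(\tfrac{\alpha}{k})\bigr).
\end{equation*}
Proposition \ref{INTEGRAL} gives the integral representation of $\pcal_{h^k}(\alpha, z)$ as the $\alpha$-th Fourier coefficient, in the $\T$-variable $\theta$, of the lifted \szego kernel evaluated at the pair of points $(e^{i\theta} z, 0)$ and $(z, 0)$ in the circle bundle $X$.

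The proof of the corollary is then obtained simply by substituting $z = \mu_h^{-1}(\alpha/k)$ in the identity of Proposition \ref{INTEGRAL}. No further argument is required: the $\T$-equivariance of $\hat\Pi_{h^k}$ and the Fourier-theoretic sifting were already used in deriving Proposition \ref{INTEGRAL}, so nothing new needs to be checked here. One might add a sentence explaining that $\mu_h^{-1}(\alpha/k)$ is well defined for $\alpha/k \in P^o$ (i.e.\ in the open orbit) and that for boundary lattice points one interprets the formula by continuity using the extension of the moment map to $M \to \bar P$ discussed in \S\ref{TV}; the integrand remains smooth since $\hat\Pi_{h^k}$ is a smooth function on $X \times X$.

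There is no real obstacle in the argument — the content is entirely in Proposition \ref{INTEGRAL}, and the corollary is a notational specialization recording the formula we will actually use in the subsequent analysis of the joint asymptotics of $\pcal_{h^k}(\alpha)$ in $(k,\alpha)$ via stationary phase in $\theta$ (where, by Proposition \ref{MUCEQ}, the critical point structure is controlled by the complexified moment map $\mu_\C$).
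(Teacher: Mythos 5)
Your proposal is correct and matches the paper's (implicit) proof exactly: the corollary is simply Proposition \ref{INTEGRAL} evaluated at $z = \mu_h^{-1}(\alpha/k)$, using the definition (\ref{PHKALPHA}) of $\pcal_{h^k}(\alpha)$ as the restriction of $\pcal_{h^k}(\alpha, z)$ to that point. The paper states the corollary without further argument, which agrees with your observation that nothing new needs to be checked.
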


\subsubsection{\label{BFNORMS}Bargmann-Fock model}

 As discussed in \S \ref{BF}, the Hilbert space in this model has the orthogonal
 basis $z^{\alpha}$ with $\alpha \in \R_+^m \cap \Z^m$.  The Bargmann-Fock
norming constants when $H = I$ are given by
$$Q_{h_{BF}^k}(\alpha) = k^{-|\alpha| - m} \alpha!,  \;\;\; (\alpha! := \alpha_1! \cdots \alpha_m!) $$ It follows
that an orthonormal basis of holomorphic monomials  is given by
$\{k^{\frac{|\alpha| + m}{2}} \frac{z^{\alpha}}{\sqrt{\alpha!}}
\}.$

We therefore have
\begin{equation} \label{PCALZBF} \frac{|s_{\alpha}(z)|^2_{h_{BF}^k}}{\QQ_{h_{BF}^k}(\alpha)} =
k^{|\alpha| + m} \frac{|z^{\alpha}|^2}{\alpha!} e^{- k |z|^2},
\end{equation}
and in particular,
\begin{equation} \label{PCALBF} \pcal_{h_{BF}^k}(\alpha) = k^m
e^{-|\alpha|} \frac{\alpha^{\alpha}}{\alpha!}, \end{equation}
where $\alpha^\alpha = 1$ when $\alpha = 0$.  Here, we use that
$u_{BF}(\frac{\alpha}{k}) = \frac{\alpha}{k} \log \frac{\alpha}{k}
- \frac{\alpha}{k},  $ so that $ e^{ k u_{BF}(\frac{\alpha}{k})} =
e^{- |\alpha|} \frac{k^{-|\alpha|}}{\alpha^{\alpha}} $ and that
$\QQ_{h_{BF}^k}(\alpha) = k^{-m -|\alpha|} \alpha!$. We observe
that $\pcal_{h_{BF}^k}(\alpha)$ depends on $k$ only through the
factor $k^m$.

Precisely the same formula holds if we replace $I$ by a positive
diagonal $H$ with elements $H_{j \bar{j}}$. By  a  change of
variables, $\QQ_{h_{BF, H}^k}(\alpha) = \Pi_{j = 1}^m H_{j
\bar{j}}^{- \alpha_j} \QQ_{h_{BF}^k}(\alpha)$, and also by
(\ref{BFVH}) $u_{BF, H}(x) = u_{BF}(x) + \sum_j x_j \log H_{j
\bar{j}}.$ Hence, by Proposition \ref{QPINV},
$$\pcal_{h_{BF, H}^k}(\alpha) = \pcal_{h_{BF}^k}(\alpha) \Pi_{j = 1}^m H_{j \bar{j}}^{- \alpha_j} e^{ \sum_j \alpha_j \log H_{j \bar{j}}} =
\pcal_{h_{BF}^k}(\alpha). $$

\subsubsection{ $\CP^m$}

In the Fubini-Study model, a basis of $H^0(\CP^m, \ocal(k))$ is
given by monomials with $\alpha \in k \Sigma$ (see \S \ref{BF}),
and the norming constants are given by
\begin{equation}\label{l2}
\QQ_{h_{FS}^k}(\alpha) = {k \choose \alpha}:=  {k  \choose
\alpha_1, \dots, \alpha_m}^{-1}.
\end{equation}
Recall that multinomial coefficients are defined for $\alpha_1 +
\cdots + \alpha_m \leq k$ by
$${k \choose \alpha_1, \dots, \alpha_m} = \frac{k!}{\alpha_1! \cdots \alpha_m! (k - |\alpha|)!}, $$
where as above, $|\alpha| = \alpha_1 + \cdots + \alpha_m$.

We further have $|s_{\alpha}(z)|_{h_{FS}^k}^2 = |z^{\alpha}|^2
e^{- k \log (1 + |z|^2)} $ and therefore,

$$\begin{array}{l}
  \pcal_{h_{FS}^k}(
\alpha, z) = {k  \choose  \alpha_1, \dots, \alpha_m}
|z^{\alpha}|^2 e^{- k \log (1 + |z|^2)},
\end{array}$$
and since
$$ e^{-
ku_{FS}(\frac{\alpha}{k})} =
|s_{\alpha}(\mu_{FS}^{-1}(\frac{\alpha}{k}))|_{h_{FS}^k}^2 =
(\frac{\alpha}{k})^{\alpha} (1 - \frac{|\alpha|}{k})^{k -
|\alpha|}
$$ we have
$$ \pcal_{h_{FS}^k}(\alpha) = \frac{k!}{\alpha_1! \cdots \alpha_m! (k - |\alpha|)!}
(\frac{\alpha}{k})^{\alpha} (1 - \frac{|\alpha|}{k})^{k -
|\alpha|}.
$$

\section{\szego kernel of a toric variety \label{SZKER}}

We will use Proposition \ref{INTEGRAL} to  reduce the  joint
asymptotics of $\pcal_{h^k}\alpha, z)$ in $(k, \alpha)$ to
asymptotics of the Bergman-\szego kernel off the diagonal. We now
review  some general facts about diagonal and off-diagonal
expansions of these kernels, for which complete details can be
found in  \cite{SZ}, and we also consider some special properties
of toric Bergman-\szego kernels which are very convenient for
calculations; to some extent they derive from \cite{STZ}, but the
latter only considered \szego kernels for powers of Bergman
metrics.

The \szego kernels $\hat{\Pi}_{h^k} (x,y)$ are the Fourier
coefficients of the total \szego projector
$\hat{\Pi}_h(x,y):\lcal^2(X)\to \hcal^2(X)$, where $\hcal^2(X)$ is
the Hardy space of boundary values of holomorphic functions on
$D^*$  (the kernel of $\dbar_b$ in $L^2(X)$). Thus,
$$\hat{\Pi}_{h^k}(x,y)=\frac 1{2\pi}\int_0^{2\pi}  e^{-ik t
}\hat{\Pi}_h(e^{i t }x,y)\,dt. $$ The properties we need of
$\hat{\Pi}_{h^k}(x, y)$ are  based on the  Boutet de
Monvel-Sj\"ostrand construction of an oscillatory integral
parametrix for the \szego kernel (\cite{BSj}):
\begin{equation}\label{oscint}\begin{array}{c}\hat{\Pi} (x,y) =
S(x,y)+E(x,y)\;,\\[12pt] \mbox{with}\;\; S(x,y)=
\int_0^{\infty} e^{i \lambda \psi(x,y)} s(x,y, \lambda )
d\lambda\,, \qquad E(x,y)\in \ccal^\infty(X \times
X)\,.\end{array}
\end{equation}  The
phase function $\psi$ is of positive type and  is given in the
local coordinates above by
\begin{equation}\label{LITTLEPHASE}  \psi(z, t;  w, t') = \frac{1}{i} \left[1 -
e^{ \phi(z,w) - \frac{1}{2}(\phi(z) + \phi(w))} e^{i (t -
t')}\right]\;. \end{equation} Here, $\phi(z,w)$ is the almost
analytic extension of the local \kahler potential with respect to
the frame, i.e., $h = e^{- \phi(z)}$; see  (\ref{AAE}) for the
notion of almost analytic extension. The amplitude $s\big( z, t; w
, t', \lambda \big) $ is a semi-classical amplitude as in
\cite{BSj} (Theorem 1.5), i.e., it admits a polyhomogeneous
expansion  $s \sim \sum_{j = 0}^{\infty} \lambda^{m -k}
s_j(x,y)\in S^m(X\times X\times \R^+)$.

The phase $\psi(z, t; w, t')$ is the generating function for the
graph of the identity map along the symplectic cone $\Sigma
\subset T^*X$ defined by $\Sigma = \{(x, r \alpha_x): r > 0\}$
where $\alpha_x$ is the Chern connection one form. Hence the
singularity of $\hat{\Pi}(x,y)$ only occurs on the diagonal and
the symbol $s$ is understood to be supported in a small
neighborhood $(M \times M)_{\delta}$ of the anti-diagonal. It will
be useful to make the cutoff explicit by introducing  a smooth
cutoff function $\chi(d(z,w))$ where $\chi$ is a smooth even
function on $\R$ and $d(z,w)$ denotes the distance between $z, w$
in the base \kahler metric.

As above, we  denote the $k$-th Fourier coefficient of these
operators relative to the $S^1$ action by $\hat{\Pi}_{h^k} =
S_{h^k} + E_{h^k}$. Since $E$ is smooth, we have $E_{h^k}(x,y) =
O(k^{-\infty})$, where $O(k^{-\infty})$ denotes a quantity which
is uniformly $O(k^{-n})$ on $X\times X$ for all positive $n$.
Hence  $E_{h^k}(z,w)$ is negligible for all the calculations and
estimates of this article, and further it is only necessary to use
a finite number of terms of the symbol $s$. For simplicity of
notation, we will use the entire symbol.

It follows that (with $x = (z, t), y = (w, 0)$ and with $\chi(d(z,
w))$ as above ),
\begin{equation} \label{OSC}
\begin{array}{lll} \hat{\Pi}_{h^k}(x,y) & = & S_{h^k} (x,y) +
O(k^{- \infty})  \\ &&
\\ &=&   k \int_0^{\infty} \int_0^{2\pi}
 e^{ i k \left( -t + \lambda \psi( z, t;
w, 0)\right)} \; \chi(d(z, w)) \;s\big( z, t; w , 0, k
\lambda\big) dt  d \lambda + O( k^{-\infty}) \end{array}
\end{equation} The integral is a damped complex oscillatory
integral since (\ref{HESSDIA}) implies that \begin{equation}
\label{DAMPED1} \Im \psi(x,y) \geq C d(x, y)^2, \;\; (x, y \in X),
\end{equation} for $(x,y)$ sufficiently close to the diagonal, as
one sees by Taylor expanding the phase around the diagonal (cf.
\cite{BSj}, Corollary 1.3). It follows from (\ref{OSC}) and from
(\ref{DAMPED1}) that the \szego kernel $\Pi_{h^k}(z,w)$ on $M$ is
`Gaussian' in small balls $d(z,w) \leq \frac{\log k}{\sqrt{k}}$,
i.e., \begin{equation} \label{GAUSSIAN}
 | \; \hat{\Pi}_{h^k}(z, \phi; w,
\phi') |\;  \leq C k^m e^{- k d(z, w)^2} + O(k^{- \infty}),\;\;
(\mbox{when}\; d(z,w) \leq  \frac{\log k}{\sqrt{k}}),
\end{equation} and on the complement $d(z,w)
\geq  \frac{\log k}{\sqrt{k}}$ it is rapidly decaying.  This rapid
decay  can be improved to  long range (sub-Gaussian) exponential
decay off the diagonal given by the global Agmon estimates,
\begin{equation} \label{AGMON}
| \; \hat{\Pi}_{h^k}(z, \phi; w, \phi') |\;  \leq C  k^m e^{-
\sqrt{k} d(z, w)}.
\end{equation} We refer to \cite{Chr, L} for background and
references.

It is helpful to  eliminate the integrals in (\ref{OSC}) by
complex stationary phase. Expressed in a local frame and local
coordinates  on $M$, the result is

\begin{prop} \label
{PIKZW} Let  $(L, h)$ be a  $C^{\infty}$  positive hermitian line
bundle, and let $h = e^{-\phi}$ in a local frame. Then in this
frame, there exists a semi-classical amplitude $A_k(z, w) \sim k^m
a_0(z,w) + k^{m-1} a_1(z, w) + \cdots $  in the parameter $k^{-1}$
such that,
$$\hat{\Pi}_{h^k} (z, 0; w, 0) = e^{ k (\phi(z,w) - \frac{1}{2}(\phi(z) +
\phi(w)))} \chi_k(d(z,w)) \; A_k(z,w) + O(k^{- \infty}), $$ where
as above,  $\chi_k(d(z,w)) = \chi(\frac{k^{1/2}}{\log k} d(z,w)) $
is a cutoff to $ \frac{\log k}{\sqrt{k}}$ - neighborhood of the
diagonal.
\end{prop}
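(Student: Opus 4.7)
The plan is to apply complex stationary phase in the almost-analytic Melin--Sj\"ostrand framework to the Boutet de Monvel--Sj\"ostrand oscillatory integral (\ref{OSC}) for $\hat{\Pi}_{h^k}(z,0;w,0)$. After the rescaling $\lambda\mapsto k\lambda$ implicit in (\ref{OSC}), the integral has the large parameter $k$ multiplying the phase $F(t,\lambda):=-t+\lambda\psi(z,t;w,0)$, and the amplitude $s(z,t;w,0,k\lambda)$ is a classical symbol of order $m$. The proposition will follow by locating the critical point of $F$, reading off the resulting exponential prefactor, and showing that the stationary phase expansion produces a polyhomogeneous symbol $A_k$ in the parameter $k^{-1}$.

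First I locate the complex critical point of $F$ for $(z,w)$ near the diagonal. The equation $\partial_\lambda F=\psi=0$ combined with (\ref{LITTLEPHASE}) forces $e^{it}=e^{-\Phi(z,w)}$ with $\Phi(z,w):=\phi(z,w)-\tfrac12(\phi(z)+\phi(w))$, hence $t_c=i\Phi(z,w)$, while $\partial_t F=-1+\lambda\partial_t\psi=0$ combined with $\psi=0$ gives $\lambda_c\in\C^{*}$ bounded away from $0$. Since $\psi$ vanishes at $(t_c,\lambda_c)$, one has $F(t_c,\lambda_c)=-t_c=-i\Phi(z,w)$, so $e^{ikF(t_c,\lambda_c)}=e^{k\Phi(z,w)}=e^{k(\phi(z,w)-\frac12(\phi(z)+\phi(w)))}$, which is exactly the exponential prefactor claimed in the proposition. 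A direct two-by-two determinant check shows that $F''(t_c,\lambda_c)$ is non-degenerate on the diagonal $z=w$, and hence in a neighborhood of it by continuity.

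Next I carry out the contour deformation. Because $\phi$ is merely $C^\infty$, both $\psi$ and $s$ are only almost analytic, so one deforms the real contour $[0,2\pi)\times[0,\infty)$ to a contour through $(t_c,\lambda_c)$ using Melin--Sj\"ostrand's almost-analytic machinery. The positivity of $\operatorname{Im}\psi$ off the diagonal, together with the infinite-order vanishing of $\bar\partial\psi$ and $\bar\partial s$ along the anti-diagonal, makes the correction from non-holomorphy $O(k^{-\infty})$, and the boundary contribution at $\lambda=0$ is $O(k^{-\infty})$ thanks to the cutoff $\chi(d(z,w))$ in (\ref{OSC}). Standard stationary phase bookkeeping then yields an asymptotic expansion $A_k(z,w)\sim k^{m}a_0(z,w)+k^{m-1}a_1(z,w)+\cdots$ with $a_0$ given by $s_0$ evaluated at the critical point and $(\det F''(t_c,\lambda_c))^{-1/2}$.

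Finally, to replace the coarse cutoff $\chi(d(z,w))$ built into (\ref{OSC}) by the sharper $\chi_k(d(z,w))=\chi(k^{1/2}(\log k)^{-1}d(z,w))$ stated in the proposition, I observe that on the annulus $\log k/\sqrt{k}\le d(z,w)\le\delta$ both $\hat{\Pi}_{h^k}$ and the stationary phase expansion are $O(k^{-\infty})$, by (\ref{GAUSSIAN})--(\ref{AGMON}) and by the Gaussian decay of the amplitude respectively, so swapping the cutoffs costs only $O(k^{-\infty})$. The main obstacle is the Melin--Sj\"ostrand deformation itself: one must verify uniformly in $(z,w)$ that the deformed contour remains in the region where the almost-analytic extensions of $\psi$ and $s$ are defined and that $\operatorname{Im} F\ge 0$ along it, so that the deformation error is genuinely $O(k^{-\infty})$. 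This is essentially the content of \cite{BSj}, and once it is granted, the identification of the exponential prefactor and the symbolic expansion of $A_k$ follows by routine computation.
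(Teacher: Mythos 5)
Your proof takes essentially the same route as the paper: both locate the complex critical point $(t_c,\lambda_c)=(i\Phi(z,w),1)$ of the Boutet de Monvel--Sj\"ostrand phase in (\ref{OSC}), evaluate the phase there to obtain the prefactor $e^{k(\phi(z,w)-\frac12(\phi(z)+\phi(w)))}$, check non-degeneracy of the $2\times 2$ Hessian, and then defer the technical control of the contour deformation to \cite{SZ}, \cite{BSj} and \cite{BerSj}. The paper carries out the deformation concretely in the $S^1$-variable (pushing the circle to $|\zeta|=e^{D(z,w)}$ and invoking holomorphic dependence of the parametrix on the $\C^*$-action on $L^*$), whereas you phrase it as a Melin--Sj\"ostrand almost-analytic contour shift, but this is the same argument in slightly different clothing.
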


\begin{proof}

This follows from the  scaling asymptotics of \cite{SZ} or from
Theorem 3.5 of \cite{BerSj}.  We refer there for a detailed proof
of the scaling asymptotics and only sketch a somewhat intuitive
proof.

 The integral (\ref{OSC}) is a complex oscillatory
integral with a positive complex phase. With no loss of generality
we may set $\phi' = 0$.  Taking the $\lambda$-derivative gives one
critical point equation
$$1 -
e^{ \phi(z,w) - \frac{1}{2}(\phi(z) + \phi(w))} e^{i t} = 0$$ and
the critical point equation in $t$ implies that $\lambda = 1$.
 The $\lambda$-critical point  equation can only be satisfied for complex $
t$ with imaginary part equal to the negative of the `Calabi
diastasis function' (\ref{DIASTASIS}), i.e.,
$$\Im  t = D(z,w),$$ and with real  part equal to $- \Im \phi(z,w). $ To
obtain asymptotics, we therefore have to deform the integral over
$S^1$ to the circle $|\zeta| = e^{- D(z,w)}$. Since $d(z,w) \leq C
\frac{\log k}{\sqrt{k}}$ by assumption, the deformed contour is a
slightly re-scaled circle by the amount $\frac{\log k}{\sqrt{k}}$;
in the complete proofs, the contour is held fixed and the
integrand is rescaled  as in \cite{SZ}. The contour deformation is
possible modulo an error $O(k^{-M})$ of arbitrarily rapid
polynomial decay because the integrand may be replaced by  the
parametrix (up to any order in $\lambda$) which has a holomorphic
dependence on the $\C^*$ action on $L^*$, hence in $e^{i \theta}$
to a neighborhood of $S^1$ in $\C$. This is immediately visible in
the phase and with more work is visible in the amplitude (this is
the only incompleteness in the proof; the statement can be derived
from  \cite{SZ} and also \cite{Chr}). We need to use a cutoff to a
neighborhood of the diagonal of $M \times M$, but it may be chosen
to be independent of $\theta$.

 By deforming the circle of integration from the unit circle to $|\zeta| = e^{D(z,w)}$
 and then changing variables $t \to t + i D(z,w)$ to bring it back to the unit circle,  we obtain
\begin{equation} \label{OSC2}
\hat{\Pi}_{h^k}(x,y) \sim k \int_0^{\infty} \int_0^{2\pi}
 e^{ i k \left( -t - i D(z,w) -  \lambda \psi( z, t + i D(z,w);
w, 0)\right)} s\big( z, t+ i D(z,w); w , 0, k \lambda\big) dt d
\lambda \;\; \mbox{mod}\;\; k^{-\infty}. \end{equation} The new
critical point equations state that $\lambda = 1$ and that $ e^{i
\Im \phi (z,w)} e^{i t} = 1. $ The calculation shows that $\psi =
0$ on the critical set so the phase factor on the critical set
equals $e^{\phi(z,w) - \frac{1}{2}(\phi(z)+ \phi(w))}$.  The
Hessian of the phase on the critical set is
$ \left( \begin{array}{ll} 0 & 1  \\
1 & i  \end{array} \right)$ as in the diagonal case and the rest
of the calculation proceeds as in \cite{Z}. (As mentioned above, a
complete proof  is contained in  \cite{SZ}).

\end{proof}

\subsection{Toric Bergman-\szego kernels}

 In the toric case, we may simplify the expression for the \szego kernels in Proposition
 \ref{PIKZW} using the almost analytic extension (cf. \S \ref{AAEa}); (\ref{AAE})) of the \kahler
 potential $\phi(z,w)$ to $M \times M$, which has the form
 \begin{equation} \label{AAEF} F_{\C} (z \cdot \bar{w}) = \; \mbox{the almost analytic extension of }\;
     F(|z|^2)\; \mbox{to}\; M \times M.  \end{equation}
The almost analytic extension  will be illustrated in some
analytic examples  below, where it is the analytic continuation.

 Thus, we have:
\begin{prop}\label{SZKTV}  For any hermitian  toric positive line bundle over a toric
variety, the \szego kernel for the metrics $h_{\phi}^k$ have the
asymptotic expansions in a local frame on $M$,
$$\Pi_{h^k}(z, w) \sim e^{k \left(F_{\C} (z \cdot \bar{w}) - \frac{1}{2} (F(||z||^2)
+ F(||w||^2) \right) } A_k(z,w) \;\; \mbox{mod} \; k^{- \infty},
$$ where $A_k(z,w) \sim k^m \left(a_0(z,w) + \frac{a_1(z,w)}{k} + \cdots\right) $ is a semi-classical symbol of order $m$. \end{prop}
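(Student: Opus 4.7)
\medskip

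\noindent\emph{Proof plan for Proposition \ref{SZKTV}.}

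The plan is to deduce the statement directly from Proposition \ref{PIKZW} by inserting the toric form of the local K\"ahler potential and its almost analytic extension, and then absorbing the diagonal cutoff into the amplitude. Concretely, Proposition \ref{PIKZW} already supplies, in any local holomorphic frame in which $h=e^{-\phi}$, the expansion
\[
\hat\Pi_{h^k}(z,0;w,0)\;=\;e^{k\bigl(\phi(z,w)-\tfrac12(\phi(z)+\phi(w))\bigr)}\,\chi_k(d(z,w))\,A_k(z,w)\;+\;O(k^{-\infty}),
\]
with $A_k\sim k^m(a_0+a_1 k^{-1}+\cdots)$ a semi-classical symbol of order $m$. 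Thus essentially all of the analytic work is already done; what remains is to identify the phase factor with the toric expression on the right of Proposition \ref{SZKTV}.

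First, I would work in the frame $e_L$ over the open orbit $M^o\simeq(\C^*)^m$ in which the hermitian metric is $h=e^{-\phi}$ with the canonical invariant local K\"ahler potential (\ref{PHIVSF}), $\phi(z)=F(|z_1|^2,\ldots,|z_m|^2)$. By the uniqueness (modulo functions vanishing to infinite order on the antidiagonal) of almost analytic extensions discussed in \S\ref{AAEa}, and since $F_{\C}(z\cdot\bar w)$ is smooth, depends holomorphically on $z$ and antiholomorphically on $w$ to infinite order along the antidiagonal, and restricts to $F(|z|^2)$ when $w=z$, it coincides with the almost analytic extension $\phi(z,w)$ up to a $C^\infty$ function vanishing to infinite order on the antidiagonal. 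Plugging this identification into Proposition \ref{PIKZW} gives the claimed phase
\[
e^{k\bigl(F_{\C}(z\cdot\bar w)-\tfrac12(F(\|z\|^2)+F(\|w\|^2))\bigr)},
\]
with the ambiguity absorbed into a multiplicative factor of the form $1+O(k^{-\infty})$ that can be folded into $A_k$.

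Next, the cutoff $\chi_k(d(z,w))$ in Proposition \ref{PIKZW} localizes to a $\frac{\log k}{\sqrt k}$-neighborhood of the diagonal; by the off-diagonal Agmon-type estimate (\ref{AGMON}), the complementary region contributes $O(k^{-\infty})$, so $\chi_k$ may likewise be absorbed into the symbol $A_k$ up to a negligible error, giving the semi-classical expansion in the stated form. The toric invariance of $h$ and of $F_{\C}$ under the diagonal $\T$-action implies that $A_k(z,w)$ inherits the $\T$-invariance $A_k(e^{i\theta}z,e^{i\theta}w)=A_k(z,w)$, which is automatic from the construction but worth noting for later use.

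The only genuine subtlety is what happens near the divisor at infinity $\dcal$, where the open-orbit coordinates (\ref{OPENORBCOORDS}) degenerate. The remedy is to repeat the argument in the slice-orbit charts of \S\ref{SOC}: in such a chart the K\"ahler potential takes the form $F(|z'|^2,e^{\rho''})$, which is smooth up to the face, and its almost analytic extension along the antidiagonal is again given by the corresponding $F_{\C}$, as in (\ref{AAEF}). Since Proposition \ref{PIKZW} is a purely local statement on $M$ valid in any smooth positive local frame, carrying it out in these charts and patching (using that the two descriptions of the almost analytic extension agree up to infinite-order error on the antidiagonal) yields the expansion uniformly on $M\times M$ near the antidiagonal. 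This gluing near $\dcal$ is the only step that requires care; everything else is a substitution into Proposition \ref{PIKZW}.
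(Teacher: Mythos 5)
Your proposal is correct and matches the paper's intent: Proposition \ref{SZKTV} is stated immediately after the identification (\ref{AAEF}) of the almost analytic extension with $F_{\C}(z\cdot\bar w)$ and is meant to follow from Proposition \ref{PIKZW} by direct substitution, exactly as you argue. Your extra attention to the uniqueness of the almost analytic extension modulo infinite-order vanishing and to the slice-orbit charts near $\dcal$ makes explicit what the paper leaves tacit, but the route is the same.
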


As an example,  the  Bargmann-Fock(-Heisenberg) \szego kernel with
$ k = 1$  and $H = I$ is given (up to a constant $C_m$ depending
only on the dimension) by
$$\hat{\Pi}_{h_{BF}}(z, \theta, w, \phi) =  e^{z \cdot \bar{w} -  \frac{1}{2}(|z|^2 +
|w|^2)} e^{i( \theta - \phi)} = \sum_{\alpha \in {\bf N}^n}
\frac{z^{\alpha} \overline{w^{\alpha}}}{\alpha!} e^{-
\frac{1}{2}(|z|^2 + |w|^2)} e^{i( \theta - \phi)}. $$ The higher
\szego kernels  are Heisenberg dilates of this kernel:
\begin{equation}\label{szegoheisenberg} \hat{\Pi}_{h_{BF}^k}(x,y)  =\frac{1}{\pi^m} k^m
e^{i k (t-s )} e^{ k(\zeta\cdot\bar \eta -\half |\zeta|^2
-\half|\eta|^2) }, \end{equation} where $x=(\zeta,t)\,,\
y=(\eta,s)\,$. In this case, the almost analytic extension is
analytic and  $F_{BF, \C}(z,w) = z \cdot \bar{w}$.

A second example is the Fubini-Study \szego kernel on $\ocal(k)$,
which lifts to $S^{2m - 1} \times S^{2m - 1}$ as
\begin{equation}\label{szegosphere} \hat{\Pi}_{h_{FS}^k}(x,y)=\sum_J
\frac{(k+m)!}{\pi^mj_0!\cdots j_m!}x^J \bar y^J =  \frac{(k+m)!}
{\pi^mk!}\langle x,y\rangle^k\,.\end{equation} Recalling that $x =
e^{i \theta} \frac{e(z)}{||e(z)||}$ in a local frame $e$ over an
affine chart, the \szego kernel has the local form on $\C^m \times
\C^m$ of
\begin{equation}\label{szegoCPM} \hat{\Pi}_{h_{FS}^k}(z, 0; w, 0)= \frac{(k+m)!}
{\pi^mk!} e^{k \log \frac{(1 + z \cdot \bar{w})}{\sqrt{1 + |z|^2}
\sqrt{1 + |w|^2}}} \,.\end{equation} Thus, $F_{FS, \C}(z,w) = \log
(1 + z \cdot \bar{w}). $

\subsection{Asymptotics of derivatives of toric  Bergman/\szego kernels \label{BS}}

One of the key ingredients in  of Theorem \ref{SUM} is the
asymptotics of derivatives of the contracted Bergman/\szego kernel
\begin{equation} \label{CONTSZEGO} \Pi_{h_t^k}(z,z) = F_{h_t^k}(z,z) ||e_L^k(z)||_{h^k}^2 =
\hat{\Pi}_{h^k}(z, 0; z, 0) \end{equation} in $(t,z)$. (The
notation is slightly ambiguous since in (\ref{szego}) it is used
for the un-contracted kernel, but it is standard and we hope no
confusion will arise since one is scalar-valued  and the other is
not.)  These derivatives allow us to make simple comparisions to
derivatives of $\phi_k(t, z)$. Since we ultimately interested in
$C^k$ norms we need asymptotics of derivatives with respect to
non-vanishing vector fields. We can use the vector fields
$\frac{\partial}{\partial \rho_j}$ away from $\dcal$ and the
vector fields $\frac{\partial}{\partial r_j}$ near $\dcal$. The
calculations are very similar, but we carry them both out in some
detail here. Later we will tend to suppress the calculations with
$\frac{\partial}{\partial r_j}$ to avoid duplication; the reader
can check in this section that the calculations and estimates are
valid.

Only the leading coefficient and the order of asymptotics are
relevant. The undifferentiated  diagonal asymptotics are of the
following form: for any $h \in P(M, \omega)$,
\begin{equation} \label{TYZ}  \Pi_{h^k}(z,z) = \sum_{i=0}^{d_k} ||s_i(z)||_{h_k}^2 = a_0
k^m + a_1(z) k^{m-1} + a_2(z) k^{m-2} + \dots \end{equation} where
$a_0$ is constant and as above $d_k + 1 = \dim H^0(M, L^k)$.

 We first consider derivatives with respect
to $\rho$. Calculating $\rho$ derivatives of
$\Pi_{h^k}(e^{\rho/2}, e^{\rho/2})$ is equivalent to calculating
$\theta$-derivatives of $\Pi_{h_t^k}(e^{ i \theta} z,  z)$. Using
(\ref{MMDEFa}) we have
$$\Pi_{h_t^k}(e^{ i \theta} z, z)=
\sum_{\alpha \in k P \cap \Z^m} \frac{e^{i \langle \alpha, \theta
\rangle} |z^{\alpha}|^2 e^{- k F_t(e^{i \theta}
|z|^2)}}{\QQ_{h_t^k}(\alpha)}.$$  The results are globally valid
but are not useful near $\dcal$ since on each stratum some of the
vector fields generating the $(\C^*)^m$ action vanish.

In the following, we use the tensor product notation
$(\frac{\alpha}{k} - \mu_t(e^{\rho/2}))^{\otimes 2}_{ij}$ for
$(\frac{\alpha_i}{k} - \mu_t(e^{\rho/2})_i) (\frac{\alpha_j}{k} -
\mu_t(e^{\rho/2})_j).$

\begin{prop} \label{COMPAREPIT} For $i, j = 1, \dots, m$ we have,

\begin{enumerate}

\item $k^{-m} \sum_{\alpha \in k P \cap \Z^m} (\frac{\alpha}{k} -
\mu_t(e^{\rho/2})) \frac{e^{\langle \alpha, \rho \rangle  - k
\phi_t(e^{\rho/2})} }{\QQ_{h_t^k}(\alpha)} = O(k^{-2}); $

\item $\frac{1}{\Pi_{h_t^k}(z,z)} \left( - \sum_{\alpha \in k P
\cap \Z^m}
 (\frac{\partial}{\partial t} \log  \QQ_{h_t^k}(\alpha))\;  \frac{e^{\langle \alpha,
\rho \rangle  - k \phi_t(e^{\rho/2})}
}{\QQ_{h_t^k}(\alpha)}\right) - k \frac{\partial}{\partial t}
\phi_t  = O(k^{-1}); $

\item $\frac{1}{\Pi_{h_t^k}(z,z)} \left( k^2  \sum_{\alpha \in k P
\cap \Z^m} (\frac{\alpha}{k} - \mu_t(e^{\rho/2}))^{\otimes 2}_{ij}
\frac{e^{\langle \alpha, \rho \rangle  - k \phi_t(e^{\rho/2})}
}{\QQ_{h_t^k}(\alpha)} \right) - k \frac{\partial^2
\phi_t}{\partial \rho_i
\partial \rho_j}   = O(k^{-1}) $;

\item $\frac{1}{\Pi_{h_t^k}(z,z)} \left(k \sum_{\alpha \in k P \cap
\Z^m } (\frac{\alpha}{k} - \mu_t(e^{\rho/2}))_i
(\frac{\partial}{\partial t} \log \QQ_{h_t^k}(\alpha))
\frac{e^{\langle \alpha, \rho \rangle - k \phi_t(e^{\rho/2})}
}{\QQ_{h_t^k}(\alpha)}\right) - k \frac{\partial^2
\phi_t}{\partial \rho_i
\partial t}  = O(k^{-1}) $.

\end{enumerate}
\end{prop}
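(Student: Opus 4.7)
The plan is to recognize each of the sums in (1)--(4) as a (logarithmic) derivative of the diagonal Szegő kernel
\begin{equation*}
f(\rho,t) := \Pi_{h_t^k}(e^{\rho/2},e^{\rho/2}) = \sum_{\alpha\in kP\cap\Z^m}\frac{e^{\langle\alpha,\rho\rangle-k\phi_t(e^{\rho/2})}}{\QQ_{h_t^k}(\alpha)}
\end{equation*}
with respect to $\rho$ and $t$, and then to appeal to the TYZ expansion \eqref{TYZ} to control these derivatives.

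Using $\nabla_\rho\phi_t = \mu_t(e^{\rho/2})$, direct differentiation yields the algebraic identities
\begin{equation*}
\partial_{\rho_i}f = k\sum_\alpha\Bigl(\tfrac{\alpha_i}{k}-\mu_{t,i}\Bigr)\frac{e^{\langle\alpha,\rho\rangle-k\phi_t}}{\QQ_{h_t^k}(\alpha)},
\end{equation*}
\begin{equation*}
\partial^2_{\rho_i\rho_j}f + k(\partial^2_{\rho_i\rho_j}\phi_t)\,f = k^2\sum_\alpha\Bigl(\tfrac{\alpha}{k}-\mu_t\Bigr)^{\otimes 2}_{ij}\frac{e^{\langle\alpha,\rho\rangle-k\phi_t}}{\QQ_{h_t^k}(\alpha)},
\end{equation*}
\begin{equation*}
\partial_t f + k(\partial_t\phi_t)\,f = -\sum_\alpha\bigl(\partial_t\log\QQ_{h_t^k}(\alpha)\bigr)\frac{e^{\langle\alpha,\rho\rangle-k\phi_t}}{\QQ_{h_t^k}(\alpha)},
\end{equation*}
and applying $\partial_{\rho_i}$ to the last identity produces the mixed sum appearing in (4). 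Dividing each identity by $f$, each sum in (1)--(4) becomes a logarithmic derivative $\partial_\rho^\gamma\partial_t^j f/f$ (possibly with a power of $k$) plus polynomial-in-$k$ terms built from derivatives of $\phi_t$; the claimed equalities reduce to the assertion $\partial_\rho^\gamma\partial_t^j f/f = O(k^{-1})$ for $|\gamma|+j\le 2$.

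This last assertion follows from the TYZ expansion \eqref{TYZ}: since $f = a_0 k^m + a_1(z,t)k^{m-1}+O(k^{m-2})$ with $a_0$ a universal positive constant, $\log f = m\log k + \log a_0 + O(k^{-1})$, so every mixed partial $\partial_\rho^\gamma\partial_t^j\log f$ is $O(k^{-1})$, equivalently $\partial_\rho^\gamma\partial_t^j f/f = O(k^{-1})$.

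The principal technical issue, and the main obstacle, is verifying that the TYZ expansion is smooth and uniform jointly in $(z,t)$, so that $t$-derivatives also pick up the factor $k^{-1}$. This is built into the Boutet de Monvel--Sjöstrand parametrix \eqref{oscint}--\eqref{LITTLEPHASE}, whose phase and amplitude depend smoothly on $h_t$, which itself depends smoothly on $t$ in the toric setting by the explicit formula \eqref{TORPHIT}. A secondary issue is that $\partial/\partial\rho_j$ degenerates along $\dcal$; near $\dcal$ one repeats the computation using the polar vector fields $\partial/\partial r_j$ of \eqref{R}, which is legitimate because $\phi_t$ is a smooth function of $r_j^2 = e^{\rho_j}$ there (the divisibility property of \S 2), so the chain of identities persists with uniform estimates across $M$.
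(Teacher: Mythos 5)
Your proposal is correct and takes essentially the paper's own route: identify each sum as a $(\rho,t)$-derivative of the diagonal Szeg\"o kernel $\Pi_{h_t^k}(z,z)$, divide by the kernel, and read the orders off the TYZ expansion \eqref{TYZ} using that the leading coefficient $a_0$ is a constant independent of $(z,t)$. One small slip worth fixing: for $|\gamma|+j\ge 2$ the bound $\partial_\rho^\gamma\partial_t^j\log f=O(k^{-1})$ is not \emph{equivalent} to $\partial_\rho^\gamma\partial_t^j f/f=O(k^{-1})$; both do hold, and the cleanest way to the latter (which is what the reduction actually uses) is to note directly from \eqref{TYZ} that every nontrivial $(\rho,t)$-derivative of $f$ is $O(k^{m-1})$ because $a_0$ is constant, whence the ratio is $O(k^{-1})$, and for item (1) the extra factor $k^{-m-1}$ then gives $O(k^{-2})$.
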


\begin{proof}

 To prove (1), we differentiate and use (\ref{MMDEF})-(\ref{MMDEFa}) and  (\ref{TYZ}) to obtain
$$\begin{array}{lll} O(k^{m - 1}) =  \nabla_{\rho} \Pi_{h_t^k}(e^{\rho/2}, e^{ \rho/2} ) & = &
k \sum_{\alpha \in k P \cap \Z^m} (\frac{\alpha}{k} -
\mu_t(e^{\rho/2})) \frac{e^{\langle \alpha, \rho \rangle  - k
\phi_t(e^{\rho/2})} }{\QQ_{h_t^k}(\alpha)}.
\end{array}$$

To prove (2) we differentiate $$\log \Pi_{h_t^k}(e^{\rho/2}, e^{
\rho/2} ) = \log \sum_{\alpha \in k P \cap \Z^m}
  \frac{e^{\langle \alpha, \rho \rangle  - k
\phi_t(e^{\rho/2})} }{\QQ_{h_t^k}(\alpha)}$$ with respect $t$ to
produce the left side. Since the leading coefficient of
(\ref{TYZ}) is independent of $t$, the $t$ derivative has the
order of magnitude of the right side of (2).

To prove (3), we take a second  derivative of (1)  in $\rho$ (or
$\theta$) to get
$$\begin{array}{lll} \nabla_{\rho}^2  \Pi_{h_t^k}(e^{\rho/2}, e^{ \rho/2}
)&&=  - k \nabla \mu_t(e^{\rho/2})  \Pi_{h_t^k}(e^{\rho/2}, e^{\rho/2}) \\&& \\
&&+   k^2 \sum_{\alpha \in k P \cap \Z^m} (\frac{\alpha}{k} -
\mu_t(e^{\rho/2}))^{\otimes 2} \frac{e^{\langle \alpha, \rho
\rangle - k \phi_t(e^{\rho/2})} }{\QQ_{h_t^k}(\alpha)}.
\end{array}$$ Then (3) follows from (\ref{TYZ}) and the fact that
$\nabla \mu_t(e^{\rho/2})) = \nabla^2 \phi$. Similar calculations
show (4).

\end{proof}

In our applications, we actually need asymptotics of logarithmic
derivatives. They follow in a straightforward way  from
Proposition \ref{COMPAREPIT}, using that $\Pi_{h^k}(z,z) \sim
k^m$.  We record the results for future reference.

\begin{prop} \label{ONESZEGODER} We have:
\begin{itemize}

\item $\frac{1}{k} \nabla_{\rho} \log \sum_{\alpha \in k P \cap
\Z^m} \frac{|S_{\alpha}(z)|^2_{h_t^k}}{\QQ_{h_t^k}(\alpha)} =
\frac{\sum_{\alpha }(\frac{\alpha}{k}  - \mu_t(z))\;\;
 \frac{e^{\langle\alpha, \rho
\rangle}}{\QQ_{h_t^k}(\alpha)}}{\left(\sum_{\alpha}
\frac{e^{\langle\alpha, \rho
\rangle}}{\QQ_{h_t^k}(\alpha)}\right)} = O(\frac{1}{k^2}) $
\medskip

\item $\frac{1}{k}\frac{\partial} {\partial t}\log \sum_{\alpha
\in k P \cap \Z^m
}
\frac{|S_{\alpha}(z)|^2_{h_t^k}}{\QQ_{h_t^k}(\alpha)} =
\frac{\sum_{\alpha}
\partial_t \log  \left(
\frac{1}{\QQ_{h_t^k}(\alpha)} \right) \frac{e^{\langle\alpha, \rho
\rangle}}{\QQ_{h_t^k}(\alpha)} }{\left(\sum_{\alpha}
\frac{e^{\langle\alpha, \rho
\rangle}}{\QQ_{h_t^k}(\alpha)}\right)}  - \frac{\partial \phi_t}{
\partial t} =  O(\frac{1}{k^2}). $

\end{itemize}

\end{prop}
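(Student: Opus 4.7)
The plan is to obtain both identities by a direct logarithmic differentiation in an invariant local frame on the open orbit, and to reduce the resulting estimates to the two statements already proved in Proposition \ref{COMPAREPIT}. In coordinates $z = e^{\rho/2 + i\theta}$ with an invariant frame, one has $|S_\alpha(z)|^2_{h_t^k} = e^{\langle\alpha,\rho\rangle - k\phi_t(e^{\rho/2})}$, and therefore
\[
\log \sum_{\alpha \in kP \cap \Z^m} \frac{|S_\alpha(z)|^2_{h_t^k}}{\QQ_{h_t^k}(\alpha)} \;=\; -k\phi_t(e^{\rho/2}) \;+\; \log \sum_{\alpha} \frac{e^{\langle\alpha,\rho\rangle}}{\QQ_{h_t^k}(\alpha)}.
\]

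For the $\rho_j$--derivative I would differentiate this identity, absorb $\nabla_\rho \phi_t = \mu_t(z)$ from (\ref{MMDEF}), and divide by $k$; this produces exactly the ratio displayed in the first bullet, with weights $w_\alpha := e^{\langle\alpha,\rho\rangle}/\QQ_{h_t^k}(\alpha)$. To see that the ratio is $O(k^{-2})$, multiply numerator and denominator by $e^{-k\phi_t(z)}$. The denominator then equals $\Pi_{h_t^k}(z,z) = a_0 k^m + O(k^{m-1})$ by the standard expansion (\ref{TYZ}), while the numerator is $k^{-1}$ times the sum whose bound is stated in Proposition \ref{COMPAREPIT}(1), hence is $O(k^{m-2})$. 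The quotient is therefore $O(k^{-2})$.

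For the $t$--derivative the same manipulation, combined with $\partial_t(1/\QQ_{h_t^k}(\alpha)) = \partial_t\log(1/\QQ_{h_t^k}(\alpha)) \cdot (1/\QQ_{h_t^k}(\alpha))$, gives
\[
\frac{1}{k}\,\partial_t \log \sum_{\alpha} \frac{|S_\alpha(z)|^2_{h_t^k}}{\QQ_{h_t^k}(\alpha)} \;=\; -\partial_t \phi_t(z) \;+\; \frac{1}{k}\cdot\frac{\sum_\alpha \partial_t\log\!\bigl(1/\QQ_{h_t^k}(\alpha)\bigr)\,w_\alpha}{\sum_\alpha w_\alpha}.
\]
Multiplying numerator and denominator of this ratio by $e^{-k\phi_t(z)}$ as in the previous step, Proposition \ref{COMPAREPIT}(2) (together with (\ref{TYZ})) says precisely that the ratio equals $k\,\partial_t \phi_t(z) + O(k^{-1})$. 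Dividing by $k$ and subtracting $\partial_t \phi_t(z)$ therefore yields $O(k^{-2})$, establishing the second bullet.

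There is no substantive obstacle here: the proposition is a repackaging of the derivative asymptotics for the contracted Szeg\"o kernel into the logarithmic form that will be applied in the sequel when differentiating $\phi_k(t,z) = k^{-1}\log Z_k(t,z)$. The only mild care needed is the bookkeeping with the common factor $e^{-k\phi_t(z)}$, which simultaneously converts the weighted sums into the Bergman--kernel quantities controlled by Proposition \ref{COMPAREPIT} and identifies the denominator, via (\ref{TYZ}), with $a_0 k^m$ to leading order. The calculation is written on the open orbit; near $\dcal$ one replaces $\partial/\partial\rho_j$ by the polar fields $\partial/\partial r_j$ of (\ref{R}) and carries out the identical manipulation in slice--orbit coordinates.
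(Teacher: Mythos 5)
Your proof is correct and follows exactly the route the paper intends: the paper itself remarks just above Proposition \ref{ONESZEGODER} that these identities ``follow in a straightforward way from Proposition \ref{COMPAREPIT}, using that $\Pi_{h^k}(z,z)\sim k^m$,'' which is precisely your logarithmic-differentiation and renormalization argument. One small bonus of your derivation: the displayed formula for the second bullet in the paper's statement is missing a factor of $\frac{1}{k}$ in front of the ratio (it should read $\frac{1}{k}\frac{\sum_\alpha \partial_t\log(1/\QQ_{h_t^k}(\alpha))\,w_\alpha}{\sum_\alpha w_\alpha}-\partial_t\phi_t$, as indeed appears in the later application in the proof of $C^1$-convergence), and your computation produces the correct prefactor.
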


\begin{prop} \label{TWOSZEGODER} We have:
\begin{enumerate}

\item $\frac{1}{k} \nabla^2_{\rho} \log \sum_{\alpha \in k P \cap
\Z^m} \frac{|S_{\alpha}(z)|^2_{h_t^k}}{\QQ_{h_t^k}(\alpha)} =
\frac{1}{ k}  \sum_{\alpha, \beta}(\alpha - \beta)^{\otimes 2}\;\;
 \frac{e^{\langle\alpha, \rho
\rangle}}{\QQ_{h_t^k}(\alpha)}\frac{e^{\langle \beta, \rho
\rangle}}{\QQ_{h_t^k}(\beta)}{\left(\sum_{\alpha}
\frac{e^{\langle\alpha, \rho
\rangle}}{\QQ_{h_t^k}(\alpha)}\right)^{-2}} -
\left(\frac{\partial^2 \phi_t}{\partial \rho_i \partial
\rho_j}\right) = O(\frac{1}{k^2}) $
\medskip

\item $\frac{1}{k}\frac{\partial} {\partial t} \nabla_{\rho} \log
\sum_{\alpha \in k P \cap \Z^m}
\frac{|S_{\alpha}(z)|^2_{h_t^k}}{\QQ_{h_t^k}(\alpha)} =
\frac{1}{k}
 \frac{\sum_{\alpha, \beta}(\alpha - \beta )\;\;
\partial_t \log  \left(
\frac{\QQ_{h_t^k} (\beta)}{\QQ_{h_t^k} (\alpha)} \right)
\frac{e^{\langle\alpha, \rho
\rangle}}{\QQ_{h_t^k}(\alpha)}\frac{e^{\langle \beta, \rho
\rangle}}{\QQ_{h_t^k}(\beta)} }{\left(\sum_{\alpha}
\frac{e^{\langle\alpha, \rho
\rangle}}{\QQ_{h_t^k}(\alpha)}\right)^2}  - \left(\frac{\partial^2
\phi_t}{\partial \rho_i \partial t}\right) = O(\frac{1}{k^2}). $

 \item
$$\begin{array}{l} \frac{1}{k}\frac{\partial^2} {\partial t^2}\log
\sum_{\alpha \in k P \cap \Z^m }
\frac{|S_{\alpha}(z)|^2_{h_t^k}}{\QQ_{h_t^k}(\alpha)} \\ \\  =
\frac{1}{k} \frac{\sum_{\alpha, \beta} \left( \partial_t^2 \log
\frac{1}{\QQ_{h_t^k(\alpha)}} + \;\;(\partial_t \log
\frac{1}{\QQ_{h_t^k}}) (\partial_t \log  ( \frac{\QQ_{h_t^k}
(\beta)}{\QQ_{h_t^k} (\alpha)}) \right) \frac{e^{\langle\alpha,
\rho \rangle}}{\QQ_{h_t^k}(\alpha)}\frac{e^{\langle \beta, \rho
\rangle}}{\QQ_{h_t^k}(\beta)} }{\left(\sum_{\alpha}
\frac{e^{\langle\alpha, \rho
\rangle}}{\QQ_{h_t^k}(\alpha)}\right)^2}
 - \left(\frac{\partial^2
\phi_t}{\partial \rho_i \partial t} \right) = O(\frac{1}{k^2}).
\end{array}$$

\end{enumerate}

\end{prop}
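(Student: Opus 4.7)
The plan is to derive Proposition \ref{TWOSZEGODER} by taking one further derivative of the formulas in Proposition \ref{ONESZEGODER}. Working on the open orbit and writing
\begin{equation*}
\log \Pi_{h_t^k}(z,z) \;=\; -k\,\phi_t(e^{\rho/2}) \;+\; \log S_k(t,\rho), \qquad S_k(t,\rho) := \sum_{\alpha \in kP\cap\Z^m} \frac{e^{\langle\alpha,\rho\rangle}}{\QQ_{h_t^k}(\alpha)},
\end{equation*}
and letting $w_\alpha := e^{\langle\alpha,\rho\rangle}/\QQ_{h_t^k}(\alpha)$ with associated probability weights $p_\alpha := w_\alpha/S_k$, the standard logarithmic-derivative identity
\begin{equation*}
\partial_A\partial_B \log S_k \;=\; \langle \partial_A\partial_B \log w\rangle_p \;+\; \mathrm{Cov}_p\bigl(\partial_A \log w,\;\partial_B \log w\bigr),
\end{equation*}
together with the symmetrization $\mathrm{Cov}_p(f,g) = \tfrac{1}{2}\sum_{\alpha,\beta}(f_\alpha - f_\beta)(g_\alpha - g_\beta)p_\alpha p_\beta$, reduces all three identities to a direct substitution.

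For the three parts I substitute $\partial_\rho \log w_\alpha = \alpha$ (so $\partial_\rho^2 \log w = 0$) and $\partial_t \log w_\alpha = -\partial_t \log \QQ_{h_t^k}(\alpha)$, recalling that $\nabla_\rho \phi_t = \mu_t(z)$. The contribution of the additive $-k\phi_t$ piece produces, in each case, the Hessian subtractions $\partial^2\phi_t/\partial\rho_i\partial\rho_j$, $\partial^2\phi_t/\partial\rho_i\partial t$, and $\partial^2\phi_t/\partial t^2$. The extra summand $\partial_t^2\log(1/\QQ_{h_t^k})$ inside the average in part (3) arises precisely because $\partial_t^2 \log w_\alpha$ does not vanish; by contrast, the absence of any analogous diagonal term in parts (1) and (2) reflects that $\partial_\rho^2 \log w$ and $\partial_\rho\partial_t\log w$ both vanish.

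The remaining $O(k^{-2})$ estimate in each part follows from the Tian--Yau--Zelditch expansion \eqref{TYZ} applied to $\Pi_{h_t^k}(z,z)$. Since its leading coefficient $a_0$ is a constant independent of $(z,t)$, and the subleading coefficients depend smoothly on $(z,t)$, one has
\begin{equation*}
\log \Pi_{h_t^k}(z,z) \;=\; m\log k \;+\; \log a_0 \;+\; \frac{a_1(z,t)}{a_0\, k} \;+\; O(k^{-2}),
\end{equation*}
so that any two $(\rho,t)$-derivatives of the right-hand side are $O(k^{-1})$, and division by $k$ gives the claimed $O(k^{-2})$. The smooth $t$-dependence of the TYZ coefficients is inherited from the smoothness in $t$ of the toric Monge--Amp\`ere geodesic $h_t$ (guaranteed by the explicit Legendre-dual formula \eqref{TORPHIT} and Proposition \ref{LINEAR}), together with the joint parameter dependence of the Boutet de Monvel--Sj\"ostrand parametrix behind Proposition \ref{PIKZW}.

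The main step requiring care is the uniformity of the remainder near the divisor at infinity $\dcal$, where the vector fields $\partial/\partial\rho_j$ degenerate; at such points the analogous estimates with the non-degenerate fields $\partial/\partial r_j$ from (\ref{R}) must be carried out in the slice-orbit coordinates of \S \ref{SOC}, using the Bargmann--Fock model of \S \ref{BF} to supply the local TYZ-type asymptotics with uniform remainders. This is the same uniformity already built into Proposition \ref{COMPAREPIT}, so once that is in hand, Proposition \ref{TWOSZEGODER} follows by the elementary two-derivative computation described above, exactly parallel to the proof of Proposition \ref{ONESZEGODER} applied one order higher.
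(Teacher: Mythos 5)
Your proof is correct, and it follows essentially the same route as the paper: the identity between the two left-hand expressions is a direct computation (you phrase it cleanly as the covariance formula for log-derivatives of a partition function, which is what the paper's raw-moment Proposition \ref{COMPAREPIT} amounts to after dividing by $\Pi_{h_t^k}$), and the $O(k^{-2})$ estimate in each part comes from the Tian--Yau--Zelditch expansion \rfe{TYZ}, using that $a_0$ is constant so that two $(\rho,t)$-derivatives of $\log\Pi_{h_t^k}$ are $O(k^{-1})$. The paper records Proposition \ref{TWOSZEGODER} as a ``straightforward'' consequence of Proposition \ref{COMPAREPIT} together with $\Pi_{h^k}(z,z)\sim k^m$; your covariance reformulation bypasses \ref{COMPAREPIT} but is the same argument repackaged, and your remark about replacing $\partial/\partial\rho_j$ by $\partial/\partial r_j$ near $\dcal$ matches the paper's treatment in Proposition \ref{ONESZEGODERr}.
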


Finally, we consider the analogous derivatives with respect to the
radial coordinates $r_j$ near $\dcal$. We assume $z$ is close to
the component of $\dcal$ given in local slice orbit coordinates by
$z' = 0$ and let $r' = (r_j)_{j = 1}^p$ denote polar coordinates
in this slice as discussed in \S \ref{TV}. The \szego kernel then
has the form \begin{equation} \label{PIWITHR} \Pi_{h_t^k}(z,  z)=
\sum_{\alpha \in k P \cap \Z^m} \frac{ \Pi_{j = 1}^p r_j^{2
\alpha_j} e^{\langle \rho'', \alpha''\rangle} e^{- k F_t(r_1^2,
\dots, r_p^2, e^{\rho_{p + 1}}, \dots,
e^{\rho_m})}}{\QQ_{h_t^k}(\alpha)}. \end{equation}  The
coefficients of the expansion (\ref{TYZ}) are smooth functions of
$r_j^2$ and the expansion may be differentiated any number of
times.

The behavior of $\Pi_{h_t^k}(z,  z)$ for $z \in \dcal$ has the new
aspect that many of the terms vanish.  The extreme case is where
$z$ is a fixed point. We choose the slice coordinates so that it
has coordinates $z = 0$. We observe that only the term with
$\alpha = 0$ in (\ref{PIWITHR}) is non-zero, and the $\alpha$th
term vanishes to order $|\alpha|$.

Since $\frac{\partial}{\partial r_j} = \frac{2}{ r_j}
\frac{\partial}{\partial \rho_j}$ where both are defined, the
calculations  above are only modified by the presence of new
factors of $\frac{2}{r_j}$ in each space derivative. Since we are
applying the derivative to functions of $r_j^2$, it is clear that
the apparent poles will be cancelled. Indeed, the $r_j$ derivative
removes any lattice point $\alpha$ with vanishing $\alpha_j$
component.  Comparing these derivatives with derivatives of
(\ref{PIWITHR}) gives the following:

\begin{prop} \label{ONESZEGODERr} For $n = 1, \dots, p$, we have:
\begin{itemize}

\item $\frac{1}{k} \frac{\partial}{\partial r_n}  \log
\Pi_{h_t^k}(z,z)  = \frac{\sum_{\alpha: \alpha_n \not= 0 }\frac{2
(\frac{\alpha_n}{k} - \mu_{t n} (z))}{ r_n} \;\; \frac{ \Pi_{j =
1}^p r_j^{2 \alpha_j} e^{\langle \rho'', \alpha''\rangle} e^{- k
F_t(r_1^2, \dots, r_p^2, e^{\rho_{p + 1}}, \dots,
e^{\rho_m})}}{\QQ_{h_t^k}(\alpha)}}{\sum_{\alpha} \frac{ \Pi_{j =
1}^p r_j^{2 \alpha_j} e^{\langle \rho'', \alpha''\rangle} e^{- k
F_t(r_1^2, \dots, r_p^2, e^{\rho_{p + 1}}, \dots,
e^{\rho_m})}}{\QQ_{h_t^k}(\alpha)}} = O(\frac{1}{k^2}) $

\end{itemize}

\end{prop}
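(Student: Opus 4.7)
The plan is to obtain this radial analogue of Proposition \ref{COMPAREPIT}(1) by directly differentiating the explicit series (\ref{PIWITHR}) for $\Pi_{h_t^k}(z,z)$ in slice-orbit coordinates and then invoking the TYZ expansion (\ref{TYZ}) to control the size of the logarithmic derivative. The calculation is parallel to what was carried out for $\partial/\partial \rho_j$ in Proposition \ref{COMPAREPIT}, the one new point being that $\partial/\partial r_n$ is a nonvanishing vector field up to (and across) the facet $\{r_n = 0\}$, which is precisely why we want to use it in order to eventually obtain $C^2$-estimates near $\dcal$.

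First I would differentiate the representative summand $T_\alpha := \prod_j r_j^{2\alpha_j}\, e^{\langle \rho'', \alpha''\rangle}\, e^{-kF_t}/\QQ_{h_t^k}(\alpha)$ term by term. The monomial factor contributes $\frac{2\alpha_n}{r_n} T_\alpha$, which is identically zero when $\alpha_n = 0$ (and, as $r_n^{2\alpha_n}$ is smooth in $r_n^2$, is finite for $\alpha_n \geq 1$). The Gaussian factor contributes $-k (\partial F_t/\partial r_n) T_\alpha$. Since $F_t$ is smooth in $r_n^2 = e^{\rho_n}$ (divisibility property, \S\ref{TV}) and $\mu_{tn}(z) = \partial \phi_t/\partial \rho_n$ by (\ref{MMDEF}), the chain rule gives $\partial F_t/\partial r_n = (2/r_n)\mu_{tn}(z)$. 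Collecting,
\[
\frac{\partial}{\partial r_n}\Pi_{h_t^k}(z,z) = \sum_{\alpha: \alpha_n \neq 0} \frac{2\alpha_n}{r_n}\, T_\alpha \;-\; \frac{2k\mu_{tn}(z)}{r_n}\, \Pi_{h_t^k}(z,z),
\]
and dividing by $\Pi_{h_t^k}(z,z)$ and regrouping yields the displayed ratio of sums in the statement.

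Second, for the $O(k^{-2})$ estimate I would invoke the TYZ expansion (\ref{TYZ}) for $\Pi_{h_t^k}(z,z)$ together with the divisibility observation that $\Pi_{h_t^k}(z,z)$, being a $\T$-invariant smooth function on $M$, is smooth in the variables $r_j^2$ in the slice-orbit chart; hence the expansion may be differentiated by $\partial/\partial r_n$ arbitrarily many times, uniformly on $M$. Because the leading coefficient $a_0$ is a universal constant, $\partial_{r_n}\Pi_{h_t^k}(z,z) = O(k^{m-1})$, while $\Pi_{h_t^k}(z,z) \geq c\, k^m$ for some $c>0$. Therefore $\frac{1}{k}\partial_{r_n}\log \Pi_{h_t^k}(z,z) = O(k^{-2})$, as claimed.

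The only real subtlety — and the main thing to check carefully — is that the TYZ expansion is valid in $C^\infty$ \emph{up to} the divisor at infinity when expressed in the radial variables $r_j$, so that differentiation by $\partial/\partial r_n$ really does kill one power of $k$. This is not automatic from the standard off-diagonal Bergman kernel asymptotics (which degenerate at $\dcal$), but follows from the $\T$-invariance of the contracted kernel $\Pi_{h_t^k}(z,z)$ together with the divisibility of smooth $\T$-invariant functions, which lets one write the expansion coefficients as smooth functions of $(r_1^2,\dots,r_p^2,e^{\rho_{p+1}},\dots,e^{\rho_m})$; the corresponding assertions for $\partial/\partial r_j$ were already discussed at the end of \S\ref{BS} in setting up the radial vector fields.
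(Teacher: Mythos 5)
Your argument is correct and follows essentially the same route as the paper: differentiate the series (\ref{PIWITHR}) term by term (equivalently, use $\partial/\partial r_n = (2/r_n)\,\partial/\partial\rho_n$ together with the chain rule identity $\partial F_t/\partial r_n = (2/r_n)\,\mu_{tn}$), then invoke the TYZ expansion (\ref{TYZ}) — with its constant leading coefficient and, by divisibility, coefficients smooth in $r_j^2$ up to $\dcal$ — to get $\partial_{r_n}\Pi_{h_t^k} = O(k^{m-1})$ and hence the $O(k^{-2})$ bound. The only caveat, shared by the paper's own displayed formula, is that the $\alpha_n=0$ terms also carry the smooth (pole-free, since $\mu_{tn}/r_n = O(r_n)$) contribution $-2\mu_{tn}/r_n$ after "regrouping," so the numerator sum is not literally restricted to $\alpha_n \neq 0$; this does not affect the order estimate.
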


In effect, the exponent $\alpha$ is taken to $ \alpha - (0, \dots,
1_n, \dots)$ in the sum or removed if $\alpha_n = 0$, where $(0,
\dots, 1_n, \dots)$ is the lattice point with only a $1$ in the
$n$th coordinate. There are similar formulae for the second
derivatives $\frac{\partial^2}{\partial r_n
\partial r_i}, \frac{\partial^2}{\partial r_n \partial t},
\frac{\partial^2}{\partial r_n \partial \rho_i}$. The only
important point to check  is that the modification changing
$\alpha$ to $ \alpha - (0, \dots, 1_n, \dots)$ does not affect the
proofs in \S \ref{C1}-\ref{C2}.

\section{\label{LOCALIZATIONB}Localization of Sums: Proof of Lemma \ref{LOCALIZATION}}

 The following
Proposition immediately implies Lemma \ref{LOCALIZATION}:

\begin{prop}\label{LOCALSUM}  Given $(t, z)$, and for any $\delta, C >
0$, there exists $C' > 0$ such that
$$\frac{|s_{\alpha}(z)|^2_{h_t^k}}{\QQ_{h_t^k}(\alpha)} =
\pcal_{h_t^k}(\alpha, z)
 = O(k^{-C}), \;\;\; \mbox{if}\;\; |\frac{\alpha}{k} - \mu_t(z)| \geq C' k^{- \frac{1}{2} + \delta}. $$
\end{prop}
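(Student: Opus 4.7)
The plan is to use Proposition~\ref{QPINV} to factor $\pcal_{h_t^k}(\alpha,z)$ into a uniformly $O(k^m)$ prefactor times a Gaussian-type factor in $|\alpha/k - \mu_t(z)|$, and to exploit uniform strict convexity of $u_{\phi_t}$ on $\overline{P}$ to produce super-polynomial decay in $k$ as soon as $|\alpha/k - \mu_t(z)|\ge C'k^{-1/2+\delta}$.

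Concretely, I would substitute $\QQ_{h_t^k}(\alpha)=e^{ku_{\phi_t}(\alpha/k)}/\pcal_{h_t^k}(\alpha)$ from Proposition~\ref{QPINV} into the definition of $\pcal_{h_t^k}(\alpha,z)$, and use the open-orbit expressions $|s_\alpha(z)|^2_{h_t^k}=e^{\langle\alpha,\rho\rangle-k\phi_t(\rho)}$ (where $z=e^{\rho/2+i\theta}$) and $u_{\phi_t}(\alpha/k)=\langle\alpha/k,\rho_\alpha\rangle-\phi_t(\rho_\alpha)$ with $\mu_t(e^{\rho_\alpha/2})=\alpha/k$. A direct calculation then yields
\[
\pcal_{h_t^k}(\alpha,z)=\pcal_{h_t^k}(\alpha)\,\exp\!\bigl(-k\,R_t(\rho,\rho_\alpha)\bigr),
\]
where $R_t(\rho,\rho_\alpha):=\phi_t(\rho)-\phi_t(\rho_\alpha)-\langle\nabla\phi_t(\rho_\alpha),\rho-\rho_\alpha\rangle\ge 0$ is the Bregman divergence of the convex potential $\phi_t$. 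Since $R_t\ge 0$, this identity gives $\pcal_{h_t^k}(\alpha,z)\le\pcal_{h_t^k}(\alpha)$ pointwise with equality at $z=\mu_t^{-1}(\alpha/k)$; summing over $\beta$ and invoking (\ref{TYZ}) then yields $\pcal_{h_t^k}(\alpha)\le\Pi_{h_t^k}(\mu_t^{-1}(\alpha/k),\mu_t^{-1}(\alpha/k))=O(k^m)$.

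The key step, and the main technical obstacle, is the Gaussian lower bound $R_t(\rho,\rho_\alpha)\ge\tfrac{c}{2}|\alpha/k-\mu_t(z)|^2$ with $c>0$ uniform in $t\in[0,1]$ and $(z,\alpha)$. By the Legendre duality of Bregman divergences, $R_t(\rho,\rho_\alpha)=D_{u_{\phi_t}}(\alpha/k,\mu_t(z))$ with $D_u(y,x):=u(y)-u(x)-\langle\nabla u(x),y-x\rangle$, and the integral Taylor formula rewrites this as $\int_0^1(1-s)(y-x)^{\top}\nabla^2 u_{\phi_t}(x+s(y-x))(y-x)\,ds$; it therefore suffices to prove $\nabla^2 u_{\phi_t}(x)\ge cI$ uniformly on $\overline{P}\times[0,1]$. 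This follows from the Legendre duality of Hessians $\nabla^2 u_{\phi_t}(\mu_t(\rho))=[\nabla^2_\rho\phi_t(\rho)]^{-1}$, which gives $\lambda_{\min}(\nabla^2 u_{\phi_t})\ge 1/\sup_M\|\nabla^2_\rho\phi_t\|_{\mathrm{op}}$; the supremum is finite because $\phi_0,\phi_1$ are \kahler potentials of smooth metrics on the compact manifold $M$ (and $\nabla^2_\rho\phi_j$ appears as the metric tensor in~(\ref{OMHESSPHI})), and the linearization~(\ref{UT}), $u_{\phi_t}=(1-t)u_{\phi_0}+tu_{\phi_1}$, propagates the uniform lower bound through $t\in[0,1]$. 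Combining the two estimates gives, for $|\alpha/k-\mu_t(z)|\ge C'k^{-1/2+\delta}$,
\[
\pcal_{h_t^k}(\alpha,z)\le Ck^m\exp\!\bigl(-\tfrac{c}{2}\,k\,|\alpha/k-\mu_t(z)|^2\bigr)\le Ck^m\exp\!\bigl(-\tfrac{cC'^2}{2}k^{2\delta}\bigr)=O(k^{-C})
\]
for every $C>0$. Finally, for $z\in\dcal$: either $s_\alpha(z)=0$, in which case $\pcal_{h_t^k}(\alpha,z)=0$ trivially, or $z$ lies on a lower-dimensional toric orbit and the same argument applies verbatim in the slice-orbit coordinates of \S\ref{SOC} on the reduced toric subvariety through $z$.
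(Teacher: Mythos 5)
Your proof is correct, and it takes a genuinely different route from the paper's. The paper's proof (\S\ref{LOCALIZATIONB}) represents $\pcal_{h_t^k}(\alpha,z)$ as a Fourier coefficient of the lifted Szeg\H{o} kernel over the torus orbit (Proposition~\ref{INTEGRAL}), then obtains super-polynomial decay by repeated integration by parts against the complex phase, using the lower bound $|\nabla_\theta\Phi|\ge |\mu_\C(z,e^{i\theta}z)-\alpha/k|$ together with a careful symbol-class analysis to ensure each partial integration gains a factor $k^{-2\delta'}$. The authors even remark afterward that the decay ``is not a matter of being far from the center of the Gaussian'' --- meaning decay in $\theta$ alone is insufficient --- which is why they go through the oscillatory-integral machinery. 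Your argument sidesteps the oscillatory integral entirely. Instead of analyzing a $\theta$-integral you use the purely algebraic factorization $\pcal_{h_t^k}(\alpha,z)=\pcal_{h_t^k}(\alpha)\,e^{-kD_{\phi_t}(\rho,\rho_\alpha)}$, which is a direct consequence of Proposition~\ref{QPINV} and the Legendre-transform definition (\ref{SYMPOTDEF}); the Bregman-divergence duality $D_{\phi_t}(\rho,\rho_\alpha)=D_{u_t}(\alpha/k,\mu_t(z))$ converts this into a quadratic lower bound in the very quantity $|\alpha/k-\mu_t(z)|$ being controlled, given the uniform estimate $\nabla^2 u_{\phi_t}\ge cI$. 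That uniform strict convexity follows from $G_{\phi}=H_\phi^{-1}$ (\ref{GINV}) and the compactness argument you give, and the linear interpolation $u_t=(1-t)u_0+tu_1$ transports it uniformly in $t$. Combined with the trivial bound $\pcal_{h_t^k}(\alpha)\le\Pi_{h_t^k}(\mu_t^{-1}(\alpha/k),\cdot)=O(k^m)$ from (\ref{TYZ}), the Gaussian upper bound $\pcal_{h_t^k}(\alpha,z)\le Ck^m e^{-\frac{c}{2}k|\alpha/k-\mu_t(z)|^2}$ follows, which is actually \emph{stronger} than the statement (it gives super-polynomial decay uniformly in $C$ for any fixed $C'>0$). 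What the paper's approach buys is robustness: the integration-by-parts machinery does not rely on the global analytic structure of toric potentials and can be carried over to the almost-analytic and non-toric settings the authors pursue in subsequent work, whereas your factorization is a specifically toric identity. What your approach buys is transparency and strength: a pointwise Gaussian bound with an explicit constant, using only convex duality and the diagonal Bergman expansion, with no microlocal input at all. One small point of care: the chain of identities through $\rho_\alpha=\mu_t^{-1}(\alpha/k)$ is literal only for $\alpha/k\in P^o$ and $z$ in the open orbit; for $\alpha/k\in\partial P$ or $z\in\dcal$ the Bregman divergence is to be understood as $+\infty$ whenever the segment exits the effective domain, which is consistent with the left-hand side vanishing (since $s_\alpha(z)=0$ in exactly those cases), and your closing remark about reducing to the lower-dimensional toric stratum in slice-orbit coordinates correctly handles the remaining case.
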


\begin{proof}

The proof is based on  integration by parts.   All of the
essential issues occur in the  Bargmann-Fock model, so we first
illustrate with that case.

\subsection{Bargmann-Fock case}

 To analyze the decay of $\pcal_{h_{BF}^k}(\alpha, z)$ as a function of lattice points
$\alpha$, it seems simplest to use the integral formula
(suppressing the factor $k^m$ and normalizing the volume of $\T$
to equal one),
\begin{equation} \label{BFPZA} k^{|\alpha|}
\frac{|z^{\alpha}|^2}{\alpha!} e^{- k |z|^2} = (2 \pi)^{-m}
\int_{T^m} e^{- k \left(|z|^2 (1 - e^{i \theta}) - i \langle
\frac{\alpha}{k}, \theta \rangle \right)} d \theta = e^{- k |z|^2}
(2 \pi)^{-m} \int_{T^m} e^{ k \left(|z|^2  e^{i \theta}) - i
\langle \frac{\alpha}{k}, \theta \rangle \right)} d \theta.
\end{equation}

We  observe that the  rightmost expression in (\ref{BFPZA})  is
$e^{- k |z|^2}$ times a complex oscillatory integral with phase
$$\Phi_{z, \frac{\alpha}{k}}(\theta) =    |z|^2  (e^{i \theta} - 1) -  i \langle
\frac{\alpha}{k}, \theta \rangle.  $$ We observe that (consistent
with Proposition \ref{MUCEQ}),
$$\nabla_{\theta} \Phi_{z, \frac{\alpha}{k}} (\theta) =  i( |z|^2  e^{i \theta}  -
\frac{\alpha}{k})  = 0 \iff e^{i \theta} |z|^2 = |z|^2 =
\frac{\alpha}{k}.
$$ Further, we claim that
\begin{equation} \label{GRADLB} |\nabla_{\theta} \Phi_{z, \frac{\alpha}{k}} (\theta)| \geq |
|z|^2 - \frac{\alpha}{k}|.  \end{equation}  Indeed,  the function
$$f_{z, \alpha}(\theta) : = \left| e^{i \theta} |z|^2 -
\frac{\alpha}{k} \right|^2 = \sum_{j = 1}^m \left(\cos \theta_j \;
|z_j|^2 - \frac{\alpha_j}{k} \right)^2 + \left(\sin \theta_j\;
|z_j|^2 \right)^2
$$ on $\T$ has a strict global minimum at $\theta = 0$ as long as
$|z_j|^2 \not= 0, \frac{\alpha_j}{k} \not= 0$ for all $j$. It
still has a global minimum without these restrictions, but the
minimum is no longer strict. We note that this discussion of
global minima is possible only because the \kahler potential
admits a global analytic continuation in $(z,w)$;  in general,
one can only analyze critical points near the diagonal.

We integrate  by parts with the operator
\begin{equation} \label{ELL} \lcal = \frac{1}{k} \frac{1}{|\nabla_{\theta}  \Phi_{z,
\frac{\alpha}{k}|^2}} \overline{\nabla_{\theta} \Phi_{z,
\frac{\alpha}{k}}} \cdot \nabla_{\theta},
\end{equation} i.e., we apply its transpose
\begin{equation} \label{ELLT} \lcal^t = - \frac{1}{k} \frac{1}{|\nabla_{\theta}  \Phi_{z, \frac{\alpha}{k}}|^2}
\nabla_{\theta} \Phi_{z, \frac{\alpha}{k}} \cdot \nabla_{\theta} -
\frac{1}{k} \nabla_{\theta} \cdot \frac{1}{|\nabla_{\theta}
\Phi_{z, \frac{\alpha}{k}}|^2} \nabla_{\theta} \Phi_{z,
\frac{\alpha}{k}}
\end{equation}
to the amplitude. The second (divergence) term is $-1$ times
\begin{equation} \label{DIVTERM} \frac{1}{k} \frac{\nabla \cdot \nabla \Phi_{z,
\frac{\alpha}{k}} }{|\nabla \Phi_{z, \frac{\alpha}{k}}|^2} +
\frac{1}{k} \frac{ \langle \nabla^2 \Phi_{z,
\frac{\alpha}{k}}\cdot \nabla \Phi_{z, \frac{\alpha}{k}}, \nabla
\Phi_{z, \frac{\alpha}{k}} \rangle}{|\nabla \Phi_{z,
\frac{\alpha}{k}}|^4}. \end{equation}

We will need to take into account the  $k$-dependence of the
coefficients, and therefore   introduce some standard spaces of
semi-classical symbols.
 We denote by $S^n_{\delta}(\T)$ the class of smooth
functions $a_k(\theta)$ on $\T \times \N$ satisfying
\begin{equation} \label{SYMBCAL}  \sup_{e^{i \theta} \in \T} \; \left|
D_{\theta}^{\gamma} a_k(\theta) \right| \leq C k^{n + |\gamma|
\delta}. \end{equation} Here we use multi-index notation
$D_{\theta}^{\gamma} = \prod_{j = 1}^m (\frac{\partial}{i
\partial \theta_j})^{\gamma_j}$. Thus, each $D_{\theta_j}$
derivative gives rise to an extra order of $k^{\delta}$ in
estimates of $a_k$. We note that products of symbols satisfy
\begin{equation} \label{MULTSYMB} S_{\delta}^{n_1} \times S_{\delta}^{n_2} \subset
S_{\delta}^{n_1 + n_2}. \end{equation}

We now claim that (with $\delta$ the same as in the statement of
the Proposition),
\begin{enumerate}

\item $\frac{\nabla_{\theta} \Phi_{z,
\frac{\alpha}{k}}}{|\nabla_{\theta} \Phi_{z, \frac{\alpha}{k}}|^2}
  \in S^{\frac{1}{2} - \delta}_{\frac{1}{2} - \delta}$

\item (\ref{DIVTERM}) lies in $ S^{- 2 \delta}_{\frac{1}{2} -
\delta}$.
\end{enumerate}
In (2), we note the pre-factor $\frac{1}{k}$.  To prove the claim,
we first observe that the sup norm estimates are correct by
(\ref{GRADLB}) and from the fact that $\frac{ \nabla \Phi_{z,
\frac{\alpha}{k}} }{|\nabla \Phi_{z, \frac{\alpha}{k}}|}$ is a
unit vector.
 We further  consider derivatives of (1)-(2). Each $\theta$ derivative essentially introduces one more factor
of $k |\nabla_{\theta} \Phi_{z, \frac{\alpha}{k}}|$ and hence
raises the order by $k^{\frac{1}{2} - \delta}$. This continues to
be true for iterated derivatives, proving the claim.

 Now we observe that
\begin{equation} \label{LACTS} \lcal^t: S^n_{\frac{1}{2} - \delta}
\to S^{n - 2 \delta}_{\frac{1}{2} - \delta}. \end{equation}
Indeed, the first term of   $\lcal^t$ is the composition of (i)
$\nabla_{\theta}$, which raises the order by $\frac{1}{2} -
\delta$, (ii) multiplication by an element of $S^{\frac{1}{2} -
\delta}_{\frac{1}{2} - \delta}$ which again raises the order by
$\frac{1}{2} - \delta$ (iii)  times $\frac{1}{k}$ which lowers the
order by $1$. The second term is a multiplication by $\frac{1}{k}$
times an element of $ S^{1- 2 \delta}_{\frac{1}{2} - \delta}$ and
thus also lowers the order by $2 \delta$.

It follows that each partial integration by $\lcal$ introduces
decay of $k^{-2 \delta}$, hence for any $M > 0$,

$$\begin{array}{lll} (\ref{BFPZA}) & = &  e^{- k |z|^2}
(2 \pi)^{-m} \int_{T^m} e^{ k \left(|z|^2  e^{i \theta}) - i
\langle \frac{\alpha}{k}, \theta \rangle \right)} ((\lcal^t)^M 1
)d \theta
\\ &&
\\ && = O \left( k^{-2 \delta} \right)^{ M} e^{- k |z|^2} \int_{T^m} e^{ k
\Re  (|z|^2  e^{i \theta})} d\theta  = O(k^{- 2 \delta M})
\end{array}$$ in this region.

\subsection{\label{PCALALZ}General case}

We now generalize this argument from the model case to the general
one. With no loss of generality we may choose coordinates so that
$z$ lies in a fixed compact subset of  $\C^m$, where the open
orbit is identified with $(\C^*)^m$. In the open orbit we continue
to write $|z|^2 = e^{\rho}$.  The first step is to obtain a useful
oscillatory integral formula for $\pcal_{h^k}(\alpha, z) $. By
Proposition \ref{INTEGRAL} and Proposition \ref{SZKTV}, we have
\begin{equation} \label{TORICPCAL} \pcal_{h^k}(\alpha, z) = (2 \pi)^{-m}
\int_{T^{m}} e^{ k (F_{\C}(e^{i \theta} |z|^2) - F(|z|^2))}\;
\chi(d(z,e^{i \theta} z)) \; A_k \big( z,
 e^{i \theta} z , 0 \big) e^{i \langle \alpha, \theta \rangle} d
\theta \; + O(k^{-\infty}).\end{equation} The  phase is given by
\begin{equation} \label{PHIDEFIN} \Phi_{z,
\frac{\alpha}{k}}(\theta) = F_{\C}(e^{i \theta} |z|^2) - F(|z|^2)
- i \langle \frac{\alpha}{k}, \theta \rangle \end{equation} where
as above, $F_{\C}(e^{i \theta} |z|^2)$ is the almost analytic
continuation of the \kahler potential $F(|z|^2)$ to $M \times M$.
By (\ref{DAMPED1}) and (\ref{HESSDIA}), it  satisfies
\begin{equation} \label{LBD} \Re (F_{\C}(e^{i \theta} |z|^2) - F(|z|^2))) \leq - C  d(z, e^{i
\theta} z)^2, \;\; (\mbox{for some}\;\; C > 0).
\end{equation} Hence, the integrand (\ref{TORICPCAL}) is  rapidly decaying on the set
of $\theta$  where $d(z,e^{i \theta} z)^2 \geq C \frac{ \log
k}{k}$   (see also (\ref{AGMON})),  and  we may replace
$\chi(d(z,e^{i \theta} z))$ by $\chi (k^{\frac{1}{2} - \delta'}
d(z,e^{i \theta} z)) \in S^{0}_{\frac{1}{2} - \delta'}$, since
 the contribution from $1 - \chi (k^{\frac{1}{2}
- \delta'} d(z,e^{i \theta} z))$ is rapidly decaying. Here,
$\delta'$ is an arbitrarily small constant and we may choose it so
that $\delta' < \delta$ in the Proposition.  (We did not use such
cutoffs in the Bargmann-Fock case since the real analytic
potential had a global analytic extension with obvious properties,
but as in \S \ref{AAEa}, it is necessary for almost analytic
extensions).

The set  $d(z,e^{i \theta} z) \leq C \frac{
k^{\delta'}}{\sqrt{k}}$ depends strongly on the position of $z$
relative to $\dcal$, or equivalently on the position of $\mu_h(z)$
relative to $\partial P$. For instance, if $z$ is a fixed point
then $d(z, e^{i \theta} z) = 0$ for all $\theta$. However, we will
not need to analyze these sets until the next  section.

We now generalize the integration by parts argument. Our goal is
to prove that $\pcal_{h_t^k}(\alpha, z)
 = O(k^{-C})$  if $ |\frac{\alpha}{k} - \mu_t(z)| \geq C k^{- \frac{1}{2} +
 \delta}$.
Now, the  gradient in $\theta$  of the phase of (\ref{TORICPCAL})
is given by
\begin{equation}\label{NABLA}  \nabla_{\theta} \Phi(z, \frac{\alpha}{k})(\theta)
= \nabla_{\theta} F_{\C} (e^{i \theta} |z|^2) - i \frac{\alpha}{k}
= i (\mu_{\C}(z, e^{i \theta} z) - \frac{\alpha}{k}),
\end{equation} where $\mu_{\C}(z, e^{i \theta} z)$ is the almost
analytic extension of the moment map (see \S \ref{AAEa}). The
following Lemma is obvious, but we display it to highlight the
relations between the small parameters $\delta$ of the Proposition
and $\delta'$ in our choice of cutoffs.

\begin{lem}\label{USESMUCEQ}  If  $ |\frac{\alpha}{k} - \mu_t(z)| \geq C k^{- \frac{1}{2} +
 \delta}$, and if $d(z,e^{i \theta} z) \leq C k^{- \frac{1}{2} + \delta'}$ with
 $\delta' < \delta$ , then $|(\mu(z, e^{i \theta} z) -
 \frac{\alpha}{k})| \geq C' k^{- \frac{1}{2} +
 \delta}$.
\end{lem}

\begin{proof} By Proposition \ref{MUCEQ},
$$\begin{array}{lll}  |(\mu(z, e^{i \theta} z) -
 \frac{\alpha}{k})|^2&  = & |(\Re \mu(z, e^{i \theta} z) -
 \frac{\alpha}{k})|^2 + |\frac{1}{2}
\nabla_{\theta} D(z, e^{i \theta} z)|^2  \\ && \\ & \geq &  |(
\mu(z) -
 \frac{\alpha}{k})|^2  + O(d(e^{i \theta} z, z)). \end{array} $$

 \end{proof}

It follows that under the assumption  $ |\frac{\alpha_k}{k} -
\mu_t(z)| \geq C k^{- \frac{1}{2} +
 \delta}$ of the Proposition,
  we  may  integrate by parts with the
operator \begin{equation} \label{LCAL2}
\begin{array}{lll}\lcal& = &\frac{1}{k} |\nabla_{\theta} \Phi_{z, \frac{\alpha}{k}} |^{-2}\;  \nabla_{\theta}
 \Phi_{z, \frac{\alpha}{k}}  \cdot \nabla_{\theta}
\end{array}. \end{equation}
 The transpose $\lcal^t$ has the same
form (\ref{ELLT}) as for the Bargmann-Fock example, the only
significant change being  that it is now applied to a non-constant
amplitude $A_k$  and to the cutoff $\chi (k^{\frac{1}{2} -
\delta'} d(z,e^{i \theta} z)) \in S^{0}_{\frac{1}{2} - \delta'}$
as well as to its own coefficients. Differentiations of  $A_k$ do
preserve the orders of terms; the  only significant change in the
symbol analysis in the Bargmann-Fock case is that differentiations
of $\chi (k^{\frac{1}{2} - \delta'} d(z,e^{i \theta} z))$ bring
only improvements of order $k^{-\delta'}$ rather than $k^{-
\delta}$. However, the order still decreases  by at least $2
\delta'$ on each partial integration, and therefore
 repeated integration by parts
again gives the estimate
\begin{equation} |\pcal_{h^k}(\alpha, z)| = O \left( ( k^{-\delta'})^{M}  \int_{\T} e^{k (\Re F(e^{i \theta}
|z|^2) - F(|z|^2) } d\theta \right)  = O \left( (k^{-\delta'}
)^{M} \right).
\end{equation}

\end{proof}

\begin{rem} It is natural to use integration by parts in this
estimate since the decay in $\mu_t(z) - \frac{\alpha}{k}$ must use
the imaginary part of the phase and is not a matter of being far
from the center of the Gaussian.
\end{rem}

\subsection{Further details on the phase}

For future reference (see Lemma \ref{FIRSTCASE}),  we Taylor
expand the phase (\ref{PHIDEFIN}) in the $\theta$ variable  to
obtain
\begin{equation} \label{PHASEANALYSIS} \Phi_{z, \frac{\alpha}{k}}(\theta)  = i \langle \mu(z) - \frac{\alpha}{k},
\theta \rangle  + \langle H_{\frac{\alpha}{k}} \theta, \theta
\rangle + R_3(k, e^{i \theta} \mu^{-1}(\frac{\alpha}{k})),
\end{equation} where $R_3 = O(|\theta|^3)$. Here, $H_{\frac{\alpha}{k}} = \nabla^2 F (\mu^{-1}(\frac{\alpha}{k}))$ denotes the Hessian
of $\phi$ at $\frac{\alpha}{k}$ (see (\ref{GINV}) of \S
\ref{KPSP})   Indeed, we have

\begin{equation}  \begin{array}{lll}
F_{\C}( e^{i \theta} |z|^2) - F( |z|^2) &=& \int_0^1 \frac{d}{dt}
F_{\C}( e^{i t \theta} |z|^2) dt \\ &&\\ & = & \int_0^1 \langle
\nabla_{\theta} F(e^{i t \theta} |z|^2), i \theta \rangle
dt \\ && \\
& = & \langle \nabla_{\rho} F(e^{\rho})),  (i \theta) \rangle
 + \int_0^1
(t - 1) \nabla_{\rho}^2 (F(e^{i t \theta +  \rho})) (i \theta)^2/2
dt  \\ && \\
& = & i \langle \mu(z),  \theta \rangle + \nabla_{\rho}^2
(F(e^{\rho})) (i \theta)^2  + R_3(k, e^{i \theta} \mu^{-1}(\frac{\alpha}{k}), \theta) \\ && \\
& = & i \langle \mu(z),  \theta \rangle +  \langle H_{z} \theta,
\theta \rangle + R_3(k, \theta, \mu^{-1}(\frac{\alpha}{k}) ),
\end{array} \end{equation}
in the notation (\ref{GINV}),  where $H_z = \nabla^2_{\rho}
F(|z|^2)$ and where
\begin{equation} \label{REMAINDERb} R_3( k, \theta, \rho) : = \int_0^1 (t - 1)^2
\langle \nabla_{\rho}^3 (F(e^{i t \theta +  \rho})), (i
\theta)^3/3!\rangle dt .
\end{equation}

\section{Proof of Regularity Lemma \ref{MAINLEM} and joint asymptotics of $\pcal_{h^k}(\alpha)$}

The first statement that $\rcal_{\infty}(t, x)$ is $C^{\infty}$ up
to the boundary follows from  (\ref{VOLDEN}),
\begin{equation}\begin{array}{lll}  \rcal_{\infty}(t, x)& = & \left(
\frac{\delta_{\phi_t}(x)\cdot\prod_{r=1}^{d}
\ell_{r}(x)}{\left(\delta_{\phi_0}(x)\cdot\prod_{r=1}^{d}
\ell_{r}(x))\right)^{1-t} \left(
\delta_{\phi_1}(x)\cdot\prod_{r=1}^{d} \ell_{r}(x)\right)^{t}}
\right)^{1/2} \\ && \\
& = & \left( \frac{\delta_{\phi_t}(x)}{\delta_{\phi_0}(x)^{1-t}
 \delta_{\phi_1}(x)^{t}} \right)^{1/2}, \end{array}
 \end{equation}
where the functions $\delta_{\phi}$ are positive, bounded below by
strictly positive constants and  $C^{\infty}$ up to $\partial P$.

We now consider the asymptotics of $\rcal_k(t, \alpha)$. We
determine the asymptotics of the ratio by first determining the
asymptotics of the factors of the ratio. We  could use either the
expression (\ref{QRATIO})  in terms of norming constants
$\QQ_h^k(\alpha)$ for   the dual expression in terms of
$\pcal_{h^k}(\alpha)$ in Corollary \ref{RP}. Each approach has its
advantages and each seems of interest in the geometry of toric
varieties, but for the sake of simplicity we only consider
$\pcal_{h^k}(\alpha)$ here. In \cite{SoZ} we take the opposite
approach of focusing on the norming constants.  The advantage of
using $\pcal_{h^k}(\alpha)$ is that it may be represented by a
smooth complex oscillatory integral up to the boundary, while
$\QQ_h^k(\alpha)$ are singular oscillatory integrals over $P$. A
disadvantage of $\pcal_{h^k}(\alpha)$ is that it does not extend
to a smooth function on $\bar{P}$ and has singularities on
$\partial P$.

The asymptotics of $\pcal_{h^k}(\alpha)$ are straightforward
applications of steepest descent in compact subsets of $M
\backslash \dcal$ but become non-uniform at $\dcal$. To gain
insight into the general problem we again consider first   the
 Bargmann-Fock model, where by (\ref{PCALBF}) we have
\begin{equation} \label{BFPZAa} \pcal_{h_{BF}^k}(\alpha) = k^m
e^{-|\alpha|} \frac{\alpha^{\alpha}}{\alpha!} = (2 \pi)^{-m} k^m
\int_{\T} e^{ k \langle e^{i \theta} - 1 - i \theta,
\frac{\alpha}{k} \rangle } d \theta.\end{equation} As observed
before, the factors of $k$ cancel so `asymptotics' means
asymptotics as
 $\alpha \to \infty$. This indicates that we do not have
asymptotics when $\alpha$ ranges over a bounded set, or
equivalently when $\frac{\alpha}{k}$ is $\frac{C}{k}$-close to a
corner. On the other hand, steepest descent asymptotics applies in
a coordinate $\alpha_j$ as long as $\alpha_j \to \infty$.  Our aim
in general is to obtain steepest descent asymptotics of
$\pcal_{h^k}(\alpha)$ in directions far from facets and
Bargmann-Fock asymptotics in directions near  a facet.

\subsection{Asymptotics of  $\pcal_{h^k}(\alpha)$}

The analysis of  $\pcal_{h^k}(\alpha)$ is closely related to the
analysis of $\pcal_{h^k}(\alpha, z)$ in \S \ref{PCALALZ}, and in a
sense is a continuation of it. But the arguments are now more than
integrations-by-parts.  We obtain the asymptotics of
$\pcal_{h^k}(\alpha)$ from the integral representation analogous
to (\ref{TORICPCAL}) (see also Proposition \ref{SZKTV} and
Corollary \ref{INTEGRALCOR}). Modulo rapidly decaying functions in
$k$, we have (in the notation of Proposition \ref{SZKTV}),

\begin{equation} \label{INTEGRALa}\begin{array}{lll} \pcal_{h^k}(\alpha)
&\sim & (2 \pi)^{-m}  \int_{\T } e^{ - k (F_{\C}( e^{i \theta}
\mu_h^{-1}(\frac{\alpha}{k})) - F( \mu_h^{-1}(\frac{\alpha}{k})))}
A_k \big( e^{i \theta}  \mu_h^{-1}(\frac{\alpha}{k}),
 \mu_h^{-1}(\frac{\alpha}{k}) , 0, k\big) e^{i \langle \alpha, \theta \rangle} d
\theta. \end{array}
\end{equation}
This largely reduces the asymptotic calculation of $\pcal_{
h^k}(\alpha)$ to facts about the off-diagonal asymptotics of the
\szego kernel (cf. Proposition \ref{SZKTV}).

The integral (\ref{INTEGRALa}) is the oscillatory integral
(\ref{TORICPCAL}) but with $z = \mu^{-1}(\frac{\alpha}{k})$.
Hence, as in (\ref{PHIDEFIN}), its phase is  \begin{equation}
\label{PHASEALPHA} \Phi_{\frac{\alpha}{k}}(\theta) = F_{\C}(e^{i
\theta} \mu^{-1}(\frac{\alpha}{k})) -
F(\mu^{-1}(\frac{\alpha}{k})) - i \langle \frac{\alpha}{k}, \theta
\rangle.  \end{equation}  As in (\ref{DAMPED1}) and (\ref{LBD})
(but with $i$ included in as part of the phase),
\begin{equation} \label{LBDb} \Re \Phi_{\frac{\alpha}{k}}(\theta) \leq - C  d(\mu^{-1}(\frac{\alpha}{k}), e^{i
\theta} \mu^{-1}(\frac{\alpha}{k}))^2, \;\; (\mbox{for some}\;\; C
> 0).
\end{equation}
Specializing  (\ref{NABLA}) to our $z =
\mu^{-1}(\frac{\alpha}{k})$, we get
\begin{equation}\label{NABLAa}  \nabla_{\theta}
\Phi_{\frac{\alpha}{k}}(\theta)   = \nabla_{\theta} F_{\C} (e^{i
\theta} \mu^{-1}(\frac{\alpha}{k})) - i \frac{\alpha}{k} = i
(\mu_{\C}(\mu^{-1}(\frac{\alpha}{k}), e^{i \theta}
\mu^{-1}(\frac{\alpha}{k})) - \frac{\alpha}{k}). \end{equation} By
Proposition \ref{MUCEQ},  the complex phase has  a critical point
 at values of  $\theta $ such that $d(z, e^{i \theta} z) \leq \delta, $ and $e^{i \theta}
\mu^{-1}(\frac{\alpha}{k}) = \mu^{-1}(\frac{\alpha}{k})$.  For
$\frac{\alpha}{k} \notin \partial P$, the only critical point is
therefore  $\theta = 0$. The phase then  equals zero, and hence at
the critical point the real part of the phase is at its maximum of
zero.

For $\frac{\alpha}{k} \notin \partial P$,  the critical point
$\theta = 0$ is non-degenerate. Specializing (\ref{PHASEANALYSIS})
to $z = \mu^{-1}(\frac{\alpha}{k})$, we have
\begin{equation} \label{TAYLOREXP} \begin{array}{lll}
F_{\C}( e^{i \theta} \mu_h^{-1}(\frac{\alpha}{k})) - F(
\mu_h^{-1}(\frac{\alpha}{k})) &=& \int_0^1 \frac{d}{dt} F_{\C}(
e^{i t \theta} \mu_h^{-1}(\frac{\alpha}{k})) dt\\ && \\ & = & i
\langle \frac{\alpha}{k} , \theta \rangle + i \langle
H_{\frac{\alpha}{k}} \theta, \theta \rangle + R_3(k, \theta,
\mu^{-1}(\frac{\alpha}{k}),
\end{array} \end{equation} where $R_3$ is defined in
(\ref{REMAINDERb}). Hence,
\begin{equation} \Phi_{\frac{\alpha}{k}}(\theta)  = \langle H_{\frac{\alpha}{k}} \theta, \theta
\rangle + R_3(\theta, k, \mu^{-1}(\frac{\alpha}{k})),
\end{equation} and finally,
 \begin{equation}\label{FINALCASEONE}
\begin{array}{lll}
 \pcal_{h^k}(\alpha) &\sim&
(2 \pi)^{-m} \int_{\T }  e^{ - k \langle H_{\frac{\alpha}{k}}
\theta, \theta \rangle} e^{k  R_3(\theta, k,
\mu^{-1}(\frac{\alpha}{k})) } A_k \big(
\mu_h^{-1}(\frac{\alpha}{k}),
 e^{i \theta}  \mu_h^{-1}(\frac{\alpha}{k}) , 0, k\big) d
\theta \end{array}.
\end{equation}
Non-degeneracy of the phase is the statement that
$H_{\frac{\alpha}{k}}$ is a non-degenerate symmetric matrix, and
this follows from strict convexity of the \kahler potential or
symplectic potential, see (\ref{GINV}). But as discussed in \S
\ref{KPSP}, the $H_{\frac{\alpha}{k}}$ has a kernel when
$\frac{\alpha}{k} \in \partial P$. Hence the stationary phase
expansion is non-uniform for $\frac{\alpha}{k} \in P$ and is not
possible when $\frac{\alpha}{k} \in \partial P$. This accounts for
the fact that we need to break up the analysis into several cases,
and that we cannot rely on the complex stationary phase method for
all of them.

 Specializing (\ref{GAUSSIAN}) and (\ref{AGMON}), we have
\begin{equation} \label{AGMON2}
| \; \Pi_{h^k}(e^{i \theta } \mu_h^{-1}(\frac{\alpha}{k}),
\mu_h^{-1}(\frac{\alpha}{k})) |\;  \leq C  k^m e^{- C k
d(\frac{\alpha}{k}), e^{i \theta} \frac{\alpha}{k}))^2} + O(e^{- C
\sqrt{k} d(z,e^{i \theta} z)}).
\end{equation} Hence, the integrand of (\ref{INTEGRALa}) is
negligible off the set of $\theta$ where
$d(\mu^{-1}(\frac{\alpha}{k}) ,e^{i \theta}
\mu^{-1}(\frac{\alpha}{k}) ) \leq C \frac{\log k}{\sqrt{k}}.$ We
now observe that for  $d(z,e^{i \theta} z) \leq C \frac{
k^{\delta}}{\sqrt{k}}$,
\begin{equation} \label{DISTANCEq}  d(e^{i \theta} z,
z)^2 \sim \sum_j (1 - \cos \theta_j) \ell_j(\mu(z)),
\end{equation}
where we sum over $j$ such that $|\ell_j(\mu(z))| << 1$ (we will
make this precise in Definition \ref{CLOSE}). In particular,
\begin{equation} \label{DISTANCEqa}  d(e^{i \theta} \mu_h^{-1}(\frac{\alpha}{k}),
\mu_h^{-1}(\frac{\alpha}{k}))^2 \sim \sum_j (1 - \cos \theta_j)
\ell_j(\frac{\alpha}{k}).  \end{equation}  Indeed, both in small
balls in the interior and  near the boundary, the calculation is
universal and hence is accurately reflected in the  Bargmann-Fock
model with all $H_j = 1$, where  the distance squared equals
\begin{equation} \label{DISTSQUARE}  \sum_{j = 1}^m |e^{i
\theta_j} z_j - z_j|^2 = 2 \sum_{j = 1}^m |z_j|^2 (1 - \cos
\theta_j) = 2 \sum_{j = 1}^m \ell_j(\mu(z))  (1 - \cos \theta_j).
\end{equation}

This motivates the following terminology:

\begin{defin} \label{CLOSE} Let $0 <
 \delta_k << 1$ . We say:

\begin{itemize}

\item  $x \in P$ is  $\delta_k$-close to (resp. $\delta_k$-far
from) the facet $F_j = \{\ell_j = 0\}$ if $\ell_j (x) \leq
\delta_k$ (resp. $\geq \delta_k$).

\item  $x$ is a $\delta_k$-interior point  if it is $\delta_k$-far
from all facets.

\end{itemize}

\end{defin}

There are $m$ possible cases according to the number of facets to
which $x$ is $\delta_k$-close.  Of course, $x$ can be
$\delta_k$-close to at most $m$ facets, in which case it is
$\delta_k$-close to the corner defined by the intersection of
these facets. We thus define
\begin{equation}\label{FCALDELTAK} \fcal_{\delta_k} (x) = \{r: |\ell_r(x)| <
\delta_k\}.\end{equation} We also let \begin{equation}
\label{CARDFCAL} \delta_k^{\#}(x) = \# \fcal_{\delta_k}(x)
\end{equation} denote the number of $\delta_k$-close facets to $x$.
Dual to the sets $\fcal_{\delta_k}$ above are the sets
\begin{equation} \fcal_{F_{i_1}, \dots, F_{i_r}} = \{x:
\fcal_{\delta_k}(x) = \{i_1, \dots, i_r\} \}. \end{equation}

The asymptotics of $\pcal_{h^k}(\alpha)$ depend to the leading
order on the determinant of the inverse of the  Hessian of the
phase of (\ref{INTEGRALa}) (see also  (\ref{TORICPCAL}))  at
$\theta = 0$. This Hessian is the same as the Hessian of the
\kahler potential discussed in \S \ref{KPSP}, and we recall that
its inverse is  the Hessian $G$ of the symplectic potential.
Hence, the asymptotics are in terms of the determinant of $G$,
which has first order poles on $\partial P$. This indicates that
the asymptotics are not uniform up to $\partial P$. We saw this as
well in the explicit example of the Bargmann-Fock case. We define
\begin{equation}\label{GCALDEF} \gcal_{\phi, \delta_k} (x) =  \left(
\delta_{\phi}(x)\cdot\prod_{j \notin \fcal_{\delta_k}(x)}
\ell_{j}(x) \right)^{-1},
\end{equation}
where the functions $\delta_{\phi}$ are defined in \S \ref{KPSP}.
When $x$ is $\delta_k$-far from all facets, then $\gcal_{\phi}(x)
= \det G_{\phi}$ (cf. \ref{VOLDEN}).  We also define $
\pcal_{h_{BF}^k}(k  \ell_j(x))$ to be the unique real analytic
extension of (\ref{PCALBF}) to all $x \in [0, \infty)$.   We then
consider Bargmann-Fock type functions of type (\ref{PCALBF}) which
are adapted to the corners of our polytope $P$:
\begin{equation}\label{PCALBFK}  \pcal_{P, k,  \delta_k} ( x) =
\prod_{j \in \fcal_{\delta_k}(x)} \pcal_{h_{BF}^k}(k \ell_j(
\frac{\alpha}{k}))
\end{equation}

and

\begin{equation}  \label{PTWIDDLE} \tilde{\pcal}_{P, k}(x) =
\prod_{j = 1}^d k^{-1}(2 \pi \ell_j(x ))^{ 1/2} \pcal_{h_{BF}^k} (k
\ell_j(x)),
\end{equation}

 When we
straighten out the corners by affine maps to be standard octants
and separate variables $x = (x', x'')$ into directions near and
far from $\partial P$, then {$\pcal_{P, k, \delta_k} (x)$} is by
definition a function of the near variables $x'$ and $\gcal_{\phi, \delta_k}
(x) $ is by definition a function of the far variables $x''$.

The main result of this section is:

\begin{prop}\label{MAINPCAL}   We have
\medskip

\begin{equation} \label{pasym1} \pcal_{h^k}(\alpha) \; = \;
 C_m k^{\frac{m}{2}}  \sqrt{  \det G_{\phi}
(\frac{\alpha}{k})} \tilde{\pcal}_{P, k}(\frac{\alpha}{k}) \;
\left(1 + R_k(\frac{\alpha}{k}, h) \right),
\end{equation}
where $R_k = O(k^{-\frac{1}{3}})$ and $C_m$ is a positive constant depending only on $m$.  The expansion
is uniform in the metric  $h$ and may be differentiated in the
metric parameter $h$ twice with a remainder of the same order.

Equivalently, with $ \delta_k^{\#}$ defined in
(\ref{CARDFCAL}) and by letting  $\delta_k = k^{-\frac{2}{3}}$,
\medskip

\begin{equation}\label{pasym2}\pcal_{h^k}(\alpha) \; = \;
 C_m k^{\frac{1}{2} \;(m - \delta_k^{\#}(\frac{\alpha}{k}))}  \sqrt{  \gcal_{\phi, \delta_k}
(\frac{\alpha}{k})}\; \pcal_{P, k,  \delta_k} (\frac{\alpha}{k})
\; \left(1 + R_k(\frac{\alpha}{k}, h) \right),
\end{equation}
where again $R_k = O(k^{- \frac{1}{3}})$.

\end{prop}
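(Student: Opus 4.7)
I begin from the oscillatory integral representation (\ref{INTEGRALa}) of $\pcal_{h^k}(\alpha)$, with phase $\Phi_{\alpha/k}(\theta)$ from (\ref{PHASEALPHA}) and amplitude $A_k \sim k^m a_0$. By Proposition~\ref{MUCEQ}, the critical set of $\Phi_{\alpha/k}$ inside the $\delta$-neighborhood where the parametrix lives is the single point $\theta=0$, and the Taylor expansion (\ref{TAYLOREXP}) identifies the Hessian there with $H_{\alpha/k}=\nabla_{\rho}^{2}F$ evaluated at $\mu^{-1}(\alpha/k)$, which by (\ref{GINV}) inverts $G_{\phi}(\alpha/k)$. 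Since $G_{\phi}$ has first-order poles on $\partial P$ by (\ref{VOLDEN}), $H_{\alpha/k}$ degenerates in any direction normal to a facet to which $\alpha/k$ lies close, so no purely stationary-phase argument can be applied uniformly in $\alpha$. The plan is to hybridize stationary phase (in directions ``far'' from $\partial P$) with a local Bargmann--Fock approximation (in directions ``close'' to $\partial P$), with crossover scale $\delta_k=k^{-2/3}$.

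For each $\alpha$, set $I':=\fcal_{\delta_k}(\alpha/k)$, so $|I'|=\delta_k^{\#}(\alpha/k)$. Working in slice-orbit coordinates at a corner of $P$ incident to $\bigcap_{j\in I'} F_j$, I split $\theta=(\theta',\theta'')$ with $\theta'$ indexing the close facets. Near this corner the \kahler potential agrees to quadratic order in $|z'|^{2}$ with the Bargmann--Fock potential of \S\ref{BF}; consequently the block of $H_{\alpha/k}$ in the $\theta'$-variables is, to leading order, the diagonal matrix $\diag(H_{jj})$ with $H_{jj}\sim\ell_{j}(\alpha/k)\le\delta_{k}$, while the $\theta''$-block has eigenvalues uniformly bounded below by $c\delta_{k}$.

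In the $\theta''$-variables I apply the complex stationary-phase method of Proposition~\ref{PIKZW}, treating $\theta'$ as a parameter and deforming the contour along the descent direction of $\Phi_{\alpha/k}$. The effective small parameter is $(k\delta_{k})^{-1}=k^{-1/3}$, so the integration contributes
\[
 C_{m}\,k^{-(m-\delta_{k}^{\#})/2}\,\sqrt{\gcal_{\phi,\delta_{k}}(\alpha/k)}
\]
modulo a relative error $O((k\delta_{k})^{-1})=O(k^{-1/3})$, where I have used (\ref{VOLDEN}) and (\ref{GCALDEF}) to identify the product of reciprocal square roots of the far eigenvalues of $H$ as $\sqrt{\gcal_{\phi,\delta_{k}}}$. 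In the $\theta'$-variables the residual integrand is, after Taylor-expanding $F_{\C}$ to quadratic order in $|z'|^{2}$, exactly the Bargmann--Fock integrand (\ref{BFPZA}) of the corner model; the deviation is a quartic-in-$z'$ correction whose contribution to the exponent is $O(k|z'|^{4})=O(k\delta_{k}^{2})=O(k^{-1/3})$. The remaining $\theta'$-integral then recovers $\prod_{j\in I'}\pcal_{h_{BF}^{k}}(k\ell_{j}(\alpha/k))=\pcal_{P,k,\delta_{k}}(\alpha/k)$ via (\ref{BFPZA})--(\ref{PCALBF}), and combining the two halves proves (\ref{pasym2}).

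The equivalent form (\ref{pasym1}) is obtained by applying a one-dimensional Stirling expansion to each far-facet factor of $\tilde\pcal_{P,k}$: when $\ell_{j}(\alpha/k)\ge\delta_{k}$, $\pcal_{h_{BF}^{k}}(k\ell_{j})$ possesses a full asymptotic expansion whose leading $\sqrt{k/(2\pi\ell_{j})}$ absorbs the normalizing prefactor $k^{-1}(2\pi\ell_{j})^{1/2}$ and combines with $\sqrt{\gcal_{\phi,\delta_{k}}}$ to reproduce $\sqrt{\det G_{\phi}}$ through (\ref{VOLDEN}). Smoothness in $h$ is preserved because $h$ enters only through $\mu_{h}^{-1}(\alpha/k)$, $u_{\phi}$, and the symbol $A_{k}$, all smooth with $h$-derivatives satisfying the same bounds uniformly in $k$. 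The principal technical obstacle is the uniformity of the $O(k^{-1/3})$ remainder across all $\alpha\in kP$, in particular a continuous matching across the transition scale $\ell_{j}\sim\delta_{k}$; this is precisely what the choice $\delta_{k}=k^{-2/3}$ arranges, by equating the stationary-phase remainder $(k\delta_{k})^{-1}$ with the Bargmann--Fock remainder $k\delta_{k}^{2}$, so that either treatment of a near-threshold facet yields the same bound.
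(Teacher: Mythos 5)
Your overall strategy is exactly that of the paper: represent $\pcal_{h^k}(\alpha)$ via (\ref{INTEGRALa}), split $\theta=(\theta',\theta'')$ into close/far directions, do complex stationary phase in $\theta''$ and a Bargmann--Fock approximation in $\theta'$, and optimize the crossover at $\delta_k=k^{-2/3}$ by matching the two remainders $(k\delta_k)^{-1}$ and $k\delta_k^2$. The Stirling derivation of (\ref{pasym1}) from (\ref{pasym2}) is also what the paper does. So this is the right architecture.

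However there are two genuine gaps that the proposal as written does not close, and each would defeat a naive execution of the plan.

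First, in the ``far'' directions the assertion that ``the effective small parameter is $(k\delta_k)^{-1}$'' is not automatic. The Hessian operator $\langle G_\phi(\alpha/k)D_\theta,D_\theta\rangle$ in the stationary phase expansion (\ref{MSPJTERM})--(\ref{MSPa}) has singular coefficients $k/\alpha_j'\lesssim\delta_k^{-1}$, and if you only use the overall $k^{-j}$ together with the constraints $\nu-\mu=j$, $2\nu\geq 3\mu$, the terms are \emph{not} of descending order. What rescues the expansion (Lemma \ref{FIRSTCASEa} in the paper) is that each $\theta'_j$-derivative of the phase or amplitude pulls out a chain-rule factor $|\mu^{-1}(\alpha/k)_j|^2 \sim \alpha_j/k$, which cancels half of the offending $k/\alpha_j'$; only after exploiting this cancellation does one obtain the claimed steps $(k\,d(\alpha/k,\partial P))^{-j}$. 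This mechanism is the crux of the uniformity and must be spelled out.

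Second, in the mixed zone the iterated stationary phase in $\theta''$ with $\theta'$ as a parameter is not a direct application of Proposition \ref{PIKZW}, because after Taylor-expanding $F_\C$ in $|z'|^2$ the $\theta''$-phase acquires the cross term $k\,F_1'(0,e^{i\theta''}|z''|^2)e^{i\theta'}|z'|^2$, which is $O(k\delta_k'\delta_k'')$ and hence an exponentially growing (in $k$) factor. The paper absorbs it into the amplitude as $E_k$ (equation (\ref{EK})) and then verifies that each application of the Hessian operator applied to $E_k$ costs at most $\eta_k = k(\delta_k')^2\delta_k'' + \delta_k'$, which is what ultimately pins down the admissible scales in (\ref{DELTAS}). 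Without this accounting, the stationary phase remainder estimate in the mixed zone does not hold, so the plan to ``treat $\theta'$ as a parameter'' needs to be supplemented by the growth analysis of $E_k$ and its $\theta''$-derivatives.

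Once these two points are addressed, the rest of your proposal (corner zone via $O(k\delta_k^2)$ Taylor error, cancellation of the barrier factors $\prod\ell_j$ in $\sqrt{\det G_\phi}$ against $\tilde\pcal_{P,k}$ via Stirling, and uniformity in $h$ because $h$ enters through $C^\infty$ data and the symbol $A_k$) goes through as stated.
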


The factor $ k^{\frac{1}{2} \;(m -
\delta_k^{\#}(\frac{\alpha}{k}))}$ is due to the fact that we
apply complex stationary phase in $m -
\delta_k^{\#}(\frac{\alpha}{k})$ variables to a complex
oscillatory integral with symbol of order $k^{(m -
\delta_k^{\#}(\frac{\alpha}{k}))}$.

 As a check, let us
 consider the $m$-dimensional Bargmann-Fock case where
$\delta_k^{\#}(\frac{\alpha}{k})= r$, and with no loss of
generality we will assume that the first $r$ facets are the close
ones. The factor $k^m$ in the symbol of the \szego kernel is then
split into $k^r$ (absorbed in $\pcal_{P, k,
\delta_k} $) and $k^{m-r}$ in the far factor. As discussed in \S
\ref{BFNORMS}, the far factor should have the form
$$k^{m-r} \prod_{j = r + 1}^m \; e^{- \alpha_j} \frac{
\alpha_j^{\alpha_j}}{\alpha_j!} \sim k^{m-r} \prod_{j = r + 1}^m
\alpha_j^{-\frac{1}{2}}.$$ The asymptotic factor in Proposition
\ref{MAINPCAL},
$$k^{\frac{1}{2} \;(m - \delta_k^{\#}(\frac{\alpha}{k}))}  \; \left( \prod_{j = r +
1}^m \frac{k}{\alpha_j} \right)^{\frac{1}{2}}, $$ matches this
expression. Here, and throughout the proof, we always straighten
out the corner to a standard octant when doing calculations in
coordinates.

Secondly, as a check on the remainder, we note that it arises from
two sources. As will be seen in the proof, in `far directions' the
stationary phase remainder has the form $ O(\frac{1}{k
d(\frac{\alpha}{k}, \partial P)})$ while in the near directions it
has the form  $O( k (d(\frac{\alpha}{k}, \partial P))^2)$. When  $
d(\frac{\alpha}{k}, \partial P) \sim k^{-\frac{2}{3}}$ the
remainders match.

We break up the proof into cases according to the distance of
$\frac{\alpha}{k}$ to the various  facets as $k \to \infty$. Since
 we are studying joint asymptotics in $(\alpha,
k)$,  $\alpha$ may change with $k$.

\bigskip

\subsection{Interior asymptotics } \label{interiorasy}~

\medskip

\noindent  { $\bullet$ \bf $\displaystyle \frac{\alpha}{k}$ is $\delta$-far from all facets}

We first consider the case where $\frac{\alpha}{k}$ is
$\delta$-far from all facets as an introduction to the problems we
face. In this case, we obtain  asymptotics of the integral
(\ref{INTEGRALa}) by  a complex stationary phase argument. But it
is not quite standard even in this interior case. In the next
section, we goo on to consider the same expansion when $\delta$
depends on $k$.

\begin{lem} \label{FIRSTCASE}  Assume that there exists $\delta > 0$ such that
 $\ell_j (\frac{\alpha}{k}) \geq \delta$ for all $j$, i.e., that $\frac{\alpha}{k}$ is $\delta$-far from all facets.  Then
there exist bounded smooth functions $A_{-j}(x)$ on $\bar{P}$
such that
$$\pcal_{h^k}(\alpha) \sim  \; C_m   k^{\frac{m}{2}} \; \sqrt{\det
G_{\phi} (\frac{\alpha}{k})}\; \left(1 +
\frac{A_{-1}(\frac{\alpha}{k})}{k} +
\frac{A_{-2}(\frac{\alpha}{k})}{k^2} + \cdots +
O_{\delta}(k^{-M})\right).$$ Here, $G_{\phi} = \nabla^2 u$ (\S
\ref{KPSP}) and  $ G_{\phi}(\frac{\alpha}{k})$ is its value at
$\frac{\alpha}{k}$; its norm is $O(\delta^{-1})$ and its determinant is $O(\delta^{-m})$.

\end{lem}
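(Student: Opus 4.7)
The plan is to apply complex stationary phase to the oscillatory integral (\ref{INTEGRALa}) for $\pcal_{h^k}(\alpha)$, whose phase $\Phi_{\alpha/k}$ is given in (\ref{PHASEALPHA}); under the hypothesis that $\alpha/k$ is $\delta$-far from every facet, the point $z_0 := \mu_h^{-1}(\alpha/k)$ lies in a fixed compact subset of the open orbit $M^o$, away from $\dcal$, so the argument proceeds uniformly in $\alpha$. First I would identify the unique critical point of $\Phi_{\alpha/k}$ and verify non-degeneracy of its Hessian. By (\ref{NABLAa}), $\nabla_\theta\Phi_{\alpha/k}(\theta) = i(\mu_\C(z_0,e^{i\theta}z_0) - \alpha/k)$, so Proposition \ref{MUCEQ}(2), applied inside the anti-diagonal neighborhood where the almost-analytic extension is determined, forces $e^{i\theta}z_0 = z_0$ at any critical point in this neighborhood. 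Since $z_0$ has trivial isotropy in the open orbit, the unique critical point is $\theta=0$, with critical value $\Phi_{\alpha/k}(0)=0$. By (\ref{LBDb}), $\Re\Phi_{\alpha/k}(\theta) \ge c\, d(z_0,e^{i\theta}z_0)^2$, so the integrand is exponentially damped away from $\theta=0$; combined with the parametrix cutoff already present in (\ref{OSC}), the integral reduces modulo $O(k^{-\infty})$ to an arbitrarily small neighborhood of $\theta=0$. The Taylor expansion (\ref{PHASEANALYSIS}) there has vanishing linear term (because $\mu(z_0)=\alpha/k$) and quadratic part governed by $H_{\alpha/k} = \nabla_\rho^2 F(|z_0|^2)$; Legendre duality (\ref{GINV}) identifies $H_{\alpha/k}^{-1} = G_\phi(\alpha/k) = \nabla^2 u_\phi(\alpha/k)$, and the $\delta$-far hypothesis together with (\ref{VOLDEN}) gives $\det G_\phi(\alpha/k) = O(\delta^{-m})$ and $\|H_{\alpha/k}^{-1}\| = O(\delta^{-1})$, so the Hessian is uniformly non-degenerate of positive type.

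Next I would apply the standard complex stationary phase theorem for phases of positive type (H\"ormander, \emph{Analysis of Linear Partial Differential Operators} I, Theorem 7.7.5, or its almost-analytic variant of Melin--Sj\"ostrand) against the semi-classical amplitude $A_k \sim k^m(a_0 + k^{-1}a_1+\cdots)$ of Proposition \ref{SZKTV}. The leading Gaussian integral against $e^{-k\langle H_{\alpha/k}\theta,\theta\rangle}$ paired with $a_0(z_0,z_0)$ produces
$$\pcal_{h^k}(\alpha) = C_m k^{m/2}\sqrt{\det G_\phi(\alpha/k)}\cdot a_0(z_0,z_0)\bigl(1 + O(k^{-1})\bigr);$$
toric symmetry and the diagonal \szego expansion (\ref{TYZ}) force $a_0$ to be a universal constant, which I absorb into $C_m$. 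The subleading corrections $A_{-j}(\alpha/k) k^{-j}$ then arise in the usual way by Taylor expanding $e^{-k R_3}$ as a formal power series, multiplying against the lower-order $a_j$, and evaluating the resulting Gaussian integrals against polynomial weights; the $k$-powers collapse so that only integer negative powers of $k$ survive, and each $A_{-j}$ is a polynomial in entries of $G_\phi$ and derivatives of $F$ evaluated at $z_0$, hence smooth on the $\delta$-interior.

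The main obstacle is uniformity of the remainder in $\alpha/k$. One must verify that on the $\delta$-interior of $P$ all $\theta$-derivatives of $\Phi_{\alpha/k}$ and of the symbol $A_k$ are bounded independently of $\alpha$ and $k$, and that $\|H_{\alpha/k}^{-1}\| = O(\delta^{-1})$ uniformly; both follow from compactness of the $\delta$-interior inside the open orbit together with the smoothness of the Kähler potential there, so the classical error bounds in the stationary phase expansion yield a uniform remainder $O_\delta(k^{-M})$. The much subtler companion problem, in which $\delta$ is replaced by a shrinking $\delta_k\to 0$ to match the Bargmann--Fock regime near $\partial P$, is what generates the $k^{-1/3}$ threshold flagged in the introduction and will occupy the rest of the section; for fixed $\delta>0$, however, the argument here reduces to a textbook application of complex stationary phase.
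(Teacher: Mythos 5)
Your proposal is correct and follows essentially the same route as the paper: apply complex stationary phase to the integral representation \rfe{INTEGRALa}, identify $\theta=0$ as the unique non-degenerate critical point via Proposition \ref{MUCEQ} and the Taylor expansion \rfe{PHASEANALYSIS}, read off $\sqrt{\det G_\phi(\alpha/k)}$ from the Hessian via \rfe{GINV}, and argue uniformity of the remainder from the uniform bounds $\|G_\phi(\alpha/k)\| = O(\delta^{-1})$ on the $\delta$-interior. The paper is slightly more explicit about working through the Fourier/Plancherel step of the stationary-phase proof (since the cited Theorem 7.7.5 does not apply verbatim to a $k$-dependent phase family), while you instead flag that step as a uniformity check; these are presentational variants of the same argument.
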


Before going into the proof, we note that the only assumption on
the limit points of $\frac{\alpha}{k}$ is that they are
$\delta$-far from facets. The lattice points $\alpha$ are
implicitly allowed to vary with $k$.  Asymptotics of the left side
clearly depend on the asymptotics of the points
$\frac{\alpha}{k}$, and the Lemma  states how they do so.

\begin{proof}

We now apply the complex stationary phase method, or more
precisely its proof. The usual complex stationary phase theorem
applies to exponents $k \Phi(\theta)$ where $\Phi(\theta)$ is a
positive phase function with a non-degenerate critical point at
$\theta = 0$. In our case, the phase is also $k$-dependent since
it depends on   $\frac{\alpha}{k}$ and the asymptotics of
(\ref{FINALCASEONE}) therefore depend on the asymptotics of
$\frac{\alpha}{k}$ in the domain $d(\frac{\alpha}{k}, \partial P)
\geq \delta$. Our stated asymptotics also depend on the behavior
of $\frac{\alpha}{k}$ in the same way.

Although the exact statement of complex stationary phase \cite{Ho}
 (Theorem 7.7.5) does not apply, the proof applies
without difficulty in this region. Namely, we introduce a cutoff
$\chi_{\delta}(\theta) = \chi(\delta^{-1} \theta) \in
C^{\infty}(\T)$ which $\equiv 1$ in a $\delta$-neighborhood of
$\theta = 0$ and which vanishes outside a $2 \delta$-neighborhood
of $\theta = 0$. We decompose the integral into its
$\chi_{\delta}$ and $1- \chi_{\delta}$ parts. A standard
integration by parts argument, essentially the same as in Lemma
\ref{LOCALIZATION} shows that the $1 - \chi_{\delta}$ term is  $ =
O(\delta^{-M} k^M)$ for all $M
> 0$. In the $\chi_{\delta}$ part the integral may be viewed as an integral
over $\R^m$ and we may apply the Plancherel theorem as in the
standard stationary phase argument to obtain \begin{equation}
\label{FOURIER}
\begin{array}{lll}
 \pcal_{h^k}(\alpha) &\sim&
\frac{C_m}{\sqrt{\det (k H_{\frac{\alpha}{k}}}) } \int_{\R^m }
e^{ - \langle (k H_{\frac{\alpha}{k}})^{-1} \xi, \xi \rangle}
\fcal_{\theta \to \xi} \left(e^{k  R_3(\theta, k,
\mu^{-1}(\frac{\alpha}{k}))) } A_k \big(
\mu_h^{-1}(\frac{\alpha}{k}),
 e^{i \theta}  \mu_h^{-1}(\frac{\alpha}{k}) , 0\big)\right)(\xi) d
\xi \end{array},
\end{equation}
where $\fcal_{\theta \to \xi}$ is the Fourier transform.

The stationary phase expansion  (see \cite{Ho}, Theorem 7.7.5) is
the following:
\begin{equation} \label{SPEXP}   \sim (\frac{2\pi}{k})^{m/2} \frac{e^{\frac{i \pi}{4}
 \mbox{sgn} H_{\frac{\alpha}{4}}}}{\sqrt{|\det H_{\frac{\alpha}{k}} |} } \sum_{j}^{\infty} k^{-j} {\mathcal
P}_{\frac{\alpha}{k}, j} A_k   \big( \mu_h^{-1}(\frac{\alpha}{k}),
 e^{i \theta}  \mu_h^{-1}(\frac{\alpha}{k}) , 0\big)|_{\theta = 0} \end{equation} where
\begin{equation} \label{MSPJTERM} {\mathcal P}_{\frac{\alpha}{k}, j}  A_k(0) = \sum_{\nu - \mu = j}
 \sum_{2 \nu \geq 3\mu} \frac{i^{-j} 2^{-\nu}}{\mu! \nu!}
\langle H_{\frac{\alpha}{k}}^{-1} D_{\theta}, D_{\theta}
\rangle^{\nu} ( A_k R_3^{\mu} ) |_{\theta = 0}
\end{equation}

  The only
change in the standard argument  is that we have a family of
quadratic forms $H_{\frac{\alpha}{k}}$ depending on parameters
$(\alpha, k)$ rather than a fixed one. But the standard proof is
valid for this modification. As in the standard proof, we expand
the exponential in (\ref{FOURIER}) and evaluate the terms and the
remainder of the exponential factor just as in \cite{Ho} Theorem
7.7.5, to obtain (\ref{SPEXP}), which becomes
\begin{equation}\label{MSPa}
\begin{array}{l} \left( \det (k^{-1} G_{\phi}(\frac{\alpha}{k}))\right)^{1/2}
\sum_{j = 0}^M  k^{-j} \left(\langle
G_{\phi}(\frac{\alpha}{k})(D_{\theta}, D_{\theta}\rangle \right)^j
\chi_{\delta} e^{k R_3(k, \theta, \mu^{-1}(\frac{\alpha}{k})) }
A_k \big( \mu_h^{-1}(\frac{\alpha}{k}),
 e^{i \theta}  \mu_h^{-1}(\frac{\alpha}{k}) , 0, k\big) |_{\theta = 0}\\ \\
+ O(k^{-M}  \sup_{\theta \in supp \chi_{\delta}} \left| \langle
G_{\phi}(\frac{\alpha}{k}) (D_{\theta}, D_{\theta}\rangle^M\;
\chi_{\delta} e^{k R_3(k, \theta, \mu^{-1}(\frac{\alpha}{k})) }
A_k \big( \mu_h^{-1}(\frac{\alpha}{k}),
 e^{i \theta}  \mu_h^{-1}(\frac{\alpha}{k}) , 0, k\big))\right|.
\end{array} \end{equation}
 Here, $G_x$ is the Hessian of the symplectic
potential, i.e., the inverse of  $H_{\mu^{-1}(x)}$. (cf.
\ref{KPSP}). We recall that  $G_x$ has poles $x_j^{-1}$  of order
one  when $x \in \partial P$. When  $d(\frac{\alpha}{k},
\partial P) \geq \delta$,  its  norm is therefore $O(\delta^{-1})$ and its determinant is $O(\delta^{-m})$.  Since $R_3$
vanishes to order $3$ at the critical point,  the terms of the
expansion can be arranged into terms of descending order as in the
standard proof. If we recall that the leading term of $S$ is
$k^{m}$, we obtain the statement of Proposition \ref{MAINPCAL} in
the $\delta$-interior case.

\end{proof}

\noindent {\bf $\bullet$ $\displaystyle \frac{\alpha}{k}$ is
$\delta_k$-far from facets with $k \delta_k \to \infty$
\label{IZ}}

We continue to study the complex oscillatory integral
(\ref{INTEGRALa}) but now allow $\frac{\alpha}{k}$ to become
$\delta_k$-close to some facet, and obtain a stationary phase
expansion (with very possibly slow decrease in the steps) under
the condition that $k \delta_k \to \infty$. This should be
feasible since the phase $k \Phi_{\frac{\alpha}{k}}$ is still
rapidly oscillating in this region, albeit at different rates in
different directions according to the proximity of
$\frac{\alpha}{k}$ to a particular facet. The principal
complication is as as follows:

\begin{itemize}

\item The Hessian $G_{\phi}(\frac{\alpha}{k})$ now has components
which blow up like $\delta_k^{-1}$   near the close facets. In the
stationary phase expansion, we get factors of
$$k^{-j} \langle G_{\phi}(\frac{\alpha}{k}) D_{\theta}, D_{\theta}
\rangle^j A_k \big( \mu_h^{-1}(\frac{\alpha}{k}),
 e^{i \theta}  \mu_h^{-1}(\frac{\alpha}{k}) , 0, k\big) R_3(k, \theta, \mu^{-1}(\frac{\alpha}{k}))^{\mu}$$ both
in the expansion and remainder. We must verify that these terms
still are of descending order.

\end{itemize}

As a guide, we note that by (\ref{BFPZA}),  the Bargmann-Fock
phase with $\mu(z) = \frac{\alpha}{k} $  is given by
$$\Phi_{BF, \frac{\alpha}{k}}(\theta)  =   \langle  \frac{\alpha}{k},    e^{i \theta} -  i  \theta \rangle
= \langle \cos \theta + i (\sin \theta - \theta), \frac{\alpha}{k}
\rangle,   $$ while the amplitude is constant. In this case, the
phase factors into single-variable factors and one can employ the
complex stationary phase method separately to each. In the general
case, we will roughly split the variables $\theta$ into two groups
$(\theta', \theta'')$, depending on $\frac{\alpha}{k}$, so that
the $\theta'$ variables are paired with the small components of
$\frac{\alpha}{k}$ while the $\theta''$ variables are paired with
its large components.  The complex stationary phase method applies
equally to either $d \theta'$ or $d\theta''$ integral, but the
orders of the terms are determined by the proximity of
$\frac{\alpha}{k}$ to the facets.

\begin{lem} \label{FIRSTCASEa}  Let $\{\delta_k\}$ be a sequence such that $k \delta_k \to \infty$.
Assume that  $\ell_j(\frac{\alpha}{k}) \geq \delta_k$
for all $j$, i.e.,, that $\frac{\alpha}{k}$ is $\delta_k $ far from
all facets.  Then in the notation of   Lemma \ref{FIRSTCASE}, we
have
$$\pcal_{h^k}(\alpha) \sim C_m k^{\frac{m}{2}} \;  \sqrt{\det
G_{\phi}(\frac{\alpha}{k}) }\; \left(1 +
\frac{A_{-1}(\frac{\alpha}{k})}{k} +
\frac{A_{-2}(\frac{\alpha}{k})}{k^2} + \cdots +
\frac{A_{-M}(\frac{\alpha}{k})}{k^2}+ O(k \delta_k)^{-M}
\right),$$ where now
$$A_{- j} (\frac{\alpha}{k}) \leq D \delta_k^{-1} =  C d(\frac{\alpha}{k}, \partial
P)^{-j}.$$

\end{lem}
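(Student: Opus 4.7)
The strategy is to rerun the complex stationary phase argument from the proof of Lemma~\ref{FIRSTCASE}, tracking carefully how each ingredient scales with $\delta_k$. The oscillatory integral representation \eqref{FINALCASEONE} is unchanged, as is the structure of the phase $\Phi_{\alpha/k}$ around its critical point $\theta=0$. First I would introduce a cutoff $\chi_\delta(\theta)$ with $\delta$ a small constant independent of $k$, and split the integral accordingly. On the tail $|\theta|\gtrsim \delta$ the lower bound \eqref{LBDb} and integration by parts with $\lcal$ as in \eqref{LCAL2} still yield rapid decay, and because the constants involved depend only on $\delta$ (not $\delta_k$), this part contributes an error of size $O(k^{-N})$ for every $N$, which is absorbed in the claimed remainder.

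For the localized piece I would apply the Plancherel reformulation \eqref{FOURIER} and invoke the stationary phase expansion in the form \eqref{SPEXP}--\eqref{MSPJTERM}. The new feature is that the Hessian of the phase equals $H_{\alpha/k}=G_\phi(\alpha/k)^{-1}$, and $G_\phi$ has eigenvalues as large as $\delta_k^{-1}$ in directions normal to those facets within distance $\delta_k$ of $\alpha/k$. Hence $\det H_{\alpha/k}$ may be as small as $\delta_k^m$, but the non-degeneracy is uniform once one rescales $\theta\mapsto H_{\alpha/k}^{-1/2}\eta$; under this anisotropic rescaling the effective large parameter in the small-eigenvalue directions becomes $k\delta_k$ rather than $k$.

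The main obstacle, and the heart of the proof, is to verify that the $j$-th term in \eqref{MSPJTERM}, which involves $\nu$ copies of $\langle G_\phi(\alpha/k) D_\theta, D_\theta\rangle$ acting on $A_k R_3^\mu$ with $\nu-\mu=j$ and $2\nu\geq 3\mu$, is bounded by $C\delta_k^{-j}$ rather than by the naive $C\delta_k^{-\nu}$. The saving comes from the fact that $R_3$ vanishes to order three at $\theta=0$ (cf.\ \eqref{REMAINDERb}): in the Leibniz expansion of $\langle G_\phi D_\theta, D_\theta\rangle^\nu(A_k R_3^\mu)|_{\theta=0}$, only terms with at least $3\mu$ of the $2\nu$ derivatives landing on $R_3^\mu$ survive, leaving at most $2\nu-3\mu$ derivatives for the smooth amplitude $A_k$ (whose derivatives are $\delta_k$-independent). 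A direct bookkeeping, using $\nu-\mu=j$ together with $2\nu\geq 3\mu$, then shows that the total power of $\delta_k^{-1}$ in each surviving term is at most $j$, uniformly in $k$, $\alpha$, and the metric $h$.

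Finally I would estimate the truncation error. H\"ormander's remainder formula expresses the error after $M$ terms as an integral whose integrand has the same structure as the last retained term with one extra application of $\langle G_\phi D_\theta, D_\theta\rangle$; the same bookkeeping gives a bound of order $(k\delta_k)^{-M}$. Combining the tail estimate, the explicit terms, and the remainder yields the stated expansion with coefficients $A_{-j}(\alpha/k)=O(\delta_k^{-j})=O(d(\alpha/k,\partial P)^{-j})$ and error $O((k\delta_k)^{-M})$. Since $k\delta_k\to\infty$ by hypothesis, this is a genuine asymptotic expansion in the parameter $(k\delta_k)^{-1}$, and the differentiability in $h$ asserted in Proposition~\ref{MAINPCAL} follows because the same argument may be applied to the $h$-derivatives of the amplitude $A_k$.
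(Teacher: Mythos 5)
There is a genuine gap in your argument. You claim that ``a direct bookkeeping, using $\nu-\mu=j$ together with $2\nu\geq 3\mu$, then shows that the total power of $\delta_k^{-1}$ in each surviving term is at most $j$,'' but the paper explicitly warns that this counting is not enough: ``Although we have an overall $k^{-j}$ and constraints $\nu - \mu = j$, $2 \nu \geq 3 \mu$, it is not hard to check that these are not sufficient to produce negative exponents of $k$.'' Indeed, each of the $\nu$ copies of $\langle G_{\phi}(\frac{\alpha}{k}) D_\theta,D_\theta\rangle$ contributes a coefficient of size up to $\delta_k^{-1}$ (coming from $G^s$), regardless of where the derivatives land, so the naive bound for the $j$-th term is $k^{-j}\delta_k^{-\nu}$, not $k^{-j}\delta_k^{-j}$. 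Taking $\mu$ as large as allowed ($\mu\approx 2\nu/3$, $j\approx\nu/3$) this is $(k\delta_k)^{-j}\cdot\delta_k^{-2\nu/3}$, which blows up as $k\to\infty$. The constraint $2\nu\geq 3\mu$ tells you how many $\theta$-derivatives must fall on $R_3^\mu$ to survive, but it puts no cap on how many factors of $\delta_k^{-1}$ accumulate.

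What actually saves the expansion is the chain-rule cancellation that your proposal never invokes. Both the remainder $R_3$ and the amplitude $A_k$ depend on $\theta$ only through $e^{i\theta}|\mu^{-1}(\frac{\alpha}{k})|^2$, so each $\theta_{q}$-derivative applied to them pulls down a factor $e^{i\theta_q}|\mu^{-1}(\frac{\alpha}{k})_q|^2$. Near the $q$-th facet this factor is of size $\ell_q(\frac{\alpha}{k})\sim \alpha_q'/k$, which is small precisely where $G_\phi^{qq}\sim k/\alpha_q'$ is large. It is the product of these chain-rule factors against the singular coefficients of $G_\phi$ that reduces the net power of $\delta_k^{-1}$ from $\nu$ to $\nu-\mu=j$; see the estimate
\[
\bigl|\,(\mathrm{term})\,\bigr|\ \leq\ C\,k^{-\nu+\mu}\Bigl(\prod_{j=1}^{\nu}\tfrac{k}{\alpha'_{q_j}}\Bigr)\prod_{j=1}^{\mu}|\mu^{-1}(\tfrac{\alpha}{k})_{q_j}|^2\ \leq\ C\,\bigl(k\,d(\tfrac{\alpha}{k},\partial P)\bigr)^{-j}
\]
in the paper. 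Without tracking these factors, the anisotropic rescaling you describe (which correctly identifies $k\delta_k$ as the effective large parameter) does not by itself control the higher-order terms, because the amplitude, phase remainder, and their derivatives all acquire $\theta$-dependence through $|\mu^{-1}(\frac{\alpha}{k})|^2$ and hence scale with $\delta_k$ as well — your assertion that the derivatives of $A_k$ are $\delta_k$-independent is not accurate at points $\mu^{-1}(\frac{\alpha}{k})$ near $\dcal$.
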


\begin{rem} One may regard this as an
expansion in the semi-classical parameter $(k \delta_k)^{-1} = (k
d(\frac{\alpha}{k}, \partial P))^{-1}$. \end{rem}

\begin{proof}

We need to prove that the expansion (\ref{MSPa}) may be
re-arranged into terms of decreasing order and that the remainder
can be made to have an arbitrarily small order $k^{-M}$ by taking
sufficiently many terms.

 To analyze the expansion (\ref{MSPa}), we begin with a decomposition of the inverse Hessian
$G_{\frac{\alpha}{k}}$, which  is the Hessian of the symplectic
potential, which has the form $u_0 + g$ where $g \in
C^{\infty}(\bar{P})$ and where $u_0$ is the canonical symplectic
potential (\ref{CANSYMPOT}).  We continue to fix a small  $\delta
> 0$ as in the previous section, and consider the facets to which
$\frac{\alpha}{k}$ is $\delta$-close. We use the affine
transformation to map these $\delta$-close facets  to the
hyperplanes $x'_j = 0$. In these coordinates, we may write the
symplectic potential as
 \begin{equation} \label{CANSYMPOTa}
u_{\phi}(x) =  \sum_{j \in \fcal_{\delta}} x_j' \log x_j' + g(x),
\end{equation}
where the Hessian of $g$ is bounded with bounded derivatives near
$\frac{\alpha}{k}$. The Hessian $G_{\frac{\alpha}{k}}$  then
decomposes into the sum,
\begin{equation} \label{BLOCKS}
G_{\phi}(x) =  \sum_{j \in \fcal_{\delta}(\frac{\alpha}{k})}
\frac{1}{x_j'}\;  \delta_{jj} + \nabla^2 g: = G_x^s +  \nabla^2 g,
\end{equation}
where $\nabla^2 g$ is smooth up to the boundary in a neighborhood
of $\fcal_{\delta_k}(\frac{\alpha}{k})$. The notation
$G_{\phi}(x)^s$ refers to the `singular part' of $G_x$. The choice
of $\delta$ is not important; we are allowing $\frac{\alpha}{k}$
to become $\delta_k$ close to some facets, and for any choice of
$\delta$, the sum will include such facets.

The decomposition (\ref{BLOCKS}) of the inverse Hessian induces a
block decomposition of the Hessian operator  $\langle
G_{\frac{\alpha}{k}} D_{\theta}, D_{\theta} \rangle$.  The chage
of variables to $x$ above induces an affine change of the $\theta$
variables, as follows:  We are using the  coordinates $(x', x'')$
on $P$ with $x'$ denoting the linear coordinates in the directions
of the normals to the facets $\fcal_{\delta}(\frac{\alpha}{k})$.
The normals corresponding to $\fcal_{\delta}(\frac{\alpha}{k})$
generate the isotropy algebra of the sub-torus $(\T)' $ fixing the
near facets. We  have $\T = (\T)' \times (\T)''$, and denote the
corresponding coordinates by $(\theta', \theta'')$.

The Hessian operator in these coordinates has the form
\begin{equation}\label{DECOMPG}  \langle G_{\phi}(\frac{\alpha}{k}) D_{\theta}, D_{\theta}
\rangle =  \sum_{j \in \fcal_{\delta}(\frac{\alpha}{k})}
\frac{k}{\alpha_j'}\;  D^2_{\theta'_j \theta'_j} + \langle
G_{\phi}(\frac{\alpha}{k})'' D_{\theta}, D_{\theta} \rangle,
\end{equation}
where the second term has bounded coefficients. Evidently, the
change to the interior stationary phase expansion is entirely due
to the singular part of the Hessian operator,
\begin{equation} \langle G^s{\frac{\alpha}{k}} D_{\theta}, D_{\theta}
\rangle : = \sum_{j \in \fcal_{\delta}(\frac{\alpha}{k})}
\frac{k}{\alpha_j'}\; D^2_{\theta'_j \theta'_j}. \end{equation}

We now consider the order of magnitude of the terms in the $j$th
term (\ref{MSPJTERM}), which has the form
\begin{equation}\label{NEWJ}
k^{-\nu} \;  ( \langle G_{\phi}(\frac{\alpha}{k}) D_{\theta},
D_{\theta} \rangle^{\nu} A_k \big( \mu_h^{-1}(\frac{\alpha}{k}),
 e^{i \theta}  \mu_h^{-1}(\frac{\alpha}{k}) , 0, k\big) R_3(k, \theta, \mu^{-1}(\frac{\alpha}{k}))^{\mu}) |_{\theta
= 0} \end{equation} with $\nu - \mu = j$ and with $2 \nu \geq
3\mu$. The latter constraint is evident from the fact that $R_3$
vanishes to order $3$.

Using (\ref{DECOMPG}), $\langle G_{\phi}(\frac{\alpha}{k})
D_{\theta}, D_{\theta} \rangle^{\nu}$ becomes a sum of terms of
which the most singular is $$\langle
G^s_{\phi}(\frac{\alpha}{k})D_{\theta}, D_{\theta} \rangle^{\nu} :
= (\sum_{j \in \fcal_{\delta}(\frac{\alpha}{k})}
\frac{k}{\alpha_j'}\; D^2_{\theta'_j \theta'_j})^{\nu}. $$ We will
only discuss the terms generated by this operator; the discussion
is similar but simpler for the other terms. In the extreme case of
$ \langle G''_{\frac{\alpha}{k}} D_{\theta}, D_{\theta}
\rangle^{\nu}$, the discussion is essentially the same as in the
previous section; in particular, (\ref{NEWJ}) has order $k^{-j}$.

The problem with each application of $\langle
G^s_{\phi}(\frac{\alpha}{k})  D_{\theta}, D_{\theta} \rangle$ is
that it raises the order by the maximum of $\frac{k}{\alpha_j'}$,
which may be as large as $k \delta_k. $ Although we have an
overall $k^{-j}$ and constraints $\nu - \mu = j, 2 \nu \geq 3
\mu$, it is not hard to check that these are not sufficient to
produce negative exponents of $k$.

The key fact which saves the situation is that the phase
$\Phi_{\frac{\alpha}{k}}$ and amplitude $S$ depend on $\theta$ as
functions of  $e^{i \theta} |\mu^{-1}(\frac{\alpha}{k})|^2$.
Although $R_3$ has a more complicated $\theta$-dependence, its
third and higher derivatives are the same as those of
$\Phi_{\frac{\alpha}{k}}$, and it is obvious that only these
contibute to (\ref{NEWJ}). Hence derivatives in $\theta$ bring in
factors of $|\mu^{-1}(\frac{\alpha}{k})|^2$ by the chain rule. Due
to the behavior of the moment map near a facet, these chain rule
factors cancel a square root of the blowing up factor in
$G_{\phi}(\frac{\alpha}{k})$. This turns out to be sufficient for
a descending series due to the power $k^{-j}$ and constraint $2
\nu \geq 3 \nu$.

Before giving all the details, let us consider what should be the
`worst' terms of (\ref{NEWJ}), i.e., the ones with the least decay
in $k$. Each factor of $R_3$ comes with a factor of $k$, so one
would expect terms with large $\mu$ to be `worst'.  The `worst'
term will be one with a maximum $\mu$ and where a maximum number
of applications on operator $\langle G^s_{\frac{\alpha}{k}}
D_{\theta}, D_{\theta} \rangle^{\nu}$ is applied to the
`chain-rule' factors $(e^{i \theta}
|\mu^{-1}(\frac{\alpha}{k}))_j|^2$ (the $j$th component of this
vector),  obtained from an application of some $D_{\theta_j'}$ to
$S$ or to $R_3$. If instead we differentiate $S$ or $R_3$ again,
we pull out another chain rule factor, which cancels more of the
bad coefficient $\frac{k}{\alpha'_j}$.

 We now give the rigorous argument. The terms of (\ref{NEWJ}) have
 the form,

\begin{equation} \label{AMPLITUDE} k^{- \nu + \mu} \; G_{\phi}(\frac{\alpha}{k})^{i_1 j_1}
\cdots
 G_{\phi}(\frac{\alpha}{k})^{i_{\nu} j_{\nu}} D^{\beta_1} R_3 \cdots D^{\beta_{\mu}} R_3 D^{\beta_{\mu+1}}
S,\end{equation}
 where $|\beta| = 2 \nu$ and where $D^{\beta_q}$ denote
universal constant multiples of the  multinomial differential
operators  $ \frac{\partial^{\beta_q} }{\partial \theta^{n_1}
\cdots
\partial \theta^{n_{\beta_q}}}$ where the union of the indices
agrees with $\{i_1, j_1, \dots, i_{\nu}, j_{\nu} \}$. We need each
$|\beta_q| \geq 3$ for $q \leq \mu$ to remove the zero of $R_3$.
If we only consider the most singular term, then we need $i_{q} =
j_{q} \in \fcal_{\delta}(\frac{\alpha}{k})$. In this case our term
becomes
\begin{equation} \label{AMPLITUDEs} k^{- \nu + \mu} \; \left(\prod^{\nu}_{j = 1; q_j\in
\fcal_{\delta}(\frac{\alpha}{k})}\frac{k}{\alpha'_{q_j}}\right)
 D^{\beta_1} R_3 \cdots D^{\beta_{\mu}} R_3 D^{\beta_{\mu+1}}
S,\end{equation} For each factor $\frac{k}{\alpha'_{q_j}}$, there
exist two factors of the associated differential operator
$\frac{\partial}{\partial \theta_{q_j}}$. When one is applied to
either $R_3$ or $S$ it pulls out a chain rule factor $e^{i
\theta_{q_j}} |\mu^{-1}(\frac{\alpha}{k}))_j|^2$. The second could
be applied to this factor, hence need not introduce any new
factors of  $|\mu^{-1}(\frac{\alpha}{k}))_j|^2$. We now estimate
(\ref{AMPLITUDEs}) by
\begin{equation} \label{AMPLITUDEsest} \left| (\ref{AMPLITUDEs}) \right| \leq k^{- \nu + \mu} \; \left(\prod^{\nu}_{j = 1; q_j\in
\fcal_{\delta}(\frac{\alpha}{k})} \frac{k}{\alpha'_{q_j}} \right)
\prod_{j = 1}^{\mu}
  |\mu^{-1}(\frac{\alpha}{k})_{q_j}|^2.\end{equation}
  Now $\mu^{-1}(x) = \nabla u_{\phi} (x)$ in $\rho$ coordinates. So the square of the $q_j$th component
  of $\mu^{-1}(\frac{\alpha}{k})$ equals  $\log \frac{\alpha_{q_j}}{k}$ plus a bounded remainder  in $\rho$
  coordinates; here as above we are using the $x_j$ coordinates
  adapted to $\frac{\alpha}{k}$. It follows that in the $z$
  coordinates adapted to the facets of $\dcal$ corresponding to
  the hyperplanes $x'_j = 0$, with $|z_j|^2 = e^{\rho_j}$,
  $|\mu^{-1}(\frac{\alpha}{k})_{q_j}|^2 \leq C
  \frac{\alpha_j}{k}. $ The constant $C$ comes from the smooth part of the symplectic potential
  and has a uniform bound.  As a check on the square root, we
  note that for the approximating Bargmann-Fock model we have
  $|z_j|^2 = \frac{\alpha_j}{k}$. It follows from
  (\ref{AMPLITUDEsest}) and  $\frac{k}{\alpha_j'} \leq C
d(\frac{\alpha}{k},
\partial P)^{-1}$  that
  \begin{equation} \label{AMPLITUDEsesta} \begin{array}{lll} \left| (\ref{AMPLITUDEs}) \right| \leq C k^{- \nu + \mu} \;
  \left(\prod^{\nu}_{j = 1; q_j\in
\fcal_{\delta}(\frac{\alpha}{k})} \frac{k}{\alpha'_{q_j}} \right)
\prod_{j = 1}^{\mu}
  \frac{k}{\alpha'_{q_j}}
& \leq & C k^{- \nu + \mu}   d(\frac{\alpha}{k}, \partial P)^{-\nu
+ \mu}\\ && \\
& = & C   (k  d(\frac{\alpha}{k}, \partial P))^{- j},
\end{array} \end{equation}

Effectively, the `semi-classical parameter' has changed from
$k^{-1}$ to $k^{-1} d(\frac{\alpha}{k}, \partial P)^{-1}$, a
natural parameter in boundary problems. As long as $k
d(\frac{\alpha}{k}, \partial P) \to \infty$ at some fixed rate, we
obtain a descending expansion.

\end{proof}

\subsection{Boundary zones: Corner zone \label{CZ}}

Having dealt with the case where $|\frac{\alpha_j}{k}| \geq
\delta_k,$ we now turn to the complementary cases where $d(\mu(z),
\partial P) \leq \delta_k$, i.e.,  at least
for one $j$, $|\frac{\alpha_j}{k}| \leq \delta_k$  or
equivalently, $\frac{\alpha}{k}$ is $\delta_k$-close to at least
one facet. The choice of the scale $\delta_k$ is so that it  it is
small enough to justify   the Bargmann-Fock approximation  in the
`near' variables.

In this section, we consider the extreme
`corner' case where $\mu(z)$  lies in a $\delta_k$-corner, i.e., where  there
exists a vertex $v \in \partial P$ so that $d(\mu(z), v) \leq
\delta_k$.  Putting $v = 0$,
the assumption becomes that $|\mu(z)| \leq C \delta_k$ . Our main object is to determine the scale $\delta_k$
so that the Bargmann-Fock approximation is valid. That is, for
$z = \mu^{-1}(\frac{\alpha}{k})$ we should have in the multi-index  notation  of \S \ref{BF}
(see (\ref{BFPZAa})),

\begin{equation} \label{INTEGRALBFa}\begin{array}{lll} \pcal_{h^k}(\alpha)
&\sim &  \pcal_{h^k_{BF} }(\alpha) = k^m (2 \pi)^{-m} \int_{\T }
e^{ - k \left( \sum_{j = 1}^m H_{j \bar{j}} (e^{i\theta_j} - 1  +
i \theta_j, \frac{\alpha_j}{k} \rangle \right)} d \theta.
\end{array}
\end{equation}

\begin{lem} \label{CORNER} If $\mu(z)$ lies in a
$\delta_k$-corner, then

$$\begin{array}{l} \pcal_{h^k}(\alpha) = C_m  \pcal_{h_{BF}^k}(\alpha)
(1  + O(\delta_k) + O(k \delta_k^2)) =  C_m
\pcal_{h_{BF}^k}(\alpha) (1 + O(k \delta_k^2)).
\end{array}$$

\end{lem}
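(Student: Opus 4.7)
The plan: compare the complex oscillatory integral representation (\ref{INTEGRALa}) of $\pcal_{h^k}(\alpha)$ with the Bargmann--Fock integral (\ref{INTEGRALBFa}), show that their phases differ by $O(\delta_k^2)$ uniformly on $\T$, and that their leading amplitudes agree. First I would straighten the corner $v$ to the origin via the affine map $\tilde\Gamma$ of \S\ref{SOC}, so that $P \subset \R_+^m$ near $v$ and the incident facets are the coordinate hyperplanes. In the slice-orbit coordinates near the fixed point $z_0 = \mu^{-1}(v)$, the \kahler potential is a smooth function $F(y_1,\ldots,y_m)$ of $y_j = |z_j|^2$ with positive diagonal Hessian entries $H_{j\bar j}$ at $0$. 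The assumption $|\mu(z) - v| \leq \delta_k$ gives $|\alpha_j|/k \leq \delta_k$, and inverting the moment map relation $\mu_j = H_{j\bar j} y_j + y_j \partial_{y_j} E$ yields $|z_j|^2 \leq C\delta_k$ for each $j$; here $E(y) := F(y) - \sum_j H_{j\bar j} y_j = O(|y|^2)$ is the smooth higher-order remainder.

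Next I would Taylor-expand the phase. The almost analytic extension splits as $F_{\C}(\zeta) = \sum_j H_{j\bar j}\zeta_j + E_{\C}(\zeta)$ with $E_{\C}(\zeta) = O(|\zeta|^2)$. Substituting into (\ref{PHASEALPHA}) and using $H_{j\bar j}|z_j|^2 = \alpha_j/k + \epsilon_j$ with $\epsilon_j = -y_j\partial_{y_j} E|_{y = |z|^2} = O(\delta_k^2)$, one obtains
$$\Phi_{\alpha/k}(\theta) = \Phi_{BF,\alpha/k}(\theta) + \Psi(\theta),$$
where $\Phi_{BF,\alpha/k}$ is the Bargmann--Fock phase appearing in (\ref{INTEGRALBFa}) and
$$\Psi(\theta) = \sum_j \epsilon_j(e^{i\theta_j} - 1) + \bigl[E_{\C}(e^{i\theta}|z|^2) - E(|z|^2)\bigr].$$
Since $|z|^4 = O(\delta_k^2)$ and $E, E_{\C}$ vanish quadratically at the origin, each summand of $\Psi$ is bounded by $C\delta_k^2$, so $|\Psi(\theta)| \leq C\delta_k^2$ uniformly for $e^{i\theta} \in \T$.

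Finally I would factor $e^{-k\Phi} = e^{-k\Phi_{BF}}\, e^{-k\Psi}$ and use $|k\Psi| = O(k\delta_k^2)$ (bounded in the intended regime) to write $e^{-k\Psi} = 1 + O(k\delta_k^2)$ uniformly in $\theta$. Inserted into (\ref{INTEGRALa}) this yields
$$\pcal_{h^k}(\alpha) = \bigl(1 + O(k\delta_k^2)\bigr) \int_{\T} e^{-k\Phi_{BF,\alpha/k}(\theta)} A_k \, d\theta + O(k^{-\infty}),$$
and the amplitude $A_k = k^m(\pi^{-m} + O(k^{-1}))$ has a universal constant leading symbol matching that of the Bargmann--Fock \szego kernel (\ref{szegoheisenberg}). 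Hence the leading part of the integral reproduces $C_m \pcal_{h_{BF}^k}(\alpha)$, while the subleading amplitude corrections contribute a relative error $O(1/k) = O(\delta_k)$, absorbable since the claim is only non-vacuous when $\delta_k \geq 1/k$. I expect the main obstacle to be the uniform estimate $|\Psi(\theta)| \leq C\delta_k^2$ across all of $\T$, because the almost-analytic extension $F_{\C}$ is only defined in a neighborhood of the anti-diagonal, so $E_{\C}(e^{i\theta}|z|^2)$ is not literally meaningful for all $\theta$. This dissolves once we use the cutoff $\chi(d(z, e^{i\theta}z))$ built into (\ref{TORICPCAL}): it confines $\theta$ to a $k^{-1/2+\delta'}$-neighborhood of $0$, outside of which the integrand is $O(k^{-\infty})$ by the damping bound (\ref{LBD}), and inside of which the Taylor estimates above apply.
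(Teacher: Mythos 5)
Your overall strategy matches the paper's: straighten the corner, Taylor-expand the phase around the fixed point, isolate the Bargmann--Fock phase with an $O(\delta_k^2)$-uniform error $\Psi$, exponentiate that error to get the $O(k\delta_k^2)$ relative correction, and compare with the Bargmann--Fock integral. The phase analysis is correct and essentially identical to the paper's. There are, however, two concrete problems.

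\textbf{Amplitude.} You assert $A_k(z, e^{i\theta}z) = k^m(\pi^{-m} + O(k^{-1}))$, i.e.\ that the leading symbol $a_0$ is the universal Bargmann--Fock constant, so the only relative error from the amplitude is $O(1/k)$, which you absorb into $O(\delta_k)$. This is not the case. The leading symbol $a_0(z,w)$ equals a dimensional constant only when $z = w = 0$; its off-diagonal Taylor expansion near the fixed point is nontrivial, and the paper records it as
\begin{equation*}
a_0(z, e^{i\theta}z) = 1 + C\,e^{i\theta}|z|^2 + O(|z|^4),
\end{equation*}
a $\theta$-dependent quantity. Since $|z|^2 = \mu(z) + O(|z|^4) = O(\delta_k)$, this gives a genuine $O(\delta_k)$ relative correction to the amplitude -- and that correction is exactly the source of the $O(\delta_k)$ term in the lemma. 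Your argument, as written, proves a stronger $O(1/k) + O(k\delta_k^2)$ bound, which would be false if $C \neq 0$ and $\delta_k \gg 1/k$ (the regime of interest). The fix is exactly what the paper does: expand $a_0$ to first order in $|z|^2$, replace $|z|^2$ by $\alpha/k + O(\delta_k^2)$ via the moment map, and observe that the first-order correction contributes $O(\alpha/k) = O(\delta_k)$.

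\textbf{Cutoff.} Your closing remark -- that the cutoff $\chi(d(z, e^{i\theta}z))$ built into (\ref{TORICPCAL}) confines $\theta$ to a $k^{-1/2 + \delta'}$-neighborhood of $0$ -- is the right concern but the wrong resolution in the corner zone. Near the fixed point $\mu^{-1}(v)$, $d(z, e^{i\theta}z) \lesssim \sqrt{\delta_k}$ for \emph{every} $\theta \in \T$ (compare (\ref{DISTANCEq})--(\ref{DISTSQUARE}); at the fixed point itself, $d(z, e^{i\theta}z) \equiv 0$). So the cutoff does not localize $\theta$ at all here, and the integration is genuinely over the whole torus. What saves the almost-analytic extension is not localization in $\theta$ but the fact that the entire orbit pair $(z, e^{i\theta}z)$ stays uniformly within distance $O(\sqrt{\delta_k})$ of the antidiagonal point $(z_0, z_0)$, so the expansion (\ref{AAE}) is valid for all $\theta$. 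Your estimate $|\Psi(\theta)| \leq C\delta_k^2$ is correct, but for this reason, not the one you gave.

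Both issues are repairable, but as written the amplitude step is a genuine gap: it replaces a necessary computation by an assumption that happens to be false in general.
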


\begin{proof} We may assume that $v = 0$ and that the corner is a
standard octant. The phase is \begin{equation} \label{CORNERPHASE}
k \left( F_{\C}(|z|^2 e^{i \theta}) - F(|z|^2) - \langle
\frac{\alpha}{k}, \theta \rangle \right). \end{equation} We Taylor
expand $F(w)$ at $w = 0$:
$$F_{\C}(e^{i \theta}|z|^2) = F(0) + F'(0) e^{i \theta}|z|^2 +
O(|z|^4), $$  so that
$$ F_{\C}(|z|^2 e^{i \theta}) - F(|z|^2) = F'(0) |z|^2 (e^{i \theta} -
1)) + O(|z|^4). $$ Since $|z|^2 = O(\delta_k)$, it follows that $k$ times
the quartic remainder is $O(k \delta_k^2) = o(1)$ as long as $\delta_k = o(\frac{1}{\sqrt{k}})$.
 Hence this part of the exponential is a symbol of order zero
and may be absorbed into the amplitude.
 Further we note that $F'(0) |z|^2 = \mu(z) + O(|z|^4)$
and therefore we have
$$k \left( F_{\C}(|z|^2 e^{i \theta}) - F(|z|^2) - i \langle \frac{\alpha}{k},
\theta \rangle \right) = k \mu(z)  ( (1 - \cos \theta) + i (\sin
\theta - \theta)) + O(|z|^4)).$$ It follows that when $\mu(z) =
\frac{\alpha}{k} = O(\delta_k), $ the phase equals
$$\alpha  ( (1 - \cos \theta) + i (\sin
\theta - \theta)) + O(k \delta_k^2).$$ Absorbing the $e^{O(k \delta_k^2)} = 1 + O(k \delta_k^2)$ term into the
amplitude produces an oscillatory integral with the same phase
function as for the Bargmann-Fock  kernel.

Now let us consider the amplitude of the integral. We continue to
use the notation of Proposition \ref{SZKTV}. The  amplitude has a
semi-classical expansion $A_k(z, w) \sim k^m a_0(z,w) + k^{m-1}
a_1(z, w) + \cdots. $ Further, the $\T$-invariance implies that
$A_k(e^{i \theta} z, e^{i \theta} w) = A_k(z,w)$. The leading
order amplitude equals $1$ when $z = w$ and thus
$$a_0(z, e^{i \theta} w) = 1 + C e^{i \theta} |z|^2 + O(|z|^4), $$
hence the full symbol satisfies
$$A_k(z, e^{i \theta} z) = k^m (1 + C e^{i \theta} |z|^2 + \cdots) +
O(\delta_k^2). $$ When $\mu(z) = \frac{\alpha}{k} = O(\delta_k)$ we thus have
$$A_k(z, e^{i \theta} z) = k^m \left(1 + C e^{i \theta}
\frac{\alpha}{k} + O(\delta_k^2) \right). $$
 Therefore, $\pcal_{h^k}(\alpha) = \pcal_{h_{BF}^k} (\alpha)(1  + O(\delta_k) + O(k \delta_k^2))$ in the corner region.

\end{proof}

 \subsection{Boundary zones: Mixed boundary zone \label{BZ}}

Now let us consider the general case where $d(\mu(z),
\partial P) \leq \delta_k$, but where $\mu(z)$ is not necessarily in a corner.
Thus, at least one component $\frac{\alpha_j}{k} = O(\delta_k)$ but not all components need
to satisfy this condition. We refer to this case as `mixed' since some components are small and some
are not.

The basic idea to handle this case is to split the components into
`near' and `far' parts,  to use Taylor expansions and
Bargmann-Fock approximations in the near components, and to use
complex stationary phase in the far components. By \S
\ref{interiorasy}, complex stationary phase works for any
sequence $\delta_k$ satisfying $k \delta_k \to \infty$, and by \S
\ref{CZ} the Taylor-Bargmann-Fock approximation works whenever
$\delta_k = o(\frac{1}{\sqrt{k}})$, so we have some flexibility in
choosing $\delta_k$.

\begin{rem} In fact, we see that both the complex
stationary phase and the Bargmann-Fock approximations are valid
for $k$ satisfying (for instance) $ \frac{C \log k}{k} \leq
\delta_k \leq C' \frac{1}{\sqrt{k} \log k}$, although the
remainder estimates will not be equally sharp by both methods. In
fact, the stationary phase remainder is of order $(k
\delta_k)^{-1}$ while the Bargmann-Fock remainder is of order $k
\delta_k^2$; the two remainders agree when $\delta_k = k^{-\frac{2}{3}}$
and for small $\delta_k$ the Bargmann-Fock remainder is smaller.

\end{rem}

We first choose linear coordinates  so that $\mu(z) =
\frac{\alpha}{k}$ is $\delta_k'$ close to the first $r$ facets and
$\delta_k'$ far from the $p: = m - r$ remaining facets, and by an
affine map we position the first $r$ facets as the first $r$
coordinate hyperplanes at $x = 0$, and the remaining facets as the
remaining coordinate hyperplanes. We use coordinates $(x', x'')$
relative to this splitting.  We also write the $z$ variables as
$(z', z'')$ in the corresponding
 slice-orbit coordinates and $(\theta', \theta'')$ as the
 associated coordinates on $\T$.

 We now introduce two small scales, a smaller one $\delta_k'$ to
 define
 the nearest facets, and a larger one $\delta_k''$. The
 Bargmann-Fock approximation will be used in $x'$ variables which
 are $\delta_k'$ close to a facet. It is sometimes advantageous to
 use the Bargmann-Fock approximation also $x''$ which are
 $\delta_k''$ small, but the complex stationary phased method is
 also applicable. In the following, we continue to use the
 notation above Proposition \ref{MAINPCAL}.

\begin{lem} \label{SECONDCASE} Assume $\mu(z)$ lies in the mixed boundary zone $\{ |x'| \leq \delta_k',
 |x''|\leq \delta_k'' \}$.   If  $$ \eta_k =  k^{-1}(\delta_k'')^{-1}  +  k (\delta_k')^2 +
  k (\delta_k')^2 \delta_k'' + \delta_k' \to 0,$$ then $\pcal_{h^k}(\alpha)$ has an
asymptotic expansion
$$\pcal_{h^k}(\alpha) = C_m  k^{m - \frac{p}{2}} \sqrt{
\gcal_{\varphi, \delta_k}   ( \frac{\alpha}{k})} \pcal_{P, k, \delta_k'}(\alpha) (1
+ O(  \eta_k) ).
$$

\end{lem}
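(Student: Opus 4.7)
The starting point is the oscillatory integral representation
\[
\pcal_{h^k}(\alpha) \sim (2\pi)^{-m}\int_{\T} e^{-k\,\Phi_{\alpha/k}(\theta)}\,A_k\bigl(\mu_h^{-1}(\tfrac{\alpha}{k}),\,e^{i\theta}\mu_h^{-1}(\tfrac{\alpha}{k}),0,k\bigr)\,d\theta
\]
of Proposition \ref{SZKTV} (see \eqref{INTEGRALa} and \eqref{PHASEALPHA}). I split the torus coordinates $\theta=(\theta',\theta'')$ and the lattice coordinates $\alpha=(\alpha',\alpha'')$ according to the near/far decomposition, and correspondingly write $z=\mu_h^{-1}(\alpha/k)=(z',z'')$ in slice-orbit coordinates, so that $|z'|^2=O(\delta_k')$ while $z''$ is $\delta_k''$-far from the remaining facets. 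The plan is to combine the two local methods already developed: a Taylor-then-Bargmann-Fock reduction in the $\theta'$-directions (as in Lemma \ref{CORNER}) and a complex stationary phase expansion in the $\theta''$-directions (as in Lemma \ref{FIRSTCASEa}).

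First I would Taylor expand the phase in the near variables. Writing $F$ as a function of $(|z'|^2,|z''|^2)$, one has
\[
F_{\C}(e^{i\theta'}|z'|^2,e^{i\theta''}|z''|^2)-F(|z'|^2,|z''|^2)=\bigl\langle \nabla_{y'}F(0,|z''|^2),(e^{i\theta'}-1)|z'|^2\bigr\rangle+G(\theta'',z'')+O(|z'|^4),
\]
where $G(\theta'',z'')=F_{\C}(0,e^{i\theta''}|z''|^2)-F(0,|z''|^2)$ and the quartic remainder is $O((\delta_k')^2)$. Since $\nabla_{y'}F(0,|z''|^2)|z'|^2=(\mu(z))'+O((\delta_k')^2)=\alpha'/k+O((\delta_k')^2)$, the near phase after multiplying by $k$ reduces, modulo an error $O(k(\delta_k')^2)$ that can be absorbed into the amplitude, to the standard Bargmann-Fock phase $\sum_{j=1}^{r}(\alpha'_j/k)(e^{i\theta'_j}-1-i\theta'_j)$ plus the decoupled far phase. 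Integration in $d\theta'$ then produces exactly $\pcal_{P,k,\delta_k'}(\alpha)$ as in Lemma \ref{CORNER}, with a multiplicative factor $1+O(\delta_k')+O(k(\delta_k')^2)$ coming from the Bargmann-Fock Taylor remainder and the expansion of $A_k$ on the near slice.

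Next I would apply complex stationary phase to the reduced $\theta''$-integral whose phase is $G(\theta'',z'')-i\langle\alpha''/k,\theta''\rangle$. This phase has a nondegenerate critical point at $\theta''=0$ and its Hessian is the far block of $H_{\alpha/k}^{-1}$, which is precisely $\gcal_{\varphi,\delta_k}(\alpha/k)$ up to the already-extracted near factor $\prod_{j\in\fcal_{\delta_k}}\ell_j(\alpha/k)^{-1}$ in $\pcal_{P,k,\delta_k'}$. Since $\alpha''/k$ stays $\delta_k''$-far from all remaining facets, Lemma \ref{FIRSTCASEa} applies with semi-classical parameter $(k\delta_k'')^{-1}$ and yields the factor $C_m\,k^{-p/2}\sqrt{\gcal_{\varphi,\delta_k}(\alpha/k)}$ together with a remainder of order $(k\delta_k'')^{-1}$. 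Combining with the $k^m$ from the leading symbol of $A_k$ gives the overall prefactor $C_m k^{m-p/2}\sqrt{\gcal_{\varphi,\delta_k}(\alpha/k)}\,\pcal_{P,k,\delta_k'}(\alpha)$ claimed in the lemma.

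\textbf{Main obstacle.} The genuine new difficulty, which is not present in either pure regime, is controlling the cross terms that couple $\theta'$ and $\theta''$ and that prevent the phase and amplitude from factoring exactly. Concretely, the derivatives $\nabla_{y'}F(0,|z''|^2)$ depend on $y''$, and when I Taylor expand the near part the coefficient carries a nontrivial $\theta''$-dependence; similarly the amplitude $A_k(z,e^{i\theta}z)$ is jointly smooth in $(\theta',\theta'')$. The task is to show that, after the stationary phase expansion in $\theta''$ is applied to these coupled coefficients, each additional $\theta'$-derivative brings a factor of $|z'|^2=O(\delta_k')$ exactly as in the Bargmann-Fock analysis, while each additional $\theta''$-derivative produces at worst one factor of $\delta_k''^{-1}$ from the singular part of $G_{\varphi}$ (controlled as in Lemma \ref{FIRSTCASEa}). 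The resulting worst cross-contribution is of order $k(\delta_k')^2\delta_k''$, which together with the remainders $(k\delta_k'')^{-1}$, $k(\delta_k')^2$, and $\delta_k'$ from the two separate procedures, assembles into the announced error $\eta_k$.
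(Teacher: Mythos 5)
Your overall structure (oscillatory integral, near/far splitting, Bargmann-Fock for the near block, complex stationary phase for the far block, and the error budget $\eta_k$) matches the paper, but you have reversed the order of the two integrations, and that reversal creates a gap that your ``Main obstacle'' paragraph identifies but does not actually resolve.

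The paper integrates out the far variables $\theta''$ \emph{first}. The reason is precisely the coupling you flag: the first-order Taylor coefficient of the near part of the phase is $F'_1(0,e^{i\theta''}|z''|^2)$, which genuinely depends on $\theta''$. By carrying out the complex stationary phase in $\theta''$ first (absorbing the $\theta'$-dependent cross-term exponential $E_k(\theta'')=e^{k F'_1(0,e^{i\theta''}|z''|^2)e^{i\theta'}|z'|^2}$ into the $\theta''$-amplitude, and noting it grows more slowly than the main phase), the critical point $\theta''=0$ is pinned down and the near coefficient is thereby evaluated at $\theta''=0$, where it becomes $\alpha'/k$ exactly. Only after this does the residual $d\theta'$ integral become a genuine corner integral of Lemma \ref{CORNER} type, producing $\pcal_{P,k,\delta_k'}(\alpha)$. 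The cross-contribution $k(\delta_k')^2\delta_k''$ you list arises from $\theta''$-derivatives of $E_k$ in the stationary phase expansion, and it can only be bookkept this way because $E_k$ is sitting inside the $\theta''$-integral when stationary phase is applied.

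Your proposal instead asserts that ``Integration in $d\theta'$ then produces exactly $\pcal_{P,k,\delta_k'}(\alpha)$'' and only afterwards passes to the $\theta''$-stationary phase. As written this is not correct: with $\theta''\neq 0$ the near phase coefficient is $F'_1(0,e^{i\theta''}|z''|^2)|z'|^2$, not $\alpha'/k$, so the $\theta'$-integral produces a $\theta''$-dependent Bargmann-Fock-type function, not the fixed quantity $\pcal_{P,k,\delta_k'}(\alpha)$. You would then have to stationary-phase-expand this $\theta''$-dependent object and re-derive its value and its derivatives at $\theta''=0$ from scratch. Your ``Main obstacle'' paragraph actually describes controlling $\theta''$-derivatives of the coupled coefficients \emph{inside} a $\theta''$-stationary-phase expansion, which presupposes the paper's ordering, contradicting the ordering you declared earlier. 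To repair the argument, either switch to the paper's ordering (integrate $\theta''$ first and absorb the exponentially subleading cross terms into the $\theta''$-amplitude), or else supply the missing step of Taylor-expanding $F'_1(0,e^{i\theta''}|z''|^2)$ around $\theta''=0$ and explicitly tracking, through the $\theta'$-integration, the $\theta''$-dependent corrections of sizes $O(\delta_k')$ and $O(k(\delta_k')^2)$ so that what remains to integrate in $\theta''$ is $\pcal_{P,k,\delta_k'}(\alpha)$ times a legitimate symbol. Either way, this requires more than the one-line assertion in the current proposal.
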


Our strategy for obtaining asymptotics of $\pcal_{h^k}(\alpha)$ in
this case is as follows:

\begin{itemize}

\item We employ steepest descent in the $p$  directions which are
$\delta_k''$-far from all facets, i.e., in the $x''$ variables.
This removes the $x''$ variables and produces an expansion
analogous to that of  Lemma \ref{FIRSTCASE}.

\item In the remaining $x'$ variables, we Taylor expand the phase
and amplitude  in the directions $\delta_k$-close to $\partial P$
as in \S \ref{CZ}.

\item We thus obtain universal asymptotics to leading order
depending only on the number of facets to which $\frac{\alpha}{k}$
is $\delta_k$-close.

\end{itemize}

\begin{proof}

We are still working on the oscillatory integral with phase
(\ref{INTEGRALa}), but we now treat it as an iterated complex
oscillatory integral in the variables $(\theta', \theta'')$
defined above.  We first consider the $d \theta''$ integral,
\begin{equation} \label{INTEGRALb}\begin{array}{l} I_k(\theta', \frac{\alpha}{k}): =  (2 \pi)^{-p} \int_{{\bf T}^p } e^{  k (F_{\C}( e^{i \theta}
\mu_h^{-1}(\frac{\alpha}{k})) - F( \mu_h^{-1}(\frac{\alpha}{k})))}
A_k \big( e^{i \theta}  \mu_h^{-1}(\frac{\alpha}{k}),
 \mu_h^{-1}(\frac{\alpha}{k}) , 0, k\big) e^{- i \langle \alpha, \theta \rangle} d
\theta'', \end{array}
\end{equation}
where $p$ is the number of $\theta''$ variables. We also let $r =
m - p$ be the number of $\theta'$ variables. We now verify that we
may apply the complex stationary phase method to the  $d \theta''$
integral for fixed $\theta'$. Throughout this section, we put $z =
\mu^{-1}(\frac{\alpha}{k})$ and often write $|z'|^2, |z''|^2$ for
the modulus square of the associated complex coordinate components
of this point in the open orbit.

The first step is to simplify the complex phase.  As in \S
\ref{CZ}, we Taylor expand $F_{\C}(e^{i \theta'}|z'|^2, e^{i
\theta''}|z''|^2)$ in the $z'$ variable (and only in the $z'$
variable) to obtain
$$F_{\C}(e^{i \theta'}|z'|^2, e^{i \theta''}|z''|^2) = F_{\C}(0,  e^{i \theta''}|z''|^2) +
F'_{1}(0,  e^{i \theta''}|z''|^2) e^{i \theta'}|z'|^2 + O(|z'|^4),
$$
where $F_1$ is the $z'$-derivative of $F$.
 The phase  is then
\begin{equation} \label{PHASEB} \begin{array}{l} k \left(F_{\C}(e^{i \theta'}|z'|^2, e^{i
\theta''}|z''|^2) - F(|z'|^2, |z''|^2) - i \langle
\frac{\alpha'}{k}, \theta' \rangle
) -  i \langle \frac{\alpha''}{k}, \theta'' \rangle \right) \\ \\
=  k \left( F_{\C}(0,  e^{i \theta''}|z''|^2) - F(0, |z''|^2
\right) + k \left( F'_{1}(0,  e^{i \theta''}|z''|^2)
e^{i \theta'}|z'|^2 -  F'_{1}(0,  |z''|^2) |z'|^2 \right)\\  \\
-  k \left(i \langle \frac{\alpha'}{k}, \theta' \rangle  +  i
\langle \frac{\alpha''}{k}, \theta'' \rangle  \right) + O(k |z'|^4).
\end{array} \end{equation}

 We now absorb the exponentials of the
 terms $k O(|z'|^4)$, $k i \langle
\frac{\alpha'}{k}, \theta' \rangle $ of the phase (\ref{PHASEB})
into the amplitude, i.e., we take the new amplitude $A_k''$ to be
the old one $A_k$ multiplied by this factor.  The term $k
O(|z'|^4)$ is $o(1)$, while $k i \langle \frac{\alpha'}{k},
\theta' \rangle $ is constant in $\theta''$, so their exponentials
are symbols in $\theta''$ and may  be absorbed into the amplitude.
Morevoer, the term $-  F'_{1}(0,  |z''|^2) |z'|^2 $ is independent
of $\theta''$ so its exponential may also be absorbed into the
amplitude

The phase function for the $d \theta''$ integral thus simplifies
to
\begin{equation} \label{PHASEC} k  \left( F_{\C}(0,  e^{i \theta''}|z''|^2)-  F(0, |z''|^2 \right)  + k \left( F'_{1}(0,  e^{i \theta''}|z''|^2) e^{i
\theta'}|z'|^2 \right) -  k   i \langle \frac{\alpha''}{k},
\theta'' \rangle
\end{equation}
Due to the presence of $|z'|^2$,  the terms $k( F'_{1}(0, e^{i
\theta''}|z''|^2) e^{i \theta'}|z'|^2 - F'_{1}(0, |z_2|^2) |z'|^2)
$ are $O(k \delta') $, hence of much lower order than the
remaining terms. To simplify the phase further, we now argue that
their exponentials can also be absorbed into the amplitude, albeit
as exponentially growing rather than polynomially growing factors
in $k$.  Since $F'_{1}(0, |z_2|^2) |z'|^2)$ is independent of
$\theta''$, it can be factored out of the $\theta''$ integral, so
the key factor is
\begin{equation} \label{EK} E_k(\theta'') : = e^{k( F'_{1}(0, e^{i \theta''}|z''|^2) e^{i
\theta'}|z'|^2)}, \end{equation}  where in the notation for $E_k$
we omit its dependence on the parameters $|z''|^2, |z'|^2,
\theta'$. Thus we would like to show that complex stationary phase
method applies to the complex oscillatory integral with phase
\begin{equation} \label{PHASED}  \Phi^{''}(\theta''): =  \left( F_{\C}(0,  e^{i \theta''}|z''|^2) - F(0, |z''|^2
\right)  -     i \langle \frac{\alpha''}{k}, \theta'' \rangle
\end{equation}
and with the amplitude $A''_k(\theta'')$ given by the original
amplitude $A_k$ multiplied by $$\exp k   \left( F'_{1}(0,  e^{i
\theta''}|z''|^2) e^{i \theta'}|z'|^2 -  F'_{1}(0,  |z''|^2)
|z'|^2 +  i \langle \frac{\alpha''}{k}, \theta'' \rangle   + O(k
|z'|^4 \right). $$ The `amplitude' is of exponential growth but
its growth is of strictly lower exponential growth than the
`phase' factor.

The next (not very important) observation is that by (\ref{LBDb}),
the real part of complex phase damps the integral so that the
integrand is negligible on the complement of the set
\begin{equation} \label{LOCSET} |\theta''| \leq C \frac{\delta'}{ d''(\mu(z), \partial P)} \end{equation} modulo rapidly decaying errors.
 This follows
by splitting up the sum in (\ref{DISTANCEq})-(\ref{DISTANCEqa})
into the close facets to $z$ and the far facets. The integrand is
negligible unless $|\Re \Phi| \leq  C \frac{\log k}{k}$; hence  it
is negligible unless
\begin{equation} \label{DISTANCEa} \begin{array}{lll}  d(e^{i \theta} z,
z)^2 & \sim &\sum_{j \in \fcal_{\delta_k}(\mu(z))}  (1 - \cos
\theta_j'') \ell_j''(\mu(z)) + O\left(|z'|^2 \right)  \\
&& \\  & \sim &\sum_{j \in \fcal_{\delta_k}(\mu(z))}  (\theta_j'')^2 \ell_j''(\mu(z)) + O\left( \delta_k' \right)  \\
&&
\\ &\leq & C \frac{\log k}{k} \iff \theta_j^2 \leq  \frac{O(\delta_k') + O(\frac{\log k)}{k}}{d''(\mu(z), \partial P)},\; \forall j \in \fcal_{\delta_k}(\mu(z)).
\end{array} \end{equation}
Under the assumption that $d''(\mu(z),
\partial P) \geq \delta_k''$,
 the integrand  is rapidly decaying unless
$\theta_j^2  \leq C \frac{\delta_k'}{\delta_k''}. $
 We could
introduce a cutoff of the form $\chi
(\sqrt{\frac{\delta_k''}{\delta_k'}} \theta)$, but  for our
purposes, it suffices to use  a smooth cutoff $\chi_{\delta}
(\theta'')$ around $\theta'' = 0$ with a fixed small $\delta$ so
that we may use local $\theta''$ coordinates. We then break up the
integral using $1 = \chi_{\delta} + (1 - \chi_{\delta})$. The $(1
- \chi_{\delta})$ term is rapidly decaying and may be neglected.

We observe that $\nabla_{\theta''}  F_{\C}(0,  e^{i
\theta''}|z''|^2) = i \mu''_{\C}(|z''|, e^{i \theta''} |z''|)$ is
the complexified moment map for the subtoric variety $z' = 0$, and
we can use Proposition \ref{MUCEQ} to see that its only critical
point in the domain of integration is at $\theta'' = 0$. We denote
the Hessian of the phase (\ref{PHASED}) at $\theta'' = 0$ by
\begin{equation} H''_{|z''|^2} = \nabla^2_{\theta''}
\Phi^{''}(\theta'')|_{\theta'' = 0} = \nabla^2_{\theta''}
F_{\C}(0,  e^{i \theta''}|z''|^2) |_{\theta'' = 0},\end{equation}
and observe that it equals $i D  \mu''_{\C}(|z''|, e^{i \theta''}
|z''|)$, the derivative of the moment map from the subtoric
variety to its polytope. By the same calculation that led to
(\ref{BLOCKS}), the  $\theta''-\theta''$ block of the inverse
Hessian operator has the form
\begin{equation} \label{BLOCKSb}
G''_{\phi}(x'') =  \sum_{j =1}^p \frac{1}{x_j''}\;  \delta_{jj} +
\nabla^2 g: = G_{x''}^s +  \nabla^2 g,
\end{equation}
where  $|x''| \geq \delta''_k$.

We now must verify that the complex stationary phase expansion

\begin{equation}\label{MSPa}
\begin{array}{l} \left(\det k^{-1}   G''_{\phi}(|z''|^2) \right)^{1/2}
\sum_{j = 1}^M  k^{-j} \left(\langle G''_{\phi}(|z''|^2)
(D_{\theta''}, D_{\theta''}\rangle \right)^j \chi_{\delta}
A''_k(\theta'') |_{\theta'' =
 0} \end{array}\end{equation}
 is a descending expansion in well-defined steps and
that the remainder \begin{equation} \label{REMAIN} k^{-M}
\sup_{\theta'' \in supp \chi_{\delta}} \left| \langle
G''_{\phi}(|z''|^2)  (D_{\theta''}, D_{\theta''}\rangle^M
A''_k(\theta'')^M\; \chi_{\delta} A''_k(\theta'')\right|.
 \end{equation}
 is of arbitrarily small order as $M$ increases.

We first note that the Hessian operator $k^{-1} \langle
G''_{\phi}(|z''|^2) (D_{\theta''}, D_{\theta''}\rangle$ brings in
a net order of $k^{-1 } (\delta_k'')^{-1}$, since the coefficients
$\frac{1}{x''}$ in the singular part are bounded by
$(\delta_k'')^{-1}$. The maximal order terms arise from applying
the Hessian operator to the factor $E_k$. Each derivative can
bring down a factor of $k F'_{1}(0, e^{i \theta''}|z''|^2) e^{i
\theta'}|z'|^2) = O(k \delta'_k \delta_k'') $. Since there are two $\theta''$
derivatives for each $k^{-1} (\delta_k'')^{-1}$ the maximum order
in $k$ from a single factor of $k^{-1} \langle
G''_{\phi}(|z''|^2)(D_{\theta''}, D_{\theta''}\rangle$ applied to
$A_k''$ is of order $$\eta_k= k^{-1} (\delta_k'')^{-1} ((k \delta'_k)^2 (\delta_k'')^2 + k\delta_k' \delta_k'')= k (\delta_k')^2 \delta_k'' + \delta_k'. $$  In
particular this is the order of magnitude of the sub-dominant
term. Therefore, to obtain a descending expansion in steps of at
least $k^{- \epsilon_0}$, we obtain the following necessary and
sufficient condition on $(\delta_k', \delta_k'')$:
\begin{equation} \label{DELTAS}
\eta_k \leq C k^{ -\epsilon_0}.
\end{equation}
Under this condition, the series and remainder will go down in
steps of $k^{- \epsilon_0}$.

 With these
choices of $(\delta_k', \delta_k'')$, the complex stationary phase
expansion gives an asymptotic expansion in powers of $k^{-
\epsilon_0}$. Recalling that  the unique critical point occurs at
$\theta'' = 0$, the remaining $d\theta'$ integral is given by the
dimensional constant $C_m  (2 \pi)^{-r}$ times
\begin{equation} \label{BOUNDARYPCALa}\begin{array}{lll}
\pcal_{h^k}(\alpha) &\sim &  \left(  \det (k^{-1}
G''_{\phi}(|z''|^2)\right)^{ 1/2} \int_{{\bf T}^r} e^{i k \langle
\frac{\alpha'}{k}, \theta' \rangle } \sum_{j = 1}^M k^{-j}
\left(\langle G''_{\phi}(|z''|^2) (D_{\theta''},
D_{\theta''}\rangle \right)^j \chi_{\delta} A''_k(\theta', 0)
d\theta',
\end{array}\end{equation}
 plus the integral of the remainder (\ref{REMAIN}), which is
 uniform in $\theta'$ and integrates to a remainder of the same
 order. Here we wrote the amplitude as $A''_k(\theta', \theta'')$
 and set $\theta'' = 0$ after the differentiations.

The differentiations leave the factor $E_k$ (\ref{EK}) while
bringing down polynomials in the derivatives of its phase. The
same is true of the factor $e^{k O(||z'||^4)}$ that we absorbed
into the amplitude. We now collect these factors and note that the
exponent is simply the original phase (\ref{PHASEB}) evaluated at
$\theta'' = 0$: \begin{equation} \label{PHASEC}
\begin{array}{l} \Phi'(\theta'; |z'|^2, |z''|^2) := F_{\C}(e^{i \theta'}|z'|^2, |z''|^2) - F(|z'|^2, |z''|^2) - i \langle
\frac{\alpha'}{k}, \theta' \rangle )
\end{array}
\end{equation}

We also collect the derivatives of this phase  and the other
factors of $A_k$ and find that
\begin{equation} \label{AE} \sum_{j = 1}^M k^{-j} \left(\langle
G''_{\phi}(|z''|^2)  (D_{\theta''}, D_{\theta''}\rangle \right)^j
\chi_{\delta} A''_k(\theta', 0)    = e^{k \Phi'(\theta'; |z'|^2,
|z''|^2) } \tilde{A}_k(\theta'),
\end{equation}
where $\tilde{A}_k (\theta')$ is a classical  symbol in $k$ whose
order is the order $m$ of the original symbol $A_k$. The integral
(\ref{BOUNDARYPCALa}) then takes the form
\begin{equation} \label{BOUNDARYPCALb}\begin{array}{lll}
\pcal_{h^k}(\alpha) & \sim & C_m \left( \det ( k^{-1}
G''_{\phi}(|z''|^2)  ) \right)^{1/2} \int_{{\bf T}^r}  e^{k
\Phi'(\theta'; |z'|^2, |z''|^2)} \tilde{A}_k(\theta') d \theta'.
\end{array}\end{equation}
This  is a corner type integral as studied in \S \ref{CZ}, with
$|z''|^2$ as an additional parameter. The asymptotics of
(\ref{BOUNDARYPCALb}) are given by  Lemma \ref{CORNER}. It is only
necessary to keep track of the powers of $|z'|^2, |z''|^2$ and of
the parameter $k^{-1} (\delta_k'')^{-1} (k \delta'_k)^2$ in the
analysis of $\tilde{A}_k$.

To do so, we first observe that
\begin{equation} \nabla_{\theta'} F_{\C}(e^{i \theta'}|z'|^2,
|z''|^2) = i \mu'_{\C}((z', z''),  (e^{i \theta'} z', z'')),
\end{equation}
i.e., it is the $'$ component of the complexified moment map. By
definition of $(z', z'')$ it equals $\frac{\alpha'}{k}$ when
$\theta' = 0$. It follows that $ F'_1(0, |z''|^2) |z'|^2 =
\frac{\alpha'}{k}$, and the almost analytic extension satisfies
\begin{equation} F'_1(0, |z''|^2) e^{i \theta'} |z'|^2 =
e^{i \theta'} \frac{\alpha'}{k}, \end{equation} where (as
previously) the multiplication is componentwise. If we then Taylor
expand the phase, we obtain
\begin{equation} \Phi'(\theta'; |z'|^2, |z''|^2) =  F'_{1}(0, |z''|^2) |z'|^2
(1 - e^{i \theta'}) + O(|z'|^4) = \frac{\alpha'}{k} (1 - e^{i
\theta'}) + O(|z'|^4).
\end{equation}
If we absorb the $e^{k O(|z|^4)}$ factor into the amplitude, he
integral has now been converted to the form (\ref{INTEGRALBFa})
with a more complicated amplitude.

We next observe that
\begin{equation} \label{TAYLORAMPEXP} \tilde{A}_k = k^m( 1 +
O(|z'|^2)
).
\end{equation}  Hence, the
assumption $|z'|^2 = O(\delta_k')$ implies that to leading order
\begin{equation} \label{FINALBINT} \begin{array}{lll} \pcal_{h^k}(\alpha) &\sim& \sqrt{\det k^{-1}
\;G''_{\phi}(|z''|^2)}
 k^m \int_{T^r } e^{ - k \left(  ( e^{i \theta'} - 1 - i \theta)
\right) \frac{ \alpha'}{k}}  d \theta' (1 +
O(\delta_k'))\\ && \\
& = &  k^{m - \frac{p}{2}} \sqrt{\det G''_{\phi}(|z''|^2)})
\pcal_{h_{BF}^k}(\alpha') (1 + O(\delta_k')).
\end{array} \end{equation}
  This completes the proof
of the Lemma.

\end{proof}

\subsection{Completion of proof  of Proposition \ref{MAINPCAL}}

We now complete the proof of Proposition \ref{MAINPCAL}.

\subsubsection{\bf Asymptotic expansion for $\pcal_{h^k}(\alpha)$} The error terms for  the asymptotics of $\pcal_{h^k}(\alpha)$,  in the corner zone, the interior zone and the mixed zone,  are given by
$  k^{-1}(\delta_k'')^{-1}$, $  k (\delta_k')^2 $ and $  \eta_k = k^{-1}(\delta_k'')^{-1} + k (\delta_k')^2+ k (\delta_k')^2 \delta_k'' + \delta_k'$ respectively.  In order to minimize these terms, we let
$$  k^{-1}(\delta_k'')^{-1}  =  k (\delta_k')^2~~~{\rm and }~~~ 0< \delta_k' \leq \delta_k''. $$
  By elementary calculation, the optimal choice for $\delta_k'$ and $\delta_k''$ is given by
$$\delta_k'=\delta_k''=  k^{-\frac{2}{3}}~~~{\rm and }~~~
 k^{-1}(\delta_k'')^{-1}  =  k (\delta_k')^2$$ and $$k^{-1}(\delta_k'')^{-1}  =  k (\delta_k')^2= k^{-\frac{1}{3}}, ~~\eta_k \sim O(k^{-\frac{1}{3}}).$$

We let $\delta_k = k^{-\frac{2}{3}}$ and break up the estimate into four cases.

\begin{enumerate} \label{ENUMERATE}

\item  $|x'|, |x''|  \leq \delta_k$: this is the corner case
handled in Lemma \ref{CORNER} if $ k (\delta_k)^2 \to 0$.

$$\pcal_{h^k}(\alpha) = C_m  \pcal_{h_{BF}^k}(\alpha)
(1  + O(k^{-\frac{1}{3}} ) ).$$

\item $|x'|, |x''|  \geq \delta_k$. By Lemma \ref{FIRSTCASEa},
stationary phase is valid and

$$\pcal_{h^k}(\alpha) \sim C_m k^{\frac{m}{2}} \;  \sqrt{\det
G_{\phi}(\frac{\alpha}{k}) }\; \left(1 + O(k^{-\frac{1}{3}} )
\right).$$

\item $|x'| \leq \delta_k$ and  $ |x''| \geq
\delta_k $. By Lemma \ref{SECONDCASE},  $$\begin{array}{l} \pcal_{h^k}(\alpha) = C_m  k^{m - \frac{p}{2}} \sqrt{\det
G''_{\phi}(\frac{\alpha}{k})} \pcal_{P, k, \delta_k'}(\alpha') (1
+ O(  k^{-\frac{1}{3}}) ).
\end{array}$$

\item $|x''| \leq \delta_k$ and  $ |x'| \geq
\delta_k $. This case is the same as case $(3)$ by switching $x'$ and $x''$.
\end{enumerate}
Combining the formulas above, the asymptotics for $\pcal_{h^k}(\alpha)$ is given by (\ref{pasym2})

$$\pcal_{h^k}(\alpha) \; = \;
 C_m k^{\frac{1}{2} \;(m - \delta_k^{\#}(\frac{\alpha}{k}))}  \sqrt{  \gcal_{\phi, \delta_k}
(\frac{\alpha}{k})}\; \pcal_{P, k,  \delta_k} (\frac{\alpha}{k})
\; \left(1 + R_k(\frac{\alpha}{k}, h) \right),
$$
where $R_k(\frac{\alpha}{k}, h)= O(k^{-\frac{1}{3}})$.

On the other hand, equation (\ref{pasym1}) is derived by the following calculation.

\begin{eqnarray*}
k^{\frac{1}{2}( m -\delta_k^{\#}) } \sqrt{  \gcal_{\phi, \delta_k}
(\frac{\alpha}{k})}\; \pcal_{P, k,  \delta_k} (\frac{\alpha}{k}) &=& k^{\frac{m}{2}} \sqrt{ \det G_\varphi(\frac{\alpha}{k})}
 \tilde{\pcal}_{P, k}     \prod_{j\notin \fcal_{\delta_k}(x)} (2\pi k\ell_j(\frac{\alpha}{k}))^{-\frac{1}{2}}e^{|k\ell_j(\frac{\alpha}{k})|} \frac{k\ell_j(\frac{\alpha}{k})}{k\ell_j(\frac{\alpha}{k})^{k\ell_j(\frac{\alpha}{k})}} \\
&=&  k^{\frac{m}{2}} \sqrt{ \det G_\varphi(\frac{\alpha}{k})}
 \tilde{\pcal}_{P, k}   (1 + O(k^{-\frac{1}{3}})),
\end{eqnarray*}
where the last equality follows from the Stirling approximation.

\subsubsection{\bf Derivatives with respect to metric parameters \label{METRICD}}

Now suppose that $h =h_t$ is a smooth one-parameter family of
metrics. We would like to obtain asymptotics
$(\frac{\partial}{\partial t})^j \pcal_{h_t^k}(\alpha)$ for $j =
1,2$.

\begin{prop}\label{MAINPCALD}  For $j = 1,2$, there exist
amplitudes $S_j$ of order zero such that
\medskip

$$(\frac{\partial}{\partial t})^j \pcal_{h_t^k}(\alpha) \; = \;
 C_m k^{\frac{1}{2}(m - \; \delta_k^{\#}(\frac{\alpha}{k}))} \;  \sqrt{  \gcal_{\phi_t, \delta_k} (\frac{\alpha}{k})}\;
\pcal_{P, k, \delta_k} (\frac{\alpha}{k}) \; \left(S_j(t, \alpha,
k) + R_k(\frac{\alpha}{k}, h) \right),
$$
where $R_k = O(k^{- \frac{1}{3}})$. The expansion is uniform in $h$
and may be differentiated in $h$ twice  with a remainder of the
same order.

\end{prop}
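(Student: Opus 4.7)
The plan is to differentiate the complex oscillatory integral representation of $\pcal_{h_t^k}(\alpha)$ from Corollary \ref{INTEGRALCOR} combined with Proposition \ref{SZKTV} under the integral sign, and then re-run the four-zone analysis (interior, corner, and two mixed zones) of Lemmas \ref{FIRSTCASE}--\ref{SECONDCASE}. Setting $z_t := \mu_{h_t}^{-1}(\alpha/k)$ and $\Phi_t(\theta) := F_{t,\C}(e^{i\theta} z_t) - F_t(z_t) - i\langle \alpha/k, \theta\rangle$, we have
$$\pcal_{h_t^k}(\alpha) \sim \int_{\T} e^{k\Phi_t(\theta)}\, \tilde A_{k,t}(\theta)\, d\theta \quad \mathrm{mod}\ O(k^{-\infty}),$$
where $\tilde A_{k,t}$ is a semiclassical symbol of order $m$ with $C^\infty$ dependence on $t$ through $z_t$ and through the Szeg\"o symbol $A_k$. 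Hence $\partial_t^j \pcal_{h_t^k}(\alpha)$, for $j=1,2$, is a finite sum of integrals of the same type obtained by distributing the $\partial_t$'s across $e^{k\Phi_t}$ and $\tilde A_{k,t}$ via the chain rule.

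The decisive point is that each $t$-derivative of the exponential factor produces no net power of $k$. From the defining identity $\mu_{h_t}(z_t) = \alpha/k$ and formula (\ref{NABLA}) one has $\Phi_t(0) = 0$ and $\nabla_\theta \Phi_t|_{\theta=0} = i(\mu_{t,\C}(z_t,z_t) - \alpha/k) = 0$ for every $t$; differentiating these identities in $t$ shows that $\partial_t^j \Phi_t(\theta)$ vanishes to order two in $\theta$ at $\theta = 0$ for all $j \geq 1$. Consequently $k\, \partial_t^j \Phi_t(\theta) = O(k|\theta|^2)$ pointwise, and although this is not uniformly bounded in $\theta$, its effect inside the integral is controlled by the identity
$$\partial_t \int e^{-k\langle H_t \theta,\theta\rangle}\, d\theta = -k\int \langle \partial_t H_t \cdot \theta,\theta\rangle\, e^{-k\langle H_t \theta,\theta\rangle}\, d\theta = \tfrac{1}{2}\, \partial_t\log\det G_{\phi_t}\ \cdot \int e^{-k\langle H_t \theta,\theta\rangle}\, d\theta,$$
so the Gaussian moments that arise are precisely the logarithmic $t$-derivatives of $\sqrt{\det G_{\phi_t}(\alpha/k)}$. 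Since $\det G_{\phi_t} = \delta_{\phi_t}/\prod_r \ell_r$ by (\ref{VOLDEN}) with $\delta_{\phi_t}$ smooth and strictly positive on $[0,1] \times \bar P$ and with the denominator $t$-independent, $\partial_t^j \sqrt{\det G_{\phi_t}}/\sqrt{\det G_{\phi_t}}$ is uniformly $O(1)$ in $(t,\alpha)$. Thus each $t$-derivative of $e^{k\Phi_t}$ produces, modulo $O(k^{-1/3})$, multiplication by a bounded $t$-dependent symbol of order zero. No analogous issue arises in the near-boundary directions of the corner and mixed zones because the Bargmann-Fock factor $\pcal_{P,k,\delta_k}$ is intrinsic to $P$ and hence $t$-independent, while the Taylor-expansion reduction of Lemma \ref{CORNER} absorbs all $t$-dependent corrections of order $k\delta_k^2 = O(k^{-1/3})$ into the remainder.

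With these observations the four-case analysis of Proposition \ref{MAINPCAL} carries over: the zone partition depends only on $\alpha/k$ and $P$ and is $t$-independent, so no new cutoff errors arise; the complex stationary phase expansions of Lemmas \ref{FIRSTCASE}--\ref{FIRSTCASEa} and the Bargmann-Fock reductions of Lemmas \ref{CORNER}--\ref{SECONDCASE} apply to the $t$-differentiated integrands. In each zone one obtains
$$\partial_t^j \pcal_{h_t^k}(\alpha) = C_m\, k^{(m - \delta_k^{\#}(\alpha/k))/2}\, \sqrt{\gcal_{\phi_t,\delta_k}(\alpha/k)}\ \pcal_{P,k,\delta_k}(\alpha/k)\, \big(S_j(t,\alpha,k) + R_k(\alpha/k, h_t)\big),$$
where $S_j$ is a polynomial in the bounded logarithmic $t$-derivatives of $\delta_{\phi_t}$ and of $\tilde A_{k,t}$, hence a zeroth-order symbol. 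The principal obstacle is the stability of the $O(k^{-1/3})$ remainder under differentiation: one must verify that the delicate balance $k\delta_k^2 \sim (k\delta_k)^{-1} \sim k^{-1/3}$ between the Bargmann-Fock and stationary-phase errors in the mixed zone (see the optimization at the end of \S \ref{BZ}) survives when applied to $t$-differentiated amplitudes. This reduces to checking that the implicit constants in Lemmas \ref{FIRSTCASEa} and \ref{SECONDCASE} depend only on finitely many $C^r$-seminorms of $u_{\phi_t} - u_0$ on $\bar P$ and of their first two $t$-derivatives, which follows from the smoothness of the toric Monge-Amp\`ere geodesic together with the structural form (\ref{CANSYMPOT}) of toric symplectic potentials.
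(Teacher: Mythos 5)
Your proposal follows the same route as the paper: both differentiate the oscillatory-integral representation of $\pcal_{h_t^k}(\alpha)$ under the integral sign, identify the order-two vanishing in $\theta$ at $\theta=0$ of $\partial_t^j\Phi_t$ (the paper's identity (\ref{TJ}), a consequence of $\mu_{h_t}(z_t)\equiv\alpha/k$) as the mechanism cancelling the extra power of $k$, and then re-run the zone decomposition of Proposition~\ref{MAINPCAL} on the differentiated integrands. Your packaging of the interior contribution via the Gaussian-moment identity is a compact way of expressing the same stationary-phase bookkeeping the paper carries out term by term, and your observation that the Bargmann--Fock leading piece of the corner phase is $t$-independent (so $k\partial_t\Phi_t$ only sees the $O(k\delta_k^2)$ corrections) is the same reason the paper's near-boundary factors stay bounded; the remaining imprecisions (the formula for $\det G_{\phi_t}$ should be $(\delta_{\phi_t}\prod_r\ell_r)^{-1}$, and the $j=2$ case would need the parallel order-four vanishing argument for the squared first-derivative term, as in the paper's analysis of $\tilde A_{k,2}$) are fillable details rather than gaps in the approach.
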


\begin{proof}

Such time derivatives may also be represented in the form
(\ref{INTEGRALa})
\begin{equation} \label{INTEGRALt} (\frac{\partial}{\partial
t})^j  \pcal_{h_t^k}(\alpha) = (2 \pi)^{-m} \int_{\T } e^{ - k
(F_t( e^{i \theta} \mu_{h_t}^{-1}(\frac{\alpha}{k})) - F_t(
\mu_{h_t}^{-1}(\frac{\alpha}{k})))} A_{k, j} \big(k, t, \alpha,
\theta \big) e^{i \langle \alpha, \theta \rangle} d \theta,
\end{equation}
with a new amplitude $ A_{k, j}$ that is obtained by a combination
of differentiations of the original amplitude in $t$ and of
multiplications  by $t$  derivatives of the phase. It is easy to
see that $t$ derivatives of the amplitude do not change the
estimates above since they do not change the order in growth in
$k$ of the amplitude. However,  $t$ derivatives of the phase bring
down factors $k (\frac{\partial}{\partial t})^j  (F_t( e^{i
\theta} \mu_{h_t}^{-1}(\frac{\alpha}{k})) - F_t(
\mu_{h_t}^{-1}(\frac{\alpha}{k}))$. The second derivative can
bring down two factors with $j = 1$ or one factor with $j = 2$. We
now verify that, despite the extra factor of $k$, the new
oscillatory integral still satisfies the same estimates as before.

The key point is that, by the calculation (\ref{TAYLOREXP}),  the
phase $ F_t( e^{i \theta} \mu_{h_t}^{-1}(\frac{\alpha}{k})) - F_t(
\mu_{h_t}^{-1}(\frac{\alpha}{k})) - i \langle \frac{\alpha}{k},
\theta \rangle$ for any  metric $h$ vanishes to order two at the
 critical point $\theta = 0$; the first derivative vanishes
because $\nabla_{\theta} F( e^{i \theta} z)|_{\theta = 0} = i
\mu_h(z)$. Hence,  the $t$ derivative of the $h_t$-dependent
Taylor expansion (\ref{TAYLOREXP}) for  a one-parameter family
$h_t$ of metrics  also vanishes to order $2$, i.e.,
\begin{equation} \label{TJ}(\frac{\partial}{\partial t})^j \left(F_t( e^{i \theta}
\mu_{h_t}^{-1}(\frac{\alpha}{k})) - F_t(
\mu_{h_t}^{-1}(\frac{\alpha}{k}) \right) =
O(|\theta|^2).\end{equation} Thus, for each new power of $k$ one
obtains by differentiating the phase factor in $t$ one obtains a
factor which vanishes to order two at $\theta = 0$. As a check, we
note that in the Bargmann-Fock model,  the phase has the form
$\sum_j (e^{i \theta_j} - 1 - i \theta_j) \frac{\alpha_j}{k}. $

Let us first consider the first derivative.  We  repeat the
asymptotic analysis but with the new amplitude $S_1$. In the
`interior region' the stationary phase calculation in Proposition
\ref{FIRSTCASE} proceeds as before, but the leading term (now of
one higher order than before) vanishes since it contains the value
of (\ref{TJ}) at the critical point as a factor.  Therefore the
asymptotics start at the same order as before but with the value
of the second $\theta$-derivative of the amplitude at $\theta =
0$.

In the corner, resp.  mixed boundary,  zone we obtain an integral
of the same type as the ones studied in Lemma \ref{CORNER}, resp.
Lemma \ref{SECONDCASE},  but again with an amplitude of one higher
order given by the $t$-derivative of the phase. The only change in
the calculation is in the Taylor expansion of the amplitude in
(\ref{TAYLORAMPEXP}) in the $z'$ variable, which now has the form
\begin{equation} \label{TAYLORAMPEXPa} \tilde{A}_{k, 1}  =  k (\frac{\partial}{\partial t}) \left(F_t( e^{i \theta}
\mu_h^{-1}(\frac{\alpha}{k})) - F_t( \mu_h^{-1}(\frac{\alpha}{k}))
\right) + O(|z'|^2 ), \end{equation} so that the final integral
now has the form $$(2 \pi)^{-m} k^m \int_{T^r } e^{ - k \left(  (
e^{i \theta'} - 1 - i \theta') \right) \frac{ \alpha'}{k}} \left(
k (\frac{\partial}{\partial t}) \left(F_t( e^{i \theta}
\mu_h^{-1}(\frac{\alpha}{k})) - F_t( \mu_h^{-1}(\frac{\alpha}{k}))
\right) \right)_{\theta'' = 0}  d \theta'.$$ As  noted in
(\ref{TJ})
$$\begin{array}{lll} k (\frac{\partial}{\partial t}) \left(F_t( e^{i \theta}
\mu_t^{-1}(\frac{\alpha}{k})) - F_t( \mu_t^{-1}(\frac{\alpha}{k}))
\right) & = & k (\frac{\partial}{\partial t}) \left(F_t( e^{i
\theta} \mu_t^{-1}(\frac{\alpha}{k})) - F_t(
\mu_t^{-1}(\frac{\alpha}{k})) - i \langle  \frac{\alpha}{k}, \theta \rangle \right) \\ && \\
&=&  k \frac{\partial}{\partial t} \int_0^1 (1 - s) \;
\frac{\partial^2}{\partial s^2} \left(F_t( e^{i  s \theta}
\mu_t^{-1}(\frac{\alpha}{k})\right) ds \\ && \\
& = &  O(k |\theta|^2 \frac{\alpha}{k} ).
\end{array} $$
Since the stationary phase method applies as long as $|\alpha| \to
\infty$ we may assume that $|\alpha| \leq C$ and we see that the
factor is then bounded. Here, we have suppressed the subscript
$\C$ for the almost-analytic extension to simplify the writing.

As an independent check, we use integration by parts in $\theta'$.
We use a cutoff function $\chi$ supported near $\theta' = 0$ to
decompose the integral into a term supported near $\theta' = 0$
and one supported away from $\theta' = 0$. We use the integration
by parts operator
$$\lcal = \frac{1}{\left(  ( e^{i \theta'}
- 1)  \alpha'\right)^2 }  \left(   e^{i \theta'} -  1 \right)
\alpha' \cdot \nabla_{\theta}$$ where we note that the factors of
$k$ cancel. The operator is well defined for $\theta' \not= 0$ and
repeated partial integration gives decay in $\alpha'$ in case
$|\alpha'| \to \infty$. On the support of $\chi$ the denominator
is not well defined but the vanishing of the phase to order two
shows that $\lcal^t (S_1)$ is bounded.

Now we consider second time derivatives. The second
$\frac{\partial}{\partial t}$ could be applied to the phase factor
$e^{k \Phi_{t}}$ again or it could be applied again to
(\ref{TAYLORAMPEXPa}),  and then we have
\begin{equation} \label{TAYLORAMPEXPb} \begin{array}{lll} \tilde{A}_{k, 2} & = &  (k (\frac{\partial}{\partial t}) (F_t( e^{i \theta}
\mu_h^{-1}(\frac{\alpha}{k}))
 - F_t( \mu_h^{-1}(\frac{\alpha}{k}) ))^2  \\ && \\ &&  + (k
(\frac{\partial^2}{\partial t^2}) (F_t( e^{i \theta}
\mu_h^{-1}(\frac{\alpha}{k})) - F_t( \mu_h^{-1}(\frac{\alpha}{k})
))^2  + O(|z'|^2). \end{array}
\end{equation}
The first term contains the factor $k^2$ and after cancellation it
induces a term of order $|\alpha'|^2$. In addition this term
vanishes to order four at $\theta = 0$. Hence the stationary phase
calculation in the case of the first derivative equally shows that
the first two terms vanish and thus the factors of $k^2$ are
cancelled. In the regime where stationary phase is not applicable,
$|\alpha'|^2$ may be assumed bounded, and additionally one can
integrate by parts twice. Thus again this term is bounded.

\end{proof}

\subsection{Completion of the proof of Lemma \ref{MAINLEM}}

So far we have only considered the asymptotics of $\pcal_{h^k}(t,
z)$. We now take the ratios to complete the proof of Lemma
\ref{MAINLEM}.

\begin{lem} \label{RATIOCANCEL} With $\delta_{\phi}$ defined by
(\ref{VOLDEN}), we have
$$\begin{array}{lll} \rcal_{k}( t, \alpha) & =  &
 \left( \frac{\det \nabla^2 u _t(\frac{\alpha}{k})}{ (\det \nabla^2
u_0(\frac{\alpha}{k}))^{1-t}(\det \nabla^2 u_1(\frac{\alpha}{k}))^{t}} \right)^{1/2} \left(1 +
O(k^{- \frac{1}{3}})\right) .\end{array}$$ The asymptotic in
 may be differentiated twice with the same order
of remainder.

\end{lem}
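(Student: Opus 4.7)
The plan is to combine Corollary \ref{RP} with the asymptotic expansion (\ref{pasym2}) of Proposition \ref{MAINPCAL}, and to exploit the fact that most factors in the asymptotic expression for $\pcal_{h^k}(\alpha)$ are metric-independent and therefore cancel in the ratio. First I would substitute (\ref{pasym2}) for each of $\pcal_{h_0^k}(\alpha)$, $\pcal_{h_1^k}(\alpha)$, and $\pcal_{h_t^k}(\alpha)$ in
$$\rcal_k(t,\alpha)=\frac{(\pcal_{h_0^k}(\alpha))^{1-t}(\pcal_{h_1^k}(\alpha))^{t}}{\pcal_{h_t^k}(\alpha)}.$$
Three kinds of factors appear: the prefactor $C_m k^{(m-\delta_k^{\#})/2}$, the Bargmann-Fock boundary factor $\pcal_{P,k,\delta_k}(\alpha/k)$, and the determinantal factor $\sqrt{\gcal_{\phi,\delta_k}(\alpha/k)}$. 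The first two depend only on $P$, $\alpha$, $k$, $\delta_k$ and not on the metric, so their contributions to numerator and denominator satisfy the exponent identity $(1-t)+t-1=0$ and cancel exactly.

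Next I would handle the $\gcal$-factor. By (\ref{GCALDEF}), $\gcal_{\phi,\delta_k}(x)=(\delta_{\phi}(x)\prod_{j\notin\fcal_{\delta_k}(x)}\ell_{j}(x))^{-1}$, which splits as the metric-dependent $\delta_{\phi}^{-1}$ times a metric-independent product of $\ell_{j}$'s. By the same exponent identity the $\prod \ell_j$ piece cancels, leaving
$$\left(\frac{\delta_{\phi_{t}}(\alpha/k)}{\delta_{\phi_{0}}(\alpha/k)^{1-t}\,\delta_{\phi_{1}}(\alpha/k)^{t}}\right)^{1/2}.$$
By (\ref{VOLDEN}), $\det\nabla^{2}u_{\phi}=\delta_{\phi}\prod_{r}\ell_{r}$, and the same cancellation of $\prod \ell_{r}$ inside Definition \ref{RINFTY} shows this last expression equals $\rcal_{\infty}(t,\alpha/k)$. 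For the error aggregation, each factor $(1+R_{k})$ with $R_{k}=O(k^{-1/3})$ is raised to one of the bounded exponents $1-t$, $t$ or $-1$ with $t\in[0,1]$, and the product expands by Taylor as $1+O(k^{-1/3})$, uniformly in $(t,\alpha)$; the uniform positive lower bound on $\delta_{\phi_{s}}$ (which also gives the smoothness claimed in the first statement of the lemma) lets one absorb the remainder into the prefactor without further loss.

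For the two-derivative statement I would apply exactly the same scheme, but using Proposition \ref{MAINPCALD} in place of Proposition \ref{MAINPCAL}, and working with logarithmic derivatives $(\partial_{t})^{j}\log\rcal_{k}$ rather than $\rcal_{k}$ itself. Because $\log\rcal_{k}=(1-t)\log\pcal_{h_{0}^{k}}+t\log\pcal_{h_{1}^{k}}-\log\pcal_{h_{t}^{k}}$, the metric-independent terms cancel at the level of the logarithm as well, and only $\log(\rcal_{\infty}(t,\alpha/k))$ and the symbolic factors $\log S_{j}(t,\alpha,k)$ survive differentiation. The anticipated obstacle is that the amplitude $S_{j}$ produced by Proposition \ref{MAINPCALD} is in principle $k$-dependent; however, since time-derivatives of the phase $F_{t}(e^{i\theta}\mu_{h_{t}}^{-1}(\alpha/k))-F_{t}(\mu_{h_{t}}^{-1}(\alpha/k))$ vanish to second order at the critical point $\theta=0$ by (\ref{TJ}), the extra factors of $k$ brought down by differentiation are absorbed by the Hessian/stationary-phase normalization, so $S_{j}$ remains of order zero. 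Thus the leading asymptotic structure is preserved under $\partial_{t}$ (and $\partial_{t}^{2}$), and the $O(k^{-1/3})$ remainder survives all bookkeeping.

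The main obstacle I foresee is precisely this derivative step: one must track carefully that the cancellation between numerator and denominator still works when $\partial_{t}$ hits the phase (producing factors of $k$ multiplying the amplitude) and when it hits the $t$-dependent exponents $1-t,t$ (producing $\pm\log\pcal_{h_{j}^{k}}$ terms, which contribute $\log k$ powers that do not spoil the remainder). The uniformity in the position of $\alpha/k$ relative to $\partial P$ follows from Proposition \ref{MAINPCALD}, which was stated with exactly this uniformity in mind, so no additional case analysis should be needed beyond the four regions enumerated in Section \ref{ENUMERATE}.
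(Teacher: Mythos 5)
Your proposal follows the same route as the paper: substitute the asymptotics of $\pcal_{h^k}(\alpha)$ from Proposition \ref{MAINPCAL} into the formula of Corollary \ref{RP}, cancel the metric-independent factors via the exponent identity $(1-t)+t-1=0$, and invoke Proposition \ref{MAINPCALD} to handle the two $t$-derivatives. The only structural difference is that the paper substitutes the form (\ref{pasym1}), whose residual after cancellation is literally $\sqrt{\det G_{\phi_t}}/\bigl((\sqrt{\det G_{\phi_0}})^{1-t}(\sqrt{\det G_{\phi_1}})^{t}\bigr)$ and hence matches Definition \ref{RINFTY} on the nose, whereas you use the equivalent form (\ref{pasym2}), which forces an extra step: split $\gcal_{\phi,\delta_k}$ via (\ref{GCALDEF}) into the metric-dependent $\delta_{\phi}$ part and the metric-independent $\prod\ell_j$ part, cancel the latter, and then identify the $\delta_{\phi}$-ratio with $\rcal_\infty$ via (\ref{VOLDEN}). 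That detour is correct in substance and is indeed what the paper itself does in the first paragraph of the proof of Lemma \ref{MAINLEM} and again in the remark following the present lemma. One small caveat: you quote (\ref{VOLDEN}) as $\det\nabla^2 u_{\phi}=\delta_{\phi}\prod_r\ell_r$, whereas the text reads $\det(G_{\phi}^{-1})=\delta_{\phi}\prod_r\ell_r$, i.e. the determinant of the \emph{inverse} Hessian; your reading reproduces the convention the paper actually uses downstream, so the slip is benign and self-consistent with your $\gcal$-cancellation, but you should be aware the two usages in the paper are not literally the same. Your error-aggregation argument (bounded exponents $1-t$, $t$, $-1$ on factors $1+O(k^{-1/3})$) and your treatment of the $t$-derivatives — using $\log\rcal_k$, noting the metric-independent logs cancel, and observing that the factors of $k$ brought down from the phase are absorbed because time-derivatives of the phase vanish to second order at $\theta=0$ — are both correct, and in fact expand usefully on the paper's terse citation of Proposition \ref{MAINPCALD}.
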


\begin{proof} Combining  Corollary \ref{RP} and  Proposition
\ref{MAINPCAL},we have \begin{equation} \label{RATIOPCAL}
\rcal_{k}( t, \alpha) = \frac{\sqrt{ \det G_{\phi_t}
(\frac{\alpha}{k})}\; \tilde{\pcal}_{P, k}
(\frac{\alpha}{k}) }{ (\sqrt{ \det G_{\phi_0}
(\frac{\alpha}{k})}\; \tilde{\pcal}_{P, k}
(\frac{\alpha}{k}))^{1-t}(\sqrt{ \det G_{\phi_1}
(\frac{\alpha}{k})}\; \tilde{\pcal}_{P, k}
(\frac{\alpha}{k}))^{t}} (1 + O(k^{-  \frac{1}{3}})).
\end{equation}  We observe that the factors of $\tilde{\pcal}_{P, k}
$   cancel out, leaving
\begin{equation} \label{RATIOCANCELa} \rcal_{k}(t, \alpha) = \frac{\sqrt{ \det G_{\phi_t}
(\frac{\alpha}{k})}}{ (\sqrt{ \det G_{\phi_0}
(\frac{\alpha}{k}})^{1-t}(\sqrt{ \det G_{\phi_1}
(\frac{\alpha}{k})})^t} (1 + O(k^{-\frac{1}{3}})).
\end{equation}

By Proposition \ref{MAINPCALD}, the asymptotic in
(\ref{RATIOPCAL}) may be differentiated twice with the same order
of remainder, completing the proof.

\end{proof}

\begin{rem} By (\ref{VOLDEN}), we also have
$$\rcal_{k}(t,\alpha) = \left( \frac{\delta_{\phi_0}^{1 - t}
\delta_{\phi_1}^{t}}{\delta_{\phi_t}} \right) ^{- \frac{1}{2}} (1
+ O(k^{- \frac{1}{3}})).$$ Indeed, the factors of
$\ell_j(\frac{\alpha}{k})$ are independent of the metrics and
cancel out. Also $\left( \frac{\delta_{\phi_0}^{1 - t}
\delta_{\phi_1}^{t}}{\delta_{\phi_t}} \right) ^{- \frac{1}{2}}$ is smooth on $P$.
\end{rem}

The following simpler  estimate on logarithmic derivatives is
sufficient for  much of  the proof of the main results:

\begin{lem}\label{BOUNDEDRCALDER}  We have:

\begin{enumerate}

\item  $\partial_t \log \rcal_k(t,  \alpha)$ is uniformly bounded.

\item $\partial_t^2  \log \rcal_k(t,  \alpha)$ is uniformly
bounded.

\end{enumerate}
\end{lem}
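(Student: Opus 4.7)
The plan is to deduce this lemma directly from the Regularity Lemma \ref{MAINLEM} and the explicit formula for the limit $\rcal_\infty$. First I would invoke the remark after Lemma \ref{RATIOCANCEL}: the singular factors $\prod_r \ell_r(x)$ in $\det G_{\phi_t}$, $\det G_{\phi_0}$, $\det G_{\phi_1}$ cancel in the ratio defining $\rcal_\infty$, giving
\[
\rcal_\infty(t,x) \;=\; \left( \frac{\delta_{\phi_0}(x)^{1-t}\,\delta_{\phi_1}(x)^{t}}{\delta_{\phi_t}(x)} \right)^{-1/2}.
\]
Since each $\delta_{\phi_s}$ is a strictly positive $C^\infty$ function on the compact polytope $\bar P$, and since $\phi_t$ depends smoothly on $t\in[0,1]$, the function $\rcal_\infty$ is smooth and bounded above and below by positive constants on the compact set $[0,1]\times\bar P$. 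In particular $\log\rcal_\infty$ is smooth and its $t$-derivatives of any order are uniformly bounded in $(t,x)$.

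Next, Lemma \ref{MAINLEM} tells us that for $j=0,1,2$,
\[
\Bigl(\tfrac{\partial}{\partial t}\Bigr)^{j}\rcal_{k}(t,\alpha)
\;=\;\Bigl(\tfrac{\partial}{\partial t}\Bigr)^{j}\rcal_{\infty}\!\left(t,\tfrac{\alpha}{k}\right)\;+\;O(k^{-1/3}),
\]
with the $O$-bound uniform in $(t,\alpha)$. Combining this with the previous step, there exists $k_0$ such that for $k\geq k_0$ and all $(t,\alpha)\in[0,1]\times kP\cap\Z^m$, the function $\rcal_k(t,\alpha)$ is bounded below by a positive constant and above by a constant, and the derivatives $\partial_t\rcal_k$, $\partial_t^2\rcal_k$ are uniformly bounded as well.

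To finish, I would simply compute
\[
\partial_t \log\rcal_k \;=\; \frac{\partial_t\rcal_k}{\rcal_k},\qquad
\partial_t^2\log\rcal_k \;=\; \frac{\partial_t^2\rcal_k}{\rcal_k}-\left(\frac{\partial_t\rcal_k}{\rcal_k}\right)^{2},
\]
and note that both are uniform ratios of uniformly bounded quantities by a uniformly positive denominator, hence uniformly bounded in $(t,\alpha,k)$ for $k\geq k_0$. For finitely many $k<k_0$ the claim is automatic from compactness of the parameter space, absorbing any loss into the final uniform constant.

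Since Lemma \ref{MAINLEM} does all the heavy lifting, there is no real obstacle here; the only subtlety worth flagging is the positivity lower bound on $\rcal_k$, which relies on the cancellation of the boundary-singular factors $\prod_r\ell_r$ that makes $\rcal_\infty$ extend as a smooth positive function to $\bar P$ rather than degenerating near $\partial P$.
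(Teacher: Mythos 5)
Your proof is correct and legitimately available at this point in the paper, since Lemma \ref{MAINLEM} is fully established (via Lemma \ref{RATIOCANCEL}) immediately before Lemma \ref{BOUNDEDRCALDER}. The route you take is, however, not the paper's. The paper instead starts from the Corollary \ref{RP} decomposition, writes $\partial_t\log\rcal_k(t,\alpha) = \log\pcal_{h_1^k}(\alpha) - \log\pcal_{h_0^k}(\alpha) - \partial_t\log\pcal_{h_t^k}(\alpha)$, and then estimates each piece directly from the $\pcal_{h^k}$ asymptotics (Propositions \ref{MAINPCAL} and \ref{MAINPCALD}): the Bargmann-Fock factors $\tilde\pcal_{P,k}$ cancel between the $h_0$ and $h_1$ terms, the metric determinant factors reduce (after the $\prod_r\ell_r$ cancellation) to a bounded expression in the $\delta_{\phi_i}$, and the remaining ratio $\partial_t\pcal_{h_t^k}/\pcal_{h_t^k}$ is shown bounded by a case analysis (interior vs.\ near $\partial P$). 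Your argument instead invokes Lemma \ref{MAINLEM} as a black box for $j=0,1,2$ and then uses smoothness and strict positivity of $\rcal_\infty$ on the compact set $[0,1]\times\bar P$ to get two-sided bounds on $\rcal_k$ and bounds on its $t$-derivatives, from which the claim follows by the quotient rule. Your route is more economical and avoids re-deriving facts already packaged in \ref{MAINLEM}; the paper's route is slightly redundant but exhibits more explicitly where the boundary cancellations occur. You are also right to flag the positivity of $\rcal_\infty$ (i.e.\ cancellation of the $\prod_r\ell_r$ singular factors) as the crucial point: without it, $\rcal_k$ could degenerate near $\partial P$ and the logarithmic derivative would not be controlled.
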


\begin{proof} We first note that
\begin{equation} \label{FULLR} \partial_t \log \rcal_k(t, \alpha) = \log \pcal_{h_1^k}(
\alpha) - \log \pcal_{h_0^k}(\alpha) - \partial_t \log
\pcal_{h_t^k}(\alpha). \end{equation}
  We note that by Proposition
\ref{MAINPCAL},
 \begin{equation} \label{LOGPCAL} \log \pcal_{h^k}(\alpha) \; = \; \frac{1}{2}\log  \det
(k^{-1} G_{\phi} (\frac{\alpha}{k}))\; + \log \tilde{\pcal}_{P, k}
(\frac{\alpha}{k}) \; + \log C_m +
O(k^{- \frac{1}{3}}).
\end{equation}
As in Lemma \ref{RATIOCANCEL}, the Bargmann-Fock terms cancel
between the $h_0$ and $h_1$ terms, while the metric factors
simplify asymptotically to $\frac{1}{2} \log \left(
\delta_{\phi_1} \delta_{\phi_0} \right)$, and this is clearly
bounded. To complete the proof of (1), we need that the final
ratio is bounded. By  Lemma \ref{MAINPCALD}, we see that in the
`interior' region both numerator and denominator have asymptotics
which differ only in the value of a zeroth order amplitude at
$\theta = 0$ and that it equals $1$ in the case of the
denominator. Hence, the ratio is bounded in the interior. Towards
the boundary, the denominator is comparable with the Bargmann-Fock
model and is bounded below by one. The numerator is also bounded
by Lemma \ref{MAINPCALD}, and therefore the ratio is everywhere
bounded.

Now we consider (2), which simplifies to
\begin{equation} \label{FULLR2} \partial_t^2 \log \rcal_k(t, \alpha) = -  \frac{ \partial_t^2
\pcal_{h_t^k}(\alpha)}{\pcal_{h_t^k}(\alpha)} + \left( \frac{
\partial_t \pcal_{h_t^k}(\alpha)}{\pcal_{h_t^k}(\alpha)} \right)^2.
\end{equation}
As we  have just argued, the second factor is bounded. The same
argument applies to the first term by Lemma \ref{MAINPCALD}.

\end{proof}

\section{$C^0$ and $C^1$-convergence \label{C1} }

We begin with the rather simple proof of $C^0$-convergence with
remainder bounds.

\subsection{$C^0$-convergence}

\begin{prop} $\frac{1}{k} \log Z_k(t, z) e^{- k \phi_t(z)} =
O(\frac{\log k}{k})$ uniformly for $(t, z) \in [0, 1] \times M$
\end{prop}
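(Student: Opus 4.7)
The plan is to combine the three main ingredients already established in the paper: the Localization Lemma, the Regularity Lemma, and diagonal Szeg\H{o} asymptotics, applied to the identity (\ref{REWRITE})
\[
e^{- k \phi_t(z)} Z_k(t,z) \;=\; \sum_{\alpha \in k P \cap \Z^m} \rcal_k(t, \alpha) \, \pcal_{h_t^k}(\alpha, z).
\]
Since $\frac{1}{k}\log\bigl(e^{-k\phi_t(z)}Z_k(t,z)\bigr)$ differs from the stated quantity only by the factor $e^{-k\phi_t(z)}$ which is absorbed in $\pcal_{h_t^k}$ in the way already set up, it suffices to control the right-hand side from above and below by $C k^m$ for some constants $0<c\le C<\infty$ that are uniform in $(t,z)\in[0,1]\times M$.

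First, I would invoke Lemma \ref{MAINLEM} to replace $\rcal_k(t,\alpha)$ by the smooth volume ratio $\rcal_\infty(t,\alpha/k)$ with an error $O(k^{-1/3})$, uniformly in $(t,\alpha)$. Since $\rcal_\infty \in C^\infty([0,1]\times P)$ is strictly positive (each $\delta_{\phi_i}$ is bounded above and below by positive constants on $\bar P$), there exist $0<c_0\le C_0<\infty$ with $c_0\le \rcal_\infty(t,x)\le C_0$ on $[0,1]\times P$. Next, I would apply the Localization Lemma \ref{LOCALIZATION} with $B_k(t,\alpha)=\rcal_k(t,\alpha)$ (which is uniformly bounded in $k$ by Lemma \ref{MAINLEM}) to restrict the sum to $|\alpha/k-\mu_t(z)|\le k^{-1/2+\delta}$ up to a remainder $O(k^{-C})$ for arbitrary $C$. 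On this localized range, smoothness of $\rcal_\infty$ gives
\[
\rcal_\infty(t,\alpha/k) \;=\; \rcal_\infty(t,\mu_t(z)) + O(k^{-1/2+\delta}),
\]
uniformly in $z$ and $t$.

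Combining these two substitutions yields
\[
e^{- k \phi_t(z)} Z_k(t,z) \;=\; \rcal_\infty(t,\mu_t(z)) \sum_{\alpha\in kP\cap\Z^m} \pcal_{h_t^k}(\alpha,z) \; \bigl(1 + O(k^{-1/3})\bigr),
\]
where the error is uniform in $(t,z)$ (the localization error $O(k^{-C})$ is absorbed since the total sum has order $k^m$, as shown next). By definition of $\pcal_{h_t^k}(\alpha,z)$ and (\ref{SZEGOT}),
\[
\sum_{\alpha\in kP\cap\Z^m} \pcal_{h_t^k}(\alpha,z) \;=\; \hat\Pi_{h_t^k}(z,0;z,0),
\]
and the standard diagonal Szeg\H{o} asymptotics (\ref{TYZ}), together with uniformity of $h_t$ for $t\in[0,1]$, give $\hat\Pi_{h_t^k}(z,0;z,0)=a_0 k^m(1+O(k^{-1}))$ uniformly in $(t,z)\in[0,1]\times M$.

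Putting everything together,
\[
e^{-k\phi_t(z)}Z_k(t,z) \;=\; a_0\,\rcal_\infty(t,\mu_t(z))\, k^m\,\bigl(1 + O(k^{-1/3})\bigr),
\]
with both sides bounded between $c\, k^m$ and $C\, k^m$. Taking $\tfrac{1}{k}\log$ produces
\[
\tfrac{1}{k}\log\bigl(e^{-k\phi_t(z)}Z_k(t,z)\bigr) \;=\; \tfrac{m\log k}{k} + O\!\left(\tfrac{1}{k}\right) \;=\; O\!\left(\tfrac{\log k}{k}\right),
\]
uniformly on $[0,1]\times M$, as required. The only delicate point is uniformity up to $\dcal$: this is handled because Lemma \ref{MAINLEM} is stated uniformly in $\alpha\in kP\cap\Z^m$, the Localization Lemma is uniform in $z$, and the Szeg\H{o} asymptotics (\ref{TYZ}) hold globally on $M$. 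The main obstacle in a complete write-up is checking that these three uniformities are compatible in the boundary regime, but this is already the content of the estimates established in Sections \ref{LOCALIZATIONB} and the proof of Proposition \ref{MAINPCAL}.
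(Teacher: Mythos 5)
Your proof is correct, but you have used considerably more machinery than the paper needs for this $C^0$ statement. The paper's own argument is a two-line sandwich: by the Upper/Lower Bound Lemma (Lemma \ref{UBLB}, itself an immediate corollary of Lemma \ref{RATIOCANCEL}), there are constants $0<c\le C$ with $c\le \rcal_k(t,\alpha)\le C$ uniformly, so
\[
c\,\Pi_{h_t^k}(z,z)\;\le\;\sum_{\alpha}\rcal_k(t,\alpha)\,\pcal_{h_t^k}(\alpha,z)\;\le\;C\,\Pi_{h_t^k}(z,z),
\]
and then $\tfrac1k\log$ of each side is $O(\log k/k)$ by the diagonal expansion (\ref{TYZ}). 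No localization and no Taylor expansion of $\rcal_\infty$ are required: once you know $\rcal_k$ is trapped between positive constants, the dependence of $\rcal_k(t,\alpha)$ on $\alpha$ is irrelevant for a $\log k/k$ estimate, because $\tfrac1k\log$ of a constant is $O(1/k)$. Your route — localize, Taylor-expand $\rcal_\infty$ around $\mu_t(z)$, factor it out, and appeal to (\ref{TYZ}) — is a miniature version of the argument the paper reserves for the $C^1$ and $C^2$ estimates; it is not wrong, and it in fact produces the stronger conclusion $e^{-k\phi_t}Z_k = a_0\,\rcal_\infty(t,\mu_t(z))\,k^m\bigl(1+O(k^{-1/3})\bigr)$, identifying the leading coefficient. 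But this extra precision is not needed here, and relying on Lemma \ref{MAINLEM} and Lemma \ref{LOCALIZATION} makes the $C^0$ step appear to carry dependencies it does not have. In particular, since you ultimately observe that $\rcal_\infty$ is bounded above and below by positive constants and $\rcal_k=\rcal_\infty+O(k^{-1/3})$, you have already re-derived Lemma \ref{UBLB}; you could stop there and sandwich as the paper does, dropping the localization and Taylor-expansion steps entirely.
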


 The Proposition follows from the following:

\begin{lem} \label{UBLB} (Upper/Lower bound Lemma) There
 exist $C,
c > 0$ so that
$$c \leq  \rcal_k(t, \alpha) \leq C. $$
\end{lem}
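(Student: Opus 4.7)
The plan is to deduce the upper and lower bounds directly from Lemma \ref{RATIOCANCEL}, together with the smoothness and positivity of the volume ratio $\rcal_\infty(t,x)$ on the closed set $[0,1]\times\bar P$.

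First, I would invoke Lemma \ref{RATIOCANCEL}, which gives
\[
\rcal_k(t,\alpha) \;=\; \rcal_\infty\!\Big(t,\tfrac{\alpha}{k}\Big)\,\bigl(1 + O(k^{-1/3})\bigr),
\]
uniformly in $(t,\alpha)\in [0,1]\times (kP\cap \Z^m)$. So the task reduces to showing that $\rcal_\infty(t,x)$ is bounded above and below by positive constants on $[0,1]\times \bar P$.

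For this, I would use the equivalent form of $\rcal_\infty$ derived in the remark following Lemma \ref{RATIOCANCEL}, obtained by factoring the logarithmic-barrier singularities out of the Hessian determinants via \eqref{VOLDEN}:
\[
\rcal_\infty(t,x) \;=\; \left(\frac{\delta_{\phi_t}(x)}{\delta_{\phi_0}(x)^{1-t}\,\delta_{\phi_1}(x)^{t}}\right)^{1/2}.
\]
The factors $\prod_r \ell_r(x)$ from the three determinants cancel identically, and each $\delta_{\phi_s}$ is a strictly positive $C^\infty$ function on $\bar P$ (cf.\ \S \ref{KPSP}). Since $[0,1]\times \bar P$ is compact, we obtain constants $0 < c_0 \leq C_0 < \infty$ such that $c_0 \leq \rcal_\infty(t,x) \leq C_0$ for all $(t,x)\in [0,1]\times \bar P$.

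Combining the two displays, there is $k_0$ such that for all $k\geq k_0$, all $t\in[0,1]$, and all $\alpha \in kP\cap \Z^m$,
\[
\tfrac{1}{2}c_0 \;\leq\; \rcal_k(t,\alpha) \;\leq\; 2C_0.
\]
For the finitely many remaining values $k<k_0$, the set $kP\cap\Z^m$ is finite, the map $t\mapsto \rcal_k(t,\alpha)$ is continuous and strictly positive on the compact interval $[0,1]$, so $\rcal_k$ attains positive maxima and minima; taking the minimum of all these bounds with $\tfrac{1}{2}c_0$ and the maximum with $2C_0$ gives uniform constants $c,C>0$ that work for every $k$. The only subtlety worth flagging is that the applicability of Lemma \ref{RATIOCANCEL} is uniform in $(t,\alpha)$, which we already have; there is no genuine obstacle, as the lemma is essentially a corollary of the previously established joint asymptotics of the $\pcal_{h^k}(\alpha)$.
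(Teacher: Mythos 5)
Your proof is correct and follows the paper's route: the paper also deduces the Upper/Lower Bound Lemma directly from Lemma \ref{RATIOCANCEL} (indeed the paper's entire proof is "This follows immediately from Lemma \ref{RATIOCANCEL}"). You have simply spelled out the implicit steps — the positivity and boundedness of $\rcal_\infty$ via the $\delta_\phi$ factorization, compactness of $[0,1]\times\bar P$, and the finitely-many-small-$k$ check — all of which are consistent with what the paper establishes in the proof of Lemma \ref{MAINLEM} and the remark following Lemma \ref{RATIOCANCEL}.
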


\begin{proof} This follows immediately from Lemma
\ref{RATIOCANCEL}. \end{proof}

$C^0$-convergence is an immediate consequence of the upper and
lower bound lemma:

\begin{proof} By the upper/lower bound lemma, there exist positive
constants $c, C > 0$ so that
\begin{equation} \label{PART3} c \Pi_{h_t^k}(z,z) \leq   \sum_{\alpha \in k P \cap \Z^m}  \rcal_k(t, \alpha)
\frac{|S_{\alpha}(z)|^2_{h_t^k}}{\QQ_{h_t^k}(\alpha)} \leq C
\Pi_{h_t^k}(z,z).
\end{equation}

Hence,
\begin{equation} \label{PART4} \begin{array}{lll} \frac{1}{k}  \log
\Pi_{h_t^k} (z,z) \leq \frac{1}{k} \log \sum_{\alpha \in k P \cap
\Z^m} \rcal_k(t, \alpha)
\frac{|S_{\alpha}(z)|^2_{h_t^k}}{\QQ_{h_t^k}(\alpha)}&&  \leq
\frac{1}{k} \log \Pi_{h_t^k} (z,z) + O(\frac{1}{k})\\ && \\
&& = O(\frac{\log k}{k}),\end{array}
\end{equation}
where the last estimate follows from (\ref{TYZ}).

\end{proof}

\subsection{$C^1$-convergence}

We now discuss first derivatives in $(t, z)$. In the $z$ variable
the vector fields $\frac{\partial}{\partial \rho_j}$ vanish on
$\dcal$, so can only use them to estimate $C^1$ norms in
directions $\delta_k$ far from the boundary. In directions close
to  the boundary we may choose coordinates so that derivatives in
$z'$ near $z' = 0$ define the $C^1$ norm.

The estimates in the $\rho$ and $z'$ derivatives are similar. We
carry out the calculations in detail in the $\rho$ variables and
then indicate how to carry out the analogous estimates in the $z$
variable.

We also consider $t$ derivative. The key distinction between $t$
and $z$ derivatives is the following:
\begin{itemize}

\item $z$ or $\rho$ derivatives bring down derivatives of the
phase, which have the form $k (\mu_t(z) - \frac{\alpha}{k})$. The
factor of $k$ raises the order of asymptotics while the factor $
(\mu_t(z) - \frac{\alpha}{k})$ lowers it by the Localization
Lemma.

\item $t$ derivatives do not apply to the phase and only
differentiate $\rcal_k(t, \alpha)$ and $\QQ_{h_t^k}(\alpha)$.

\end{itemize}

\begin{prop} Uniformly for $(t, z) \in [0, 1] \times M$, we have:
\begin{enumerate}

\item $\frac{1}{k}\left|\frac{\partial}{\partial \rho_i}\log
\sum_{\alpha \in k P \cap \Z^m}  \rcal_k(t, \alpha)
\frac{|S_{\alpha}(z)|^2_{h_t^k}}{\QQ_{h_t^k}(\alpha)} \right| =
O(k^{-\frac{1}{2} + \delta})$;

\item The same estimate is valid if we differentiate in
$\frac{\partial}{\partial r_n}$ in directions near $\dcal$ as in
Proposition \ref{ONESZEGODERr}.

\item    $ \frac{1}{k}\left|\frac{\partial}{\partial t}\log
\sum_{\alpha \in k P \cap \Z^m} \rcal_k(t, \alpha)
\frac{|S_{\alpha}(z)|^2_{h_t^k}}{\QQ_{h_t^k}(\alpha)}  \right| =
O(k^{-\frac{1}{3} })$.

\end{enumerate}

\end{prop}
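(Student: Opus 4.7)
The plan is to compute each logarithmic derivative as a ratio of weighted lattice sums and estimate it using the four main ingredients already developed: the Localization Lemma \ref{LOCALIZATION}, the Upper/Lower bound Lemma \ref{UBLB}, the Regularity Lemma \ref{MAINLEM}, and the diagonal \szego kernel asymptotics of Propositions \ref{ONESZEGODER} and \ref{ONESZEGODERr}. Throughout, write $Z_k^\sharp(t,z) := \sum_\alpha \rcal_k(t,\alpha)\,\pcal_{h_t^k}(\alpha,z)$, so that $Z_k^\sharp(t,z)=e^{-k\phi_t(z)}Z_k(t,z)$ and the three assertions amount to bounds on $\tfrac{1}{k}\partial_{\rho_i}\log Z_k^\sharp$, $\tfrac{1}{k}\partial_{r_n}\log Z_k^\sharp$ and $\tfrac{1}{k}\partial_t\log Z_k^\sharp$.

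\smallskip

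For parts (1) and (2), the space derivatives act only on $\pcal_{h_t^k}(\alpha,z)$ and produce a multiplicative factor $(\alpha_i/k-\mu_{t,i}(z))$ in the $\rho_i$ direction and the polar analogue $(2/r_n)(\alpha_n/k-\mu_{t,n}(z))$ in the $r_n$ direction (restricted to $\alpha_n\neq 0$, as in Proposition \ref{ONESZEGODERr}). I would apply Lemma \ref{LOCALIZATION} with $B_k$ equal to this factor times $\rcal_k(t,\alpha)$ to restrict both numerator and denominator to $|\alpha/k-\mu_t(z)|\le k^{-1/2+\delta}$, modulo rapidly decaying errors. On this set the factor is of size $k^{-1/2+\delta}$, while Lemma \ref{UBLB} gives $c\leq \rcal_k\leq C$ and (\ref{PART3}) gives the lower bound $Z_k^\sharp(t,z)\geq c\Pi_{h_t^k}(z,z)$, so the ratio is $O(k^{-1/2+\delta})$. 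The only subtlety is the polar factor $1/r_n$ in part (2): for $r_n\geq k^{-\delta}$ it is absorbed into the previous estimate, while for $r_n<k^{-\delta}$ I would invoke the Bargmann-Fock identification $\mu_{t,n}(z)\sim H_{nn}r_n^2$ of \S\ref{BF} to see that, once Localization has suppressed all but the dominant $\alpha_n\sim kr_n^2$ contribution, the polar factor $(2/r_n)(\alpha_n/k-\mu_{t,n}(z))$ is again $O(k^{-1/2+\delta})$ after cancellation.

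\smallskip

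Part (3) is the main estimate and the main obstacle. Using $\partial_t\log\pcal_{h_t^k}(\alpha,z)=-k\dot\phi_t(z)-\partial_t\log\QQ_{h_t^k}(\alpha)$, I would decompose
\[
\frac{1}{k}\partial_t\log Z_k^\sharp \;=\; \underbrace{\frac{1}{k}\frac{\sum_\alpha \partial_t\log\rcal_k\cdot\rcal_k\pcal_{h_t^k}}{Z_k^\sharp}}_{\mathrm{I}} \;-\; \dot\phi_t(z) \;-\; \underbrace{\frac{1}{k}\frac{\sum_\alpha \partial_t\log\QQ_{h_t^k}\cdot\rcal_k\pcal_{h_t^k}}{Z_k^\sharp}}_{\mathrm{II}}.
\]
Term $\mathrm{I}$ is $O(k^{-1})$ by Lemma \ref{BOUNDEDRCALDER} combined with Lemma \ref{UBLB}. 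For $\mathrm{II}$ I would apply Lemma \ref{LOCALIZATION} to both sums and then use the Regularity Lemma \ref{MAINLEM}, the smoothness of $\rcal_\infty$ from Definition \ref{RINFTY}, and a first-order Taylor expansion to write, uniformly on the localization set,
\[
\rcal_k(t,\alpha) \;=\; \rcal_\infty(t,\mu_t(z)) + O(k^{-1/3}).
\]
Since $\rcal_\infty(t,\mu_t(z))$ is bounded away from $0$ and $\infty$, it factors out of both numerator and denominator of $\mathrm{II}$, reducing $\mathrm{II}$ to the unweighted ratio appearing in Proposition \ref{ONESZEGODER}(2), which equals $-\dot\phi_t(z)+O(k^{-2})$; the $O(k^{-1/3})$ error from the Regularity Lemma is preserved under this manipulation. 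The explicit $-\dot\phi_t(z)$ cancels against $-\mathrm{II}$, leaving the claimed $O(k^{-1/3})$ bound. The \emph{hard part} is precisely this propagation: the $k^{-1/3}$ rate in Lemma \ref{MAINLEM}, which itself arises from the critical balance $\delta_k=k^{-2/3}$ between complex stationary phase and Bargmann-Fock approximation in the proof of Proposition \ref{MAINPCAL}, directly dictates the exponent in part (3) and, ultimately, in Theorem \ref{SUM}.
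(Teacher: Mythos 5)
Your treatment of parts (1) and (3) tracks the paper's proof. For (1), the $\rho_i$-derivative is a weighted average of $(\alpha/k - \mu_t(z))$ against $\rcal_k\pcal_{h_t^k}$, and the combination of the Localization Lemma (forcing $|\alpha/k-\mu_t(z)|\le k^{-1/2+\delta}$) with the upper/lower bounds of Lemma~\ref{UBLB} gives $O(k^{-1/2+\delta})$; this is exactly the paper's computation. For (3), the decomposition into a term governed by $\partial_t\log\rcal_k=O(1)$ (Lemma~\ref{BOUNDEDRCALDER}), plus a weighted average of $\partial_t\log\QQ_{h_t^k}$ whose $\rcal_k$-weight is replaced by $\rcal_\infty(\mu_t(z))+O(k^{-1/3})$ via the Regularity Lemma and then cancelled against $\dot\phi_t$ through Proposition~\ref{ONESZEGODER}(2), is the same argument the paper gives — the paper merely organizes the bookkeeping through the identity $\partial_t\log\QQ_{h_t^k}=\partial_t\log\rcal_k+\log(\QQ_{h_1^k}/\QQ_{h_0^k})$, noting $\log(\QQ_{h_1^k}/\QQ_{h_0^k})=O(k)$, but the mechanism and the resulting $O(k^{-1/3})$ are identical.

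Part (2) is where there is a real gap. You write the $r_n$-derivative as $(2/r_n)\times$(weighted mean of $\alpha_n/k-\mu_{t,n}(z)$) and assert that near $\dcal$ "the polar factor $(2/r_n)(\alpha_n/k-\mu_{t,n}(z))$ is again $O(k^{-1/2+\delta})$ after cancellation." That is not true term by term: the Localization Lemma only controls $|\alpha_n/k-\mu_{t,n}(z)|$ by $k^{-1/2+\delta}$, and for $r_n$ in the transitional range (e.g.\ $r_n\sim k^{-1/4}$) the naive product $(2/r_n)\cdot k^{-1/2+\delta}$ fails to be $O(k^{-1/2+\delta})$. Moreover "Localization has suppressed all but the dominant $\alpha_n\sim kr_n^2$" is not what Lemma~\ref{LOCALIZATION} provides. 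The actual cancellation uses two facts that you must exploit explicitly: the $\alpha_n=0$ terms vanish in the $r_n$-derivative of the numerator, and the $\alpha_n\ge 1$ summands carry the extra factor $r_n^{2\alpha_n-1}$, so after the index shift $\alpha\mapsto\alpha-(0,\dots,1_n,\dots,0)$ of Proposition~\ref{ONESZEGODERr} the nominal $r_n^{-1}$ becomes $r_n^{+1}$ (while the $\mu_{t,n}$-term contributes $2r_nF'_{t,n}$, which has no singularity). This shift — which is the route the paper takes — is what produces a uniform bound; the Bargmann-Fock identification $\mu_{t,n}\sim H_{n\bar n}r_n^2$ is a useful heuristic but by itself does not close the gap.
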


\begin{proof}

We first prove (1).

\begin{eqnarray*}
&&\frac{1}{k}\left|\nabla_{\rho} \log \sum_{\alpha \in k P \cap
\Z^m}  \rcal_k(t, \alpha)
\frac{|S_{\alpha}(z)|^2_{h_t^k}}{\QQ_{h_t^k}(\alpha)}
\right|\\
&=& \left|\frac{\sum_{\alpha \in k P \cap \Z^m }\left(
\frac{\alpha}{k} - \mu_t(z) \right) \rcal_k(t, \alpha)
\frac{|S_{\alpha}|^2_{h_t^k} }{\QQ_{h_t^k}(\alpha)} }
{\sum_{\alpha \in k P \cap \Z^m } \rcal_k(t, \alpha)
\frac{|S_{\alpha}|^2_{h_t^k} }{\QQ_{h_t^k}(\alpha)} }\right|\\
&=& \left|\frac{\sum_{\alpha \in k P \cap \Z^m:\;
|\frac{\alpha}{k} - \mu_t(z)| \leq k^{-\frac{1}{2} + \delta}
}\left( \frac{\alpha}{k} - \mu_t(z) \right) \rcal_k(t, \alpha)
\frac{|S_{\alpha}|^2_{h_t^k} }{\QQ_{h_t^k}(\alpha)} }
{\sum_{\alpha \in k P \cap \Z^m } \rcal_k(t, \alpha)
\frac{|S_{\alpha}|^2_{h_t^k} }{\QQ_{h_t^k}(\alpha)} }\right|
+ O(k^{-M})\\
& \leq & C k^{-\frac{1}{2} + \delta}  \left|\frac{\sum_{\alpha \in
k P \cap \Z^m:\; |\frac{\alpha}{k}
 - \mu_t(z)| \leq k^{-\frac{1}{2} + \delta}} \frac{|S_{\alpha}|^2_{h_t^k} }{\QQ_{h_t^k}(\alpha)}
} {\sum_{\alpha \in k P \cap \Z^m } \frac{|S_{\alpha}|^2_{h_t^k}
}{\QQ_{h_t^k}(\alpha)} }\right|
+ O(k^{-M}) \\
& \leq  & \ C k^{-\frac{1}{2} + \delta},
\end{eqnarray*}
proving (1).  In this estimate, we use the Localization Lemma
\ref{LOCALIZATION} and the upper/lower bound Lemma \ref{UBLB} on
$\rcal_k$.

Regarding $\frac{\partial}{\partial r_n}$ derivatives in (2), the
only change to the argument is in summing only $\alpha$ with
$\alpha_n \not= 0$ and then changing $\alpha \to \alpha - (0,
\dots, 1_n, \dots, 0)$ as explained in Proposition
\ref{ONESZEGODERr}. Clearly the localization and the estimates
only change by $\frac{1}{k}$.

We now consider the $\partial_t$ derivative. By Proposition
\ref{ONESZEGODER}, we have

\begin{equation} \begin{array}{lll}
&&\frac{1}{k} \frac{\partial} {\partial t}\log \sum_{\alpha \in k P
\cap \Z^m} \rcal_k(t, \alpha)
\frac{|S_{\alpha}(z)|^2_{h_t^k}}{\QQ_{h_t^k}(\alpha)}\\
& = &
\frac{1}{k} \frac{\sum_{\alpha} \rcal_k(t, \alpha)
\partial_t \log  \left( \frac{\rcal_k(t,
\alpha)}{\QQ_{h_t^k}(\alpha)} \right) \frac{e^{\langle\alpha, \rho
\rangle}}{\QQ^k_{t}(\alpha)}}{\left(\sum_{\alpha} \rcal_k(t,
\alpha)  \frac{e^{\langle\alpha, \rho
\rangle}}{\QQ_{h_t^k}(\alpha)}\right)} - \;
\frac{\partial}{\partial t} \phi_t  \\ && \\
& = & \frac{1}{k} \frac{\sum_{\alpha} \rcal_k(t, \alpha)
\partial_t \log  \left( \frac{\rcal_k(t,
\alpha)}{\QQ_{h_t^k}(\alpha)} \right) \frac{e^{\langle\alpha, \rho
\rangle}}{\QQ_{h_t^k}(\alpha)}}{\left(\sum_{\alpha} \rcal_k(t,
\alpha) \frac{e^{\langle\alpha, \rho
\rangle}}{\QQ_{h_t^k}(\alpha)}\right)} - \frac{1}{k}
\frac{\sum_{\alpha}
\partial_t \log  \left(\frac{1}{\QQ_{h_t^k}(\alpha)} \right) \frac{e^{\langle\alpha, \rho
\rangle}}{\QQ_{h_t^k}(\alpha)}}{\left(\sum_{\alpha}
\frac{e^{\langle\alpha, \rho
\rangle}}{\QQ_{h_t^k}(\alpha)}\right)}   + O(k^{-1}) \\
&=& \frac{1}{k} \frac{\sum_{\alpha} \rcal_k(t, \alpha)
\partial_t \log \rcal_k(t,
\alpha) \frac{e^{\langle\alpha, \rho
\rangle}}{\QQ_{h_t^k}(\alpha)}}{\left(\sum_{\alpha} \rcal_k(t,
\alpha) \frac{e^{\langle\alpha, \rho
\rangle}}{\QQ_{h_t^k}(\alpha)}\right)}
+ \frac{1}{k} \left(
\frac{\sum_{\alpha}
\partial_t \log  \QQ_{h_t^k}(\alpha) \frac{e^{\langle\alpha, \rho
\rangle}}{\QQ_{h_t^k}(\alpha)}}{\left(\sum_{\alpha}
\frac{e^{\langle\alpha, \rho
\rangle}}{\QQ_{h_t^k}(\alpha)}\right)}   -  \frac{\sum_{\alpha} \rcal_k(t, \alpha)
\partial_t \log  \QQ_{h_t^k}(\alpha) \frac{e^{\langle\alpha, \rho
\rangle}}{\QQ_{h_t^k}(\alpha)}}{\left(\sum_{\alpha}
\rcal_k(t, \alpha)\frac{e^{\langle\alpha, \rho
\rangle}}{\QQ_{h_t^k}(\alpha)}\right)}   \right)+ O(k^{-1})
 \end{array}
\end{equation}

Notice that $\QQ_{h_t^k} = \rcal_k(t, \alpha) (\QQ_{h_0^k}(\alpha))^{1-t} (\QQ_{h_1^k}(\alpha))^t$ and so $\partial_t  \log \QQ_k(t, \alpha)  = \partial_t \log \rcal_k (t, \alpha) + \log \left( \frac{\QQ_{h_1^k}(t, \alpha)}{\QQ_{h_0^k}(t, \alpha) } \right) $. It follows easily from the fact proved in Lemma \ref{MAINLEM}
(or more precisely the simpler Lemma \ref{BOUNDEDRCALDER})  that
$  \rcal_k(t, \alpha) = O(1)   $  and $\partial_t \log \left( \rcal_k(t, \alpha) \right) = O(1)$.   Also $\log \frac{\QQ_{h_1^k}(t, \alpha)}{\QQ_{h_0^k}(t, \alpha) } = O(k)$ uniformly in $\alpha$. Replacing $\rcal_k$ by $\rcal_\infty$ plus an error of order $k^{-\frac{1}{3}}$,

$$\frac{1}{k} \frac{\partial} {\partial t}\log \sum_{\alpha \in k P
\cap \Z^m} \rcal_k(t, \alpha)
\frac{|S_{\alpha}(z)|^2_{h_t^k}}{\QQ_{h_t^k}(\alpha)} = O( k^{ - \frac{1}{3}} ). $$

\end{proof}

\section{$C^2$-convergence  \label{C2}}

We now consider second derivatives in $\rho, t$. Again we must
separately consider derivatives in the interior and near the
boundary.  The following Proposition completes the proof of
Theorem \ref{SUM}.

\begin{prop}  Uniformly for $(t, z) \in [0, 1] \times M$, we have,
for any $\delta > 0$,
\begin{enumerate}

\item   $\frac{1}{k}\left|\frac{\partial^2}{\partial \rho_i
\rho_j}\log \sum_{\alpha \in k P \cap \Z^m}  \rcal_k(t, \alpha)
\frac{|S_{\alpha}(z)|^2_{h_t^k}}{\QQ_{h_t^k}(\alpha)} \right| =
O(k^{-\frac{1}{3} + 2 \delta})$;

\item   $ \frac{1}{k}\left|\frac{\partial^2}{\partial t
\partial \rho_j}\log \sum_{\alpha \in k P \cap \Z^m} \rcal_k(t, \alpha)
\frac{|S_{\alpha}(z)|^2_{h_t^k}}{\QQ_{h_t^k}(\alpha)}  \right| =
O(k^{-\frac{1}{3} + 2\delta})$;

\item   $ \frac{1}{k}\left|\frac{\partial^2}{\partial t^2}\log
\sum_{\alpha \in k P \cap \Z^m} \rcal_k(t, \alpha)
\frac{|S_{\alpha}(z)|^2_{h_t^k}}{\QQ_{h_t^k}(\alpha)}  \right| =
O(k^{-\frac{1}{3} + 2\delta})$

\item   The same estimates are valid if we replace
$\frac{\partial}{\partial r_n}$ in directions near $\dcal$ as in
Proposition \ref{ONESZEGODERr}.

\end{enumerate}

\end{prop}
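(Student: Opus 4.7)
The plan is to upgrade the four-step strategy of the $C^1$ section, now carrying the Taylor expansion of $\rcal_k$ out to first order on the localization ball, since the second-derivative operators produce weighted moments of size $k^{1+2\delta}$ that must be absorbed against the $k^{-1/3}$ regularity of $\rcal_k$. For part~(1), start from the quotient identity
\[
\frac{1}{k}\,\partial^2_{\rho_i\rho_j}\log F_k
= \frac{1}{k}\,\frac{\partial^2_{\rho_i\rho_j}F_k}{F_k} - \frac{1}{k}\,\frac{\partial_{\rho_i}F_k}{F_k}\,\frac{\partial_{\rho_j}F_k}{F_k},
\]
where $F_k(t,z):= e^{-k\phi_t(z)}Z_k(t,z) = \sum_\alpha\rcal_k(t,\alpha)\pcal_{h_t^k}(\alpha,z)$. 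Using $\partial_{\rho_i}|s_\alpha|^2_{h_t^k}=(\alpha_i-k\mu_{t,i}(z))|s_\alpha|^2_{h_t^k}$, the numerator $\partial^2_{\rho_i\rho_j}F_k$ becomes the weighted moment
\[
\sum_\alpha\rcal_k(t,\alpha)\bigl[(\alpha_i-k\mu_{t,i})(\alpha_j-k\mu_{t,j}) - k\,\partial_{\rho_j}\mu_{t,i}(z)\bigr]\pcal_{h_t^k}(\alpha,z),
\]
and Localization Lemma~\ref{LOCALIZATION} (applied first with $B_k$ equal to the bracket, then with $B_k$ equal to $(\alpha_i-k\mu_{t,i})$) restricts every sum to the ball $|\alpha/k-\mu_t(z)|\le k^{-1/2+\delta}$ modulo $O(k^{-C})$.

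Next, on this ball the Regularity Lemma~\ref{MAINLEM} combined with Lipschitz smoothness of $\rcal_\infty(t,\cdot)\in C^\infty([0,1]\times P)$ yields the decomposition $\rcal_k(t,\alpha) = \rcal_\infty(t,\mu_t(z))+E_k(t,z,\alpha)$ with $|E_k|\le C(k^{-1/3}+k^{-1/2+\delta})$ uniformly. Substituting into both numerator and denominator, the constant $\rcal_\infty(t,\mu_t(z))$ cancels from the ratio, leaving the pure \szego moment
\[
\frac{\sum_\alpha\bigl[(\alpha_i-k\mu_{t,i})(\alpha_j-k\mu_{t,j})-k\,\partial_j\mu_{t,i}\bigr]\pcal_{h_t^k}(\alpha,z)}{\sum_\alpha\pcal_{h_t^k}(\alpha,z)},
\]
which by Proposition~\ref{COMPAREPIT}(3) equals $k\,\partial^2\phi_t/\partial\rho_i\partial\rho_j - k\,\partial_j\mu_{t,i} + O(k^{-1}) = O(k^{-1})$ since $\partial^2\phi_t/\partial\rho_i\partial\rho_j = \partial_j\mu_{t,i}$. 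The residual $E_k$-error in the ratio is bounded by $\max|E_k|$ times $\sum|(\alpha-k\mu_t)^{\otimes 2}|\pcal_{h_t^k}/\sum\pcal_{h_t^k}$; on the localization ball the numerator is $O(k^{1+2\delta}\Pi_{h_t^k}(z,z))$, so this contribution is $O((k^{-1/3}+k^{-1/2+\delta})k^{1+2\delta}) = O(k^{2/3+2\delta})$, and dividing by $k$ yields the advertised $O(k^{-1/3+2\delta})$ bound. The cross term $(\partial_iF_k/F_k)(\partial_jF_k/F_k)/k$ is subdominant: estimating each factor by the same scheme via Proposition~\ref{COMPAREPIT}(1) together with the first-moment version of the Regularity error gives $\partial_iF_k/F_k = O(k^{1/6+\delta})$, so the product divided by $k$ is $O(k^{-2/3+2\delta})$.

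Parts~(2) and~(3) use the same quotient identity, but with time derivatives, which hit $\rcal_k$ and $1/\QQ_{h_t^k}(\alpha)$ rather than the phase. Lemma~\ref{BOUNDEDRCALDER} furnishes uniform bounds on $\partial_t\log\rcal_k$ and $\partial_t^2\log\rcal_k$, and the $t$-derivatives of $\log\QQ_{h_t^k}(\alpha)$ generate moment expressions whose asymptotics are controlled by Proposition~\ref{TWOSZEGODER}(2),(3); after localization, Taylor expansion, and the corresponding \szego cancellations against $\partial^2_{\rho_j t}\phi_t$ and $\partial^2_{tt}\phi_t$, only the same $O(k^{-1/3+2\delta})$ regularity-limited remainder survives. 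Part~(4) reduces by Proposition~\ref{ONESZEGODERr} and its second-derivative analogues to the identical sum structure after the substitution $\alpha\mapsto\alpha-(0,\dots,1_n,\dots)$ and the deletion of lattice points with $\alpha_n=0$, a bookkeeping change that perturbs all estimates by $O(1/k)$. The main obstacle throughout is ensuring that the first-order Taylor correction in $\rcal_k$ does not break the $k^{-1/3+2\delta}$ budget: crude volume estimation of the third moment $\sum(\alpha-k\mu_t)^{\otimes 3}\pcal_{h_t^k}$ gives a contribution of order $k^{-1/2+3\delta}$, which lies just under the threshold thanks to the matched scale $\delta_k=k^{-2/3}$ built into Proposition~\ref{MAINPCAL}, so no Gaussian-cancellation improvement for the odd moments is required.
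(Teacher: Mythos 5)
Your argument is essentially correct and follows the paper's strategy (localize, replace $\rcal_k$ by $\rcal_\infty$, cancel, cost the $(\alpha-k\mu_t)$-moments against $k^{-1/3}$), but you have organized part (1) differently from Section 8, and your route creates an extra obligation that you correctly discharge. The paper symmetrizes the double sum arising from $\partial^2_{\rho_i\rho_j}\log$ into $\tfrac{1}{2}\sum_{\alpha,\beta}(\alpha-\beta)_i(\alpha-\beta)_j$, so the second-moment and first-moment-squared pieces never have to be controlled separately: the single factor $(\alpha-\beta)^{\otimes 2}=O(k^{1+2\delta})$ on the localization ball closes the estimate at once. You instead split via $\partial^2\log F=\partial^2F/F-(\partial F/F)^2$, which means the cross term $\tfrac{1}{k}(\partial_iF_k/F_k)(\partial_jF_k/F_k)$ must be shown small on its own. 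The crude $C^1$ bound from Section 7 gives only $\partial_iF_k/F_k=O(k^{1/2+\delta})$, yielding $O(k^{2\delta})$ for the cross term — not good enough. You notice this and supply the sharper bound $\partial_iF_k/F_k=O(k^{1/6+\delta})$, which indeed follows once the zeroth-order substitution $\rcal_k\to\rcal_\infty(t,\mu_t(z))$ is fed back into the $C^1$ estimate via Proposition \ref{COMPAREPIT}(1); this refinement is not stated in the paper's $C^1$ section because the symmetrized form never needs it. Your descriptions of parts (2)--(4) are accurate at the level of ingredients; the paper's proof of part (3) additionally exploits that $\partial_t\log\QQ_{h_t^k}(\alpha)-\partial_t\log\QQ_{h_t^k}(\beta)=O(k^{1/2+\delta})$ on the localization ball (linearity of $u_t$ plus Lemma \ref{BOUNDEDRCALDER}), which you gesture at but do not display.

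One corrective remark: your closing paragraph about a "first-order Taylor correction" and a third moment is a red herring. Your own argument uses only the zeroth-order replacement $\rcal_k(\alpha)=\rcal_\infty(t,\mu_t(z))+E_k$ with $|E_k|=O(k^{-1/3})$ — the Lipschitz deviation $|\rcal_\infty(\alpha/k)-\rcal_\infty(\mu_t(z))|=O(k^{-1/2+\delta})$ is already dominated by the $O(k^{-1/3})$ regularity error — so no linear Taylor term and hence no third moment ever arises, and the invocation of the matched scale $\delta_k=k^{-2/3}$ there is vacuous. That paragraph should be deleted.
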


We break up the proof into the four  cases. To simplify the
exposition, we introduce some new notation for localizing sums
over lattice points. By the Localization Lemma \ref{LOCALIZATION},
sums over lattice points can be localized to a ball of radius
$O(k^{-\frac{1}{2} + \delta})$ around $\mu_t(z)$. We emphasize
that although there are three metrics at play, it is the metric
$h_t$ along the Monge-Amp\`ere geodesic   that is used to localize
the sum. We introduce a notation for localized sums over pairs of
lattice points: let
\begin{equation} \widetilde{\Sigma}_{\alpha, \beta} \;\; F(\alpha, \beta): = \sum_{ |\frac{\alpha}{k} - \mu_t(z)|,
|\frac{\beta}{k} - \mu_t(z)| \leq k^{-\frac{1}{2} + \delta} }
F(\alpha, \beta).
\end{equation}

\subsection{\label{SECONDSPACE}Second space derivatives in the interior}

In this section we prove case (1). We have,

\begin{eqnarray} \label{3}
&&\frac{1}{k}\left|\frac{\partial ^2} {\partial
\rho_i\partial\rho_j}\log \sum_{\alpha \in k P \cap \Z^m}
\rcal_k(t, \alpha)
\frac{|S_{\alpha}(z)|^2_{h_t^k}}{\QQ_{h_t^k}(\alpha)} \right|\\
&=&\frac{1}{k}\left| \frac{\frac{1}{2} \sum_{\alpha, \beta}(\alpha
-\beta)^2\;\;  \rcal_k(t, \alpha) \rcal_k(t, \beta)
\frac{e^{\langle\alpha, \rho
\rangle}}{\QQ_{h_t^k}(\alpha)}\frac{e^{\langle \beta, \rho
\rangle}}{\QQ_{h_t^k}(\beta)} }{\left(\sum_{\alpha} \rcal_k(t,
\alpha) \frac{e^{\langle\alpha, \rho
\rangle}}{\QQ_{h_t^k}(\alpha)}\right)^2}
-k \; \frac{\partial^2}{\partial \rho_i\partial \rho_j} \phi_t \right|\nonumber \\
&\equiv& \frac{1}{k} \left| \frac{\frac{1}{2}\sum_{\alpha,
\beta}(\alpha -\beta)^2\;\; \rcal_k(t, \alpha) \rcal_k(t, \beta)
\frac{e^{\langle\alpha, \rho
\rangle}}{\QQ_{h_t^k}(\alpha)}\frac{e^{\langle \beta, \rho
\rangle}}{\QQ_{h_t^k}(\beta)} }{\left(\sum_{\alpha} \rcal_k(t,
\alpha) \frac{e^{\langle\alpha, \rho
\rangle}}{\QQ_{h_t^k}(\alpha)}\right)^2} - \frac{\frac{1}{2}
\sum_{\alpha, \beta}(\alpha -\beta)^2\;\; \frac{e^{\langle\alpha,
\rho \rangle}}{\QQ_{h_t^k}(\alpha)}\frac{e^{\langle \beta, \rho
\rangle}}{\QQ_{h_t^k}(\beta)} }{\left(\sum_{\alpha}
\frac{e^{\langle\alpha, \rho
\rangle}}{\QQ_{h_t^k}(\alpha)}\right)^2} \right|, \nonumber
\end{eqnarray}
modulo $O(\frac{1}{k})$ by Proposition \ref{COMPAREPIT}. We also
completed the square and used that the sum over $\alpha$ is a
probability measure to replace $\alpha^2 - \alpha \beta$ by
$\frac{1}{2} (\alpha - \beta)^2$. We also use Lemma
\ref{TWOSZEGODER} to write $\frac{\partial^2}{\partial
\rho_i\partial \rho_j} \phi_t$ as a sum over lattice points.

By the
Localization Lemma \ref{LOCALIZATION}, each sum over lattice
points can be localized to a ball of radius $O(k^{-\frac{1}{2} +
\delta})$ around $\mu_t(z)$.  Then, by Lemma \ref{MAINLEM} each
occurrence of $\rcal_k(t, \alpha)$ or $\rcal_k(t, \beta)$
 may be replaced by $\rcal_{\infty}(t,
\frac{\alpha}{k} )$ plus an error of order   $k^{-\frac{1}{3}}$.
Since $\frac{1}{k} (\alpha - \beta)^2 = O(k^{ 2 \delta})$ the
total error is of order $k^{2 \delta - \frac{1}{3}}$.  Since
$\delta$ is arbitrarily small,   this term is
decaying.  Further, after replacing $\rcal_k(t, \beta)$
  by $\rcal_{\infty}(t,
\frac{\alpha}{k} )$ we may  then replace $\frac{\alpha}{k},
\frac{\beta}{k}$ by $\mu_t(z)$ at the expense of another error of
order $k^{-\frac{1}{2} + \delta}$.  By modifying (\ref{3})
accordingly, we have
\begin{eqnarray}\label{2SPACE}
&&\frac{1}{k}\left|\frac{\partial^2} {\partial
\rho_i\partial\rho_j}\log \sum_{\alpha \in k P \cap \Z^m}
\rcal_k(t, \alpha)
\frac{|S_{\alpha}(z)|^2_{h_t^k}}{\QQ_{h_t^k}(\alpha)} \right| +
O(k^{-\frac{1}{3} + 2 \delta} )\\
&\equiv& \frac{1}{k} \left|
\frac{\frac{1}{2}\widetilde{\sum}_{\alpha, \beta}(\alpha
-\beta)^2\;\; \rcal_{\infty}(t, \mu_t(e^{\rho/2}))^2
\frac{e^{\langle\alpha, \rho
\rangle}}{\QQ_{h_t^k}(\alpha)}\frac{e^{\langle \beta, \rho
\rangle}}{\QQ_{h_t^k}(\beta)} }{\left(\sum_{\alpha}
\rcal_\infty(t, \mu_t(e^{\rho/2}))  \frac{e^{\langle\alpha, \rho
\rangle}}{\QQ_{h_t^k}(\alpha)}\right)^2} -
\frac{\frac{1}{2}\widetilde{\sum}_{\alpha, \beta}(\alpha
-\beta)^2\;\; \frac{e^{\langle\alpha, \rho
\rangle}}{\QQ_{h_t^k}(\alpha)}\frac{e^{\langle \beta, \rho
\rangle}}{\QQ_{h_t^k}(\beta)} }{\left(\sum_{\alpha}
\frac{e^{\langle\alpha, \rho
\rangle}}{\QQ_{h_t^k}(\alpha)}\right)^2} \right| \equiv 0,
\nonumber
\end{eqnarray}
where $\equiv$ means that the lines agree modulo errors of order
$O(k^{-\frac{1}{3} + 2 \delta} )$. In the last estimate, we use
that $ \rcal_{\infty}(t, \mu_t(e^{\rho/2}))^2$ cancels out in the
first term. This completes the proof in the spatial interior
case.

The modifications when $z$ is close to $\partial P$ are just as in
the case of the first derivatives.

\subsection{Mixed space-time derivatives} The mixed space-time
derivative is given by

\begin{eqnarray*}
&&\frac{1}{k}\left|\frac{\partial^2} {\partial \rho_i\partial
t}\log \sum_{\alpha \in k P \cap \Z^m} \rcal_k(t, \alpha)
\frac{|S_{\alpha}(z)|^2_{h_t^k}}{\QQ_{h_t^k}(\alpha)} \right|\\
&=&\frac{1}{k}\left| \frac{1}{2}\frac{\sum_{\alpha, \beta}(\alpha
-\beta)\;\; \rcal_k(t, \beta)  \rcal_k(t, \alpha) \partial_t \log
\left( \frac{\rcal_k(t, \alpha)\QQ_{h_t^k}(\beta)}{\rcal_k(t, \beta)\QQ_{h_t^k}(\alpha)} \right)
\frac{e^{\langle\alpha, \rho
\rangle}}{\QQ_{h_t^k}(\alpha)}\frac{e^{\langle \beta, \rho
\rangle}}{\QQ_{h_t^k}(\beta)} }{\left(\sum_{\alpha} \rcal_k(t,
\alpha) \frac{e^{\langle\alpha, \rho
\rangle}}{\QQ_{h_t^k}(\alpha)}\right)^2}
-k \; \frac{\partial^2}{\partial \rho_i\partial t} \phi_t \right|\\
\end{eqnarray*}

 It suffices to prove that
\begin{eqnarray*}
\frac{1}{k} \left| \frac{\sum_{\alpha, \beta} (\alpha -\beta)\;\;
\partial_t \log \left( \rcal_k(t, \alpha) \right)
\frac{e^{\langle\alpha, \rho
\rangle}}{\QQ_{h_t^k}(\alpha)}\frac{e^{\langle \beta, \rho
\rangle}}{\QQ_{h_t^k}(\beta)}}{\left(\sum_{\alpha}
\frac{e^{\langle\alpha, \rho \rangle}}{\QQ_{h_t^k}(\alpha)}
\right)^2} \right| = O(k^{-\frac{1}{2} + \delta})
\end{eqnarray*}
and

\begin{eqnarray*}\frac{1}{k}\left| \frac{1}{2}\frac{\sum_{\alpha, \beta}(\alpha
-\beta)\;\; \rcal_k(t, \beta)  \rcal_k(t, \alpha) \partial_t \log
\left( \frac{ \QQ_{h_t^k}(\beta)}{\QQ_{h_t^k}(\alpha)} \right)
\frac{e^{\langle\alpha, \rho
\rangle}}{\QQ_{h_t^k}(\alpha)}\frac{e^{\langle \beta, \rho
\rangle}}{\QQ_{h_t^k}(\beta)} }{\left(\sum_{\alpha} \rcal_k(t,
\alpha) \frac{e^{\langle\alpha, \rho
\rangle}}{\QQ_{h_t^k}(\alpha)}\right)^2}
-k \; \frac{\partial^2}{\partial \rho_i\partial t} \phi_t \right| = O(k^{-\frac{1}{3} + 2\delta}).\\
\end{eqnarray*}

The first estimate follows by the Localization Lemma \ref{LOCALIZATION}  and from  Lemma \ref{BOUNDEDRCALDER}, i.e.,   that
$\partial_t \log \left( \rcal_k(t, \alpha) \right) = O(1)$. The
second estimate is very similar to  that in \S \ref{SECONDSPACE},
specifically in (\ref{2SPACE}),  so we do not write it out in
full. In outline, we first apply the Localization Lemma and
replace each $\rcal_k(t, \alpha)$ by $\rcal_{\infty}(\mu_t(z))$
with $z = e^{\rho/2}$.  The errors in making these replacements
are of order $k^{-1/3 + \delta}$ because $\partial_t \log \left(
\frac{ \QQ_{h_t^k}(\beta)}{\QQ_{h_t^k}(\alpha)} \right) =  O( k|
u_t(\alpha)-u_t(\beta)|)= O(k^{\frac{1}{2}+\delta}) $ and because
$(\alpha -\beta) = O(k^{\frac{1}{2}+\delta}) $ in the localized
sum. We then express $\frac{\partial^2}{\partial \rho_i\partial t}
\phi_t $ in terms of the \szego kernel, i.e., as a sum over lattice
points, using Proposition \ref{TWOSZEGODER}, and cancel the
$\frac{\partial^2}{\partial \rho_i\partial t} \phi_t$ term. The
sum of the remainders is then of order  $k^{-1/3 + \delta}$,
completing the proof in this mixed case.

\subsection{Second time derivatives}

The proof in this case follows the same pattern, although the
estimates are somewhat more involved. The main steps are to
localize the sums over lattice points, to replace each  $\rcal_k$
by $\rcal_{\infty}$, then to cancel out $\rcal_{\infty}$ after all
replacements,  and to see that the resulting lattice point sum
cancels $k \; \frac{\partial^2}{\partial  t^2} \phi_t$. The
complications are only due to the number of estimates that are
required to justify the replacements.

 The second time derivative
equals

\begin{equation}\label{STD}\begin{array}{l}
\frac{1}{k}\frac{\partial^2} {\partial t^2}\log \sum_{\alpha \in k
P \cap \Z^m} \rcal_k(t, \alpha)
\frac{|S_{\alpha}(z)|^2_{h_t^k}}{\QQ_{h_t^k}(\alpha)} \\ \\
=\frac{1}{k} \frac{\sum_{\alpha, \beta}   \rcal_k(t, \beta)
\rcal_k(t, \alpha)\left(\partial_t \log \frac{\rcal_k(t,
\alpha)}{\QQ_{h_t^k}(\alpha)}  (\frac{\rcal_k(t,
\beta)}{\QQ_{h_t^k}(\beta)})^{-1} \right)^2
\frac{e^{\langle\alpha, \rho
\rangle}}{\QQ_{h_t^k}(\alpha)}\frac{e^{\langle \beta, \rho
\rangle}}{\QQ_{h_t^k}(\beta)} }{\left(\sum_{\alpha} \rcal_k(t,
\alpha) \frac{e^{\langle\alpha, \rho
\rangle}}{\QQ_{h_t^k}(\alpha)}\right)^2} \\ \\
+ \frac{1}{k}\left( \frac{\sum_{\alpha, \beta}   \rcal_k(t, \beta)
\rcal_k(t, \alpha)
\partial_t^2 \log  \left( \frac{\rcal_k(t,
\alpha)}{\QQ_{h_t^k}(\alpha)} \right) \frac{e^{\langle\alpha, \rho
\rangle}}{\QQ_{h_t^k}(\alpha)}\frac{e^{\langle \beta, \rho
\rangle}}{\QQ_{h_t^k}(\beta)} }{\left(\sum_{\alpha} \rcal_k(t,
\alpha) \frac{e^{\langle\alpha, \rho
\rangle}}{\QQ_{h_t^k}(\alpha)}\right)^2} -k \;
\frac{\partial^2}{\partial  t^2} \phi_t\right)
\end{array}\end{equation}
Here, we have simplified the numerator of the first term by
replacing  $$\left(\partial_t \log  \left( \frac{\rcal_k(t,
\alpha)}{\QQ_{h_t^k}(\alpha)} (\frac{\rcal_k(t,
\beta)}{\QQ_{h_t^k}(\beta)})^{-1} \right) \right) \left(\partial_t
\log \frac{\rcal_k(t, \alpha)}{\QQ_{h_t^k}(\alpha)} \right) \to
\frac{1}{2}  \left(\partial_t \log \left( \frac{\rcal_k(t,
\alpha)}{\QQ_{h_t^k}(\alpha)} (\frac{\rcal_k(t,
\beta)}{\QQ_{h_t^k}(\beta)})^{-1}\right) \right)^2, $$ which is
valid since the expression is anti-symmetric in $(\alpha, \beta)$
and since we are summing in $(\alpha, \beta)$.

To simplify the notation, we now abbreviate $\rcal(\alpha) =
\rcal_k(t, \alpha), \;\; \tcal(\alpha) =
\frac{1}{\QQ_{h_t^k}(\alpha)}$, $f' = \frac{\partial f}{\partial
t}$,  and write $(\ref{STD}) = \frac{N}{D}$ where the numerator
has the schematic form
\begin{equation} \label{TTD2} \begin{array}{lll} N \\= \sum_{\alpha, \beta} \left(
\left(\frac{\rcal'}{\rcal} (\alpha) + \frac{\tcal'}{\tcal}
(\alpha) \right)' + \frac{1}{2} \left(\frac{\rcal'}{\rcal}(\alpha)
+ \frac{\tcal'}{\tcal} (\alpha)- (\frac{\rcal'}{\rcal} (\beta)+
\frac{\tcal'}{\tcal} (\beta) )\right)^2 \right) \rcal(\alpha)
\tcal(\alpha) \rcal(\beta) \tcal(\beta) e^{\langle \alpha, \rho\rangle} e^{\langle\beta, \rho\rangle}
 \end{array} \end{equation} and where the
denominator is $D = \left(\sum_{\alpha} \rcal(\alpha)
\tcal(\alpha) \right)^2$. We omit the factors $
\frac{e^{\langle\alpha, \rho
\rangle}}{\QQ_{h_t^k}(\alpha)}\frac{e^{\langle \beta, \rho
\rangle}}{\QQ_{h_t^k}(\beta)} $ from the notation since they are
always present.

We now  compare $N$ and $D$ to the corresponding expressions in
the  second time derivative of the \szego kernel in Proposition
\ref{TWOSZEGODER}. In the latter case, $\rcal \equiv 1$ so any
terms with $t$-derivatives of $\rcal$ above do not occur in the
third comparison expression of Proposition \ref{TWOSZEGODER}.
Terms with no $t$ derivatives of $\rcal$ will be precisely as in
the comparison except that $\rcal$ is replaced by $1$. So we
consider the sub-sum of $N$, \begin{equation} \label{N1}
\begin{array}{lll} N_1 &=& \sum_{\alpha, \beta} \left( \left(
\frac{\tcal'}{\tcal} (\alpha) \right)' + \frac{1}{2} \left(
\frac{\tcal'}{\tcal} (\alpha)- \frac{\tcal'}{\tcal} (\beta)
)\right)^2 \right) \rcal(\alpha) \tcal(\alpha) \rcal(\beta)
\tcal(\beta)
 \end{array} \end{equation}
 If we now replace all occurrences of $\rcal_k(t, \alpha)$ by
$\rcal_{\infty}(\mu_t(z))$ in both numerator and denominator we
get the \szego kernel expression (the third comparison expression
of Proposition \ref{TWOSZEGODER}) of order $\frac{1}{k^2}$. (This
is verified in more detail at the end of the proof).  So
 we are left with estimating two remainder terms: First,
the difference $N_1 - \tilde{N_1}$ where $\tilde{N_1} $ is a sum
of terms in which we replace at least  one $\rcal(\alpha)$ by $
\rcal_{\infty}(\mu_t(z))$ (or with $\beta$). Second, we must
estimate $N - N_1$.

We first consider  $N_1 - \tilde{N_1}$. It arises by substituting
at least one     $\rcal(\alpha) -
\rcal_{\infty}(\mu_t(z)) = O(k^{-\frac{1}{3} })$  for one of the
$\rcal(\alpha)$'s in $N_1$. We   apply the localization argument
Lemma \ref{LOCALIZATION}  to replace $N_1$ (and $D$) by sums over
$\frac{\alpha}{k}, \frac{\beta}{k} \in B(\mu_t(z), k^{-\frac{1}{2}
+ \delta}).$ We thus need to estimate the following expression,
when at least one $\rcal(\alpha)$ is replaced by  $\rcal(\alpha) -
\rcal_{\infty}(\mu_t(z))$:
\begin{eqnarray*}
&&\frac{1}{k} \frac{\widetilde{\sum}_{\alpha, \beta} \rcal_k(t,
\beta) \rcal_k(t, \alpha)\left(\partial_t \log
\frac{\QQ_{h_t^k}(\beta) }{\QQ_{h_t^k}(\alpha)} \right) \left(-
\partial_t \log \QQ_{h_t^k}(\alpha) \right)
\frac{e^{\langle\alpha, \rho
\rangle}}{\QQ_{h_t^k}(\alpha)}\frac{e^{\langle \beta, \rho
\rangle}}{\QQ_{h_t^k}(\beta)} }{\left(\sum_{\alpha \in B(\mu_t(z),
k^{-\frac{1}{2} + \delta}) } \rcal_k(t, \alpha)
\frac{e^{\langle\alpha, \rho
\rangle}}{\QQ_{h_t^k}(\alpha)}\right)^2}
\\
\\
&+&\frac{1}{k}\left( \frac{\widetilde{\sum}_{\alpha, \beta}
\rcal_k(t, \beta) \rcal_k(t, \alpha)
\partial_t^2 \log  \left( \frac{1}{\QQ_{h_t^k}(\alpha)} \right) \frac{e^{\langle\alpha,
\rho \rangle}}{\QQ_{h_t^k}(\alpha)}\frac{e^{\langle \beta, \rho
\rangle}}{\QQ_{h_t^k}(\beta)} }{\left(\sum_{\alpha \in B(\mu_t(z),
k^{-\frac{1}{2} + \delta})} \rcal_k(t, \alpha)
\frac{e^{\langle\alpha, \rho
\rangle}}{(\QQ_{h_t^k}(\alpha))}\right)^2}
-k \; \frac{\partial^2}{\partial t^2} \phi_t\right) \\
\end{eqnarray*}

Due to the factor $\frac{1}{k}$ outside the sum, it suffices to
prove that
$$\left( \left( \frac{\tcal'}{\tcal} (\alpha) \right)' +
\frac{1}{2} \left( \frac{\tcal'}{\tcal} (\alpha)-
\frac{\tcal'}{\tcal} (\beta) \right)^2 \right) = O(k^{1 + 2
\delta}).$$ By Proposition \ref{QPINV}, we have
$$\frac{\tcal'}{\tcal} = - \frac{\pcal'}{\pcal} + k
u_t'(\frac{\alpha}{k}). $$ Since $u_t = (1 - t) u_0 + t u_1$, we
have
$$\frac{\tcal'}{\tcal}
(\alpha) = - \frac{\pcal'}{\pcal}+ k(u_1 -
u_0)(\frac{\alpha}{k}) = - \frac{\pcal'}{\pcal} + k(f_1 -
f_0)(\frac{\alpha}{k}) , $$ where we recall from \S \ref{KPSP}
that $u_{\phi} = u_0 + f_{\phi}$ with $f_{\phi}$ smooth up to the
boundary of $P$.

It follows that, \begin{equation} \label{DIFFT}
\frac{\tcal'}{\tcal} (\alpha)- \frac{\tcal'}{\tcal} (\beta) = -
\frac{\pcal'}{\pcal}(\alpha) + \frac{\pcal'}{\pcal}(\beta) + k(f_1
- f_0)(\frac{\alpha}{k}) - k(f_1 - f_0)(\frac{\beta}{k}),
\end{equation}
\begin{equation}
(\frac{\tcal'}{\tcal}
(\alpha))' = - (\frac{\pcal'}{\pcal})'= O(1).
\end{equation}
with
$$ k (f_1
- f_0)(\frac{\alpha}{k}) - k(f_1 - f_0)(\frac{\beta}{k}) = k
O(|\frac{\alpha}{k} - \frac{\beta}{k}|) \; = \; O(k^{\frac{1}{2} +
\delta}).$$ Further, by Lemma \ref{RATIOCANCEL} (using Lemma
\ref{MAINPCALD}), the factors of
$$\frac{(\frac{\partial}{\partial t}) \pcal_{h_t^k}(\alpha)}{\pcal_{h_t^k}(\alpha)}  \; = \;
  \frac{\left(S_1(t, \alpha, k)  +
R_k(\frac{\alpha}{k}, h) \right)}{S_0(t, \alpha, k)} = O(1), $$
and similarly $ (\frac{\pcal'}{\pcal})' = O(1)$. Since
(\ref{DIFFT})  is squared, it has terms as large as  $O(k^{1 + 2
\delta})$. Taking into account the overall factor of $\frac{1}{k}$
and the presence of  at least one factor of size $k^{-\frac{1}{3}}$ coming from the replacement of at least one $\rcal_k(t,
\alpha)$ by $\rcal_{\infty}(\mu_t(z))$, we see that $N_1 -
\tilde{N_1}$ has order  $k^{-\frac{1}{3} + 2 \delta}$ and again this decays
for sufficiently small $\delta$.

Now we estimate $N - N_1$, which consists of  terms with at least
one $t$-derivative of $\rcal$. By Lemma \ref{RATIOCANCEL}, the
terms with no $t$ derivatives on $\tcal$ give the terms
\begin{eqnarray*}
&&\frac{1}{k} \frac{\widetilde{\sum}_{\alpha, \beta} \rcal_k(t,
\beta) \rcal_k(t, \alpha)\left(\partial_t \log \frac{\rcal_k(t,
\alpha)}{\rcal_k(t, \beta)} \right) \left(\partial_t \log
\rcal_k(t, \alpha)\right) \frac{e^{\langle\alpha, \rho
\rangle}}{\QQ_{h_t^k}(\alpha)}\frac{e^{\langle \beta, \rho
\rangle}}{\QQ_{h_t^k}(\beta)} }{\left(\sum_{\alpha} \rcal_k(t,
\alpha) \frac{e^{\langle\alpha, \rho
\rangle}}{(\QQ_{h_t^k}(\alpha))}\right)^2}
\\
\\
&+&\frac{1}{k} \frac{\widetilde{\sum}_{\alpha, \beta}   \rcal_k(t,
\beta) \rcal_k(t, \alpha)
\partial_t^2 \log \left( \rcal_k(t, \alpha) \right)
\frac{e^{\langle\alpha, \rho
\rangle}}{\QQ_{h_t^k}(\alpha)}\frac{e^{\langle \beta, \rho
\rangle}}{\QQ_{h_t^k}(\beta)} }{\left(\sum_{\alpha} \rcal_k(t,
\alpha) \frac{e^{\langle\alpha, \rho
\rangle}}{(\QQ_{h_t^k}(\alpha))}\right)^2} = O(k^{-1}),
\end{eqnarray*}
  by Lemma \ref{MAINLEM}.

  This leaves us with the terms
  $$(\frac{\rcal'}{\rcal}(\alpha)
- \frac{\rcal'}{\rcal} (\beta))(\frac{\tcal'}{\tcal} (\alpha) -
\frac{\tcal'}{\tcal} (\beta)).   $$ Again by Lemma
\ref{BOUNDEDRCALDER},  the first term is $O(1)$ while the second
factor is (\ref{DIFFT}) and has size $k k^{-\frac{1}{2} +
\delta}$.  Here, we again use Propositions \ref{QPINV} and
\ref{MAINPCALD}. Due to the overall factor of $\frac{1}{k}$ this
term has size $k^{-\frac{1}{2} + \delta}$.

Therefore, as stated above,  up to errors of order
$k^{-1/3 + \delta}$, (\ref{STD}) is simplified to $- \;
\frac{\partial^2}{\partial t^2} \phi_t$ plus
\begin{equation} \frac{1}{k}\left( \frac{\widetilde{\sum}_{\alpha, \beta}
\rcal_{\infty}(\mu_t(e^{\rho/2}))
\rcal_{\infty}(\mu_t(e^{\rho/2}))\left(
\partial_t^2 \log  \left( \frac{1}{\QQ_{h_t^k}(\alpha)} \right) + (\partial_t \log
\frac{1}{\QQ_{h_t^k}}) (\partial_t \log  ( \frac{\QQ_{h_t^k}
(\beta)}{\QQ_{h_t^k} (\alpha)}) \right) \frac{e^{\langle\alpha,
\rho \rangle}}{\QQ_{h_t^k}(\alpha)}\frac{e^{\langle \beta, \rho
\rangle}}{\QQ_{h_t^k}(\beta)} }{\left(\sum_{\alpha \in B(\mu_t(z),
k^{-\frac{1}{2} + \delta})}
\rcal_{\infty}(\mu_t(e^{\rho/2}))\frac{e^{\langle\alpha, \rho
\rangle}}{(\QQ_{h_t^k}(\alpha))}\right)^2} \right) \end{equation}
As before, we  cancel the factors of
$\rcal_{\infty}(\mu_t(e^{\rho/2}))$. The resulting difference then
cancels to order $k^{-1/2 + \delta}$ by Lemma \ref{TWOSZEGODER}
(3).

 This completes the
proof of the second time derivative estimate, and hence of the
main theorem.


\begin{thebibliography}{HHHH}

\bibitem[A]{A} M. Abreu, {\it K\"ahler geometry of toric manifolds in symplectic coordinates}, Symplectic and contact topology: interactions and
perspectives (Toronto, ON/Montreal, QC, 2001), 1--24, Fields
Inst. Commun., 35, Amer. Math. Soc., Providence, RI, 2003.

\bibitem[AT]{AT} C. Arezzo and G. Tian, {\it
Infinite geodesic rays in the space of K\"ahler potentials},
 Ann. Sc. Norm. Super. Pisa Cl. Sci. (5)
2 (2003), no. 4, 617--630.

\bibitem[BBS]{BBS} R. Berman, B. Berndtsson and J. Sj\"ostrand,
 {\it A direct approach to Bergman kernel asymptotics for positive line bundles},  Ark. Mat. 46 (2008), no. 2, 197--217.


\bibitem[B]{B} B. Berndtsson,   {\it Positivity of direct image bundles and convexity on the
space of K\"ahler metrics}, arxiv: math.CV/0608385.


\bibitem[BSj]{BSj} L. Boutet de Monvel and J. Sj\"ostrand,  {\it Sur la
singularit\'e des noyaux de Bergman et de Szeg\"o},
Ast\`erisque 34--35 (1976), 123--164.


\bibitem[Ca]{Ca} E. Calabi, Isometric imbedding of complex manifolds. Ann. of Math. (2) 58, (1953).
1--23.




\bibitem[CDG]{CDG} D.M. Calderbank, L. David and P. Gauduchon,  {\it The
Guillemin formula and \kahler metrics on toric symplectic
manifolds}, J. Symp. Geom. 1 (2002), 767-784.

\bibitem[C]{C} D.  Catlin,
The Bergman kernel and a theorem of Tian. {\it Analysis and
geometry in several complex variables} (Katata, 1997), 1--23,
Trends Math., Birkhäuser Boston, Boston, MA, 1999.

\bibitem[Ch]{Ch} X. Chen,  {\it The space of K\"ahler metrics},  J. Differential
Geom. 56 (2000), no. 2, 189--234.

\bibitem[CT]{CT} Chen, X.X. and G. Tian,  {\it
Geometry of K\"ahler metrics and foliations by discs},  Publ.
Math. Inst. Hautes \'Etudes Sci. No. 107 (2008), 1--107.

\bibitem[Chr]{Chr} M. Christ,  {\it
Slow off-diagonal decay for Szeg\"o kernels associated to smooth
Hermitian line bundles},  Harmonic analysis at Mount Holyoke,
77--89, Contemp. Math., 320, Amer. Math. Soc., Providence, RI,
2003.




\bibitem[D1]{D} S.K.  Donaldson,  {\it Scalar curvature and projective
embeddings I}, J. Differential Geom. 59 (2001), no. 3, 479--522


\bibitem[D2]{D2} S.K. Donaldson, {\it Symmetric spaces, K\"ahler geometry and Hamiltonian dynamics},
 Northern California Symplectic Geometry Seminar, Amer. Math. Soc. Transl. Ser. 2, 196, Amer. Math. Soc., Providence, RI, 1999,
13--33.


\bibitem[D3]{D3} S.K. Donaldson,
 {\it Scalar curvature and stability of toric
varieties},   J. Differential Geom. 62 (2002), no. 2, 289--349.

\bibitem[D4]{D4} S.K. Donaldson,  {\it
 Some numerical results in complex differential geometry},
arXiv:  math.DG/0512625.


\bibitem[D5]{D5} S.K. Donaldson, {\it  Kahler geometry on toric manifolds, and some other
manifolds with large symmetry}, arXiv:0803.0985.



\bibitem[E]{E} R. S. Ellis, {\it  Entropy, large deviations, and statistical mechanics.}
 Grundlehren der Mathematischen Wissenschaften [Fundamental Principles of Mathematical Sciences], 271. Springer-Verlag, New York,
 1985.







\bibitem[GKZ]{GKZ}  I. M. Gelfand, M. M. Kapranov and  A. V. Zelevinsky,
{\it Discriminants, resultants, and multidimensional
determinants},  Mathematics: Theory and Applications,
Birkh\"auser, Boston, 1994.


\bibitem[G]{G} D. Guan,  {\it On modified Mabuchi functional and Mabuchi moduli space of Kähler metrics on toric bundles},
Math. Res. Lett. 6 (1999), no. 5-6, 547--555.

\bibitem[Gu]{Gu2} V. Guillemin,  {\it
Riemann-Roch for toric orbifolds},  J. Differential Geom. 45
(1997), 53--73.

\bibitem[GS]{GS} V. Guillemin and S. Sternberg,  {\it Convexity properties of the moment mapping I}, Invent. Math. 67 (1982), 491--513.

\bibitem[H\"{o}]{Ho} L. H\"{o}rmander, {\it The Analysis of Linear
Partial Differential Operators, I} (Second Ed.)  Springer, Berlin,
1990.



\bibitem[LT]{LT} E. Lerman and S.  Tolman,  {\it Hamiltonian torus actions
on symplectic orbifolds and toric varieties}, Trans. Amer. Math.
Soc. 349 (1997), no. 10, 4201--4230.







\bibitem[L]{L} N. Lindholm,  {\it Sampling in weighted $L\sp p$ spaces of entire functions in ${\mathbb C}\sp n$ and
 estimates of the Bergman kernel}, J. Funct. Anal. 182 (2001), no. 2, 390--426.


\bibitem[M1]{M} T.  Mabuchi,  {\it
Some symplectic geometry on compact K\"ahler manifolds},  I. Osaka J.
Math. 24 (1987), no. 2, 227--252.

\bibitem[M2]{M2} T. Mabuchi,  {\it
 Extremal metrics and stabilities on polarized manifolds},   International Congress of Mathematicians. Vol. II, 813--826, Eur. Math. Soc., Z\"urich, 2006.

 \bibitem[MSj]{MSj} A. Melin and J.  Sj\"ostrand,
 Fourier integral operators with complex-valued phase functions.
 {\it Fourier integral operators and partial differential equations }
 (Colloq. Internat., Univ. Nice, Nice, 1974), pp. 120--223. Lecture Notes in Math., Vol. 459, Springer, Berlin, 1975.

\bibitem[Mo]{Mo} J. Moser,
On the volume elements on a manifold. Trans. Amer. Math. Soc. 120
1965 286--294.

\bibitem[PS1]{PS} D. H. Phong and J. Sturm,  {\it
 The Monge-Amp\`ere operator and geodesics in the space of
K\"ahler potentials}, Invent. Math. 166 (2006), no. 1, 125--149.

\bibitem[PS2]{PS1}  D. H. Phong and J. Sturm,   {\it Test Configurations
for K-Stability and Geodesic Rays}, J. Symplectic Geom. 5 (2007),
no. 2, 221--247.

\bibitem[PS3]{PS3} D. H. Phong and J. Sturm,  {\it Lectures on Stability and Constant Scalar Curvature} (preprint arXiv:0801.4179).

\bibitem[R]{R} Y. A. Rubinstein, {\it Geometric quantization and dynamical constructions on the space of Kahler metrics}, Ph.D. Thesis, MIT, 2008.

\bibitem[RZ1]{RZ} Y. A.  Rubinstein and S. Zelditch,  {\it Bergman approximations of harmonic
maps into the space of Kahler metrics on toric varieties },(arXiv:0803.1249).

\bibitem[RZ2]{RZ2} Y. A. Rubinstein and S. Zelditch, {\it Complexifying Hamiltonian flows and the initial value problem for geodesics in the space of K\"ahler metrics}, (preprint, 2009).

\bibitem[RZ3]{RZ3} Y.A. Rubinstein and S. Zelditch, {\it On the symmetric space geometry of the space of K\"ahler metrics},
preprint, 2008, in preparation.

\bibitem[RZ4]{RZ4} Y.A. Rubinstein and S. Zelditch,
Bohr-Sommerfeld and Monge-Amp\`ere in dimenson one (in
preparation).


\bibitem[S]{S2} S. Semmes,   {\it Complex Monge-Amp\`ere and symplectic manifolds},
Amer. J. Math. 114 (1992), no. 3, 495--550.

\bibitem[SZ]{SZ}   B. Shiffman and S. Zelditch,   {\it Almost holomorphic sections of ample line bundles over
symplectic manifolds}, J. Reine Angew. Math. 544
(2002), 181-222.

\bibitem[STZ1]{STZ} B. Shiffman, T. Tate and S. Zelditch,  {\it Distribution laws
for integrable eigenfunctions},  Ann. Inst. Fourier (Grenoble) 54
(2004), no. 5, 1497--1546.

\bibitem[STZ2]{STZ2} B. Shiffman, T. Tate and S. Zelditch, {\it Harmonic analysis on toric varieties},
 Explorations in complex and Riemannian geometry, 267--286,
Contemp. Math., 332, Amer. Math. Soc., Providence, RI, 2003.

\bibitem[Sj]{Sj} R. Sjamaar, {\it Holomorphic slices, symplectic
reduction and multiplicities of representations}, Ann. Math. 141
(1995), 87-129.



\bibitem[So]{Song}  J. Song, {\it
The $\alpha$-invariant on certain surfaces with symmetry groups},
Trans. Amer. Math. Soc. 357 (2005), no. 1, 45--57.


\bibitem[SoZ1]{SoZ} J. Song and S. Zelditch, {\it Convergence of Bergman
geodesics on $ \CP^1$} ,  Ann. Inst. Fourier (Grenoble) 57 (2007), no. 6, 2209--2237.

\bibitem[SoZ2]{SoZ2} J. Song and S. Zelditch, {\it
Test configurations, large deviations and geodesic rays on toric varieties}, arXiv:0712.3599.




\bibitem[T]{T} G.Tian, {\it On a set of polarized Kahler metrics on algebraic manifolds},
J.Differential Geom. 32 (1990), 99 - 130.


\bibitem[Y]{Y} S.T.  Yau,
Open problems in geometry. {\it Chern---a great geometer of the
twentieth century}, 275--319, Internat. Press, Hong Kong, 1992.


\bibitem[Ze1]{Z} S. Zelditch, {\it \szego kernels and a theorem of Tian},
Int. Math. Res. Notices, no.6 (1998), 317--331.

\bibitem[Ze2]{Z2} S. Zelditch, {\it
Bernstein polynomials,  Bergman kernels and toric K\"ahler
varieties}, Jour. Symp. Geom. 7 (2009), 1 - 26. (arXiv:
0705.2879).

\end{thebibliography}
\end{document}